\documentclass[10pt,a4paper]{article}
\usepackage[utf8]{inputenc}
\usepackage{amsmath}
\usepackage{amsfonts}
\usepackage{amssymb}
\usepackage{amsthm}
\usepackage{mathtools}
\usepackage{color}
\usepackage{enumitem}
\usepackage[dvipsnames]{xcolor}
\usepackage{hyperref}

\hypersetup{colorlinks = true,
            linkcolor = blue,
            citecolor = LimeGreen}

\DeclareMathOperator{\Exp}{Exp}
\DeclareMathOperator{\Div}{div}

\DeclareMathOperator{\D}{d}
\DeclareMathOperator{\re}{Re}
\DeclareMathOperator{\im}{Im}
\DeclareMathOperator{\res}{Res}
\DeclareMathOperator{\diam}{diam}

\newtheorem{lemma}{Lemma}[section]
\newtheorem{theorem}{Theorem}
\newtheorem{proposition}[lemma]{Proposition}
\theoremstyle{definition}

\theoremstyle{remark}
\newtheorem{remark}[lemma]{Remark}

\newcommand{\set}[1]{\left\{ #1 \right\}}
\newcommand{\n}[1]{\left\| #1 \right\|}
\newcommand{\T}{\mathcal{T}}
\newcommand{\F}{\mathcal{F}}

\title{Properties of resonant states for generic smooth expanding maps}
\date{}
\author{Malo J\'ez\'equel\thanks{CNRS, Univ. Brest, UMR6205, Laboratoire de Math{\'e}matiques de Bretagne Atlantique, France. email: malo.jezequel@math.cnrs.fr}}

\begin{document}

\maketitle

\begin{abstract}
We prove that the resonances for a generic smooth expanding map are simple and that zero is a regular value for the associated resonant states. Moreover, the real-valued resonant states are Morse functions. Using Nash-Moser theory, we also prove that the resonant states for a generic smooth expanding map of large enough degree (depending on the dimension of the manifold the map is acting on) have the same properties as generic smooth functions.
\end{abstract}

\section{Introduction}

Let $M$ be a compact boundaryless connected $C^\infty$ manifold. For convenience, we endow $M$ with a smooth Riemannian metric. We let $\Exp(M)$ be the set of smooth ($C^\infty$) expanding maps from $M$ to itself. We recall that a smooth map $T : M \to M$ is expanding if there are constants $C,\theta > 0$ such that for every\footnote{We use a calligraphic $\mathcal{T}$ in the notation for tangent spaces and bundles to avoid ambiguity with the letter $T$ which is often used to denote a smooth map.} $x \in M, v \in \T_x M$ and $n \in \mathbb{N}$, we have
\begin{equation}\label{eq:definition_expanding}
|D T^n(x) \cdot v | \geq C e^{n \theta} |v|. 
\end{equation}
Smooth expanding maps are among the simplest examples of hyperbolic dynamical systems. Recall indeed that a dynamical system is said to be hyperbolic if it is contracting in a direction and expanding in a supplementary direction. In the case of expanding maps, the contracted direction is trivial.

Hyperbolic dynamical systems are known for their rich statistical properties, and expanding maps are no exception. If $T \in \Exp(M)$ then there is a unique Borel probability measure $\mu$ on $M$ which is both invariant by $T$ and absolutely continuous with respect to the class of Lebesgue measure \cite{acim_expanding}. The measure $\mu$ plays a central role in the understanding of the statistical properties of $T$. Since $T$ is smooth, we even know that, if $\mathrm{d}x$ denotes the Riemannian volume density on $M$, then $\mu = \rho \mathrm{d}x$ with $\rho$ a smooth positive function on $M$. 

Moreover, the measure $\mu$ is known to be exponentially mixing: there is $\delta \in (0,1)$ such that for every $f,g \in C^\infty(M,\mathbb{C})$ we have (see e.g. \cite[Chapter 2]{baladi_book2})
\begin{equation}\label{eq:exponential_mixing}
\int_M f. g \circ T^n \mathrm{d}\mu \underset{n \to + \infty}{=} \int_M f \mathrm{d}\mu \int_M g \mathrm{d}\mu + \mathcal{O}(\delta^n).
\end{equation}
The notion of \emph{Ruelle resonances} \cite{ruelle_expanding_maps} allows to sharpen this estimate. There is a discrete subset $\res(T)$ in $\mathbb{C}^*$ such that for each $\lambda \in \res(T)$ there is a finite dimensional subspace $E_{T,\lambda}$ of $C^\infty(M,\mathbb{C})$, a continuous projection $\Pi_{T,\lambda} : C^\infty(M,\mathbb{C}) \to E_\lambda$ and a nilpotent endomorphism $N_{T,\lambda} : E_{T,\lambda} \to E_{T,\lambda}$ with the properties that for every $\delta > 0$ and $f,g \in C^\infty(M,\mathbb{C})$ we have the following asymptotic of correlations formula, which is a consequence of \cite[Theorem 3.2]{ruelle_expanding_maps}:
\begin{equation}\label{eq:asymptotic_correlation}
\int_M f. g \circ T^n \mathrm{d}x \underset{n \to + \infty}{=} \sum_{\substack{\lambda \in \res(T) \\ |\lambda| \geq \delta}} \int_{M} \left((\lambda I + N_{T,\lambda})^n \Pi_{T,\lambda} (f)\right) g \mathrm{d}x + \mathcal{O}(\delta^n).
\end{equation}

The elements of $\res(T)$ are called the resonances for $T$ and $\res(T)$ itself is the Ruelle spectrum of $T$. For $\lambda \in \res(T)$, the non-zero elements of $E_{T,\lambda}$ are called the \emph{generalized resonant states} for $T$ associated to $\lambda$, and the non-zero elements of $\ker N_{T,\lambda}$ are called \emph{resonant states}. The dimension of $E_{T,\lambda}$ is called the multiplicity of $\lambda$ (as a resonance for $T$). Notice that if $\lambda$ is a simple resonance (i.e. of multiplicity $1$), then the term corresponding to $\lambda$ in \eqref{eq:asymptotic_correlation} takes the form
\begin{equation*}
\lambda^n \nu_{T,\lambda}(f) \int_M \phi_{T,\lambda} g \mathrm{d}x,
\end{equation*}
where $\nu_{T,\lambda}$ is a distribution on $M$ and $\phi_{T,\lambda}$ a smooth function on $M$ (that spans $E_{T,\lambda}$).

The aim of this paper is to describe the generic properties of resonances and resonant states for smooth expanding maps. Notice that it follows from \eqref{eq:exponential_mixing} that $1$ is always a simple resonance for $T$ with (up to rescaling) $\nu_{T,1} : f \mapsto \int_{M} f\mathrm{d}x$ and $\phi_{T,1} = \rho$. Moreover, all the other resonances of $T$ have strictly smaller modulus. Our first result is the following (the topology on $\Exp(M)$ is induced by the usual topology on $C^\infty(M,M)$).

\begin{theorem}\label{theorem:generic_simple}
Let $\delta > 0$. The set of smooth expanding maps whose resonances of moduli larger than or equal to $\delta$ are all simple is open and dense in $\Exp(M)$.
\end{theorem}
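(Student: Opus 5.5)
The plan is to realize the resonances as eigenvalues of the transfer operator $\mathcal{L}_T f(x) = \sum_{T(y)=x} f(y)/|\det DT(y)|$ acting on a suitable Banach space, and then split the argument into openness and density. The space is $C^k(M)$, or an anisotropic/Sobolev space $H^s$, with $k$ (or $s$) large enough that the essential spectral radius is $<\delta$. Ruelle's theory gives that the essential spectral radius of $\mathcal{L}_T$ on $C^k$ is at most $\|DT^{-1}\|^{k}$-type quantities times the spectral radius, so it is $<\delta$ for $k$ large. Moreover, the dual of $\mathcal{L}_T$ is the Koopman operator $g \mapsto g\circ T$, which matches \eqref{eq:asymptotic_correlation}. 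Hence the resonances of modulus $\geq \delta$, with their multiplicities, are the eigenvalues of $\mathcal{L}_T$ on $C^k$ in $\{|z|\geq\delta\}$, and there are finitely many of them.

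\textbf{Openness.} Fix $T$ and $k$, and note that $\mathcal{L}_{T'}$ depends continuously on $T'$ only in a weak sense: $\mathcal{L}_{T'}-\mathcal{L}_T$ is small as an operator $C^k\to C^{k-1}$, not in norm on $C^k$. I would therefore invoke the Keller--Liverani perturbation theorem. Uniform Lasota--Yorke inequalities hold for $T'$ near $T$ in $C^{k+1}$, and $\|\mathcal{L}_{T'}-\mathcal{L}_T\|_{C^k\to C^{k-1}}\to 0$. Together these give continuity of the spectral projectors for eigenvalues outside the essential radius. Choose $\delta'<\delta$ such that no resonance of $T$ has modulus in $[\delta',\delta)$; this is possible by discreteness, and it handles resonances that might cross the circle $|z|=\delta$. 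Around each resonance $\lambda$ with $|\lambda|\geq\delta'$, the total multiplicity inside a small disc is locally constant. If all these resonances are simple, then every $T'$ near $T$ has exactly one simple resonance in each disc and none elsewhere in $|z|\geq\delta$. Hence the set is open.

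\textbf{Density.} Given $T$, I would perturb within a finite-dimensional family to split the multiplicities one resonance at a time. The natural perturbations are $T_t = \Phi_t\circ T$ with $\Phi_t$ a diffeomorphism close to identity, or equivalently conjugation-free modifications in which $\mathcal{L}_{T_t}$ changes by a first-order term. The first-order variation is $\frac{d}{dt}\mathcal{L}_{T_t}|_{t=0} f = -\Div(Xf)$-type, namely $\mathcal{L}_T$ composed with a first-order operator $P_X$ depending linearly on the vector field $X$ generating $\Phi_t$. Take a resonance $\lambda$ of multiplicity $m\ge2$, with generalized eigenspace $E$ and spectral projector $\Pi$. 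Consider the finite-dimensional family of matrices $\Pi P_X|_E$ as $X$ ranges over vector fields. The key claim is that these span all of $\mathrm{End}(E)$, or at least enough to break the eigenvalue degeneracy. The proof would use that elements of $E$ are smooth and linearly independent, while the dual resonant states are nonzero distributions. Concretely, the bilinear pairings $(u,v)\mapsto \langle v, P_X u\rangle$ for $u\in E$ and $v\in E^*$ are given by integrals $\int (\cdots)\, X\cdot\nabla u$ against $v$. Localizing $X$ near a point where the relevant functions and distributions are independent then produces arbitrary matrices. Once this span property holds, standard finite-dimensional perturbation theory gives the splitting. Using the Keller--Liverani control again, a generic small $t$ in a polynomial family makes the characteristic polynomial on the $\lambda$-cluster have simple roots, since the discriminant is analytic and not identically zero. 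Iterating over the finitely many resonances of modulus $\geq\delta'$, and using openness to keep already-split ones simple, yields density.

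\textbf{Main obstacle.} I expect the main obstacle to be this span/non-degeneracy step. One must show that the perturbation does not always act on $E$ as a scalar plus nilpotent in a way that preserves the degeneracy. This requires a unique-continuation-type argument: a dual resonant state $v$ (a distribution) paired against $X\cdot\nabla u$ cannot vanish for all $X$ unless $u$ is constant, and $u$ constant only occurs for $\lambda=1$, which is simple. A secondary subtlety is justifying differentiability of eigenvalues in $t$ despite only weak continuity. I would handle it by working on a smaller Sobolev scale where $t\mapsto\mathcal{L}_{T_t}$ is $C^1$ as a map into bounded operators $C^{k}\to C^{k-1}$, combined with the Gouëzel--Liverani differentiability results.
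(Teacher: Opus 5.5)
Your openness argument is fine and is essentially the paper's: Keller--Liverani stability, and your choice of $\delta'$ correctly handles resonances crossing $|z|=\delta$.

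The gap is in density, exactly at the step you call the main obstacle, and the heuristic you give for it is wrong. Since $P_Xu=-\mathcal{L}_T(\Div(uX))=-\mathcal{L}_T(u\Div X+X\cdot\nabla u)$, the pairing is not ``$v$ against $X\cdot\nabla u$''. For an eigen-coresonant state $\nu$ one has $\nu(P_Xu)=-\lambda\,\nu(\Div(uX))$. If this vanished for all $X$, then $u\nabla\nu=0$, so $\nu$ would be locally a constant multiple of $\mathrm{d}x$ on the nonempty open set $\{u\neq0\}$. The obstruction therefore concerns $\nu$, not $u$ being constant; for $\lambda=1$ it is the \emph{coresonant} state that is $\mathrm{d}x$, while the resonant state is the invariant density.

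No unique continuation is available for such distributions. What excludes this is a dynamical fact absent from your proposal: for $\lambda\neq1$, a coresonant state cannot vanish on all zero-mean test functions supported in a nonempty open set (Proposition~\ref{proposition:full_support}). It is proved by showing that the set where $\nu$ vanishes is forward invariant, using $\mathcal{L}_T^*\nu=\lambda\nu$ on an injective branch, and then applying $T^N(U)=M$ (Lemma~\ref{lemma:expanding_open_sets}).

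Your stronger claim, that $\{\Pi P_X|_E\}$ spans $\mathrm{End}(E)$, needs much more than one nonvanishing pairing. A single $X$ must control $P_Xu_1,\dots,P_Xu_m$ simultaneously, and you also need that no nonzero \emph{generalized} coresonant state is locally a multiple of $\mathrm{d}x$. Both can plausibly be obtained: a preimage construction as in Lemmas~\ref{lemma:linear_algebra}--\ref{lemma:long_range_construction}, plus a forward-invariance/semicontinuity argument for the local kernels of $\mathcal{E}_{T,\lambda}$. None of this is in your sketch.

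The splitting step is also unjustified as written. The map $t\mapsto\mathcal{L}_{T_t}$ is only $C^\infty$, with loss of derivatives, so the discriminant of the cluster is smooth, not known to be analytic. A smooth function that is not identically zero may vanish on a whole neighbourhood of $0$. You would need an explicit argument instead: $A(t)=\lambda I+t(B+O(t))$ with $B$ of simple spectrum in the semisimple case, and a Puiseux-type rescaling when there are Jordan blocks.

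The paper avoids both the full-span claim and any splitting analysis through an extremal argument. In a given open set, it takes $T_0$ minimizing the excess multiplicity $n(T)$, and among those maximizing the largest Jordan block size $N_0$. By stability, along any deformation $T_t=T_0\circ\phi_t$ the cluster at $\lambda_0$ must remain a single resonance $\lambda_t$ with $(\mathcal{L}_{T_t}-\lambda_t)^{N_0}f_t=0$, with $\lambda_t$ and $f_t$ smooth in $t$. Differentiating and pairing with a coresonant state $\nu$ (chosen with $\nu(f_0)=0$ when $N_0=1$) gives $\nu(P_{T_0}(X)g_0)=0$ for every $X$, where $g_0=(\mathcal{L}_{T_0}-\lambda_0)^{N_0-1}f_0$ and $\nu(g_0)=0$.

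One explicit field contradicts this. Take $X=g_0^{-1}Y$ with $\Div Y=-\psi$, where $\mathcal{L}_{T_0}\psi=\varphi$ on an injective branch on which $g_0$ does not vanish, and $\nu(\varphi)\neq0$ by full support. So only one nonvanishing pairing $\nu(P_{T_0}(X)g_0)$ with $\nu(g_0)=0$ is needed, and that is exactly the ingredient your argument lacks.
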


In particular, the set of smooth expanding maps whose resonances are all simple is a $G_\delta$ dense set in $\Exp(M)$. Once we know that the resonances of an expanding maps are simple, we can investigate the generic properties of the associated resonant states, and we find:

\begin{theorem}\label{theorem:generic_morse}
Let $\delta > 0$. Let $\mathcal{U}_\delta$ be the set of smooth expanding maps $T$ with the following properties:
\begin{itemize}
\item the resonances of $T$ of moduli larger than or equal to $\delta$ are simple;
\item if $f$ is a real-valued resonant state associated to a real resonance of $T$ of absolute value larger than or equal to $\delta$, then $f$ is a Morse function and $0$ is a regular value for $f$;
\item if $f$ is a resonant state associated to a resonance for $T$ in $\mathbb{C} \setminus \mathbb{R}$ of modulus larger than or equal to $\delta$, then $0$ is a regular value for $f$.
\end{itemize}
Then $\mathcal{U}_\delta$ is open and dense in $\Exp(M)$.
\end{theorem}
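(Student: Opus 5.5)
We will prove openness and density of $\mathcal{U}_\delta$ separately, working throughout with the transfer operator $\mathcal{L}_T$, the $L^2(\mathrm{d}x)$-adjoint of $g \mapsto g\circ T$. Its resonant states are the eigenfunctions of $\mathcal{L}_T$ acting on $C^k(M)$ spaces on which its essential spectral radius is $<\delta$; this needs $k$ large, depending on $\delta$ and on the expansion rate. The conditions in the definition of $\mathcal{U}_\delta$ only involve finitely many resonances: those in $\{|\lambda|\ge\delta\}$, and we may shrink $\delta$ slightly so that no resonance lies on $|\lambda|=\delta$.

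\textbf{Openness.} Let $T\in\mathcal{U}_\delta$ and fix $k$ as above. We use the following standard perturbation fact for transfer operators of smooth expanding maps: $T'\mapsto\mathcal{L}_{T'}$ is continuous as a map into bounded operators $C^{k+1}\to C^k$ in the $C^\infty$ topology, and the uniform Lasota--Yorke inequalities hold for $T'$ near $T$. Keller--Liverani stability then shows that the spectral projectors onto $\{|\lambda|\ge\delta\}$ vary continuously, with values in $C^{k}$. Hence, for $T'$ near $T$:
\begin{itemize}
\item the resonances of modulus $\ge\delta$ stay simple and move continuously;
\item the eigenfunctions $\phi_{T',\lambda'}$, normalized through the spectral projector, are $C^{k}$-close to $\phi_{T,\lambda}$.
\end{itemize}
Here we take $k\ge 2$ and use that eigenfunctions are $C^\infty$ but $C^{k}$-close suffices. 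A real simple resonance of $T$ stays real for $T'$, because complex ones come in conjugate pairs and simplicity prevents splitting. Its resonant state can be chosen real. Being Morse with $0$ a regular value, and (for complex $\lambda$) having $0$ as a regular value of $f:M\to\mathbb{C}\simeq\mathbb{R}^2$, are $C^2$-open conditions on a compact manifold. This gives openness.

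\textbf{Density.} Start from arbitrary $T$. We first apply Theorem \ref{theorem:generic_simple} to get $T$ with simple resonances in $\{|\lambda|\ge\delta\}$, say $\lambda_1,\dots,\lambda_N$. It remains to perturb so that each $\phi_j=\phi_{T,\lambda_j}$ acquires the transversality properties. We use a finite-dimensional family $T_s=T\circ\Phi_s$, with $s\in B\subset\mathbb{R}^p$ and $\Phi_s$ a family of diffeomorphisms close to the identity, or alternatively $T_s=\Phi_s\circ T$.

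By simplicity, $s\mapsto\phi_{j,s}$ is smooth (analytic perturbation theory) and
\[
\partial_s\phi_{j,s}|_{s=0}=(\lambda_j-\mathcal{L}_T)^{-1}(I-\Pi_j)\,\partial_s\mathcal{L}_{T_s}\phi_j ,
\]
computed with the reduced resolvent. Morally, $\partial_s\mathcal{L}_{T_s}\phi_j$ is (up to a Jacobian term) $\Div(X\phi_j)$ composed with inverse branches, where $X$ is the vector field generating $\Phi_s$.

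The plan is a parametric transversality argument. Consider the maps $F_j(s,x)=\phi_{j,s}(x)$ and $G_j(s,x)=d_x\phi_{j,s}$, the latter viewed as a section of $\T^*M$ in local trivializations. If, for every $x$, the map $s\mapsto (F_j(s,x),G_j(s,x))$ has differential at $s=0$ that is surjective onto $\mathbb{R}\times\T^*_xM$ (or onto $\mathbb{C}$ for $F_j$ when $\lambda_j\notin\mathbb{R}$), then by the parametric transversality theorem (Sard) almost every small $s$ gives the following:
\begin{itemize}
\item $d\phi_{j,s}\pitchfork 0$, i.e. $\phi_{j,s}$ is Morse;
\item $0$ is a regular value of $\phi_{j,s}$.
\end{itemize}
For complex $\lambda_j$ it suffices to require only $F_j\pitchfork0$ in $\mathbb{C}$. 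Surjectivity is an open condition in $x$ and $M$ is compact, so finitely many local perturbations suffice, and $p$ can be taken finite.

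\textbf{Main obstacle.} The hard step is proving this surjectivity, i.e. showing that the linear map
\[
X\mapsto (\lambda_j-\mathcal{L}_T)^{-1}(I-\Pi_j)\,\mathcal{L}_T(\Div(X\phi_j)\circ T)
\]
(or the appropriate derivative) has $1$-jets at a given point $x_0$ filling $\mathbb{R}\times\T^*_{x_0}M$. The resolvent is nonlocal, so supports get spread. My first attempt is to use perturbations $X$ supported in a tiny ball $V$ around a point $y_0$ chosen so that $x_0\notin\bigcup_{n\ge 1}T^{n}(V)$; this is possible since a non-periodic point $y_0$ whose forward orbit avoids $x_0$ can be found generically. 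The idea is to expand the reduced resolvent as a Neumann-type series on finite-rank-corrected operators, and to localize so that the first term, a local differential operator applied to $X$, dominates the $1$-jet at $x_0$. Where $\phi_j(x_0)$ or $d\phi_j(x_0)$ vanishes, the jet of $\Div(X\phi_j)$ is not automatically surjective. I would handle this by also allowing multiplicative perturbations through a change of the map on small sets, and by arguing by contradiction: if surjectivity failed at $x_0$ for all $X$, then a nonzero covector annihilating all these jets yields a distribution $\nu$ supported at $x_0$ with $\mathcal{L}_T^*$-invariance relations. This would force $\nu$ to be a resonant state of the adjoint supported on the finite set $\{x_0\}$, which is impossible since the adjoint resonant states for $\lambda\ne0$ cannot be finitely supported (the adjoint operator is composition with $T$, which does not preserve point supports). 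The contradiction gives the needed surjectivity.
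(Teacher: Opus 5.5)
Your openness argument and your density framework (Theorem \ref{theorem:generic_simple}, a finite-parameter family $T\circ\Phi_s$, the reduced-resolvent formula for $\partial_s\phi_{j,s}$, parametric transversality) coincide with the paper's. The gap is exactly the step you flag. Let $H_{T,\lambda_j}$ be the reduced resolvent and $P_T(X)\phi_j=-\mathcal{L}_T(\Div(\phi_j X))$ the first-order variation. Surjectivity of $X\mapsto j^1_{x_0}\bigl(H_{T,\lambda_j}P_T(X)\phi_j\bigr)$ is the whole content of the density proof, and neither of your routes proves it.

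\emph{Localization route.} It fails at the outset: no nonempty open $V$ satisfies $x_0\notin\bigcup_{n\geq 1}T^n(V)$, because $T^n(V)=M$ for all large $n$ (Lemma \ref{lemma:expanding_open_sets}). You conflated the forward orbit of the point $y_0$ with that of a neighbourhood. Hence the terms $\lambda_j^{-n-1}\mathcal{L}_T^nP_T(X)\phi_j$ with $n$ large are in general nonzero near $x_0$. When other resonances of modulus $\geq|\lambda_j|$ exist, the series must moreover first be corrected by finite-rank pieces built from non-localized resonant and coresonant states. So nothing makes the first term dominate the $1$-jet at $x_0$. That first term is also not jet-surjective at $x_0$ when $\phi_j$ vanishes to second order at every preimage of $x_0$, for instance if $\phi_j$ vanishes on an open set, which cannot be excluded a priori. ``Multiplicative perturbations'' give no extra freedom, since every deformation of $T$ acts at first order only through some $P_T(X)$ (Lemma \ref{lemma:explicit_derivative}).

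\emph{Duality route.} It does not give what you claim either. Suppose a nonzero distribution $\nu$ of order $\leq 1$ at $x_0$ annihilates all $H_{T,\lambda_j}P_T(X)\phi_j$. What you actually learn concerns $\mu=H_{T,\lambda_j}^*\nu$: $\mathcal{L}_T^*\mu$ is locally constant on $\{\phi_j\neq 0\}$, and $(\lambda_j-\mathcal{L}_T^*)\mu=\nu-\nu(\phi_j)\nu_{T,j}$. Here $\nu$ is a source term, not an eigendistribution of $\mathcal{L}_T^*$. There is also no information on the interior of $\{\phi_j=0\}$. So no contradiction with the absence of finitely supported coresonant states follows.

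What is missing is the paper's Lemma \ref{lemma:key_deformation}. Its key idea is to choose $X$ with $\mathcal{L}_T^NP_T(X)\phi_j=0$ for some $N$. Then, by Lemma \ref{lemma:derivative_simple_resonance}, the derivative is the \emph{finite} sum $\sum_{\ell<N}\lambda_j^{-\ell-1}\mathcal{L}_T^\ell P_T(X)\phi_j$, whose support is under control. This uses $\deg T\geq 2$:
\begin{itemize}
\item Given $x_1$, take $x_2\neq x_1$ with $T(x_2)=T(x_1)$.
\item Near a non-periodic preimage of $x_1$, place a function whose image under $\mathcal{L}_T$ has the prescribed jet at $x_1$. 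Near a preimage of $x_2$, place a compensating function so that the sum is killed by $\mathcal{L}_T^2$.
\item Realize each piece as $\mathcal{L}_T^{N_0}P_T(X)\phi_j$, with $X$ supported near preimages under $T^{N_0+1}$ at which $\phi_j$ does not vanish (Lemmas \ref{lemma:linear_algebra} and \ref{lemma:long_range_construction}). This is how zeros of $\phi_j$ are bypassed.
\item Handle a degenerate configuration involving fixed points through a cohomological equation at a contracting fixed point (Lemma \ref{lemma:contracting_fixed_point}).
\end{itemize}
With this lemma in hand, your transversality argument goes through as you wrote it.
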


We can go further in the description of resonant states. Let us start with the resonant state associated to the resonance $1$, which is just (after suitable normalization) the density of the invariant absolutely continuous probability measure.

\begin{theorem}\label{theorem:generic_density}
Let $U \subseteq \set{\rho \in C^\infty(M,\mathbb{R}_+^*) : \int_M \rho \mathrm{d}x = 1}$ be open and dense. Let $\mathcal{U}$ be the set of $T$ in $\Exp(M)$ such that the density of the invariant absolutely continuous probability measure for $T$ belongs to $U$. The set $\mathcal{U}$ is open and dense in $\Exp(M)$.
\end{theorem}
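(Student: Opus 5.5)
The plan is to prove openness and density separately. Openness will follow from continuity of the map $T \mapsto \rho_T$, and density from an explicit conjugation argument based on Moser's trick. Throughout, $\rho_T$ denotes the density of the absolutely continuous invariant probability measure of $T \in \Exp(M)$.

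\emph{Openness.} It suffices to show that $T \mapsto \rho_T$ is continuous from $\Exp(M)$ to $C^\infty(M,\mathbb{R})$; then $\mathcal{U}$ is the preimage of the open set $U$. Fix $k \geq 1$.
\begin{enumerate}
\item The density $\rho_T$ is the normalized eigenvector for the eigenvalue $1$ of the transfer operator
\begin{equation*}
\mathcal{L}_T f(x) = \sum_{T(y)=x} \frac{f(y)}{|\det DT(y)|}
\end{equation*}
acting on $C^k(M)$.
\item For $T'$ in a $C^{k+1}$-neighbourhood of $T$, the operators $\mathcal{L}_{T'}$ satisfy a uniform Lasota--Yorke inequality $\|\mathcal{L}_{T'}^n f\|_{C^k} \leq C\sigma^{n}\|f\|_{C^k} + C_n \|f\|_{C^{k-1}}$, with $\sigma < 1$.
\item Moreover, $T' \mapsto \mathcal{L}_{T'}$ is continuous as a map into bounded operators $C^k \to C^{k-1}$.
\item The Keller--Liverani perturbation theorem (or a direct argument) then gives that the spectral projector onto the simple isolated eigenvalue $1$ depends continuously on $T'$. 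Hence $\rho_{T'} \to \rho_T$ in $C^{k-1}$.
\end{enumerate}
Since $k$ is arbitrary, this yields continuity in the $C^\infty$ topology.

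\emph{Density.} Fix $T \in \Exp(M)$ and a neighbourhood $\mathcal{V}$ of $T$.
\begin{enumerate}
\item Since $U$ is dense, choose $\rho' \in U$ arbitrarily close to $\rho_T$ in $C^\infty$. It is positive and has $\int_M \rho' \mathrm{d}x = 1 = \int_M \rho_T \mathrm{d}x$.
\item By Moser's trick, build a diffeomorphism $\varphi$ with $\varphi_*(\rho_T \mathrm{d}x) = \rho' \mathrm{d}x$. Concretely, set $\rho_t = (1-t)\rho_T + t\rho'$, solve $\Delta u = \rho_T - \rho'$ (solvable since the right-hand side has zero mean), and let $X_t = \nabla u / \rho_t$. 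Then $\partial_t \rho_t + \Div(\rho_t X_t) = 0$, so the time-$1$ flow $\varphi$ of $X_t$ transports $\rho_T \mathrm{d}x$ to $\rho' \mathrm{d}x$.
\item By elliptic regularity, $u$ and hence $X_t$ are small in every $C^k$ norm when $\rho' - \rho_T$ is small in $C^\infty$. Consequently $\varphi$ is $C^\infty$-close to the identity.
\item Define $T' = \varphi \circ T \circ \varphi^{-1}$. It is a smooth expanding map: the conjugacy by a fixed diffeomorphism only changes the constant $C$ in \eqref{eq:definition_expanding}.
\item When $\varphi$ is close enough to the identity, $T'$ lies in $\mathcal{V}$.
\item The map $T'$ preserves $\varphi_*\mu = \rho' \mathrm{d}x$, which is an absolutely continuous probability measure. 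By uniqueness of the absolutely continuous invariant probability measure, $\rho_{T'} = \rho' \in U$, so $T' \in \mathcal{U}$.
\end{enumerate}

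The density part is essentially mechanical. The step needing the most care is the continuity of $T \mapsto \rho_T$ in the $C^\infty$ topology. The issue is that $\mathcal{L}_{T'}$ is not norm-continuous in $T'$ on a fixed $C^k$ space, which is why I would use the two-norm setting ($C^k$ versus $C^{k-1}$) with uniform Lasota--Yorke bounds. I expect this continuity to be essentially established already in the course of proving Theorems~\ref{theorem:generic_simple} and~\ref{theorem:generic_morse}, where perturbation of resonant states is needed, so I would invoke that result if the paper provides it.
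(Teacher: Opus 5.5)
Your proof is correct, and its density half follows a genuinely different route from the paper's.

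\textbf{How the paper argues.} The paper obtains Theorem~\ref{theorem:generic_density} as the special case $p=1$, $q=0$, $\lambda_1=1$ of Lemma~\ref{lemma:local_solvability}. Since $\rho_{F_0}$ never vanishes, the linearized equation $P_T(X)\rho_T=g$ can be solved locally: write $g$ as a divergence (Lemma~\ref{lemma:tame_homology}) and use any inverse branch (Lemma~\ref{lemma:inverse_final_version}). Tame Lasota--Yorke and resolvent estimates make this a smooth tame family of right inverses. Hamilton's surjective-derivative version of the Nash--Moser theorem then gives a local right inverse $\Psi$ of $T\mapsto E_{T,1}$. The paper applies $\Psi$ to a sequence in $U$ tending to $E_{F_0,1}$.

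\textbf{How your route differs.} You bypass all of this. You use two facts: the acim transforms naturally under conjugacy, and positive densities of equal mass are isotopic (Moser). In fact $\rho'\mapsto\varphi_{\rho'}\circ T\circ\varphi_{\rho'}^{-1}$ is already an explicit local right inverse of $T\mapsto\rho_T$ passing through $T$. It depends smoothly on $\rho'$ and needs only elliptic regularity and flow estimates: no tame estimates, no Nash--Moser. Your openness argument is the same spectral-projector continuity that the paper takes from Proposition~\ref{proposition:stability_resonances}, via $\rho_{T'}=\Pi_{T',1}\rho_T$.

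\textbf{What each route buys.} The paper's heavier approach buys extendability. Conjugation leaves the resonances unchanged, so it cannot move the $\lambda$-components required in Theorem~\ref{theorem:general_statement}. Moreover, Moser's construction uses positivity in an essential way (you divide by $\rho_t$), and this fails for the sign-changing, zero-mean resonant states attached to $\lambda\neq 1$. The Nash--Moser machinery is built for those cases, and Theorem~\ref{theorem:generic_density} falls out of it as a byproduct. For this particular statement, your argument is the more elementary and more explicit one.
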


To put it loosely, ``the density of the absolutely continuous invariant probability measure for a generic smooth expanding map is a generic smooth positive function of integral $1$''. Theorem \ref{theorem:generic_density} is obtained by proving that the map that associates to a smooth expanding map the density of its absolutely continuous invariant probability measure is locally surjective (it has local right inverses).

We need extra definitions in order to discuss the other resonances. Since there is in general no preferred element in $E_{T,\lambda}$ (except in the case $\lambda = 1$), we will study the generic properties of the space $E_{T,\lambda}$ itself. In order to do so, for $\mathbb{K} = \mathbb{R}$ or $\mathbb{C}$, we let $C_0^\infty(M,\mathbb{K})$ denote the space of zero-mean\footnote{It follows from \eqref{eq:asymptotic_correlation} and the fact that $\mu$ is invariant for $T$ that the generalized resonant states associated to resonances different from $1$ have zero average.} (for $\mathrm{d}x$) smooth functions from $M$ to $\mathbb{K}$. We define then $PC_0^\infty(M,\mathbb{K})$ as the space of one-dimensional subspaces of $C_0^\infty(M,\mathbb{K})$. We put a topology on $PC_0^\infty(M,\mathbb{K})$ by identifying it with the quotient of $C_0^\infty(M,\mathbb{K}) \setminus \set{0}$ under the action of $\mathbb{K}^*$. If $\phi$ is an element of $C_0^\infty(M,\mathbb{K}) \setminus \set{0}$, then we will denote by $[ \phi ]$ its equivalence class (or the line that it spans depending on the point of view).

Let us recall that if $T \in \Exp(M)$ then $T$ is a covering map of $M$ by itself. Since $M$ is compact, $T$ has a finite number of sheets, we call this number the degree of $T$, that we write $\deg T$. Notice that any point in $M$ has exactly $\deg T$ antecedents by $T$ and that, if $M$ is orientable, then $\deg T$ is the absolute value of the topological degree of $T$. For $m \geq 2$, we will denote by $\Exp_{\geq m}(M)$ the set of expanding maps on $M$ whose degree is larger than or equal to $m$. With these notations, we have:

\begin{theorem}\label{theorem:generic_real_resonance}
Let $\delta > 0$. Let $U$ be an open subset of $PC_0^\infty(M,\mathbb{R}) \times \mathbb{R}$. Let $m = \dim M + 1$. Let $\mathcal{U}_\delta$ be the set of $T \in \Exp_{\geq m}(M)$ such that if $\lambda \in \res(T)$ and $\lambda \in \mathbb{R}\setminus ((-\delta,\delta) \cup \set{1})$ then $\lambda$ is simple and\footnote{The intersection with $C_0^\infty(M,\mathbb{R})$ is here because, as we defined it, $E_{T,\lambda}$ is a complex linear subspace of $C_0^\infty(M,\mathbb{C})$. However, in the context of Theorem \ref{theorem:generic_real_resonance}, $E_{T,\lambda(T)}$ is a one-dimensional subspace spanned by a real-valued function, so that $E_{T,\lambda}\cap C_0^\infty(M,\mathbb{R})$ belongs to $PC_0^\infty(M,\mathbb{R})$. We might sometimes be slightly less accurate and identify $E_{T,\lambda}$ with a subspace of $C_0^\infty(M,\mathbb{R})$ when $\lambda$ is real.} $(E_{T,\lambda} \cap C_0^\infty(M,\mathbb{R}),\lambda) \in U$. The set $\mathcal{U}_\delta$ is open and dense in $\Exp_{\geq m}(T)$.
\end{theorem}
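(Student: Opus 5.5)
Openness and density are handled separately. For openness, fix $T \in \mathcal{U}_\delta$. The real resonances of $T$ in $\mathbb{R}\setminus((-\delta,\delta)\cup\{1\})$ form a finite set; each is simple, and none has modulus exactly $\delta$ after slightly shrinking $\delta$ (or one treats boundary resonances by continuity). Standard perturbation theory for the transfer operator $\mathcal{L}_T$ acting on $C^k$ for $k$ large, with essential spectral radius below $\delta$, shows the following: the spectrum in $\{|z|\ge\delta\}$ depends continuously on $T$, simple eigenvalues persist and stay simple, and their spectral projectors vary continuously in $C^k$. A simple real eigenvalue of a real operator stays real, since the conjugate of an eigenvalue is an eigenvalue and simplicity rules out splitting. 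No new real resonance can appear near $\delta$-admissible non-real ones, because non-real simple eigenvalues stay non-real. The eigenline then moves continuously in $PC_0^\infty(M,\mathbb{R})$; to get this in the $C^\infty$ topology, one uses the spaces $C^k$ for every $k$, noting that resonances and resonant states do not depend on $k$. Since $U$ is open, $T'\in\mathcal{U}_\delta$ for $T'$ near $T$.

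For density, start from $T$ and, by Theorem \ref{theorem:generic_simple}, perturb so that all resonances of modulus $\ge\delta$ are simple. The key step is a local surjectivity statement analogous to the one behind Theorem \ref{theorem:generic_density}. Fix a simple real resonance $\lambda\neq 1$ with resonant state $\phi$. The map $T'\mapsto([\phi_{T'}],\lambda(T'))$, defined near $T$, should be shown to have local right inverses into $PC_0^\infty(M,\mathbb{R})\times\mathbb{R}$. If it does, it is open, so every resonant pair can be moved into $U$ by a small perturbation. This uses that $U$ is dense, which we assume here; otherwise the statement should be read with $U$ open and dense. One then treats the finitely many real resonances of modulus $\ge\delta$ one at a time, using openness at each stage to keep the earlier ones inside $U$. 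To build the right inverse, look for $T'=T\circ\Psi$, or a perturbation of the inverse branches. Write the eigen-equation $\mathcal{L}_{T'}\psi=\mu\psi$, where $\mathcal{L}_{T'}\psi(x)=\sum_{T'y=x}\psi(y)/|\det DT'(y)|$. Given a target $(\psi,\mu)$ near $(\phi,\lambda)$, we must solve for $T'$.

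The main obstacle is solving this nonlinear equation, where the degree assumption $m=\dim M+1$ enters. Linearize in the direction of the perturbation $X$ of the inverse branches $g_1,\dots,g_d$, with $d=\deg T$. The derivative of $\sum_i \psi\circ g_i\,|\det Dg_i|$ involves $\sum_i\big(d\psi(g_i)\cdot X_i+\psi(g_i)\,\mathrm{div}\text{-type}(X_i)\big)$, a first-order differential operator in $X=(X_1,\dots,X_d)$ with values in functions. We need it to be surjective with a tame right inverse. Pointwise, at a point $x$, there are $d\ge n+1$ vectors $X_i(x)\in\T_{g_i(x)}M$, and both the value and the derivative of the target must be hit. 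Since $\psi$ cannot have a critical point at all $d$ preimages simultaneously for a suitably generic $\psi$, or since one can use the $\psi(g_i)$ times divergence terms, I expect one can choose $X$ by solving an underdetermined system. The count $d\ge n+1$ is what lets one get around the zeros of $d\psi$ and of $\psi$ on the preimages. Here I would first try to show that the zeroth-order coefficients $\psi(g_i(x))$ and the differentials $d\psi(g_i(x))$ cannot all vanish at a given $x$. I would also exploit the freedom in the divergence term, which can absorb any function once some $\psi(g_i(x))\ne0$.

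The resulting right inverse loses derivatives, so the final step applies the Nash–Moser inverse function theorem in the tame Fréchet category. This requires tame estimates on $T'\mapsto\mathcal{L}_{T'}$ and on its spectral projector. Those estimates come from the Lasota–Yorke inequalities uniformly on $C^k$. Adjusting $\mu$ is handled by the free real parameter, and the zero-mean normalization by the projective quotient.
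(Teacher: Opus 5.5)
Your overall architecture matches the paper's. Openness comes from spectral stability. Density comes from Theorem~\ref{theorem:generic_simple} followed by a Nash--Moser local right inverse, applied one resonance at a time. You are also right that $U$ must be open and dense, as in Theorem~\ref{theorem:general_statement}.

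The gap is in the step that carries all the content. You never establish when $X\mapsto P_T(X)\phi=-\mathcal{L}_T(\Div(\phi X))$ is onto $C_0^\infty(M,\mathbb{R})$ with a right inverse that is smooth tame in $T$. Counting values and derivatives pointwise does not produce an inverse for a divergence-type operator.

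The paper instead writes $g=\Div(Ag)$ with a tame operator $A$ (Lemma~\ref{lemma:tame_homology}) and uses $\mathcal{L}_T\Div=\Div L_T$. This reduces everything to the algebraic fibrewise equation $\sum_{Ty=x}\phi(y)\,DT(y)X(y)/|\det DT(y)|=Y(x)$. That equation has an explicit smooth tame right inverse exactly when every fibre $T^{-1}(x)$ contains a point with $\phi\neq0$ (Lemma~\ref{lemma:inverse_final_version}). This inverse is regularizing, not derivative-losing as you state.

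Your fallback of using $d\phi$ where $\phi$ vanishes on a whole fibre would require dividing by $\phi$ near a zero set that moves with $T$, and you give no tame construction for that. Moreover, your proposed condition (that the $\phi(g_i(x))$ and $d\phi(g_i(x))$ do not all vanish) already holds generically in every degree, because $0$ is generically a regular value of $\phi$ (Theorem~\ref{theorem:generic_morse}). If that condition sufficed, the hypothesis $m=\dim M+1$ would be superfluous. So your proposal has not located where that hypothesis is actually needed.

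The hypothesis is needed to show that the fibrewise nonvanishing condition holds for a dense set of $T$ (Lemma~\ref{lemma:generic_solvability}). Your phrase ``for a suitably generic $\psi$'' hides the main difficulty: $\phi$ is the resonant state of $T$, not a free function, so its genericity must be produced by perturbing $T$.

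This is what Lemma~\ref{lemma:key_deformation} does. One chooses $X$ with $\mathcal{L}_T^NP_T(X)\phi=0$, so that the first variation of $[\phi]$ is the explicit sum $\sum_{\ell<N}\lambda^{-\ell-1}\mathcal{L}_T^\ell P_T(X)\phi$. Without this choice the variation involves the unknown operator $H_{T,\lambda}$. Prescribing this variation near $k$ points of a fibre costs one extra antecedent.

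Consequently, when $\deg T=\dim M+1$, one cannot make $x\mapsto(\phi(y))_{y\in T^{-1}(x)}$ transverse to $0$ directly. The paper proceeds in several steps:
\begin{itemize}
\item It drops one component to obtain transversality to the rank strata, so the bad points are isolated.
\item It shows these bad points can be taken not to be fixed points.
\item It removes them one at a time: it moves $x_0(T)$ while freezing $\phi_T$ near $x_0(T)$ to first order, using transversality to the auxiliary set $\mathcal{N}$.
\end{itemize}
Without this argument, or a genuine substitute, the Nash--Moser step has no hypothesis to run on.
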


Notice that in dimension $1$, the degree condition in Theorem \ref{theorem:generic_real_resonance} is empty. For non-real resonances, this result becomes:

\begin{theorem}\label{theorem:generic_complex_resonance}
Let $\delta > 0$. Let $U$ be an open subset of $PC_0^\infty(M,\mathbb{C})\times \mathbb{C}$. Let $m = \dim M + 2$. Let $\mathcal{U}_\delta$ be the set of $T \in \Exp_{\geq m}(M)$ such that if $\lambda \in \res(T)$ and $\lambda \in \mathbb{C}\setminus (\mathbb{D}(0,\delta) \cup \mathbb{R})$ then $\lambda$ is simple and $(E_{T,\lambda},\lambda) \in U$. The set $\mathcal{U}_\delta$ is open and dense in $\Exp_{\geq m}(T)$.
\end{theorem}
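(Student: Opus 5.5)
We follow the strategy of Theorem \ref{theorem:generic_real_resonance}, adapted to complex resonances. Openness is the easy half. Fix $T_0 \in \mathcal{U}_\delta$. By discreteness of $\res(T_0)$ and the fact that the Ruelle spectrum in $\{|\lambda| \geq \delta\}$ is finite, we may assume (after shrinking $\delta$ slightly if the circle $|\lambda|=\delta$ is hit) that only finitely many resonances lie in $\mathbb{C}\setminus \mathbb{D}(0,\delta)$. Each of these is simple. We then use the continuity of the transfer operator $\mathcal{L}_T$ acting on an anisotropic or $C^k$ space with essential spectral radius $<\delta$, in the sense of Kato perturbation theory, as $T$ varies in the $C^\infty$ topology. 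This gives that for $T$ near $T_0$ the spectrum in $\{|\lambda|\geq \delta\}$ consists of simple eigenvalues $\lambda(T)$ depending continuously on $T$. The spectral projectors $\Pi_{T,\lambda(T)}$ are given by Riesz integrals and are continuous into $C^\infty$, since resonant states are smooth and the projector is continuous in every $C^k$ norm. Hence $(E_{T,\lambda(T)},\lambda(T))$ depends continuously on $T$ and stays in the open set $U$. One subtlety needs care: a real resonance of $T_0$ could become non-real. It stays real for nearby $T$, however, because a simple real eigenvalue of a real operator cannot leave the real axis (the complex conjugate would be another nearby eigenvalue). So the conditions persist.

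For density, start from any $T_0 \in \Exp_{\geq m}(M)$ and first apply Theorem \ref{theorem:generic_simple} to make all resonances of modulus $\geq \delta$ simple. Write $\lambda_1,\dots,\lambda_N$ for the non-real ones. We then perturb the map to move each pair $(E_{T,\lambda_j},\lambda_j)$ into $U$, one resonance (together with its conjugate) at a time, using openness to preserve what has already been achieved. The key is a local surjectivity statement, in the spirit of the one behind Theorem \ref{theorem:generic_density}. Consider the map
\[
\Phi : T \mapsto (\phi_{T,\lambda(T)},\lambda(T)),
\]
with $\phi$ normalized by a fixed linear functional. We want $\Phi$ to be locally open from a neighbourhood of $T_0$ onto a neighbourhood in $C_0^\infty(M,\mathbb{C}) \times \mathbb{C}$, at least after composing with a small preliminary perturbation. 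This would follow from the Nash--Moser inverse function theorem (tame Fréchet spaces) once we show that the differential $D\Phi(T)$ has a tame right inverse for all $T$ in a neighbourhood.

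The computation of the differential proceeds as follows. Write $T_t = \exp(tX)\circ T$ or use a variation of $T$ by a vector field $X$ along $T$. Differentiating $\mathcal{L}_{T}\phi = \lambda \phi$ gives
\[
(\mathcal{L}_T - \lambda)\dot\phi + (\partial_X \mathcal{L}_T)\phi = \dot\lambda \phi .
\]
The term $(\partial_X\mathcal{L}_T)\phi$ has a local expression at a point $y$: it is a sum over the $\deg T$ preimages $x_i$ of $y$ of expressions like $\mathrm{div}$-type first-order operators in $X(x_i)$ applied to $\phi$ and the Jacobian weight. Since $\mathcal{L}_T-\lambda$ is invertible on a complement of $E_{T,\lambda}$, with a tame inverse obtained through the Neumann-series/resolvent structure, surjectivity reduces to showing that $X \mapsto (\partial_X\mathcal{L}_T)\phi$ is onto $C^\infty$ with a tame right inverse, modulo finite-dimensional corrections. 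Pointwise, at each $y$ we have $\deg T$ independent choices of the vector $X(x_i) \in \T_{x_i}M$, which contributes $\deg T\cdot \dim M$ real parameters. The quantities involved are combinations of $\phi(x_i)$ and $d\phi(x_i)$, and we must hit a complex target, that is, two real equations. The condition $\deg T \geq \dim M+2$ should guarantee that the linear pointwise system is solvable at every point, even at points where $\phi$ and $d\phi$ vanish at some preimages. By Theorem \ref{theorem:generic_morse} we may assume $0$ is a regular value of $\phi$, so $\phi$ and $d\phi$ never vanish simultaneously. A dimension count with $\deg T$ preimages then shows that enough preimages carry nondegenerate data; the extra $+1$ compared with Theorem \ref{theorem:generic_real_resonance} reflects the complex target. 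The solution can be chosen smoothly with a partition of unity, giving a tame linear right inverse.

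The main obstacle is exactly this pointwise algebraic solvability together with its uniformity, which is what the tame estimates require. After that, Nash--Moser yields that $\Phi$ is open near $T_0$. Since $U$ is open and $PC_0^\infty$ carries the quotient topology, the hypotheses in fact require $U$ to be nonempty, and density holds when $U$ is dense. We then pick a target in $U$ arbitrarily close to $\Phi(T_0)$ and realize it by a map $T$ close to $T_0$. Iterating over $j$ and using openness completes the density argument.
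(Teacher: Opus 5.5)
Your architecture matches the paper's. Openness comes from spectral stability. Density comes from first making resonances simple (Theorem \ref{theorem:generic_simple}) and then applying Hamilton's Nash--Moser theorem to $\Phi$. Before that, surjectivity of $D\Phi(T)$ is reduced, via the splitting of $C_0^\infty(M,\mathbb{C})$ into the range of $\lambda(T)-\mathcal{L}_T$ and $\langle f_T\rangle$, to a tame right inverse for $X\mapsto P_T(X)f_T$. You are also right that density needs $U$ dense, as in Theorem \ref{theorem:general_statement}.

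The gap is exactly the step you flag as the main obstacle, and the criterion you propose there is not the relevant one. Write $g=\Div Y$ with $Y$ a complex vector field depending tamely on $g$ (Lemma \ref{lemma:tame_homology}), and use $\mathcal{L}_T\circ\Div=\Div\circ L_T$. Then $P_T(X)f_T=g$ becomes the zeroth-order problem $L_T(f_TX)=-Y$ with $X$ real. This is solvable pointwise, hence tamely via a partition of unity, exactly when for every $x\in M$ the values $(f_T(y))_{y\in T^{-1}(\{x\})}$ span $\mathbb{C}$ over $\mathbb{R}$. That is the hypothesis of Lemma \ref{lemma:local_solvability} with $p=0$, $q=1$ (see Lemma \ref{lemma:inverse_final_version}). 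So the obstruction is not a common zero of $f_T$ and $\mathrm{d}f_T$. It is all preimage values lying on a single real line $e^{i\theta}\mathbb{R}$, none of them necessarily zero. At such a point the pointwise reduction breaks down and the principal symbol of $X\mapsto P_T(X)f_T$ has one-dimensional real image. Knowing that $0$ is a regular value of $f_T$ (Theorem \ref{theorem:generic_morse}) says nothing about this.

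Showing that this spanning condition holds for a dense set of $T$ is where $\deg T\geq \dim M+2$ enters, and it is the hardest part of the paper (Lemma \ref{lemma:generic_solvability}). A dimension count does not do it, because $f_T$ is not a free function. You must show that deforming $T$ moves the values of $f_T$ at the preimages of a point independently. But the first variation of $[f_T]$ is $[H_{T,\lambda}P_T(X)f_T]$, which involves the non-explicit holomorphic part of the resolvent.

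Lemma \ref{lemma:key_deformation} gets around this by building $X$ with $\mathcal{L}_T^NP_T(X)f_T=0$, using Lemmas \ref{lemma:linear_algebra}, \ref{lemma:long_range_construction} and, at fixed points, \ref{lemma:contracting_fixed_point}. This makes the variation explicit and localized, but it only controls $m$ out of $m+1$ points with a common image. Hence the transversality argument tracks only $\dim M+1$ preimages. There the bad stratum $N_1$ has codimension exactly $\dim M$, so finitely many bad points survive. These are removed by a separate argument (Steps 1 to 4 in the proof of Lemma \ref{lemma:generic_solvability}): show the bad point is not fixed, then use the eigenvalue equation through $\mathcal{H}_T$ and $\mathcal{N}$ to displace it while freezing $f_T$ near it to first order. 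None of this appears in your proposal, so the density half is not established.

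A smaller point concerns openness. Membership in $\mathcal{U}_\delta$ only constrains non-real resonances, so you cannot assume that the real resonances of $T_0$ of modulus at least $\delta$ are simple. A non-simple real resonance can split into a non-real pair under perturbation (compare Appendix \ref{appendix:existence_simple_real_resonances}). The paper's one-line appeal to Proposition \ref{proposition:stability_resonances} passes over this too.
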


Theorems \ref{theorem:generic_density}, \ref{theorem:generic_real_resonance} and \ref{theorem:generic_complex_resonance} are particular cases of a more general result (Theorem \ref{theorem:general_statement}) that allows to deal with several resonances simultaneously. Theorems \ref{theorem:generic_real_resonance} and \ref{theorem:generic_complex_resonance} may be stated informally as `` the resonant states for a generic smooth expanding map of large enough degree have the generic properties of smooth maps of average zero''.

\subsection*{Context}

The concept of resonances for hyperbolic dynamical systems originated in the work of Ruelle \cite{ruelle_resonances} and Pollicott \cite{pollicott_resonances}. In the context of smooth expanding maps, the theory of resonances is developed in \cite{ruelle_expanding_maps}. For a textboof presentation of this topic, one may refer to \cite[Chapter 2]{baladi_book1} or \cite[Part I]{baladi_book2}.

The specific case of real-analytic expanding maps (in particular in low dimension) has been extensively studied, see for instance\footnote{Notice that some references in this list does not directly deal with expanding maps but study instead some model transfer operators associated to a family of contractions.} \cite{bandtlow_jenkinson_2007, bandtlow_jenkinson_2008, bandtlow_jenkinson_2008_2, naud_ruelle_spectrum, bjs_2013, bjs_2017, bandtlow_naud}. To this list we may add the seminal work on Ruelle on the related topic of zeta functions \cite{ruelle_zeta}. The paper \cite{bjs_2013} by Bandtlow, Just and Slipantschuk is of specific interest to us as it gives examples of expanding maps on the circle with explicit Ruelle resonances. In particular, the authors exhibit expanding maps with non-real resonances, which proves that Theorem \ref{theorem:generic_complex_resonance} is not empty. However, they do not give examples of expanding maps with simple real resonances distinct from $1$. We will see in Appendix \ref{appendix:existence_simple_real_resonances} that such maps exist, which implies that Theorem \ref{theorem:generic_real_resonance} is not empty\footnote{Theorems \ref{theorem:generic_real_resonance} and \ref{theorem:generic_complex_resonance} are a priori coherent with the absence of resonances distinct from $1$. The results from \cite{bjs_2013} and Appendix \ref{appendix:existence_simple_real_resonances} shows that the domain of application of Theorems \ref{theorem:generic_real_resonance} and \ref{theorem:generic_complex_resonance} is non trivial.}.

The result from \cite{bandtlow_naud} is also relevant for the study of generic properties of resonances for expanding maps. In this reference, Bandtlow and Naud prove a lower bound on the number of resonances\footnote{More precisely, they show that a lower bound on the growth of the number of resonances of modulus larger than $r$ as $r$ goes to $0$.} for a dense set of analytic expanding maps of the circle. This is a bit different from the generic properties that we study in the present paper: we focus on ``local'' properties of the resonance spectrum (i.e. properties that involve only a finite number of resonances simultaneously), while the lower bound from \cite{bandtlow_naud} is a ``global'' property of resonances (it only makes sense when considering the whole spectrum at once). Such global properties of the resonance spectrum usually require regularity assumptions beyond $C^\infty$ to be dealt with\footnote{See \cite{jezequel_ultradifferentiable_expanding} for a systematic study of resonances for expanding maps of the circle in classes of regularity between $C^\infty$ and analytic.}. On the other hand, local properties are accessible in the smooth category\footnote{And even in finite regularity with some extra care. Notice however that the Nash--Moser theory methods that we will use in this paper would not apply directly in finite regularity. Hence, it is likely that Theorems \ref{theorem:generic_simple} and \ref{theorem:generic_morse} may be adapted to deal with finitely differentiable expanding maps (with extra technicalities) but it is probably much harder to adapt Theorems~\ref{theorem:generic_density}, \ref{theorem:generic_real_resonance} and \ref{theorem:generic_complex_resonance}.}, which is thus a natural setting for the properties that we study in the present paper.

The concept of Ruelle resonances also apply to more general hyperbolic dynamical systems, in particular to smooth Anosov diffeomorphisms (see e.g. \cite[Part II]{baladi_book2}), and some of the results mentioned above have analogues in this context. The existence of Anosov maps with non-trivial (i.e. distinct from $1$) resonances is not obvious. It was first established by Adam \cite{adam_anosov} who proved the existence of Anosov diffeomorphisms with at least one non-trivial resonance by perturbative methods. Then, Bandtlow, Just and Slipantschuk \cite{complete_Anosov, resonances_rational} produced many examples of Anosov diffeomorphisms of the two-dimensional torus with explicit resonance spectra (see also \cite{pollicott_sewell}). We used these examples to adapt the result of \cite{bandtlow_naud} and prove a lower bound on the number of resonances for a dense subset of the space of analytic Anosov diffeomorphisms of the two-dimensional torus.

Beware that the resonant states in the Anosov case are distributions rather than smooth functions. Hence, it is not clear what could be the equivalent of a statement such as Theorem \ref{theorem:generic_morse} in this context. Moreover, the resonant states associated to an Anosov diffeomorphism are not any distributions, they have regularity properties that depend on the Anosov map (in particular through its stable and unstable direction) and on the associated resonances (resonant states associated to small resonances can be more irregular). Hence, the precise regularity properties of a resonant state are a priori not preserved under a small deformation of the Anosov map, which is a tricky feature to deal with when studying their generic properties.

There is a question however that still makes sense in the Anosov case (even in the context of Anosov flow) which is the simplicity of resonances (Theorem \ref{theorem:generic_simple}). Related questions are addressed for geodesic flows on the unit tangent bundle of a generic perturbation of a hyperbolic $3$-manifold in \cite{three_manifolds} by Ceki\'c, Delarue, Dyatlov and Paternain. The methods in this paper are similar to our proof of Theorem \ref{theorem:generic_simple} as the authors also rely on a deformation argument by computing the first order variation of some spectral quantities. The geometric context however is much more involved in \cite{three_manifolds} than here.

Some of the methods that we will use could be related to recent works in linear response theory. In particular, in \cite{galatolo_pollicott_control} Galatolo and Pollicott solve the linearized problem associated to Theorem \ref{theorem:generic_density} in dimension $1$. The higher dimensional case is dealt with by Kloeckner in \cite{kloeckner_linear_request}. Notice that our proof of Theorem \ref{theorem:generic_density} implies to solve the linearized problem.

Finally, let us mention that a motivation for the current paper is the work of Uhlenbeck \cite{uhlenbeck_generic} on generic properties of eigenfunctions for elliptic differential operators. A crucial element in the article by Uhlenbeck is the use of infinite-dimensional transversality theorem \cite{smale_infinite,abraham_transversality, quinn_transversal} (in particular, an infinite-dimensional analogue of Sard's theorem). The core ideas of the current paper originated from the reading of \cite{uhlenbeck_generic}. However, the actual tools from \cite{uhlenbeck_generic} are not really suited for the analysis of resonant states of generic expanding maps. The reason for that is that \cite{uhlenbeck_generic} only deals with a Banach setting, while some specifities of the perturbation theory for resonances make it more convenient to work in a Fr\'echet framework. Fortunately, the Nash--Moser theory as it is exposed by Hamilton in \cite{hamilton_ift} turned out to be very well suited to our dynamical context.

\subsection*{Methods}

The main tool in this paper is the theory of perturbations for Ruelle resonances and resonant states. The specific case of the leading resonance $1$ is often called linear response\footnote{Linear response is a term from statistical physics that denotes the first order variation of an observed quantity under a small perturbation of a system.} theory \cite{baladi_or_else}. As explained in \S \ref{subsection:resonances}, the resonances for a smooth expanding map $T$ are eigenvalues for a \emph{transfer operator} \eqref{eq:definition_transfer_operator} acting on $C^k(M,\mathbb{C})$ for different values of $k > 0$. However, this operator does not depend smoothly on $T$ as a bounded operator on $C^k(M,\mathbb{C})$, and the standard perturbation theory \cite{kato_book} does not apply. Dynamicists have developed methods to bypass this difficulty. There are in particular abstract results of Gou\"ezel, Keller and Liverani that apply perfectly to the case of smooth expanding maps, see \cite[I.2.5 and III.A.3]{baladi_book2}. Once the specific difficulties related to the dynamical context are dealt with, the resulting perturbation theory is very similar to what would happen for bounded operators on a Banach space (in terms of formula for the derivative of the spectral data for instance).

Once we have a good perturbation theory for resonances, we need to produce relevant perturbations for the proof of our different theorems. The perturbation theory exposed in \S \ref{subsection:linear_response_theory} implies that the set $\mathcal{U}_\delta$ from Theorem \ref{theorem:generic_simple} is open. In order to prove that it is dense, we want to prove that if $T_0 \in \Exp(M)$ has a resonance $\lambda_0$ that is not simple, then we can produce a smooth perturbation $(T_t)_{t \in (-\epsilon,\epsilon)}$ of $T_0$ such that for $t$ small but non-zero the resonance $\lambda_0$ splits into several resonances\footnote{If there are Jordan blocks associated to $\lambda_0$, we can also reduce the size of the largest Jordan block.}. By applying this process several times, we end up finding that $\mathcal{U}_\delta$ is dense. The condition on $(T_t)_{t \in (-\epsilon,\epsilon)}$ for a splitting of $\lambda_0$ to happen involves the coresonant states for $\lambda_0$. As an intermediary step in the proof, we will show that the coresonant states for smooth expanding maps are fully supported (Proposition \ref{proposition:full_support}). Notice that Weich proved that resonant states for Anosov systems are fully supported \cite{weich_support}.

The proof of Theorem \ref{theorem:generic_morse} is based on a standard transversality argument (a corollary of Sard's theorem). Considering a smooth expanding map $T_0$ and a real resonant state $f_0$ for $T_0$ (associated to a simple resonance), we build a smooth deformation $(-\epsilon,\epsilon)^N \ni t \mapsto T_t$ of $T_0$ such that, if $f_t$ denotes the deformation of $f_0$ into a resonant state of $T_t$, then $0$ is a regular value of the map $(x,t) \mapsto f_t(x)$. It follows then that, for almost all $t$ in $(-\epsilon,\epsilon)^N$, the number $0$ is a regular value of the map $f_t$. By similar consideration, we can make $f_t$ Morse. Once again, the point here is to produce enough interesting perturbations of the resonant state $f_0$. These perturbations are obtained in Lemma \ref{lemma:key_deformation}. This lemma may seem involved, but this is because it is designed so that it applies in different situations that appear along the paper. For the proof of Theorem \ref{theorem:generic_morse}, the only point that matters is that for every $x_0 \in M$ and $g \in C^\infty_0(M,\mathbb{C})$, we can produce a deformation of $T_0$ for which the first order change in $f_0$ is given by $g$ near $x_0$. This perturbation is obtained by direct inspection of the formula for the first order change in $f_0$. The specific ideas behind the proof of Lemma \ref{lemma:key_deformation} are difficultly explained without referring to some notions from \S \ref{section:background}. Their exposition is deferred to \S \ref{section:key_lemma}.

Let us now explain the structure of the proofs of the Theorems \ref{theorem:generic_density}, \ref{theorem:generic_real_resonance} and \ref{theorem:generic_complex_resonance}. As above, we consider a smooth expanding map $T_0$ and a simple, say real and distinct from $1$, resonance $\lambda_0$. For $T$ close to $T_0$, there is a single resonance $\lambda(T)$ for $T$ close to $\lambda_0$. Our goal is to find a local right inverse for the map
\begin{equation}\label{eq:map_to_invert}
T \mapsto (E_{T,\lambda(T)} \cap C_0^\infty(M,\mathbb{R}),\lambda(T)) \in PC_0^\infty(M,\mathbb{R}) \times \mathbb{R}.
\end{equation}
We start by giving a sufficient condition on $T_0$ for the existence of a right inverse for the \emph{derivative} of \eqref{eq:map_to_invert}. Then, we apply Nash--Moser theory to show that, when this condition is satisfied, there is a local right inverse for \eqref{eq:map_to_invert}. This is the content of Lemma \ref{lemma:local_solvability}. Nash--Moser theory is often advertised as a way to solve a non-linear problem when there is a deregularizing operator solving the linearized problem, but this is not the case here. Actually, an inspection of the proof of Lemma \ref{lemma:local_solvability} shows that the solution that we give to the linearized problem is given by a regularizing operator. The linearized problem however is given by a deregularizing operator. This feature of our problem makes the approach of Nash--Moser theory exposed in \cite{hamilton_ift} very convenient. The main result of the theory is stated there as an inverse function theorem in the category of so-called smooth tame map. Hence, there is a symmetry between the linearized problem and its solution, so that which one is deregularizing does not matter.

Once we have a sufficient condition for the existence of a local right inverse for \eqref{eq:map_to_invert}, the final ingredient of the proof is to check that a generic expanding map (with large enough degree) satisfies this condition. This is the content of Lemma \ref{lemma:generic_solvability}, whose proof is based on a succession of transversality arguments (sketched in Remark \ref{remark:sketch_proof}). The condition on the degree of the map appears in these arguments: it is related to the fact that a transverse intersection between two manifolds of large enough codimensions is empty. These transversality arguments will require relevant perturbations of resonant states, and this is where we will need the full generality of the deformations from Lemma \ref{lemma:key_deformation}.

\subsection*{Structure of the paper}

In \S \ref{section:background} we recall standard results on resonances and their perturbation theory.
In \S \ref{section:simplicity}, we prove Theorem \ref{theorem:generic_simple}.
In \S \ref{section:key_lemma}, we prove our key Lemma \ref{lemma:key_deformation} that provides many useful deformations of expanding maps.
In \S \ref{section:first_generic_properties} we use Lemma \ref{lemma:key_deformation} to prove Theorem \ref{theorem:generic_morse}.
In \S \ref{section:general_results}, we state and prove a general result that implies Theorems \ref{theorem:generic_real_resonance} and \ref{theorem:generic_complex_resonance}. We also prove Theorem \ref{theorem:generic_density}.

This paper contains two appendices. In Appendix \ref{appendix:existence_simple_real_resonances}, we prove that there exists expanding maps with simple real resonances (proving that Theorem \ref{theorem:generic_real_resonance} is not empty). In Appendix \ref{appendix:weighted_transfer_operator}, we discuss the potential adaptation of our analysis to the case of weighted transfer operators.

This paper is organized so that the notions of tame Fréchet manifolds and smooth tame maps only appear in \S \ref{section:general_results}. Hence, the proofs of Theorems \ref{theorem:generic_simple} and \ref{theorem:generic_morse} may be read without any knowledge in Nash--Moser theory.

\subsection*{Acknowledgements}

The author benefits from the support of the French government “Investissements d’Avenir” program integrated to France 2030, bearing the following reference ANR-11-LABX-0020-01.

\section{Background and notations}\label{section:background}

In this first section, we recall some bacgkground facts and fix notations for the rest of the paper. We start with generalities on resonances in \S \ref{subsection:resonances} and then pay a particular attention to the perturbation theory of resonances in \S \ref{subsection:linear_response_theory}.

\subsection{Transfer operator and resonances}\label{subsection:resonances}

Let us fix $T \in \Exp(M)$. The main character in the theory of Ruelle resonances for $T$ is the transfer operator $\mathcal{L}_T$ defined by
\begin{equation}\label{eq:definition_transfer_operator}
\mathcal{L}_Tf(x) = \sum_{\substack{y \in M \\ Ty = x}} \frac{f(y)}{|\det DT(y)|}
\end{equation}
for $f : M \to \mathbb{C}$ and $x \in M$. Here, the Jacobian determinant $|\det DT|$ is defined using the density $\mathrm{d}x$. The operator $\mathcal{L}_T$ is the adjoint of the composition operator associated to $T$ in the following sense: if $f,g \in C^\infty(M,\mathbb{C})$ then the change of variable formula yields
\begin{equation}\label{eq:change_of_variable}
\int_M f. g \circ T \mathrm{d}x = \int_M \mathcal{L}_T(f) g\mathrm{d}x.
\end{equation}

The expansion \eqref{eq:asymptotic_correlation} is a consequence of the following result (see \cite[Corollary 2.6]{baladi_book2} for a proof that Proposition \ref{proposition:essential_spectral_radius} implies \eqref{eq:asymptotic_correlation}).

\begin{proposition}[Theorem 3.2 in \cite{ruelle_expanding_maps}]\label{proposition:essential_spectral_radius}
Let $T \in \Exp(M)$. There are $C, \theta > 0$ such that for every $k \in \mathbb{N}$ the operator $\mathcal{L}_T$ induces a bounded operator from $C^k(M,\mathbb{C})$ to itself with essential spectral radius less than $C e^{-k \theta}$.
\end{proposition}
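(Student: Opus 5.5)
The plan is a Lasota--Yorke (Doeblin--Fortet) inequality combined with Hennion's theorem. Fix $k \in \mathbb{N}$. Boundedness of $\mathcal{L}_T$ on $C^k$ is straightforward. Since $T$ is a finite covering of degree $d = \deg T$, every small ball $B$ has $d$ smooth local inverse branches $S_1,\dots,S_d : B \to M$, and on $B$ we can write
\begin{equation*}
\mathcal{L}_T f = \sum_{i=1}^d g_i \cdot (f \circ S_i), \qquad g_i = |\det DT \circ S_i|^{-1}.
\end{equation*}
The $g_i$ are smooth and the $S_i$ are smooth. By compactness we cover $M$ with finitely many such balls, which gives $\n{\mathcal{L}_T f}_{C^k} \le C_k \n{f}_{C^k}$.

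The core of the argument is the Lasota--Yorke inequality for iterates. We apply the same local description to $T^n$; its inverse branches $S^{(n)}_i$ are contractions with $\n{DS^{(n)}_i} \le C^{-1} e^{-n\theta}$ by \eqref{eq:definition_expanding}. Differentiating $k$ times by Faà di Bruno, the top-order term is
\begin{equation*}
\sum_i g^{(n)}_i \cdot D^k f(S^{(n)}_i)\bigl(DS^{(n)}_i\cdot,\dots,DS^{(n)}_i\cdot\bigr).
\end{equation*}
Its sup norm is at most $C^{-k} e^{-nk\theta} \n{D^k f}_\infty \sup_x \mathcal{L}_{T^n}1(x)$. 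Two facts control the factor $\mathcal{L}_{T^n}1$:
\begin{itemize}
\item by \eqref{eq:change_of_variable} we have $\int \mathcal{L}_{T^n}1\,\mathrm{d}x = \mathrm{vol}(M)$;
\item the standard bounded-distortion estimate for expanding maps shows that $\log |\det DT^n|$ has uniformly bounded variation on inverse branches.
\end{itemize}
Together these give $\sup_n \n{\mathcal{L}_{T^n}1}_\infty < \infty$.

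All the remaining terms of the Faà di Bruno expansion involve $D^j f$ with $j<k$. They also involve derivatives of $g^{(n)}_i$ and higher derivatives of $S^{(n)}_i$, which distortion estimates keep bounded by $C_{n,k}$ times the same sum of weights. This yields
\begin{equation*}
\n{\mathcal{L}_T^n f}_{C^k} \le A\, C^{-k} e^{-nk\theta} \n{f}_{C^k} + B_{n,k} \n{f}_{C^{k-1}}.
\end{equation*}

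The conclusion then follows from Hennion's theorem. The inclusion $C^k \hookrightarrow C^{k-1}$ is compact by Arzelà--Ascoli, so the essential spectral radius of $\mathcal{L}_T$ on $C^k$ is at most $\liminf_n (A C^{-k} e^{-nk\theta})^{1/n} = e^{-k\theta}$. This is the claimed bound with constant $1$ and the same $\theta$; the constant $C$ in the statement can absorb any loss if one uses a slightly smaller $\theta$.

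The main obstacle is keeping the constant in front of $\n{D^k f}$ uniform in $k$, up to the factor $C^{-k}$. Any $k$-dependent constant that grows with $n$ is fine, because it disappears after taking $n$-th roots; so the only real care needed is in the uniform bound on $\mathcal{L}_{T^n}1$ via bounded distortion. A cleaner alternative is to use the norm $\sup|D^k f|$ together with lower-order terms that carry small weights, but Hennion's theorem makes this unnecessary.
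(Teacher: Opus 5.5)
Your proof is correct. The Lasota--Yorke inequality for $\mathcal{L}_T^n$ with $C^{k-1}$ as the weak norm, together with Hennion's theorem and the compactness of $C^k \hookrightarrow C^{k-1}$, gives the bound. Your bounded-distortion argument for $\sup_n \n{\mathcal{L}_{T^n}1}_{C^0}<\infty$ is also sound.

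The paper does not prove this proposition; it quotes Ruelle. It does, however, carry out the same computation in tame form in Lemma \ref{lemma:dfly}. The only difference is the top-order coefficient. The paper bounds it crudely, counting the $(\deg T)^n$ preimages and using $|\det DT^n|\geq C^d e^{nd\theta}$, and obtains $C_r e^{n(\theta_0-r\theta)}$ on $C^r$ with $\theta_0=\log\deg T$. That already gives an essential spectral radius at most $(\deg T)\,e^{-k\theta}$, which has the required form because $\deg T$ does not depend on $k$. Your distortion estimate sharpens this to $e^{-k\theta}$, but the statement does not need it.

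Your closing remark is slightly off. What must be controlled is the growth in $n$ of the coefficient of $\n{f}_{C^k}$, not its dependence on $k$. A factor depending only on $k$ disappears after taking $n$-th roots. A fixed exponential factor $e^{n\theta_0}$ with $\theta_0$ independent of $k$ only changes the constant $C$. The coefficient $B_{n,k}$ of the weak norm is unrestricted.
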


We recall that if $L : \mathcal{B} \to \mathcal{B}$ is a bounded endomorphism of Banach space $\mathcal{B}$, then the essential spectral radius of $L$ is the smallest $r > 0$ such that the intersection of the spectrum of $L$ with $\set{z \in \mathbb{C} : |z|> r}$ only contains isolated eigenvalues of finite multiplicity. Hence, Proposition \ref{proposition:essential_spectral_radius} implies that for every $k \in \mathbb{N}$ the resolvent $(z - \mathcal{L}_T)^{-1} : C^k(M,\mathbb{C}) \to C^k(M,\mathbb{C})$ is meromorphic on $\set{z \in \mathbb{C} : |z| > C \lambda^{-k}}$ with coefficients of finite ranks in the Laurent series expansion at each pole.

Consequently, if we define for $|z| \gg 1$ the operator $R_T(z) : C^\infty(M,\mathbb{C}) \to C^0(M,\mathbb{C})$ by $R_T(z) f = \sum_{k = 0}^{+ \infty} z^{-k-1} \mathcal{L}_T^k f$, then $R_T(z)$ actually maps $C^\infty(M,\mathbb{C})$ into itself, and has a meromorphic extension, as an operator from $C^\infty(M,\mathbb{C})$ to itself, to $\mathbb{C}^*$, with coefficients of finite ranks in the Laurent series at each pole. The meromorphic extension of $R_T(z)$ will still be denoted by $R_T(z)$.

The set $\res(T)$ of resonances for $T$ are then the poles of $R_T(z)$. Equivalently, we find that $\lambda \in \mathbb{C}^*$ is a resonance for $T$ if there is $f \in C^\infty(M,\mathbb{C})$ non-zero such that $\mathcal{L}_T f = \lambda f$. If $\lambda$ is a resonance, then the operator $\Pi_{T,\lambda}$ from \eqref{eq:asymptotic_correlation} is the residue of $R_T(z)$ at $\lambda$:
\begin{equation}\label{eq:definition_projector}
\Pi_{T,\lambda} = \frac{1}{2 i \pi} \int_{\partial \mathbb{D}(\lambda,\epsilon)} R_T(z) \mathrm{d}z,
\end{equation}
where $\epsilon > 0$ is small enough so that $\lambda$ is the only resonance for $T$ within $\overline{\mathbb{D}}(\lambda,\epsilon)$. The space $E_{T,\lambda}$ is then the range of $\Pi_{T,\lambda}$, but also the characteristic space of $\mathcal{L}_T$ (acting on $C^\infty(M,\mathbb{C})$) for $\lambda$:
\begin{equation*}
E_{T,\lambda} = \Pi_{T,\lambda}(C^\infty(M,\mathbb{C})) = \set{ f \in C^\infty(M,\mathbb{C}): \exists N \in \mathbb{N}^*, (\mathcal{L}_T - \lambda)^N f = 0}.
\end{equation*}
The operator $N_{T,\lambda}$ is just the operator induced by $\mathcal{L}_T - \lambda I$ on $E_{T,\lambda}$. The resonant states for $\mathcal{L}_T$ are then the eigenvectors (associated to non-zero eigenvalues) of $\mathcal{L}_T$ acting on $C^\infty(M,\mathbb{C})$ and the generalized resonant states are the corresponding generalized eigenvectors. The dimension of $E_{T,\lambda}$ is finite, it is called the multiplicity of $\lambda$ has a resonance for $T$.

We let $\mathcal{D}'(M)$ denote the space of distributions on $M$, that we identify with the dual of $C^\infty(M,\mathbb{C})$ using the density $\mathrm{d}x$ to identify $C^\infty(M,\mathbb{C})$ with the space of smooth sections of the complexification of the density bundle on $M$. If $A$ is a bounded operator from $C^\infty(M,\mathbb{C})$ to itself, we let $A^*$ be the operator from $\mathcal{D}'(M)$ to itself defined by $A^* \nu(\varphi) = \nu(A \varphi)$ for $\nu \in \mathcal{D}'(M)$ and $\varphi \in C^\infty(M,\mathbb{C})$.

We will be particularly interested in the operator\footnote{Notice that, considering \eqref{eq:change_of_variable}, it would be legitimate to write $\nu \circ T$ instead of $\mathcal{L}_T^* \nu$.} $\mathcal{L}_{T}^*$. The (generalized) eigenvectors for $\mathcal{L}_T^*$ (associated to non-zero eigenvalues) are called \emph{(generalized) coresonant states} for $T$. For $\lambda \in \mathbb{C}^*$, the associated eigenspace is
\begin{equation*}
\mathcal{E}_{T,\lambda} \coloneqq \set{\nu \in \mathcal{D}'(M): \exists N \in \mathbb{N}^*, (\mathcal{L}_T^* - \lambda)^N \nu = 0}.
\end{equation*}
Using Proposition \ref{proposition:essential_spectral_radius}, we find that $\mathcal{E}_{T,\lambda}$ is the image of $C^\infty(M,\mathbb{C})$ by $\Pi_{T,\lambda}^*$. Hence, $\mathcal{E}_{T,\lambda}$ is non-trivial if and only if $\lambda$ is a resonance for $T$. Moreover, the pairing of distributions with functions induce a non-degenerate pairing between $E_{T,\lambda}$ and $\mathcal{E}_{T,\lambda}$. In particular, the dimension of $\mathcal{E}_{T,\lambda}$ is the multiplicity of $\lambda$ as a resonance for $T$.

\begin{remark}
There are some definitions above (starting with the definition of $\mathcal{L}_T$) that depends on the choice of the density $\mathrm{d}x$ on $M$. The impact of this choice is mostly irrelevant. Indeed, we could define the operator $\mathcal{L}_T$ as the pullback operator by $T$ acting on sections of the density bundle on $M$ to get more intrinsic statements. With this point of view, our choice of reference density is just a way to trivialize the density bundle.

The only statement that would change if we were working directly with densities is Theorem \ref{theorem:generic_morse}. Indeed, whether a resonant state is a Morse function or not depend on the choice of the reference density. Having zero as a regular value however is well-defined for densities.
\end{remark}

\begin{remark}\label{remark:1_resonance}
It follows from \eqref{eq:change_of_variable} that we have $\int_M \mathcal{L}_T f \mathrm{d}x = \int_M f \mathrm{d}x$ for every $f \in C^\infty(M,\mathbb{C})$. Hence, the distribution $f \mapsto \int_M f \mathrm{d}x$ is a coresonant state for $T$ associated to the resonance $1$. It can be shown (see e.g. \cite[Proposition 2.5]{baladi_book2}) that $1$ is always a simple resonance, and that there is an everywhere positive resonant state associated to $1$: this is the density of the absolutely continuous invariant measure for $T$.

It is important to remember that, since  $f \mapsto \int_M f \mathrm{d}x$ is a coresonant state for $T$ associated to $1$, any resonant state $g$ for $T$ associated to a resonance $\lambda \neq 1$ has zero average: $\int_M g \mathrm{d}x = 0$. This is why the spaces $C_0^\infty(M,\mathbb{R})$ and $C_0^\infty(M,\mathbb{C})$ appear in Theorems \ref{theorem:generic_real_resonance} and \ref{theorem:generic_complex_resonance}, instead of  $C^\infty(M,\mathbb{R})$ and $C^\infty(M,\mathbb{C})$. Notice also that $C_0^\infty(M,\mathbb{R})$ and $C_0^\infty(M,\mathbb{C})$ are stable under the action of $\mathcal{L}_T$.
\end{remark}

\begin{remark}\label{remark:real_valued}
The space $C^\infty(M,\mathbb{R})$ is stable under the action of $\mathcal{L}_T$. This fact has the following consequences:
\begin{itemize}
\item If $\lambda$ is a real resonance for $T$, then the space $E_{T,\lambda}$ is preserved by taking the real or the imaginary parts. Hence, $E_{T,\lambda}$ is spanned by the real-valued elements of $E_{T,\lambda}$.
\item If $\lambda$ is a resonance for $T$ then $\bar{\lambda}$ is a resonance for $T$ and $\mathfrak{C} : f \mapsto \bar{f}$  induces an isomorphism between $E_{T,\lambda}$ and $E_{T,\bar{\lambda}}$.
\item If $z \in \mathbb{C}^*$ is not a resonance for $T$ then $R_T(\bar{z}) = \mathfrak{C} \circ R_T(z) \circ \mathfrak{C}$.
\item If $\lambda$ is a resonance for $T$ then $\Pi_{T,\bar{\lambda}} = \mathfrak{C} \circ \Pi_{T,\lambda} \circ \mathfrak{C}$.
\item If $\lambda$ is a real resonance for $T$, then $C^\infty(M,\mathbb{R})$ is stable under the action of $\Pi_{T,\lambda}$. 
\end{itemize}
These facts (in particular the first two) are important because they are limitations on what the resonances and resonant states for $T$ can be. Hence, we will have to keep them in mind when constructing deformation of $T$ with specific resonances and resonant states. Notice for instance that in \S \ref{section:key_lemma} below, where we study the possible deformations of a family of resonances, we impose that his family that does not contain a pair of complex conjugates.
\end{remark}

\subsection{Spectral stability for transfer operators}\label{subsection:linear_response_theory}

As explained in the introduction, we will need to understand how the resonances and resonant states behave under a perturbation of $T$. The difficulty in this matter is that the operator $\mathcal{L}_T$ does not depend smoothly on $T$ as a bounded operator on $C^k(M,\mathbb{C})$ for $k \geq 0$. Since this problem is crucial for linear response theory, dynamicists developed tools to bypass this difficulty. In particular, Gou\"ezel, Keller and Liverani developed an abstract framework that can be used to study the spectral stability of many dynamical sytems.  A textbook presentation of this method can be found in \cite[\S A.3]{baladi_book2} (see \S 2.5 in this book for the application in the case of expanding maps).

We start with a stability statement which is a direct consequence of \cite[Theorem 2.35]{baladi_book2} (see also \cite[Theorem A.4]{baladi_book2}):

\begin{proposition}\label{proposition:stability_resonances}
Let $T_0 \in \Exp(M)$. Let $\delta > 0$. Let $\lambda_1,\dots,\lambda_N$ denote the resonances for $T_0$ of modulus larger than or equal to $\delta$, and $m_1,\dots,m_N$ be their multiplicities. Let $\epsilon \in(0,\delta)$ be such that the disks $\overline{\mathbb{D}}(\lambda_1,\epsilon),\dots,\overline{\mathbb{D}}(\lambda_N,\epsilon)$ are disjoint and for $j = 1,\dots,N$ the only resonance for $T_0$ in $\overline{\mathbb{D}}(\lambda_j,\epsilon)$ is $\lambda_j$. There is a neighbourhood $U$ of $T_0$ in $\Exp(M)$ such that:
\begin{itemize}
\item for every $T \in U_0$, the resonances for $T$ of modulus larger than or equal to $\delta$ are contained in $\bigcup_{j = 1}^N \mathbb{D}(\lambda_j,\epsilon)$;
\item for every $T \in U$ and for $j = 1,\dots,N$ there are exactly $m_j$ resonances (counted with multiplicities) for $T$ in $\mathbb{D}(\lambda_j,\epsilon)$. Moreover, for $j = 1,\dots,N$, the map
\begin{equation}\label{eq:deformation_projector}
T \mapsto \frac{1}{2 i \pi} \int_{\partial \mathbb{D}(\lambda_j,\epsilon)} R_T(z) \mathrm{d}z
\end{equation}
is continuous from $\Exp(M)$ to the space of continuous operators from $C^\infty(M,\mathbb{C})$ to itself\footnote{This space is endowed with the topology of uniform convergence on bounded sets.}.
\end{itemize}
\end{proposition}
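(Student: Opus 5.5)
The plan is to deduce Proposition~\ref{proposition:stability_resonances} from the Gou\"ezel--Keller--Liverani perturbation theorem \cite[Theorem A.4]{baladi_book2}, in the form specialized to expanding maps in \cite[Theorem 2.35]{baladi_book2}. Fix $k$ large enough that $C e^{-k\theta} < \delta - \epsilon$ in Proposition~\ref{proposition:essential_spectral_radius}. Since $C,\theta$ can be chosen uniformly for $T$ in a $C^{k+1}$-neighbourhood of $T_0$, this holds uniformly near $T_0$. We work with the pair of spaces $C^k \subset C^{k-1}$. The hypotheses of the abstract theorem are then checked on a small neighbourhood $U$ of $T_0$:
\begin{itemize}
\item a uniform Lasota--Yorke inequality $\n{\mathcal{L}_T^n f}_{C^k} \leq C\sigma^n\n{f}_{C^k} + C_n \n{f}_{C^{k-1}}$ with $\sigma < \delta-\epsilon$;
\item uniform boundedness of $\mathcal{L}_T^n$ on $C^{k-1}$;
\item the closeness estimate $\n{(\mathcal{L}_T - \mathcal{L}_{T_0}) f}_{C^{k-1}} \leq \eta(T) \n{f}_{C^k}$ with $\eta(T) \to 0$ as $T \to T_0$.
\end{itemize}
The last one follows by writing the inverse branches of $T$ as small perturbations of those of $T_0$ and using the mean value theorem, which costs one derivative.

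The abstract theorem then gives, for $z$ outside $\bigcup_j \mathbb{D}(\lambda_j,\epsilon)$ with $|z| \geq \delta$, uniform bounds on the resolvent $(z-\mathcal{L}_T)^{-1}$ on $C^k$ for $T \in U$. This yields the first item, since such $z$ are not resonances. It also gives continuity in $T$ of the spectral projector \eqref{eq:deformation_projector} as an operator $C^k \to C^{k-1}$, and the conservation of rank. The rank of the projector equals the number of resonances in $\mathbb{D}(\lambda_j,\epsilon)$ counted with multiplicity, and these agree on $C^k$ and on $C^\infty$ because generalized eigenvectors are smooth, by the spectral gap on every $C^{k'}$. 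This gives $m_j$ resonances for each $T \in U$.

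The delicate step, and the one I expect to be the main obstacle, is upgrading the continuity of the projector to $C^\infty \to C^\infty$ with the topology of uniform convergence on bounded sets. The GKL statement only gives continuity with a loss of one derivative, and $U$ might seem to depend on $k$. I would handle this as follows.

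First, fix a single neighbourhood $U$ built from one value $k_0$; this determines the rank $m_j$ and the contour. Then, for each $k' \geq k_0$, apply GKL again on $C^{k'+1}\subset C^{k'}$ with a neighbourhood $U_{k'}$. Continuity at a given $T$ only requires $T$ to lie in some neighbourhood, so this suffices for continuity on $U$, applying the argument at each $T \in U$ in place of $T_0$.

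Finally, write $\Pi_T = \Pi_T^{2}$ and use that the range is finite dimensional and consists of smooth functions. Then $\Pi_T f = \Pi_T\Pi_T f$, and the $C^{k'}$ norms of elements of the range are controlled by their $C^{k_0}$ norms uniformly in $T$, via $\Pi_T f = (\lambda$-resolvent applied iteratively$)$ and the Lasota--Yorke inequality. This bootstraps the loss of one derivative into continuity in every $C^{k'}$ norm, uniformly on bounded sets of $C^\infty$.
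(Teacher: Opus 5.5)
Your proposal follows the same route as the paper, which simply invokes the Gou\"ezel--Keller--Liverani perturbation theorem in the form of \cite[Theorem 2.35 and Theorem A.4]{baladi_book2}. The hypotheses you check are the right ones: the uniform Lasota--Yorke inequality, the weak-norm bounds, and the closeness estimate $\n{(\mathcal{L}_T-\mathcal{L}_{T_0})f}_{C^{k-1}} \leq \eta(T)\n{f}_{C^k}$.

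Your final bootstrap via $\Pi_T = \Pi_T^2$ is unnecessary. A bounded subset of $C^\infty(M,\mathbb{C})$ is bounded in every $C^{k'+1}$ norm. So the estimate $\n{\Pi_T - \Pi_{T_1}}_{C^{k'+1}\to C^{k'}} \to 0$, obtained by applying the theorem on $C^{k'+1}\subset C^{k'}$ for each $k'$ (as in your preceding paragraph), already gives continuity for the topology of uniform convergence on bounded sets. The loss of one derivative is therefore harmless.
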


Notice that the operator in the right hand side of \eqref{eq:deformation_projector} is the sum of the spectral projectors $\Pi_{T,\lambda}$ for $\lambda$ the resonances of $T$ within $\mathbb{D}(\lambda_j,\epsilon)$.

We will need a sharper result than Proposition \ref{proposition:stability_resonances} in order to understand the behaviour of resonances and resonant states along smooth families of expanding maps. Notice that \cite[Theorem 2.36]{baladi_book2} can be used to this end. However, we will sometimes need slightly more accurate statement, and it will be convenient to state smoothness results using the structure of Fréchet manifold structure of $\Exp(M)$.

Indeed, $\Exp(M)$ is an open subset of $C^\infty(M,M)$. Hence, the standard structure of Fréchet manifold of $C^\infty(M,M)$ (see for instance \cite[Example I.4.1.3]{hamilton_ift}) induces a structure of Fréchet manifold on $\Exp(M)$. If $F \in C^\infty(M,M)$, then the tangent space to $C^\infty(M,M)$ at $F$ identifies with $\Gamma(F^* \mathcal{T}M)$, the space of smooth sections of the pullback of the tangent bundle to $M$ by $F$ (see for instance \cite[Example I.4.3.3]{hamilton_ift}). However, if $F \in \Exp(M)$, since $F$ is a local diffeomorphism, there is an identification between $\Gamma(F^* \mathcal{T}M)$ and $\Gamma(\mathcal{T}M)$: to $Y \in \Gamma(F^* \mathcal{T}M)$ we associate the vector field given by $x \mapsto D F(x)^{-1} \cdot Y(x)$. Hence, there is an identification of $\mathcal{T}_F \Exp(M)$ with $\Gamma(\T M)$. Concretely, if $(F_t)_{t \in (-\epsilon_0,\epsilon_0)}$ is a smooth curve in $\Exp(M)$ with $F_0 = F$, the tangent vector to $(F_t)_{t \in (-\epsilon_0,\epsilon_0)}$ at $t = 0$ is given under this identification by the vector field $x \mapsto DF_0(x)^{-1} \cdot \frac{\mathrm{d}}{\mathrm{d}t} F_t(x)_{t = 0}$.

If $F_0 \in \Exp(M)$, then for $X \in \Gamma(\T M)$, we may define a smooth map $F_X : M \to M$ by
\begin{equation}\label{eq:coordinate_expm}
F_X(x) = F_0(\exp_x(X(x))) \textup{ for } x \in M.
\end{equation}
Here, we use the exponential map associated to the Riemannian metric on $M$. One can then see that the map $X \mapsto F_X$ induces a diffeomorphism between a neighbourhood of $0$ in $\Gamma(\T M)$ and a neighbourhood of $F_0$ in $\Exp(M)$. We could even use such maps to define an atlas on $\Exp(M)$. The identification of $\Gamma(\T M)$ with $\T_{F_0} \Exp(M)$ is then the derivative at $0$ of the map $X \mapsto F_X$.

The identification of each tangent space of $\Exp(M)$ with $\Gamma(\T M)$ induces a parallelization of $\Exp(M)$:
\begin{equation*}
\mathcal{T} \Exp(M) \simeq \Exp(M) \times \Gamma(\T M).
\end{equation*}
This identification is actually an isomorphism of Fréchet vector bundles: using coordinates of the form $X \mapsto F_X$ as defined by \eqref{eq:coordinate_expm}, the identification is given by nonlinear partial differential operators, which are smooth maps \cite[Example I.3.6.6]{hamilton_ift}. All the derivatives computed below are expressed using this identification.

We start by studying the smoothness of the map $T \mapsto \mathcal{L}_T$ itself. To do so, for $T \in \Exp(M)$ and $X \in \Gamma(\T M)$, we introduce the operator $P_T(X) : C^\infty(M,\mathbb{C}) \mapsto C^\infty(M,\mathbb{C})$ defined by
\begin{equation}\label{eq:derivative_transfer}
P_T(X) f = - \mathcal{L}_T(\Div(fX))
\end{equation}
for $f \in C^\infty(M,\mathbb{C})$. We have then the following formula:

\begin{lemma}\label{lemma:explicit_derivative}
The map $(T, f) \mapsto \mathcal{L}_T f$ is smooth\footnote{Here, we use the notion of smoothness for maps between Fréchet manifolds discussed in \cite[I.4.4]{hamilton_ift}.} from $\Exp(M) \times C^\infty(M,\mathbb{C})$ to $C^\infty(M,\mathbb{C})$. The derivative with respect to $T$ of this map is\footnote{Concretely, if $t \mapsto T_t$ is a smooth map from a neighbourhood $I$ of $0$ in $\mathbb{R}$ to $\Exp(M)$ and $f \in C^\infty(M,\mathbb{C})$ then the derivative at $0$ of the map $t \mapsto \mathcal{L}_{T_t} f$ is $P_{T_{0}}(X)f$, where the vector field $X$ is defined by
\begin{equation*}
X(x) = DT_{0}(x)^{-1} \cdot \frac{\mathrm{d}}{\mathrm{d}t} T_{t}(x)_{|t = 0} \textup{ for } x \in M.
\end{equation*}} $(T,f,X) \mapsto P_T(X)f$.
\end{lemma}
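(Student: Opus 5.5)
The plan is to reduce everything to a fixed transfer operator composed with a transfer operator of a diffeomorphism close to the identity. I would then get smoothness from standard facts about the Fréchet Lie group of diffeomorphisms, and the derivative formula from the duality relation \eqref{eq:change_of_variable}.

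\emph{Smoothness.} Since smoothness is local, it suffices to work near a fixed $T_0 \in \Exp(M)$ in the chart $X \mapsto T_X$ from \eqref{eq:coordinate_expm}. Write $\Phi_X(x) = \exp_x(X(x))$, so that $T_X = T_0 \circ \Phi_X$. For $X$ in a small $C^1$ neighbourhood of $0$, $\Phi_X$ is a diffeomorphism of $M$. Two facts are needed about the map $X \mapsto \Phi_X$:
\begin{itemize}
\item it is a smooth map from a neighbourhood of $0$ in $\Gamma(\T M)$ into $\mathrm{Diff}(M)$, being a nonlinear differential operator of order $0$;
\item inversion is smooth on $\mathrm{Diff}(M)$ (Hamilton, \cite{hamilton_ift}).
\end{itemize}
Directly from \eqref{eq:definition_transfer_operator}, the transfer operator is multiplicative, $\mathcal{L}_{A \circ B} = \mathcal{L}_A \circ \mathcal{L}_B$, and therefore
\begin{equation*}
\mathcal{L}_{T_X} f = \mathcal{L}_{T_0}\big( (f \circ \Phi_X^{-1}) \, |\det D\Phi_X^{-1}| \big).
\end{equation*}
The inner expression is built from three smooth operations: composition $(f,\psi) \mapsto f \circ \psi$, which is smooth on $C^\infty(M,\mathbb{C}) \times \mathrm{Diff}(M)$; differentiation $\psi \mapsto |\det D\psi|$, a nonlinear differential operator; and pointwise multiplication. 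It is therefore a smooth function of $(X,f)$. Finally, $\mathcal{L}_{T_0}$ is a continuous linear map on $C^\infty(M,\mathbb{C})$, which follows from Proposition \ref{proposition:essential_spectral_radius} applied for each $k$. This proves that $(T,f) \mapsto \mathcal{L}_T f$ is smooth.

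\emph{Derivative.} Now that differentiability is known, I would compute the derivative along a smooth curve $t \mapsto T_t$ with $T_0 = T$ and fixed $f$. Set $X = DT_0^{-1} \cdot \partial_t T_t|_{t=0}$. For any $g \in C^\infty(M,\mathbb{C})$, relation \eqref{eq:change_of_variable} gives $\int \mathcal{L}_{T_t}(f)\, g \,\mathrm{d}x = \int f \, g\circ T_t \,\mathrm{d}x$. Differentiating at $t = 0$, the chain rule gives
\begin{equation*}
\partial_t (g \circ T_t)(x)|_{t=0} = dg(T_0 x)\cdot DT_0(x) X(x) = X(g\circ T_0)(x).
\end{equation*}
Integrating by parts against $\mathrm{d}x$ then yields
\begin{equation*}
\int_M f\, X(g\circ T_0)\, \mathrm{d}x = -\int_M \Div(fX)\, g\circ T_0 \,\mathrm{d}x = -\int_M \mathcal{L}_{T_0}(\Div(fX))\, g \,\mathrm{d}x .
\end{equation*}
Differentiation in $t$ commutes with pairing against $g$, because the derivative exists in $C^\infty$. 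Since $g$ is arbitrary, the derivative equals $P_{T_0}(X) f$. The same computation at any base point $T$ gives the derivative $(T,f,X) \mapsto P_T(X)f$ in the parallelization.

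\emph{Main obstacle.} I expect the main difficulty to be the smoothness step, specifically the fact that composition and inversion in $\mathrm{Diff}(M)$ are smooth in the Fréchet sense; I would quote this from Hamilton rather than reprove it. One also has to check that the chart identification of tangent vectors agrees with the vector field $X$ used above. This holds because $\partial_t T_0(\Phi_{tX}(x))|_{t=0} = DT_0(x) X(x)$.
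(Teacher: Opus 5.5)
Your proof is correct, but it takes a different route from the paper.

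\emph{The paper's route.} It works with local inverse branches. Via the implicit function theorem it writes $\mathcal{L}_T f(x) = \sum_j f(G_j(T,x))\,|\det D_x G_j(T,x)|$ near each point, and deduces joint continuity of $(T,f)\mapsto \mathcal{L}_T f$. It gets the derivative along curves from a Taylor expansion with a controlled remainder. It then bootstraps: the derivative $-\mathcal{L}_T(\Div(fX))$ is expressed through the map itself composed with a linear differential operator, so being $C^k$ implies being $C^{k+1}$.

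\emph{Your route.} You factor $T_X = T_0 \circ \Phi_X$, so that $\mathcal{L}_{T_X} = \mathcal{L}_{T_0}\mathcal{L}_{\Phi_X}$ isolates all dependence on $X$ in a diffeomorphism close to the identity. Smoothness then reduces to the Fréchet Lie group structure of $\mathrm{Diff}(M)$. The derivative follows from the adjoint relation \eqref{eq:change_of_variable} and one integration by parts, which also shows where the divergence in $P_T(X)$ comes from.

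\emph{What each buys.} Your argument is shorter and more conceptual. However, it hands off the one genuinely analytic step, smoothness of inversion in $\mathrm{Diff}(M)$, to \cite{hamilton_ift}. The paper instead carries out an analogous continuity-plus-bootstrap argument by hand in this specific setting, which keeps it self-contained. That structure is reused in the proof of Lemma \ref{lemma:transfer_smooth_tame}, where the tame estimate of Lemma \ref{lemma:dfly} feeds the same induction. Conversely, your factorization would give smooth tameness of $(T,f)\mapsto\mathcal{L}_T f$ almost for free. Composition and inversion in $\mathrm{Diff}(M)$ are smooth tame in \cite{hamilton_ift}, and $\mathcal{L}_{T_0}$ satisfies $\|\mathcal{L}_{T_0}f\|_{C^k}\le C_k\|f\|_{C^k}$. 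Lemma \ref{lemma:dfly} would then be needed only for the resolvent.
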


\begin{proof}
Let us start by proving that $(T,f ) \mapsto \mathcal{L}_T f$ is continuous. Let $T_0 \in \Exp(M)$. Pick a point $x_0 \in M$. Let $m$ be the degree of $T_0$ and $y_1,\dots,y_m$ be the antecedents of $x_0$ by $T_0$. Let us apply the implicit function theorem to the $C^1$ map
\begin{equation*}
\begin{array}{ccc}
C^1(M,M) \times M & \to & M \\
(T,x)& \mapsto Tx
\end{array}
\end{equation*}
at the points $(T_0,y_1),\dots,(T_0,y_m)$. We find that there is an open neighbourhood $U$ of $T_0$ in $C^1(M,M)$, an open neighbourhood $V$ of $x_0$ in $M$ and open neighbourhoods $V_1,\dots,V_m$ of $y_1,\dots,y_m$ respectively such that for every $x \in V,T \in U$ and $j \in \set{1,\dots,m}$, there is a unique point $G_j(T,x)$ in $V_j$ such that $T(G_j(T,x)) = x$. Moreover, the map $G_j$ is $C^1$ from $U \times W$ to $V_j$. Up to taking $U$ and $V$ smaller, we may assume that for every $x \in W$ and $T \in U$ the points $G_1(T,x),\dots,G_m(T,x)$ are distinct and are all the antecedents of $x$ by $T$. Applying the implicit function theorem to the map $(T,x) \mapsto Tx$ from $C^k(M,M) \times M$ to $M$, we find that for every $k \geq 1$ and $j \in \set{1,\dots,m}$ the restriction of $G_j$ to $(U \cap C^k(M,M)) \times W$ is $C^k$.

Let $U_0 = U \cap \Exp(M)$ (provided $U$ is small enough, this is also $U \cap C^\infty(M,M)$). If $T \in U_0, f \in C^\infty(M,\mathbb{C})$ and $x \in W$, then we have
\begin{equation}\label{eq:local_transfer}
\mathcal{L}_{T} f(x) = \sum_{j = 1}^{m} f(G_j(T,x)) |\det D_x G_j(T,x)|.
\end{equation}
If we use this formula and the chain rule to compute the derivatives of order $k \in \mathbb{N}$ (in any coordinates system) of $\mathcal{L}_T f$ at a point $x$ in $W$, we get a polynomial in the derivatives of order up to $k$ of $f$ evaluated at the $G_j(T,x)$'s, the derivative with respect to $x$ of the $G_j$'s and some smooth functions that do not depend on $x$ and $T$ (they depend on the Riemannian metric on $M$). All these quantities depend continuously on $f$ and $T$ in the $C^k$ topology. Covering $M$ by a finite number of open sets with the properties of $W$ (only changing $x_0$), we find (up to reducing the size of $U_0$) that $(T,f) \mapsto \mathcal{L}_T f$ is continuous from $U_0 \times C^\infty(M,\mathbb{C})$ to $C^k(M,\mathbb{C})$. Since $k$ and $T_0$ are arbitrary, we find that $(T,f) \mapsto \mathcal{L}_T f$ is continuous from $\Exp(M) \times C^\infty(M,\mathbb{C})$ to $C^\infty(M,\mathbb{C})$.

Let us now consider a $C^1$ map $t \mapsto T_t$ from an open interval $I \subseteq \mathbb{R}$ containing $0$ to $\Exp(M)$. Let $f \in C^\infty(M,\mathbb{C})$. We want to prove that $t \mapsto \mathcal{L}_{T_t} f \in C^\infty(M,\mathbb{C})$ is differentiable at $0$ and compute its derivative. To do so, we fix a point $x_0 \in M$ and apply the construction from the previous paragraphs to $T_0$. The formula \eqref{eq:local_transfer} becomes, for $x \in W$ and $t$ near $0$:
\begin{equation*}
\mathcal{L}_{T_t}f(x) = \sum_{j = 1}^m f(g_j(t,x))|\det D_x g_j(t,x)|,
\end{equation*}
where the $g_j$'s are defined by $G_j(T_t,x)$. Notice that the $g_j$'s are $C^\infty$ functions on $I \times W$ (up to making $I$ smaller). Hence, we may apply Taylor's formula to find for $t$ small and $x \in W$ that
\begin{equation}\label{eq:taylor_local}
\mathcal{L}_{T_t} f(x) = \mathcal{L}_{T_0}f(x) + t P_{T_0}(X)f(x) + t^2 Q_t f(x).
\end{equation}
Here, $X$ is the vector field $y \mapsto D T_0(y)^{-1} \cdot \frac{\mathrm{d}}{\mathrm{d}t} T_t(y)_{|t = 0}$ and $Q_t f(x)$ is the integral remainder in Taylor's formula. This is an integral between $0$ and $1$ that involves the derivatives of $f$ up to order $2$ and some smooth functions of $t$ and $x$. Hence, the derivatives of $Q_t f$ of order up to $k$ are controlled by the derivatives of $f$ of order up to $k+2$. It follows then from \eqref{eq:taylor_local} that $t \mapsto \mathcal{L}_{T_t} f$ is differentiable at $0$ with derivative $P_{T_0}(X)f$ (as above, we work with a finite number of open sets that cover the manifold and on which we have a formula \eqref{eq:taylor_local}).

We proved that $(T,f) \mapsto \mathcal{L}_{T}f$ is continuous and has a partial derivative with respect to $T$ given by $(T,f,X) \mapsto P_T(X)f$. However, recalling the definition \eqref{eq:derivative_transfer}, we deduce from the continuity of $(T,f) \mapsto \mathcal{L}_{T}f$ that the map $(T,f,X) \mapsto P_T(X)f$ is continuous. Hence, it follows from \cite[Corollary I.3.4.4]{hamilton_ift} that the map $(T,f) \mapsto \mathcal{L}_{T}f$ is $C^1$. Notice that the operator $(X,f) \mapsto \Div(fX)$ is smooth (see for instance \cite[Example I.3.6.6]{hamilton_ift}). In view of the formula \eqref{eq:derivative_transfer} for the derivative of $(T,f) \mapsto \mathcal{L}_T f$, we find that if $(T,f) \mapsto \mathcal{L}_T f$ is $C^k$ for some $k \geq 1$ then it is $C^{k+1}$. It follows by induction that $(T,f) \mapsto \mathcal{L}_T f$ is smooth.
\end{proof}

A key ingredient to understand the perturbations of resonances and resonant states is the understanding of the perturbations of the resolvent $R_T(z)$.

\begin{lemma}\label{lemma:useful_linear_response}
Let $T_0 \in \Exp(M)$. Let $V$ be an open relatively compact subset of $\mathbb{C}^*$ such that $\overline{V} \cap \res(T_0) = \emptyset$. There is an open neighbourhood $U$ of $T_0$ in $\Exp(M)$ such that for every $T \in U$ the map $T$ has no resonance in $\overline{V}$. Moreover, the map $(T,z,f) \mapsto R_T(z) f$ is smooth from $U \times V \times C^\infty(M,\mathbb{C})$ to $C^\infty(M,\mathbb{C})$ and its derivative with respect to $T$ is given by $(T,z,f,X) \mapsto R_T(z) P_T(X) R_T(z)f$.
\end{lemma}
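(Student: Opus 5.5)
The plan is to reduce everything to statements about bounded operators on the Banach spaces $C^k(M,\mathbb{C})$, using Proposition \ref{proposition:essential_spectral_radius} and the Gou\"ezel--Keller--Liverani framework behind Proposition \ref{proposition:stability_resonances}. We then upgrade continuity to smoothness by a resolvent identity, in the same spirit as the induction at the end of the proof of Lemma \ref{lemma:explicit_derivative}.

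\textbf{Step 1: no resonances in $\overline{V}$ near $T_0$.} Since $\overline{V}$ is compact in $\mathbb{C}^*$, choose $\delta>0$ smaller than $\inf_{z\in\overline V}|z|$. Apply Proposition \ref{proposition:stability_resonances} with this $\delta$ and with $\epsilon$ so small that the disks $\overline{\mathbb{D}}(\lambda_j,\epsilon)$ do not meet $\overline V$. For $T$ in the resulting neighbourhood $U$, every resonance of modulus $\geq\delta$ lies in $\bigcup_j \mathbb{D}(\lambda_j,\epsilon)$. Hence $T$ has no resonance in $\overline V$.

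\textbf{Step 2: continuity of $(T,z,f)\mapsto R_T(z)f$ on $U\times V\times C^\infty$.} Fix $k$ large, so that the essential spectral radius bound $Ce^{-k\theta}$ of Proposition \ref{proposition:essential_spectral_radius} is below $\delta$, uniformly for $T$ near $T_0$. The key point is that the constants in the Lasota--Yorke type inequalities can be taken uniform on a $C^{k+1}$-neighbourhood of $T_0$. The abstract stability theorem \cite[Theorem A.4]{baladi_book2} (as used for Proposition \ref{proposition:stability_resonances}) then gives two facts:
\begin{itemize}
\item a uniform bound on $\|R_T(z)\|_{C^k\to C^k}$ for $z\in\overline V$ and $T$ near $T_0$;
\item continuity of $(T,z)\mapsto R_T(z)$ as an operator $C^{k}\to C^{k-1}$.
\end{itemize}
Since $R_T(z)$ maps $C^\infty$ into $C^\infty$ and $k$ is arbitrary, continuity with values in $C^\infty(M,\mathbb{C})$ follows. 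Continuity in $f$ is automatic from the uniform bounds. Holomorphy in $z$ follows from analyticity of resolvents.

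\textbf{Step 3: derivative in $T$ and smoothness.} Take a $C^1$ curve $t\mapsto T_t$ with tangent vector $X$. Use the resolvent identity
\begin{equation*}
R_{T_t}(z)-R_{T_0}(z) = R_{T_t}(z)\,(\mathcal{L}_{T_t}-\mathcal{L}_{T_0})\,R_{T_0}(z).
\end{equation*}
This identity is valid on $C^\infty$ because both resolvents preserve $C^\infty$ there. Divide by $t$ and apply it to $f$. By Lemma \ref{lemma:explicit_derivative}, $t^{-1}(\mathcal{L}_{T_t}-\mathcal{L}_{T_0})R_{T_0}(z)f \to P_{T_0}(X)R_{T_0}(z)f$ in $C^\infty$. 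The joint continuity from Step 2 (applied on bounded sets converging in $C^\infty$) then gives the derivative $R_{T_0}(z)P_{T_0}(X)R_{T_0}(z)f$. This expression is continuous in $(T,z,f,X)$ by Step 2 and Lemma \ref{lemma:explicit_derivative}. The derivative in $z$ is $-R_T(z)^2f$, which is also continuous. By \cite[Corollary I.3.4.4]{hamilton_ift} the map is therefore $C^1$. The derivative is a composition of the map itself with the smooth map $(T,f,X)\mapsto P_T(X)f$. So if the map is $C^k$, then so is its derivative, hence the map is $C^{k+1}$. Induction gives smoothness.

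\textbf{Main obstacle.} The main obstacle is Step 2: extracting genuine joint continuity of $(T,z)\mapsto R_T(z)$ into $C^\infty$ from the abstract theorem, which only yields continuity in a weaker norm. The approach is to run Keller--Liverani on each pair $(C^k,C^{k-1})$. This needs the perturbation $T\mapsto\mathcal{L}_T$ to be small as a map $C^k\to C^{k-1}$, which holds since the loss of one derivative is exactly what appears in $P_T(X)$. It also needs the Lasota--Yorke constants to be locally uniform. Then interpolate over all $k$.
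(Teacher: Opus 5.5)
Your proof is correct and follows essentially the paper's route. The neighbourhood comes from Proposition \ref{proposition:stability_resonances}, and continuity comes from the locally uniform resolvent bounds of Keller--Liverani (Baladi's Theorem A.4) combined with a resolvent identity. Smoothness is the bootstrap that the paper outsources to \cite[Theorem I.5.3.1]{hamilton_ift} and that you redo by hand via $R_{T_t}(z)-R_{T_0}(z)=R_{T_t}(z)(\mathcal{L}_{T_t}-\mathcal{L}_{T_0})R_{T_0}(z)$ and induction.

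The differences are minor. The paper extracts only the bound $\|R_T(z)f\|_{C^k}\le C\|f\|_{C^\ell}$ (via Sobolev spaces and embeddings) and gets continuity from a three-term identity plus Lemma \ref{lemma:explicit_derivative}. You instead run Keller--Liverani directly on $(C^k,C^{k-1})$ and also use its weak-norm continuity conclusion. To get continuity on all of $U$ rather than only at $T_0$, you should apply that conclusion around every base point $T_1\in U$, which is harmless.
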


\begin{proof}
The existence of the set $U$ is a direct consequence of Proposition \ref{proposition:stability_resonances}. It follows from Lemma \ref{lemma:explicit_derivative} and \cite[Theorem I.5.3.1]{hamilton_ift} that, in order to prove the smoothness of $(T,z,f) \mapsto R_{T}(z)f$, we only need to prove that this map is continuous (from $U \times V \times C^\infty(M,\mathbb{C})$ to $C^\infty(M,\mathbb{C})$). The formula for the derivative is then obtained by considering a smooth curve $t \mapsto T_t$ in $U$ and differentiating the relation $(z - \mathcal{L}_{T_t}) R_{T_t}(z) f = f$ with respect to $t$.

In order to prove the continuity of $(T,z,f) \mapsto R_{T}(z)f$, let us mention that for $T_1,T_2 \in U, z_1,z_2 \in \overline{V}$ and $f_1,f_2 \in C^\infty(M,\mathbb{C})$, we have
\begin{equation*}
\begin{split}
R_{T_1}(z_1) f_1 - R_{T_2}(z_2)f_2 & = R_{T_2}(z_2) (f_1-f_2) + (z_2 - z_1) R_{T_2}(z_1) R_{T_2}(z_2) f_1 \\ & \qquad \qquad + R_{T_1}(z_1)(\mathcal{L}_{T_1} - \mathcal{L}_{T_2}) R_{T_2}(z_1)f_1. 
\end{split}
\end{equation*}
Considering this relation and Lemma \ref{lemma:explicit_derivative}, we see that we only need to prove that for every $T_1 \in U$ $z_1 \in V$ and $k \in \mathbb{N}$ there are neighbourhoods $\widetilde{U}$ and $\widetilde{V}$ respectively of $T_1$ in $U$ and $z_1 \in V$ and constants $C > 0$ and $\ell \in \mathbb{N}$ such that for every $T_2 \in \widetilde{U}, z_2 \in \widetilde{V}$ and $f \in C^\infty(M,\mathbb{C})$ we have $\n{R_{T_2}(z_2)f}_{C^k} \leq C \n{f}_{C^\ell}$.

Such a bound is given in the proof of \cite[Theorem 2.35]{baladi_book2}, but using Sobolev spaces instead of $C^k$ spaces. This difference is not an issue thanks to Sobolev injections. The required bound is established in the beginning of the proof of \cite[Theorem 2.35]{baladi_book2}, when the author applies \cite[Theorem A.4]{baladi_book2} with $N = 1$ (see the penultimate bound in the statement of this theorem).
\end{proof}

From Lemma \ref{lemma:useful_linear_response} we deduce a smoothness result (see \cite[Theorem 2.36]{baladi_book2} for a more complete statement but using another language).

\begin{proposition}\label{proposition:smoothness_spectral_projector}
In the setting of Proposition \ref{proposition:stability_resonances}, for $j = 1,\dots,N$ the map
\begin{equation*}
(T,f) \mapsto \frac{1}{2 i \pi} \int_{\partial \mathbb{D}(\lambda_j,\epsilon)} R_{T}(z)f \mathrm{d}z
\end{equation*}
is smooth from $U \times C^\infty(M,\mathbb{R})$ to $C^\infty(M,\mathbb{C})$.
\end{proposition}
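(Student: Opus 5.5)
The map $(T,f) \mapsto \frac{1}{2 i \pi} \int_{\partial \mathbb{D}(\lambda_j,\epsilon)} R_{T}(z)f \,\mathrm{d}z$ is a contour integral of a map whose smoothness is given by Lemma \ref{lemma:useful_linear_response}, so the plan is to transfer that smoothness through the integral over a compact parameter set.

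First, choose an open relatively compact set $V \subseteq \mathbb{C}^*$ containing the circle $\partial \mathbb{D}(\lambda_j,\epsilon)$ with $\overline{V} \cap \res(T_0) = \emptyset$, for instance a thin annulus $\set{z : \epsilon - \eta < |z-\lambda_j| < \epsilon + \eta}$ with $\eta>0$ small. This is possible because $\lambda_j$ is the only resonance of $T_0$ in $\overline{\mathbb{D}}(\lambda_j,\epsilon)$, and resonances of modulus $\geq \delta$ are isolated while $\epsilon<\delta$. Lemma \ref{lemma:useful_linear_response} then gives a neighbourhood $U'$ of $T_0$ on which $(T,z,f) \mapsto R_T(z)f$ is smooth from $U' \times V \times C^\infty(M,\mathbb{C})$ to $C^\infty(M,\mathbb{C})$. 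Shrinking $U$ from Proposition \ref{proposition:stability_resonances}, we may assume $U \subseteq U'$. Strictly, the statement is about a neighbourhood of $T_0$; to get smoothness at every point of $U$ one repeats the argument around each $T_1 \in U$, using that $T_1$ has no resonance on $\partial\mathbb{D}(\lambda_j,\epsilon)$ by Proposition \ref{proposition:stability_resonances} and that smoothness is local.

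Next, parametrize the contour by $\theta \mapsto z(\theta) = \lambda_j + \epsilon e^{i\theta}$. The operator becomes
\[
\frac{\epsilon}{2\pi}\int_0^{2\pi} e^{i\theta} R_T(z(\theta)) f \,\mathrm{d}\theta .
\]
The integrand $(T,f,\theta) \mapsto e^{i\theta}R_T(z(\theta))f$ is smooth from $U \times C^\infty(M,\mathbb{C}) \times \mathbb{R}$ to $C^\infty(M,\mathbb{C})$, being a composition of smooth maps. It remains to check that integrating a smooth Fréchet-valued map over the compact parameter interval $[0,2\pi]$ yields a smooth map. This is the standard differentiation-under-the-integral result in Hamilton's calculus: continuity follows from the uniform continuity of the integrand on compact sets, and the derivative is $\int_0^{2\pi} D(\text{integrand})\,\mathrm{d}\theta$, which is again continuous by the same argument. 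Induction then gives $C^k$ for every $k$. One can also cite \cite[I.3]{hamilton_ift}, where integrals of continuous curves and the commutation of differentiation with integration over a parameter are treated. Restricting $f$ to $C^\infty(M,\mathbb{R})$ preserves smoothness.

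The only delicate point is this interchange of derivative and integral in the Fréchet setting, specifically continuity of the integral of a jointly continuous map. I would handle it seminorm by seminorm. For each $C^k$ norm, joint continuity of the integrand on the compact set $\set{(T_1,f_1)} \times [0,2\pi]$ gives a uniform estimate near $(T_1,f_1)$, and this uniform estimate passes through the integral. The derivative formula obtained in this way is the integral of $R_T(z)P_T(X)R_T(z)f$, which will be useful later.
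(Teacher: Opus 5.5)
Your proposal is correct and follows the same route as the paper: you get smoothness of $(T,z,f)\mapsto R_T(z)f$ on a neighbourhood of the contour from Lemma \ref{lemma:useful_linear_response}, then differentiate under the integral sign in Hamilton's Fréchet calculus (the paper simply cites \cite[Lemma I.3.3.1]{hamilton_ift} for this step). Your additional details (the thin annulus $V$ avoiding $\res(T_0)$, the parametrization of the circle, the localization around each $T_1 \in U$, and the seminorm-by-seminorm compactness argument) fill in what the paper's two-line proof leaves implicit.
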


\begin{proof}
In order to deduce Proposition \ref{proposition:smoothness_spectral_projector} from Lemma \ref{lemma:useful_linear_response}, one only needs a result of differentiation under the integral that applies in this context. Such a result can be deduced from \cite[Lemma I.3.3.1]{hamilton_ift}.
\end{proof}

\begin{remark}\label{remark:useful_smoothness}
In Proposition \ref{proposition:smoothness_spectral_projector} the notion of smoothness that we use is the one from \cite[I.4.4]{hamilton_ift}. However, we have the following consequence in terms of the standard notion of smoothness of maps between open sets of Banach spaces. Let $j \in \set{1,\dots,N}$. If $V$ is an open subset of $\mathbb{R}^n$ and $t \mapsto F_t$ is a smooth map from $V$ to $U$, then for every $f \in C^\infty(M,\mathbb{C})$ and $k \in \mathbb{N}$, the map
\begin{equation}\label{eq:smooth_family}
t \mapsto \frac{1}{2 i \pi} \int_{\partial \mathbb{D}(\lambda_j,\epsilon)} R_{F_t}(z)f \mathrm{d}z
\end{equation}
is smooth from $V$ to $C^k(M,\mathbb{C})$. To prove this fact, just notice that Proposition \ref{proposition:smoothness_spectral_projector} and \cite[Theorem I.3.6.4]{hamilton_ift} implies that the map \eqref{eq:smooth_family}, seen as a map from $V$ to $C^k(M,\mathbb{C})$ as continuous partial derivatives of any order.

From the smoothness of \eqref{eq:smooth_family} we may deduce another handy result: the sum of the resonances for $F_t$ in $\mathbb{D}(\lambda_j,\epsilon)$ counted with multiplicities is a smooth function of $t \in V$. Let us denote the operator in the right hand side of \eqref{eq:smooth_family} by $\Pi(t)$. Let $t_0 \in V$. Let $f_1,\dots,f_m$ be a basis of the range of $\Pi(t_0)$ (with $m = m_j$). Let $l_1,\dots,l_m$ be measures on $M$ such that $l_i(f_k) = \delta_{i,k}$ for $i,k \in \set{1,\dots,m}$ (the existence of such measures is guaranteed by the Hahn--Banach theorem). For $t \in V$ and $i \in \set{1,\dots,m}$ let $g_{i,t} = \Pi(t) f_i$. Notice that the $g_{i,t}$'s depend smoothly on $t$ as continuous functions, hence the matrix $N(t) = (l_i(g_{k,t}))_{1 \leq i,k \leq t}$ is a smooth function of $t \in V$. In particular, there is a neighbourhood $V_0$ of $t_0$ such that for every $t \in V_0$ the matrix $N(t)$ is invertible. Consequently, for $t \in V_0$ the functions $g_{1,t},\dots,g_{m,t}$ are linearly independent and thus form a basis of the range of $\Pi(t)$ (i.e. of the sum of the characteristic spaces for $\mathcal{L}_{F_t}$ associated to resonances in $\mathbb{D}(\lambda_j,\epsilon)$). Hence, there is a matrix $A(t) = (a_{i,k}(t))_{1 \leq i,k \leq m}$ such that for every $t \in V_0$ and $i \in \set{1,\dots,m}$ we have
\begin{equation*}
\mathcal{L}_{F_t} g_{i,t} = \sum_{k = 1}^m a_{i,k}(t) g_{k,t}.
\end{equation*}
Notice that $A(t)$ is just the transpose of the matrix of the operator induced by $\mathcal{L}_{F_t}$ on the range of $\Pi(t)$. Hence, the eigenvalues of $A(t)$ are the resonances for $\mathcal{L}_{F_t}$ in $\mathbb{D}(\lambda_j,\epsilon)$. For $i = 1,\dots,m$, since $t \mapsto g_{i,t}$ is smooth from $V$ to $C^\infty(M,\mathbb{C})$, it follows from Lemma \ref{lemma:explicit_derivative} that the map $t \mapsto \mathcal{L}_{F_t} g_{i,t}$ is smooth from $V$ to $C^0(M,\mathbb{C})$ (for instance). Hence, the matrix $Q(t) = (l_i(\mathcal{L}_{F_t} g_{k,t}))_{1 \leq i,k \leq t}$ is a smooth function of $t \in V$. For $t \in V_0$, we have $Q(t) = A(t) N(t)^\top$ and thus $A(t) = Q(t)(N(t)^\top)^{-1}$, which proves that $A(t)$ is a smooth function of $t \in V_0$. Hence, the sum of the resonances for $F_t$ in $\mathbb{D}(\lambda_j,\epsilon)$, which is just the trace of $A(t)$, is a smooth function of $t$.
\end{remark}

\section{Generic simplicity of resonances (Theorem \ref{theorem:generic_simple})}\label{section:simplicity}

This section is dedicated to the proof of Theorem \ref{theorem:generic_simple}. To do so, we start by studying coresonant states. The following result implies that the coresonant states associated to a smooth expaniding maps have full support. A similar result is proven in the Anosov case in \cite{weich_support}. Notice however that in the Anosov case there is a symmetry between resonant and coresonant states, and thus both of them are fully supported. However, in the expanding case it it not clear whether the resonant states are always fully supported or not (it follows from Theorem \ref{theorem:generic_morse} that the resonant states for a generic smooth expanding maps are fully supported).

\begin{proposition}\label{proposition:full_support}
Let $T \in \Exp(M)$. Let $\nu$ be a coresonant state for $T$ associated to a resonance $\lambda$ not equal to $1$. Let $U$ be an open subset of $M$. There is $\varphi \in C_0^\infty(M,\mathbb{R})$ supported in $U$ such that $\nu(\varphi) \neq 0$.
\end{proposition}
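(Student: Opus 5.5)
We argue by contradiction: suppose $\nu(\varphi) = 0$ for every $\varphi \in C_0^\infty(M,\mathbb{R})$ supported in $U$. By complex linearity this also holds for complex-valued zero-mean $\varphi$ supported in $U$. Hence on $U$ the distribution $\nu$ coincides with a constant multiple of $\mathrm{d}x$: there is $c \in \mathbb{C}$ with $\nu(\varphi) = c \int_M \varphi \,\mathrm{d}x$ for all $\varphi \in C^\infty(M,\mathbb{C})$ supported in $U$. Indeed, fix $\psi$ supported in $U$ with $\int \psi = 1$, set $c = \nu(\psi)$, and apply the hypothesis to $\varphi - (\int\varphi)\psi$.

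The plan is to propagate this local information to all of $M$ using the eigenvalue equation $\mathcal{L}_T^*\nu = \lambda\nu$, i.e. $\nu(\mathcal{L}_T\varphi) = \lambda \nu(\varphi)$. Writing $\lambda^{n}\nu(\varphi) = \nu(\mathcal{L}_T^{n} \varphi)$ alone is not enough, since $\mathcal{L}_T^n$ spreads support out rather than localizing it. We therefore use the dual direction. For $\varphi$ supported in a small open set $W$ on which some inverse branch $h$ of $T^n$ is defined with $h(W) \subseteq U$, take $\tilde\varphi = (\varphi\circ T^n)\cdot \mathbf{1}_{h(W)}$, suitably smoothed: it is supported in $h(W)\subseteq U$ and satisfies $\mathcal{L}_T^n\tilde\varphi = \varphi$ exactly. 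Then
\[ \nu(\varphi) = \nu(\mathcal{L}_T^n\tilde\varphi)=\lambda^n \nu(\tilde\varphi) = \lambda^n c\int_M \tilde\varphi\,\mathrm{d}x = \lambda^n c \int_M \mathcal{L}_T^n\tilde\varphi\, \mathrm{d}x = \lambda^n c\int_M\varphi\,\mathrm{d}x, \]
using Remark \ref{remark:1_resonance}. The existence of such branches is standard for expanding maps: since $T$ is expanding and $M$ is connected, for $n$ large $T^n(U)=M$, and every small ball $W$ admits an inverse branch of $T^n$ landing in $U$ (the branches of $T^{-n}$ contract, so preimages of a point are $Ce^{-n\theta}$-dense).

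Thus $\nu$ is locally equal to $\lambda^n c\,\mathrm{d}x$ near every point, for some $n$ depending on the point, and a partition of unity gives global information. To get consistency, apply the same identity with $W\subseteq U$ itself, taking $n$ large enough that some branch of $T^{-n}$ maps $W$ into $U$. This gives $c = \lambda^n c$, and similarly $c=\lambda^{n+1}c$ using a branch of $T^{-(n+1)}$, which exists for all large $n$. Hence $c(\lambda - 1)=0$. Since $\lambda\neq 1$, we get $c=0$, so $\nu$ vanishes near every point of $M$, hence $\nu=0$, contradicting that $\nu$ is a coresonant state.

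The main obstacle is the topological step: showing that for all sufficiently large $n$ every small ball $W$ has an inverse branch of $T^n$ mapping it into $U$. For this we use that $T$ is a covering map, so inverse branches exist on simply connected balls. By \eqref{eq:definition_expanding} these branches shrink diameters by $C^{-1}e^{-n\theta}$. It remains to show that the full preimage $T^{-n}(x)$ meets a fixed ball inside $U$ for all large $n$, uniformly in $x$. This follows from topological exactness of expanding maps on connected manifolds: $T^{n}(B)=M$ for some $n$ and any ball $B$, and then for all larger $n$ as well since $T$ is onto. Smoothing $\tilde\varphi$ is not actually needed, since $(\varphi\circ T^n)$ restricted to $h(W)$ is already smooth and compactly supported in $h(W)$ once $\varphi$ has compact support in $W$.
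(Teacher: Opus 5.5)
Your argument is correct apart from one slip. As written, $\tilde\varphi = (\varphi\circ T^n)\mathbf{1}_{h(W)}$ does not satisfy $\mathcal{L}_T^n\tilde\varphi=\varphi$: on $W$ you get $\varphi/(|\det DT^n|\circ h)$. You need the Jacobian factor, i.e. $\tilde\varphi = |\det DT^n|\,(\varphi\circ T^n)$ on $h(W)$ and $0$ elsewhere. With it, $\mathcal{L}_T^n\tilde\varphi=\varphi$ holds exactly and $\int_M\tilde\varphi\,\mathrm{d}x=\int_M\varphi\,\mathrm{d}x$, which is all the rest of your argument uses.

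Your ingredients are the same as the paper's: pulling back test functions along inverse branches, the eigenvalue equation $\nu(\mathcal{L}_T\varphi)=\lambda\nu(\varphi)$, and topological exactness (Lemma \ref{lemma:expanding_open_sets}). The organization differs, though.

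\textbf{The paper's route.} It takes the maximal open set $\mathcal{V}$ on which $\nu$ kills all \emph{zero-mean} test functions. Using a single inverse branch of $T$, it shows $T(\mathcal{V})\subseteq\mathcal{V}$; pullbacks of zero-mean functions stay zero-mean, so no constant ever appears. Lemma \ref{lemma:expanding_open_sets} then gives $\mathcal{V}\in\set{\emptyset,M}$. If $\mathcal{V}=M$, then $\nu$ is a multiple of $\mathrm{d}x$, which is a coresonant state for $1$, contradicting $\lambda\neq1$.

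\textbf{Your route.} You work with the full local restriction $\nu|_U=c\,\mathrm{d}x$ and pull back $n$ steps at once into $U$. This produces the factors $\lambda^n$, so you must kill $c$ by comparing two consecutive iterates. That in turn needs branches of $T^{-n}$ into $U$ for \emph{all} large $n$, which you get from exactness, surjectivity of $T$, and contraction of inverse branches. The paper only needs one $N$ with $T^N(\mathcal{V})=M$.

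The paper's version is more economical. Yours is more explicit: it exhibits $\nu=\lambda^n c\,\mathrm{d}x$ on the pulled-back neighbourhoods and isolates exactly where $\lambda\neq1$ enters, namely in $c(\lambda-1)=0$.
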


The proof of Proposition \ref{proposition:full_support} requires the following standard lemma. We recall its proof since we will also need this result in the proof of Lemma \ref{lemma:linear_algebra} below.

\begin{lemma}\label{lemma:expanding_open_sets}
Let $T \in \Exp(M)$. Let $U$ be a non-empty open subset of $M$. There is $N \geq 0$ such that $T^N(U) = M$.
\end{lemma}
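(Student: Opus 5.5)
The plan is to show that a small ball grows by a definite factor under each application of $T$ until it becomes large enough that some iterate covers $M$. Fix the Riemannian distance $d$ on $M$.

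\textbf{Step 1: uniform expansion of balls.} Replacing $T$ by a suitable iterate $T^k$ (so that $Ce^{k\theta} \geq 2$ in \eqref{eq:definition_expanding}), we may assume $|DT(x)v| \geq 2|v|$ for all $x$ and $v$. Since $T^{Nk}(U)=M$ for the original map gives the conclusion with $N$ replaced by $Nk$, this reduction is harmless. We will then prove:
\begin{itemize}
\item there is $r_0>0$ such that for every $x\in M$ and $r\le r_0$ we have $T(B(x,r)) \supseteq B(Tx,2r)$.
\end{itemize}

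\textbf{Step 2: proof of the claim.} Since $T$ is a covering map of finite degree, $M$ is compact, and $T$ is a local diffeomorphism, there is a uniform $\eta>0$ with the following property: for each $x$, the branch of $T^{-1}$ sending $Tx$ to $x$ is defined on $B(Tx,\eta)$. The simplest way to get this is to use a Lebesgue number for a cover of $M$ by evenly covered sets.

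This local inverse $G$ is a contraction by a factor $1/2$. Indeed, for $z\in B(Tx,\eta)$, take a geodesic from $Tx$ to $z$ and lift it through $G$; this gives a curve from $x$ to $Gz$ of length at most half the original. Hence $d(x,Gz)\le d(Tx,z)/2$.

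Now take $r_0=\eta/2$. For $z\in B(Tx,2r)$ we get $Gz\in B(x,r)$ and $T(Gz)=z$, which proves the inclusion.

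\textbf{Step 3: iterate.} Since $U$ is open and non-empty, it contains a ball $B(x,r)$ with $r\le r_0$. By induction using the claim,
\[
T^n(U) \supseteq B\bigl(T^n x, \min(2^n r, 2r_0)\bigr).
\]
More precisely, as long as $2^{n-1} r\le r_0$, the claim applies at each stage; once the radius exceeds $r_0$, we shrink it back to $r_0$, and each further step doubles it to $2r_0$. So there is $n_0$ such that $T^{n_0}(U)$ contains a ball $B(y,r_0)$.

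To finish, we need a fixed power that maps every ball of radius $r_0$ onto $M$. Apply the claim repeatedly to $B(y,r_0)$, at each stage shrinking the current ball to one of radius $r_0$ around the image point. This only gives $T(B(y,r_0))\supseteq B(Ty,2r_0)$, and the radius does not keep growing beyond $2r_0$. So the naive iteration stalls, and this is the main obstacle.

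\textbf{Handling the stall.} I would fix it using connectedness of $M$ and an open/closed argument rather than radii alone. Let $A=\bigcup_{n} T^n(B(y,r_0))$. Consider a point $z$ in $\overline{A}$, and pick $w\in A$ with $d(w,z)<r_0$. Then $w\in T^n(B(y,r_0))$, so $w=T^n v$ for some $v\in B(y,r_0)$.

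The goal is to show that the images $T^n(B(y,r_0))$ contain balls of radius $r_0$ around each of their points, up to replacing $n$ by $n+1$. Indeed, a point $w=T^n(v)$ admits a neighbourhood $B(v,\rho)$ inside the set, and the claim then enlarges its image.

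A cleaner route is through the lower bound on diameters instead. Inverse branches of $T^n$ along any path contract by $2^{-n}$. So take $n$ with $2^{-n}\,\diam M < r$, pick a preimage $v$ of a point under $T^n$ near $x$, and lift paths from $T^n x$ to arbitrary $z$ via path lifting for the covering $T^n$. The lift of a path starting at $T^n x$, initialized at $x$, has length at most $2^{-n}$ times the path's length. If we choose geodesics of length at most $\diam M$, the endpoint lies in $B(x,r)\subseteq U$, and it maps to $z$. Hence $T^n(U)=M$.

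This path-lifting version bypasses the stall entirely, and it is the argument I would actually write. The only non-routine ingredient is the covering-space path lifting combined with the length contraction $|DT^n v|\ge 2^n|v|$.
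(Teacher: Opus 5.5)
Your final path-lifting argument is correct and is essentially the paper's proof: lift a path of length at most $\diam M$ from $T^n x$ to an arbitrary point through the covering $T^n$, starting at $x \in U$, and use the expansion to make the lift shorter than the radius of a ball around $x$ contained in $U$. The ball-doubling attempt and the reduction to an iterate with expansion factor $2$ are not needed, since the bound $|DT^n v| \geq C e^{n\theta}|v|$ from \eqref{eq:definition_expanding} can be used directly.
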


\begin{proof}
Choose a point $x_0 \in U$. Let $x$ be a point of $M$. For each $N \geq 1$, let us choose a $C^1$ path $\gamma_N : [0,1] \to M$ from $T^N(x_0)$ to $x$ (recall that $M$ is connected) of length less than $2 \diam M$. Since $T: M \to M$ is expanding, it is a local diffeomorphism and, since $M$ is compact, it follows that $T$ is a covering. Consequently for each $N \geq 1$, there is a unique $C^1$ path $c_N$ starting at $x_0$ such that $T^N \circ c_N= \gamma_N$. With $C$ and $\theta$ the constants from the definition \eqref{eq:definition_expanding} of an expanding map, we notice that the length of $c_N$ is smaller than $2 C^{-1} e^{-N \theta} \diam M$. Since $U$ is open we have $d(x_0, M \setminus U) >0$, and we find that for $N$ strictly larger than $\theta^{-1}\log\left( \frac{2 \diam M}{C d(x_0, M \setminus U)} \right)$ the point $c_N(1)$ belongs to $U$ and thus $x = T^N(c_N(1)) \in T^N(U)$.
\end{proof}

\begin{proof}[Proof of Proposition \ref{proposition:full_support}]
Let $A$ be the set of all open subsets $V$ of $M$ such that for every $\varphi \in C_0^\infty(M,\mathbb{R})$ supported in $V$ we have $\nu(\varphi) = 0$. Let $\mathcal{V}$ be the union of the elements of $A$. By a partition of unity argument, we find that $\mathcal{V} \in A$. Let $x_0$ be a point in $\mathcal{V}$. By the Inverse Function Theorem, there is a neighbourhood $V_0 \subseteq \mathcal{V}$ of $x_0$ such that $T$ induces a diffeomorphism from $V_0$ to $T(V_0)$. Let $\varphi \in C_0^\infty(M,\mathbb{R})$ be supported in $T(V_0)$ and define the function $\psi : M \to \mathbb{R}$ by
\begin{equation*}
\psi(x) = \begin{cases} \varphi(T x) |\det DT(x)| & \textup{ if } x \in V_0, \\
                        0 & \textup{ if } x \in M \setminus V_0. \end{cases}
\end{equation*}
Notice that $\psi \in C_0^\infty(M,\mathbb{R})$ is supported in $V_0$ and $\mathcal{L}_T\psi = \varphi$. Hence, we have $\nu(\psi) = 0$ and thus $\nu(\varphi) = \lambda \nu(\psi) = 0$. Consequently, $T(V_0) \in A$, which proves that $T(x_0) \in \mathcal{V}$.

We just showed that $T(\mathcal{V}) \subseteq \mathcal{V}$. It follows then from Lemma \ref{lemma:expanding_open_sets} that $\mathcal{V} = \emptyset$ or $\mathcal{V} = M$. If $\mathcal{V} = M$, then $\nu$ is identically zero on $C^\infty_0(M,\mathbb{C})$, and thus $\nu$ is a multiple of the distribution $f \mapsto \int_M f \mathrm{d}x$, which contradicts $\lambda \neq 1$. We must consequently have $\mathcal{V} = \emptyset$, proving the result.
\end{proof}

We are now ready to prove Theorem \ref{theorem:generic_simple}.

\begin{proof}[Proof of Theorem \ref{theorem:generic_simple}]
Let $\mathcal{U}_\delta$ denote the set of $T \in \Exp(M)$ such that all resonances of $T$ of moduli larger than or equal to $\delta$ are simple. It follows from Proposition \ref{proposition:stability_resonances} that $\mathcal{U}_\delta$ is open, so we only need to prove that it is dense. Let $U$ be a non-empty open subset of $\Exp(M)$, we want to prove that $\mathcal{U}_\delta \cap U \neq \emptyset$. 

For $T \in U$, let $n(T)$ be the number of resonances for $T$ in $\mathbb{C} \setminus \mathbb{D}(0,\delta)$ counted with multiplicities minus the number of resonances for $T$ in $\mathbb{C} \setminus \mathbb{D}(0,\delta)$ counted without multiplicities. I.e. each resonance for $T$ in $\mathbb{C} \setminus \mathbb{D}(0,\delta)$ contribute to $n(T)$ by its multiplicity minus one. Notice that $T$ belongs to $\mathcal{U}_\delta$ if and only if $n(T) = 0$. Let $n_0$ be the minimal value of $n(T)$ for $T \in U$ and consider $\widetilde{U} = \set{T \in U : n(T) = n_0}$. It follows from Proposition \ref{proposition:stability_resonances} that $\widetilde{U}$ is open. 

Let us assume by contradiction that $n_0 > 0$. For $T \in \widetilde{U}$, let $N(T)$ denote the size of the largest Jordan block for a resonance for $T$ in $\mathbb{C} \setminus \mathbb{D}(0,\delta)$. I.e. $N(T)$ is the largest integer such that there is $\lambda \in \mathbb{C} \setminus \mathbb{D}(0,\delta)$ and $f \in C^\infty(M,\mathbb{C}) \setminus \set{0}$ such that $(\mathcal{L}_T - \lambda)^{N(T)-1}f \neq 0$ and $(\mathcal{L}_T - \lambda)^{N(T)}f = 0$. The number $N(T)$ is well-defined since our assumption $n_0 > 0$ implies that any $T \in \widetilde{U}$ has at least one resonance in $\mathbb{C} \setminus \mathbb{D}(0,\delta)$. Let $N_0$ be the maximal value of $N(T)$ for $T \in \widetilde{U}$ (which is well-defined beacuse $N(T)$ is always less than $1 + n_0$).

Consider now $T_0 \in \widetilde{U}$ such that $N(T_0) = N_0$. Let $\lambda_0 \in \mathbb{C} \setminus \mathbb{D}(0,\delta)$ and $f_0 \in C^\infty(M,\mathbb{C}) \setminus \set{0}$ be such that $(\mathcal{L}_{T_0} - \lambda_0)^{N_0 - 1} f_0 \neq 0$ and  $(\mathcal{L}_{T_0} - \lambda_0)^{N_0} f_0 = 0$. If $N_0 = 1$, we may impose in addition that the multiplicity $m_0$ of $\lambda_0$ as a resonance of $T_0$ is at least $2$ (since $n_0 > 0$). Let then $\nu$ be a coresonant state for $T_0$ associated to $\lambda_0$. When $N_0 = 1$, we may impose that $\nu(f_0) = 0$ (because the kernel of $\mathcal{L}_{T_0}^* - \lambda_0$ has dimension at least $2$). Let $g_0 = (\mathcal{L}_{T_0} - \lambda_0)^{N_0 - 1} f_0$ and choose $x_0 \in M$ such that $g_0(x_0) \neq 0$. Let then $V_0$ be an open neighbourhood of $x_0$ in $M$ diffeomorphic to a ball and such that $T_0$ induces a diffeomorphism from $V_0$ to $T_0(V_0)$ and $g_0$ does not vanish on $V_0$. According to Proposition \ref{proposition:full_support}, there is $\varphi \in C_0^\infty(M,\mathbb{R})$ supported in $T_0(V_0)$ such that $\int_M \varphi \mathrm{d}x = 0$ and $\nu(\varphi) \neq 0$ (we must have $\lambda_0 \neq 1$ since $1$ is always a simple resonance). Define a function $\psi : M \to \mathbb{R}$ by
\begin{equation*}
\psi(x) = \begin{cases} \varphi(T_0(x)) |\det T_0(x)| & \textup{ if } x \in V_0, \\ 0 & \textup{ if } x \in M \setminus V_0. \end{cases}
\end{equation*}
Notice that $\psi$ is smooth, supported in $V_0$ and satisfies $\mathcal{L}_{T_0} \psi = \varphi$. In particular, we have $\int_M \psi \mathrm{d}x = 0$. Since $T_0(V_0)$ is diffeomorphic to an open ball, there is a vector field $Y$ on $M$, supported in $V_0$, and such that $\psi = - \Div(Y)$. Let then $X$ be the vector field on $M$ defined by $X = g_0^{-1} Y$, which makes sense since $g_0$ does not vanish on the support of $Y$.

We let $(\phi_t)_{t \in \mathbb{R}}$ be the flow of $X$ and define a smooth family $(T_t)_{t \in \mathbb{R}}$ of maps from $M$ to itself by $T_t = T_0 \circ \phi_t$. Let then $\epsilon > 0$ be such that the only resonance of $T_0$ in $\overline{\mathbb{D}}(\lambda_0,\epsilon)$ is $\lambda_0$. It follows from Proposition \ref{proposition:stability_resonances} for $t$ small the number of resonances for $T_t$ in $\overline{\mathbb{D}}(\lambda_0,\epsilon)$ is $m_0$. It implies that $T_t$ has exactly one resonance $\lambda_t$ in $\mathbb{D}(\lambda_0,\epsilon)$ that has multiplicity $m_0$: otherwise the total contribution of the resonances in $\mathbb{D}(\lambda_0,\epsilon)$ to $n(T_t)$ would be strictly less than $m_0 - 1$, and since the contribution of the other resonances cannot increase (still by Proposition \ref{proposition:stability_resonances}), we would contradict the minimality of $n_0$. Notice that for $t$ small, the sum of the resonances for $T_t$ in $\mathbb{D}(\lambda_0,\epsilon)$ is $m_0 \lambda_t$. It follows consequently from Remark \ref{remark:useful_smoothness} that $t \mapsto \lambda_t$ is smooth on a neighbourhood of $0$. Let us define for $t$ small $f_t = \frac{1}{2 i \pi}\int_{\partial \mathbb{D}(\lambda_0,\epsilon)} R_{T_t}(z)f_0 \mathrm{d}z$ (notice that when $t = 0$, we retrieve indeed $f_0$). This is a generalized resonant state for $T_t$ associated to $\lambda_t$, and it depends smoothly on $t$ in any $C^k$ space according to Remark \ref{remark:useful_smoothness}. By continuity, we must have $(\mathcal{L}_{T_t} - \lambda_t)^{N_0 - 1} f_t \neq 0$ for $t$ small. We must also have $(\mathcal{L}_{T_t} - \lambda_t)^{N_0} f_t = 0$ (otherwise, we would contradict the maximality of $N_0$). Differentiating this last relation, we get
\begin{equation}\label{eq:differentiation_jordan}
\begin{split}
& \sum_{j = 0}^{N_0-1} (\mathcal{L}_{T_0} - \lambda_0)^{j}( P_{T_0}(X) - \frac{\mathrm{d}}{\mathrm{d}t}(\lambda_t)_{|t = 0})(\mathcal{L}_{T_0} - \lambda_0)^{N_0 - 1 - j} f_0 \\ & \qquad \qquad \qquad \qquad \qquad \qquad \qquad \qquad + (\mathcal{L}_{T_0} - \lambda_0)^{N_0} \frac{\mathrm{d}}{\mathrm{d}t}(f_t)_{|t = 0} = 0.
\end{split}
\end{equation}
Now, since $\nu$ is a coresonant state for $T_0$ associated to $\lambda_0$, we deduce from \eqref{eq:differentiation_jordan} that $\nu(P_{T_0}(X)(\mathcal{L}_{T_0} - \lambda_0)^{N_0 - 1} f_0) = 0$ but
\begin{equation*}
P_{T_0}(X)(\mathcal{L}_{T_0} - \lambda_0)^{N_0 - 1} f_0 = P_{T_0}(X) g_0 = - \mathcal{L}_{T_0}(\Div(g_0 X)) = \mathcal{L}_{T_0} \psi = \varphi.
\end{equation*}
This is a contradiction with $\nu(\varphi) \neq 0$.
\end{proof}

\section{A key deformation lemma}\label{section:key_lemma}

The point of this section is to explain how to produce enough interesting perturbations of resonant states for the proofs of Theorems \ref{theorem:generic_morse}, \ref{theorem:generic_density}, \ref{theorem:generic_real_resonance} and \ref{theorem:generic_complex_resonance}. The main result in this regard is Lemma \ref{lemma:key_deformation}, but see also Lemma \ref{lemma:derivative_simple_resonance} and Remarks \ref{remark:derivative_simple_resonance} and \ref{remark:key_deformation} for more context. 

We will fix some notations for the whole section. Let us consider a map $F_0$ in $\Exp(M)$. Let $\lambda_1,\dots,\lambda_p$ be distinct simple real resonances for $F_0$ and $\lambda_{p+1},\dots,\lambda_{p+q}$ be distinct simple non-real resonances for $F_0$. Assume in addition that $\lambda_j \neq \bar{\lambda}_k$ if $j,k \in \set{p+1,\dots,p+q}$.

Choose $\epsilon > 0$ such that the disks $\overline{\mathbb{D}}(\lambda_1,\epsilon), \dots , \overline{\mathbb{D}}(\lambda_{p+q},\epsilon)$ are disjoint and do not contain other resonances than $\lambda_1,\dots,\lambda_{p+q}$. According to Proposition \ref{proposition:stability_resonances}, there is a connected neighbourhood $U_0$ of $F_0$ in $\Exp(M)$ such that for every $T \in U_0$ and $j \in \set{1,\dots,p+q}$ there is exactly one resonance $\lambda_j(T)$ for $T$ in $\mathbb{D}(\lambda_j,\epsilon)$. Moreover, the resonance $\lambda_j(T)$ is simple, and it is real if and only if $j \leq p$.

If $j \in \set{1,\dots,p+q}$, let $f_{F_0,j}$ be a resonant state for $F_0$ associated to $\lambda_j$. If $j \leq p$, we choose $f_{F_0,j}$ real-valued. Then for $T \in U_0$, we set $f_{T,j} = \Pi_{T,\lambda_j(T)} f_{F_0,j}$. Up to taking $U_0$ smaller, we may assume that $f_{T,j}$ is non-zero for every $T \in U_0$ (and thus that it is a resonant state for $T$). For $T \in U_0$ and $j \in \set{1,\dots,p+q}$, we let $\nu_{T,j}$ be the coresonant state for $T$ associated to $\lambda_j$ such that\footnote{We have then $\Pi_{T,\lambda_j(T)}: f \mapsto \nu_{T,j}(f) f_{T,j}$.} $\nu_{T,j}(f_{T,j}) = 1$. Notice that $f_{T,j}$ depends smoothly on $T$ (according to Proposition \ref{proposition:smoothness_spectral_projector}) and that $f_{T,j}$ is real-valued when $j \leq p$, see Remark \ref{remark:real_valued}. The goal of this section is to produce curves $T \mapsto T_t$ in $U_0$ for which the derivative at $0$ of $t \mapsto f_{T_t,j}$ takes some specific values (see Lemma \ref{lemma:key_deformation} below).

We will start with an explicit formula for the derivative of $t \mapsto f_{T_t,j}$ at zero that we will also use in the proof of Lemma \ref{lemma:local_solvability}. We need further notation. If $T \in \Exp(M)$ and $\lambda$ is a simple resonance for $T$ we have
\begin{equation}\label{eq:first_step_expansion_resolvant}
R_T(z) \underset{z \to \lambda}{=} \frac{\Pi_{T,\lambda}}{z - \lambda} + H_{T,\lambda} + \mathcal{O}(|z - \lambda|)
\end{equation}
for some continuous operator $H_{T,\lambda} :C^\infty(M,\mathbb{C}) \to C^\infty(M,\mathbb{C})$. We have then the relations
\begin{equation}\label{eq:full_inverse}
I = \Pi_{T,\lambda} + (\lambda - \mathcal{L}_T)H_{T,\lambda} \textup{ and } H_{T,\lambda} \Pi_{T,\lambda} = 0.
\end{equation}
Using the operators from \eqref{eq:first_step_expansion_resolvant}, we can compute the derivative of $t \mapsto f_{T_t,j}$.

\begin{lemma}\label{lemma:derivative_simple_resonance}
Let $(T_t)_{t \in (-\epsilon_0,\epsilon_0)}$ be a smooth family of elements of $U_0$. Let $j \in \set{1,\dots,p+q}$. Let $X$ be the vector field on $M$ defined by $X(x) = D T_0(x)^{-1} \cdot \frac{\mathrm{d}}{\mathrm{d}t}T_t(x)_{|t = 0}$. Then\footnote{The first derivative holds in $C^\infty(M,\mathbb{C})$ according to Proposition \ref{proposition:smoothness_spectral_projector}.}:
\begin{equation}\label{eq:derivative_simple_resonance}
\frac{\mathrm{d}}{\mathrm{d}t}(f_{T_t,j})_{t = 0} = H_{T_0,\lambda_j(T_0)} P_{T_0}(X) f_{T_0,j} + \Pi_{T_0,\lambda_j(T_0)} P_{T_0}(X) H_{T_0,\lambda_j(T_0)} f_{F_0,j}
\end{equation}
and
\begin{equation*}
\frac{\mathrm{d}}{\mathrm{d}t}\lambda_j(T_t)_{|t= 0} = \nu_{T_0,j}(P_{T_0}(X)f_{T_0,j}).
\end{equation*}
If there is $N \geq 0$ such that $\mathcal{L}_{T_0}^N P_{T_0}(X) f_{T_0,j} = 0$, then the formulae above simplify into
\begin{equation*}
\begin{split}
\frac{\mathrm{d}}{\mathrm{d}t}(f_{T_t,j})_{t = 0} = & \sum_{k= 0}^{N-1} \lambda_j(T_0)^{-k-1} \mathcal{L}_{T_0}^k P_{T_0}(X) f_{T_0,j} \\ & \qquad \qquad \qquad \qquad \qquad + \Pi_{T_0,\lambda_j(T_0)} P_{T_0}(X) H_{T_0,\lambda_j(T_0)} f_{F_0,j}
\end{split}
\end{equation*}
and $\frac{\mathrm{d}}{\mathrm{d}t}\lambda_j(T_t)_{|t= 0} = 0$.
\end{lemma}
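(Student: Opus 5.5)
The plan is to differentiate the contour-integral definition of $f_{T_t,j}$ and then evaluate the resulting integral by residues using the Laurent expansion \eqref{eq:first_step_expansion_resolvant}.

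Write $\lambda = \lambda_j(T_0)$, $\Pi = \Pi_{T_0,\lambda}$, $H = H_{T_0,\lambda}$ and $P = P_{T_0}(X)$. For $t$ small, $\lambda_j(T_t)$ is the only resonance of $T_t$ in $\mathbb{D}(\lambda_j,\epsilon)$. Hence
\begin{equation*}
f_{T_t,j} = \frac{1}{2i\pi}\int_{\partial\mathbb{D}(\lambda_j,\epsilon)} R_{T_t}(z) f_{F_0,j}\,\mathrm{d}z .
\end{equation*}
By Proposition \ref{proposition:smoothness_spectral_projector} and Lemma \ref{lemma:useful_linear_response}, we may differentiate under the integral. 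This gives
\begin{equation*}
\frac{\mathrm{d}}{\mathrm{d}t}(f_{T_t,j})_{t=0} = \frac{1}{2i\pi}\int_{\partial\mathbb{D}(\lambda_j,\epsilon)} R_{T_0}(z) P R_{T_0}(z) f_{F_0,j}\,\mathrm{d}z .
\end{equation*}
Near $z=\lambda$ we substitute $R_{T_0}(z) = \Pi/(z-\lambda) + H + (z-\lambda)K(z)$ with $K$ holomorphic, and compute the residue at $\lambda$, which is the only pole inside the contour.

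The computation of this residue is the main step, and it needs some care because $f_{F_0,j}$ is not exactly $f_{T_0,j}$; I expect the bookkeeping here to be the only real difficulty. Since $T_0 \in U_0$, the function $f_{F_0,j}$ decomposes as $f_{F_0,j} = \Pi f_{F_0,j} + (I-\Pi) f_{F_0,j}$, with $\Pi f_{F_0,j} = f_{T_0,j}$. For the first piece, $R_{T_0}(z) f_{T_0,j} = f_{T_0,j}/(z-\lambda)$ exactly. For the second piece, $R_{T_0}(z)(I-\Pi) f_{F_0,j}$ is holomorphic near $\lambda$, with value at $\lambda$ equal to $H f_{F_0,j}$, because $H\Pi = 0$ by \eqref{eq:full_inverse}. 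Expanding both pieces, the residue collects three terms:
\begin{itemize}
\item $H P f_{T_0,j}$, coming from the product of the $H$ term and the simple pole acting on $f_{T_0,j}$;
\item $\Pi P H f_{F_0,j}$, coming from the product of the $\Pi/(z-\lambda)$ term and the holomorphic part;
\item a double-pole contribution $\Pi P f_{T_0,j}/(z-\lambda)^2$, which has zero residue.
\end{itemize}
Together with the first-order coefficients, which pair only with regular terms, this gives exactly \eqref{eq:derivative_simple_resonance}.

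For the eigenvalue, I would use that $\lambda_j(T_t) = \nu_{T_0,j}(\mathcal{L}_{T_t} f_{T_t,j}) / \nu_{T_0,j}(f_{T_t,j})$, a smooth quotient by Remark \ref{remark:useful_smoothness}. Alternatively, differentiate $\mathcal{L}_{T_t} f_{T_t,j} = \lambda_j(T_t) f_{T_t,j}$ at $t=0$ and apply $\nu_{T_0,j}$. Since $\nu_{T_0,j}$ annihilates the range of $\mathcal{L}_{T_0}-\lambda$ and $\nu_{T_0,j}(f_{T_0,j})=1$, this yields $\frac{\mathrm{d}}{\mathrm{d}t}\lambda_j(T_t)_{|t=0} = \nu_{T_0,j}(P f_{T_0,j})$.

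For the simplified formulas, set $g = P f_{T_0,j}$ and assume $\mathcal{L}_{T_0}^N g = 0$. For $|z|$ large we have $R_{T_0}(z) g = \sum_{k=0}^{N-1} z^{-k-1}\mathcal{L}_{T_0}^k g$. This is holomorphic on $\mathbb{C}^*$, so by meromorphic continuation the identity holds everywhere. In particular $R_{T_0}(z) g$ has no pole at $\lambda$, so $\Pi g = 0$, and reading off the regular part at $\lambda$ gives $H g = \sum_{k} \lambda^{-k-1}\mathcal{L}_{T_0}^k g$. Finally, $\nu_{T_0,j}(g) = \lambda^{-N}\nu_{T_0,j}(\mathcal{L}_{T_0}^N g) = 0$, so the derivative of the eigenvalue vanishes.
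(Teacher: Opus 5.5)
Your proof is correct and follows the paper's argument. You differentiate the contour-integral formula for $f_{T_t,j}$ under the integral sign and compute the residue of $R_{T_0}(z)P_{T_0}(X)R_{T_0}(z)f_{F_0,j}$ via \eqref{eq:first_step_expansion_resolvant} and $H_{T_0,\lambda_j(T_0)}\Pi_{T_0,\lambda_j(T_0)}=0$. You then differentiate the eigenvalue equation and pair it with $\nu_{T_0,j}$. In the nilpotent case you identify $R_{T_0}(z)P_{T_0}(X)f_{T_0,j}$ with a finite sum by analytic continuation.

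Your deviations are only cosmetic. You split $f_{F_0,j}=\Pi f_{F_0,j}+(I-\Pi)f_{F_0,j}$ before taking the residue. You also obtain $\nu_{T_0,j}(P_{T_0}(X)f_{T_0,j})=0$ from $\mathcal{L}_{T_0}^*\nu_{T_0,j}=\lambda_j(T_0)\nu_{T_0,j}$ rather than from $\Pi_{T_0,\lambda_j(T_0)}P_{T_0}(X)f_{T_0,j}=0$.
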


\begin{remark}\label{remark:derivative_simple_resonance}
Observe that the term $\Pi_{T_0,\lambda_j(T_0)} P_{T_0}(X) H_{T_0,\lambda_j(T_0)} f_{F_0,j}$ that appears in the formula for the derivative $\frac{\mathrm{d}}{\mathrm{d}t}(f_{T_t,j})_{t = 0}$ is equal to zero when $T_0 = F_0$ or $\lambda_j = 1$. Notice also that this term is a multiple of $f_{T_0,j}$, hence it does not really matter. Indeed, the object we care about is not the resonant state $f_{T_t,j}$ but the one-dimensional vector space $E_{T_t,\lambda_j(T_t)}$ that it spans. If $\lambda_j \neq 1$, then it follows from Proposition \ref{proposition:smoothness_spectral_projector} that the map $t \mapsto E_{T_t,\lambda_j(T_t)}$ is smooth from $(- \epsilon_0,\epsilon_0)$ to $PC_0^\infty(M,\mathbb{C})$. The tangent space to $PC_0^\infty(M,\mathbb{C})$ at $f_{T_0,j}$ identifies with $C_0^\infty(M,\mathbb{C}) / \langle f_{T_0,j} \rangle$ (see \S \ref{subsection:tangent_spaces}), and our formula then tells that the derivative of $t \mapsto E_{T_t,\lambda_j(T_t)}$ at $0$ identifies with the class of $H_{T_0,\lambda_j(T_0)} P_{T_0}(X) f_{T_0,j}$.
\end{remark}

\begin{proof}[Proof of Lemma \ref{lemma:derivative_simple_resonance}]
Recall the formula 
\begin{equation*}
f_{T_t,j} = \frac{1}{2 i \pi} \int_{\partial \mathbb{D}(\lambda_j,\epsilon)} R_{T_t}(z) f_{F_0,j} \mathrm{d}z.
\end{equation*}
It follows from Lemma \ref{lemma:useful_linear_response} and differentiation under the integral (that we can apply in $C^k(M,\mathbb{C})$ for each $k \in \mathbb{N}$) that $t \mapsto f_{T_t,j}$ is smooth from $(-\epsilon_0,\epsilon_0)$ to $C^\infty(M,\mathbb{C})$ with derivative at $0$:
\begin{equation*}
\begin{split}
\frac{1}{2 i \pi}\int_{\partial \mathbb{D}(\lambda_j,\epsilon)} R_{T_0}(z) P_{T_0}(X) R_{T_0}(z)f \mathrm{d}z & = H_{T_0,\lambda_j(T_0)} P_{T_0}(X) \Pi_{T_0,\lambda_j(T_0)}f_{F_0,j}  \\ & \qquad + \Pi_{T_0,\lambda_j(T_0)} P_{T_0}(X) H_{T_0,\lambda_j(T_0)}f_{F_0,j}.
\end{split}
\end{equation*}
We used \eqref{eq:first_step_expansion_resolvant} for the residue computation.

Differentiating with respect to $t$ the relation $\mathcal{L}_{T_t} f_{T_t,j} = \lambda_j(T_t) f_{T_t,j}$, we get
\begin{equation*}
P_T(X) f_{T_0,j} = (\lambda_j(T_0) - \mathcal{L}_{T_0}) \frac{\mathrm{d}}{\mathrm{d}t}(f_{T_t,j})_{t = 0}  + \frac{\mathrm{d}}{\mathrm{d}t}\lambda_j(T_t)_{|t= 0} f_{T_0,j}.
\end{equation*}
Applying $\nu_{T_0,j}$ to this relation, we get the formula for $\frac{\mathrm{d}}{\mathrm{d}t}\lambda_j(T_t)_{|t= 0}$. 

We deal now with the case in which there is an integer $N \geq 0$ such that $\mathcal{L}_{T_0}^N P_{T_0}(X)f_{T_0,j} = 0$. By the analytic continuation principle, we find that $R_{T_0}(z) P_{T_0}(X) f_{T_0,j} = \sum_{k = 0}^{N-1} z^{-k-1} \mathcal{L}_{T_0}^{k} P_{T_0}(X) f_{T_0,j}$ for $z \in \mathbb{C}^* \setminus \res(T_0)$. Comparing with \eqref{eq:first_step_expansion_resolvant}, we find that $$H_{T_0,\lambda} P_{T_0}(X) f_{T_0,j} = \sum_{k = 0}^{N-1} \lambda_j(T_0)^{-k-1} \mathcal{L}_{T_0}^{k} P_{T_0}(X) f_{T_0,j}$$ and $\Pi_{T_0,\lambda} P_{T_0}(X) f_{T_0,j} = 0$, from which the result follows (the second relation implies $\nu_{T_0,j}(P_{T_0}(X)f_{T_0,j}) = 0$).
\end{proof}

We are now ready to state a lemma that we will use to produce many useful perturbations of resonant states in the proofs of our different genericity results.

\begin{lemma}\label{lemma:key_deformation}
Let $T \in U_0$. Let $m \geq 1$ be an integer. Let $x_1,\dots,x_{m+1}$ be distinct points in $M$ such that $T(x_1) = \dots = T(x_{m+1})$. Let $E$ be a subset of $T^{-1}(\set{x_1})$ such that $T^{-1}(\set{x_1}) \setminus E$ contains at least a point that is not periodic for $T$. Let $(\mathfrak{f}_{j,k})_{\substack{1 \leq j \leq p \\ 1 \leq k \leq m}}$ be a family of elements of $C^\infty(M,\mathbb{R})$ and $(\mathfrak{f}_{j,k})_{\substack{p+1 \leq j \leq p+q \\ 1 \leq k \leq m}}$ be a family of elements of $C^\infty(M,\mathbb{C})$. Then, there is a vector field $X$ on $M$ such that:
\begin{enumerate}[label=(\roman*)]
\item $X$ is supported away from $x_1,\dots,x_{m+1}$, $T(x_1)$ and the antecedents of $x_1,\dots,x_{m+1}$ by $T$;\label{item:support}
\item there is $N \geq 0$ such that $\mathcal{L}_{T}^N P_T(X) f_{T,j} = 0$ for $j = 1,\dots,p+q$;\label{item:kernel}
\item for $j = 1,\dots,p+q$ and $k = 1,\dots,m$, the function $\mathfrak{f}_{j,k}$ coincides with $\sum_{\ell = 0}^{N-1} \lambda_j(T)^{-\ell-1} \mathcal{L}_T^{\ell} P_{T}(X) f_{T,j}$ on a neighbourhood of $x_k$;\label{item:local_perturbation}
\item for $j = 1,\dots,p+q$, if $x_1$ is not a fixed point for $T$ then the function $\sum_{\ell = 0}^{N-1} \lambda_j(T)^{-\ell-1} \mathcal{L}_T^{\ell} P_{T}(X) f_{T,j}$ is identically zero on a neighbourhood of $E$;\label{item:we_need_more}
\item for $j= 1,\dots,p+q$, if there is no fixed point of $T$ among $x_1,\dots,x_{m+1}$, then the function $\sum_{\ell = 0}^{N-1} \lambda_j(T)^{-\ell-1} \mathcal{L}_T^{\ell} P_{T}(X) f_{T,j}$ is identically zero on a neighbourhood of $T(x_1)$.\label{item:not_too_much}
\end{enumerate}
\end{lemma}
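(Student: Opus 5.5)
The plan is to build $X$ as a finite sum of vector fields with small supports. Each piece will be chosen so that $P_T(X)f_{T,j}$ is an explicit compactly supported function whose forward images under $\mathcal{L}_T$ vanish after finitely many steps. Condition \ref{item:kernel} then holds, and by Lemma \ref{lemma:derivative_simple_resonance} the finite sum $S_j:=\sum_{\ell<N}\lambda_j(T)^{-\ell-1}\mathcal{L}_T^\ell P_T(X)f_{T,j}$ can be computed by hand.

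\textbf{Basic building block.} Let $B$ be a small ball on which $T$ is injective. Let $w$ be supported in $B$ with $\int_B w\,\mathrm{d}x=0$, and let $g$ be smooth and non-vanishing on $B$. As in the proof of Theorem \ref{theorem:generic_simple}, we can write $w=-\Div(Y)$ with $Y$ supported in $B$. Setting $X=g^{-1}Y$ gives $P_T(X)g=\mathcal{L}_T w$, which is just $w$ transported by the local inverse branch and weighted by the Jacobian.

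\textbf{Cancellation near the common image.} The role of the $m+1$ points $x_1,\dots,x_{m+1}$ with common image $x_0:=T(x_1)$ is the following. Cut off each prescribed $\mathfrak{f}_{j,k}$ near $x_k$ for $k\le m$. Then add near $x_{m+1}$ the function obtained by transporting $-\sum_k$ of these pieces through the local branches of $T$ at $x_0$. The resulting function $u_j$ satisfies $\mathcal{L}_T u_j=0$ near $x_0$, so $\mathcal{L}_T u_j=0$ everywhere. It has integral zero and coincides with $\mathfrak{f}_{j,k}$ near $x_k$. The extra point $x_{m+1}$ is exactly what makes this cancellation possible. We then want $P_T(X)f_{T,j}$ to be of the form $\lambda_j u_j$ plus terms killed by $\mathcal{L}_T$ and vanishing near $x_1,\dots,x_m$, $E$ and $x_0$. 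With such an $X$, items \ref{item:kernel} and \ref{item:local_perturbation} follow with $N$ small. Items \ref{item:support}, \ref{item:we_need_more} and \ref{item:not_too_much} follow from the locality of the supports. The hypotheses that $x_1$ is not fixed, and that no $x_k$ is fixed, are needed so that the pieces living near $E$ and near $x_0$ are not fed back into the neighbourhoods of the $x_k$.

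\textbf{Main obstacle: decoupling the $p+q$ resonant states.} A single $X$ must produce prescribed, unrelated outputs for $p+q$ different functions $f_{T,j}$ simultaneously, and the building block above handles only one $g$ at a time. I would exploit the distinctness of the $\lambda_j$, and this is where the non-periodic point $y\in T^{-1}(\set{x_1})\setminus E$ enters.
\begin{itemize}
\item Take a backward orbit $y=z_0$, $z_{n+1}\in T^{-1}(z_n)$. Since $y$ is not periodic, the points $z_n$ are pairwise distinct and avoid $x_1,\dots,x_{m+1}$, $x_0$ and the relevant preimages.
\item Place pieces of $X$ in small disjoint balls around $z_1,\dots,z_{r}$, with $r\geq p+q$, on which the $f_{T,j}$'s do not vanish; after adjusting the orbit choice, I would use a unique continuation/full-support type argument to guarantee this.
\item A contribution inserted at depth $n$ reaches the vicinity of $x_1$ after $n$ applications of $\mathcal{L}_T$. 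In $S_j$ it therefore carries the weight $\lambda_j^{-n}$ times a ratio of local values of $f_{T,j}$.
\item By linear independence, at distinct points, of the characters $n\mapsto\lambda_j^{-n}$ (a Vandermonde determinant), suitable linear combinations of the depth-$n$ insertions isolate each index $j$. This is achieved at first order in small parameters, or more cleanly by choosing the inserted functions $w$ jointly so that the resulting linear map is invertible.
\end{itemize}
Once the $p+q$ channels are decoupled, the outputs are routed through the fibre over $x_0$ as in the cancellation step, with one correction term per $j$. All supports are placed off the sets excluded in \ref{item:support}.

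I expect the Vandermonde decoupling, and in particular verifying that the $f_{T,j}$ do not vanish at the chosen backward orbit points, to be the delicate part. Everything else is bookkeeping of supports together with Lemma \ref{lemma:derivative_simple_resonance}.
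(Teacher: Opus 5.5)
Your architecture is close to the paper's: the extra point $x_{m+1}$ puts the prescribed profile in $\ker\mathcal{L}_T$, the profile is realised through insertions deep in backward orbits, and then supports are tracked. But the step you yourself call the crux, decoupling the $p+q$ resonant states, fails as stated. Take a piece of $X$ supported near $z_n$, where $T^{n+1}$ is injective. It contributes to $S_j=\sum_{\ell<N}\lambda_j(T)^{-\ell-1}\mathcal{L}_T^\ell P_T(X)f_{T,j}$ near $x_1$ only through the term $\ell=n$. The computation in the proof of Lemma \ref{lemma:long_range_construction} shows this term is $\lambda_j(T)^{-n-1}\Div\bigl((f_{T,j}\circ\phi_n)\,Z_n\bigr)$. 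Here $\phi_n$ is the inverse branch of $T^{n+1}$ onto a neighbourhood of $z_n$, and $Z_n$ is one and the same \emph{real} vector field for all $j$. So prescribing arbitrary outputs needs the vectors $\bigl(\lambda_j(T)^{-n-1}f_{T,j}(\phi_n(x))\bigr)_{j}$ to span $\mathbb{R}^p\times\mathbb{C}^q$ over $\mathbb{R}$. That requires at least $p+2q$ insertions, not $p+q$. Moreover, the resulting matrix is not a Vandermonde matrix: the factors $f_{T,j}(z_n)$ depend on both $j$ and $n$, and no single inserted field can normalise them for all $j$ at once. Already for $p=2$, $q=0$ and two depths, the determinant is a nonzero multiple of $\lambda_1 f_{T,1}(z_1)f_{T,2}(z_2)-\lambda_2 f_{T,1}(z_2)f_{T,2}(z_1)$, and nothing prevents this from vanishing. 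Simultaneous nonvanishing of the $f_{T,j}$ at the $z_n$ would not help anyway. It is also not available: the paper proves full support only for coresonant states and notes it is unclear for resonant states.

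The paper decouples spatially instead. Distinctness of the $\lambda_j$, together with $\lambda_j\neq\bar{\lambda}_k$, is used only to make $f_{T,1},\dots,f_{T,p},\re f_{T,j},\im f_{T,j}$ linearly independent over $\mathbb{R}$. Hence $(\F_T(z_1),\dots,\F_T(z_{p+2q}))$ is a basis for some points $z_i$. This open condition on $M^{p+2q}$ is spread by the expanding product map $\widetilde{T}$ and Lemma \ref{lemma:expanding_open_sets}, which gives Lemma \ref{lemma:linear_algebra}. Lemma \ref{lemma:long_range_construction} then solves $Y_j=\sum_k (f_{T,j}\circ (T^{N+1}_{|W_k})^{-1})Z_k$ fibrewise with real $Z_k$. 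All insertions sit at one depth, so the weights $\lambda_j(T)^{-\ell-1}$ are only a diagonal rescaling.

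Two further points are missing.

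First, \ref{item:support} makes $X$ vanish near every antecedent of every $x_k$, so $P_T(X)f_{T,j}=0$ near each $x_k$. Your requirement $P_T(X)f_{T,j}=\lambda_j u_j$ there therefore contradicts \ref{item:support}; the profile near $x_k$ must come from terms with $\ell\ge 1$. The paper does your cancellation one level back. It builds $\mathfrak{g}_{j,k}$ near a non-periodic antecedent $\tilde{x}_k$ of $x_k$ with $\sum_k\mathcal{L}_T^2\mathfrak{g}_{j,k}=0$, realises $\mathfrak{g}_{j,k}=\mathcal{L}_T^{N_0}P_T(X_k)f_{T,j}$, and takes $N=N_0+2$. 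Non-periodicity of the $\tilde{x}_k$ is what gives the support conditions a)--d).

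Second, \ref{item:local_perturbation} must hold even when some $x_k$ is fixed, and you give no mechanism for that case. When $x_{m+1}$ is fixed and $T^{-1}(\set{x_{m+1}})=\set{x_1,\dots,x_{m+1}}$, the compensating piece must be produced near some $x_k$, which feeds back into the prescription there. The paper solves the resulting equation with Lemma \ref{lemma:contracting_fixed_point}. This case needs care. By \ref{item:kernel}, $(\lambda_j(T)-\mathcal{L}_T)S_j=P_T(X)f_{T,j}$. Evaluating at $x_{m+1}$ gives $(\lambda_j(T)-|\det DT(x_{m+1})|^{-1})S_j(x_{m+1})=\sum_{k\le m}\mathfrak{f}_{j,k}(x_k)/|\det DT(x_k)|$. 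So if $\lambda_j(T)=|\det DT(x_{m+1})|^{-1}$, the values $\mathfrak{f}_{j,k}(x_k)$ cannot be prescribed freely. Consistently with this, using the powers of $\lambda_j$ from \eqref{eq:forget_past}, the coefficient at the fixed point is $(\lambda_j(T)|\det DT(x_{m+1})|)^{-1}$. Hence the hypothesis $|h|<1$ of Lemma \ref{lemma:contracting_fixed_point} is not automatic.
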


\begin{remark}\label{remark:key_deformation}
Lemma \ref{lemma:key_deformation} deserves some comments. Let us point out that \ref{item:kernel} implies that the function $\sum_{\ell = 0}^{N-1} \lambda_j(T)^{-\ell-1} \mathcal{L}_T^{\ell} P_{T}(X) f_{T,j}$ that appears in \ref{item:local_perturbation} is the relevant part of the first order variation of $f_{T,j}$ when $T$ is perturbed using $X$ (see Lemma \ref{lemma:derivative_simple_resonance} and Remark \ref{remark:derivative_simple_resonance}).

Lemma \ref{lemma:key_deformation} may seem a bit involved. This is because we stated a result with enough generality so that it can be used to produce all the deformations that are required for the transversality arguments in the rest of the paper. There is only one proof below in which we will use the full generality of Lemma \ref{lemma:key_deformation} (the proof of Lemma \ref{lemma:generic_solvability}).

The simplest application of Lemma \ref{lemma:key_deformation} (this is the only case that will appear in the proof of Theorem \ref{theorem:generic_morse} in \S \ref{section:first_generic_properties}) is the following: we have a point $x_1$ and we want to produce a deformation of a resonant state near that point. Then, we choose a point $x_2$ with the same image as $x_1$ and we apply Lemma \ref{lemma:key_deformation} with $m= 2$ and $E = \emptyset$, ignoring \ref{item:we_need_more} and \ref{item:not_too_much}. We end up with an arbitrary deformation of our resonant state near $x_1$, up to the direction of the resonant state itself (which does not matter as explained in Remark \ref{remark:derivative_simple_resonance}).
\end{remark}

\begin{remark}\label{remark:proof_key}
Let us explain the ideas behind the proof of Lemma \ref{lemma:key_deformation} in the case of a single real resonance (that is $p = 1$ and $q = 0$). In view of Lemma~\ref{lemma:derivative_simple_resonance}, the most natural way to produce explicit perturbations of $f_{T,1}$ would be to prove that the map $X \mapsto P_T(X)f_{T,1}$ is surjective. This is what we will do in the proof of Lemma \ref{lemma:local_solvability} below, but we will need extra assumptions on $T$ to do so. The issue with the derivative formula in Lemma \ref{lemma:derivative_simple_resonance} is the presence of the operator $H_{T,\lambda_1(T)}$ we do not know how to compute. This is why we will use the second part of this lemma and search for $X$ such that $\mathcal{L}_T^N P_T(X) f_{T,1} = 0$ for some $N \geq 1$. 

To do so, we first construct a function $g$, supported near $x_1,\dots,x_{m+1}$, with the value that we want near $x_1,\dots,x_m$ and such that $\mathcal{L}_T g = 0$. The values of $g$ near $x_1,\dots,x_m$ are imposed, which is why we need the extra point $x_{m+1}$ to be able to achieve $\mathcal{L}_T g = 0$. Then, we want to write $g$ as $\mathcal{L}_T^{N-1} P_T(X)$ for some $N \geq 1$ and $X \in \Gamma(\T M)$. We work locally near each point $x_1,\dots,x_{m+1}$. Consider for instance $x_1$. If there is an antecedent $\tilde{x}_1$ for $x_1$ by $T$ such that $f_{T,1}(\tilde{x}_1) \neq 0$ then we can solve $P_T(X)f_{T,1} = g$ at least near $x_1$ (recall the definition \eqref{eq:derivative_transfer} and notice that we can divide by $f_{T,1}$ near $\tilde{x}_1$). If all the antecedents of $x_1$ by $T$ vanishes, then we look at the antecedents of $x_1$ by $T^N$ (for some large $N$) and solve $\mathcal{L}_T^{N-1} P_T(X) f_{T,1} = g$ instead of $P_T(X)f_{T,1} = g$. Indeed, it follows from Lemma \ref{lemma:expanding_open_sets} that there is $N$ large enough such that $x_1,\dots,x_{m+1}$ all have an antecedents by $T^N$ at which $f_{T,1}$ does not vanish.

When dealing with several resonances, the condition ``having an antecedent at which $f_{T,1}$ does not vanish'' is replaced by a linear algebra condition given in Lemma \ref{lemma:linear_algebra}. 
\end{remark}

We prepare the proof of Lemma \ref{lemma:key_deformation} with a linear algebra fact.

\begin{lemma}\label{lemma:linear_algebra}
Let $T \in U_0$. Introduce the function $\F_T : M \to \mathbb{R}^p \times \mathbb{C}^q$ defined by $\F_T(x) = (f_{T,j}(x))_{1 \leq j \leq p + q}$ for $x \in M$. There is $N_0 \geq 0$ such that for every $N \geq N_0$ and $x \in M$ there are points $y_1,\dots,y_{p+2q} \in M$ such that $T^N(y_1) = \dots = T^N(y_{p+2q}) = x$ and $(\F_T(y_1),\dots,\F_T(y_{p+2q}))$ is a basis (over $\mathbb{R}$) of $\mathbb{R}^p \times \mathbb{C}^q$.
\end{lemma}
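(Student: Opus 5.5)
The plan is to reduce the statement to a \emph{pointwise-in-space} spanning property, then transport it to preimages of an arbitrary point using Lemma \ref{lemma:expanding_open_sets}.

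\textbf{Step 1: the values of $\F_T$ span $\mathbb{R}^p \times \mathbb{C}^q$ over $\mathbb{R}$.} I argue by contradiction. If the real span of $\F_T(M)$ were a proper subspace, there would be a non-zero $\mathbb{R}$-linear form $\ell$ on $\mathbb{R}^p \times \mathbb{C}^q$ vanishing on $\F_T(M)$. Any such form can be written
\[
\ell(u) = \sum_{j=1}^p a_j u_j + \sum_{j=p+1}^{p+q} \left( b_j u_j + \bar b_j \bar u_j \right),
\]
with $a_j \in \mathbb{R}$ and $b_j \in \mathbb{C}$, not all zero. Hence
\[
\sum_{j\le p} a_j f_{T,j} + \sum_{j>p} \left(b_j f_{T,j} + \bar b_j \overline{f_{T,j}}\right) = 0
\]
identically on $M$. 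By Remark \ref{remark:real_valued}, $\overline{f_{T,j}}$ is a resonant state for $T$ associated to $\overline{\lambda_j(T)}$. The numbers $\lambda_1(T),\dots,\lambda_p(T)$ (real), $\lambda_{p+1}(T),\dots,\lambda_{p+q}(T)$ and their conjugates (non-real) are pairwise distinct. For the non-real ones this uses that the disks $\mathbb{D}(\lambda_j,\epsilon)$ are disjoint and that $\lambda_j \neq \bar\lambda_k$ for $j,k>p$; I would shrink $\epsilon$ if necessary so that the conjugate disks are also disjoint from the others. So these are eigenvectors of $\mathcal{L}_T$ for distinct eigenvalues, hence linearly independent over $\mathbb{C}$. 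Therefore all $a_j,b_j$ vanish, which is a contradiction. This step is the main point, and the only delicate part is making sure the conjugate eigenvalues do not collide with the listed ones. This is exactly why the standing assumption excludes complex-conjugate pairs.

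\textbf{Step 2: openness.} By Step 1, there are points $z_1,\dots,z_{p+2q}$ such that $(\F_T(z_1),\dots,\F_T(z_{p+2q}))$ is a real basis. Since $\F_T$ is continuous and being a basis is an open condition (non-vanishing of a determinant), there are non-empty open sets $V_1,\dots,V_{p+2q}$ with $z_i \in V_i$ such that $(\F_T(y_1),\dots,\F_T(y_{p+2q}))$ is a basis whenever $y_i \in V_i$ for each $i$.

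\textbf{Step 3: preimages.} By Lemma \ref{lemma:expanding_open_sets}, for each $i$ there is $N_i$ with $T^{N_i}(V_i) = M$. Since $T$ is surjective (it is a covering), it follows that $T^N(V_i) = M$ for every $N \ge N_i$. Set $N_0 = \max_i N_i$. Given $N \ge N_0$ and $x \in M$, I choose $y_i \in V_i$ with $T^N(y_i) = x$. By Step 2 the family $(\F_T(y_i))_i$ is a basis, which concludes the proof.
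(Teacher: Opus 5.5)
Your proof follows the paper's in substance. Step 1 is the dual form of the paper's first two paragraphs (real independence of $f_{T,j}$, $\re f_{T,j}$, $\im f_{T,j}$ via independence of eigenvectors for distinct eigenvalues, then spanning by evaluation functionals). Steps 2--3 differ only mildly: the paper applies Lemma \ref{lemma:expanding_open_sets} to the product map $\widetilde{T}$ on $M^{p+2q}$, while you apply it factor by factor to a product box $V_1\times\dots\times V_{p+2q}$ inside the open set of good tuples. Your variant is fine and slightly more elementary, since you never need that $\widetilde{T}$ is expanding on the connected manifold $M^{p+2q}$. Your use of surjectivity of $T$ to pass from $N_i$ to every $N\geq N_0$ is what the paper uses implicitly.

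The step that does not work as written is the distinctness $\lambda_j(T)\neq\overline{\lambda_k(T)}$ for $j\neq k$ in $\{p+1,\dots,p+q\}$. In the lemma, $\epsilon$ and $U_0$ are already fixed and $T$ is an arbitrary element of $U_0$. The disk conditions only give $\bar\lambda_k\notin\overline{\mathbb{D}}(\lambda_j,\epsilon)$, so $\mathbb{D}(\lambda_j,\epsilon)$ and $\mathbb{D}(\bar\lambda_k,\epsilon)$ may overlap, and nothing in them rules out a collision $\lambda_j(T)=\overline{\lambda_k(T)}$ in the overlap. Shrinking $\epsilon$ to some $\epsilon'$ only helps if you also shrink $U_0$ so that $\lambda_j(T)\in\mathbb{D}(\lambda_j,\epsilon')$. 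You would then have proved the lemma on a smaller neighbourhood, not for every $T\in U_0$ as stated.

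The missing ingredient is the connectedness of $U_0$, which is why the paper requires it. The set $\{T\in U_0:\lambda_j(T)=\overline{\lambda_k(T)}\}$ is closed, by continuity of $T\mapsto\lambda_j(T)$. It is open: near such a $T$ there is a single simple resonance of $T'$ close to $\lambda_j(T)$, and it must equal both $\lambda_j(T')$ and, by conjugation symmetry of the spectrum, $\overline{\lambda_k(T')}$. It does not contain $F_0$, so it is empty. Alternatively, you can build the extra disjointness condition into the choice of $\epsilon$ before $U_0$ is chosen. That is harmless for the later uses of $U_0$, but it changes the setting rather than proving the lemma within it. With either repair, the rest of your argument goes through.
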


\begin{proof}
Let us start by proving that the functions $f_{T,1},\dots,f_{T,p}$, $\re f_{T,p+1}$, $\dots$, $\re f_{T,p+q}$, $\im f_{T,p+1}$,$\dots$, $\im f_{T,p+q}$ are linearly independent over $\mathbb{R}$. Consider real numbers $\alpha_1$,$\dots$,$\alpha_p$, $\beta_{p+1}$,$\dots$, $\beta_{p+q}$, $\gamma_{p+1}$,$\dots$,$\gamma_{p+q}$ such that
\begin{equation*}
\sum_{j = 1}^p \alpha_j f_{T,j} + \sum_{j = p+1}^{p+q} \beta_j \re f_{T,j} + \gamma_j \im f_{T,j} = 0.
\end{equation*}
This relation may be rewritten as
\begin{equation}\label{eq:other_relation}
\sum_{j = 1}^p \alpha_j f_{T,j} + \sum_{j = p+1}^{p+q} \frac{\beta_j - i \gamma_j}{2} f_{T,j} + \frac{\beta_j + i \gamma_j}{2} \bar{f}_{T,j} = 0.
\end{equation}
Notice then that the functions $f_{T,1},\dots,f_{T,p}, f_{T,p+1}, \bar{f}_{T,p+1},\dots,f_{T,p+q}, \bar{f}_{T,p+q}$ are eigenvectors for $\mathcal{L}_T$ associated to distinct eigenvalues. Here, we use our assumption that $U_0$ is connected and that $\lambda_j \neq \bar{\lambda}_k$ when $j,k \in \set{p+1,\dots,p+q}$. Consequently, $f_{T,1},\dots,f_{T,p}, f_{T,p+1}, \bar{f}_{T,p+1},\dots$, $f_{T,p+q}, \bar{f}_{T,p+q}$ are linearly independent over $\mathbb{C}$ and it follows from \eqref{eq:other_relation} that $\alpha_1 = \dots  = \alpha_p = \beta_{p+1} = \dots = \beta_{p+q} = \gamma_{p+1} = \dots = \gamma_{p+q} = 0$.

Let us now call $E$ the linear subspace (over $\mathbb{R}$) of $C^\infty(M,\mathbb{C})$ spanned by $f_{T,1},\dots,f_{T,p}, \re f_{T,p+1}$, $\im f_{T,p+1}$, $\dots, \re f_{T,p+q}, \im f_{T,p+q}$. For each $x \in M$, let $l_x$ be the linear form on $E$ defined by $l_x(f) = f(x)$ for $f \in E$. Since $\bigcap_{x \in M} \ker l_x = \set{0}$, we find that the $l_x$'s for $x \in M$ span the dual $E^*$ of $E$. Hence, there are $z_1,\dots,z_{p+2q} \in M$ such that $(l_{z_1},\dots,l_{z_{p+2q}})$ is a basis of $E^*$. Since $(f_{T,1},\dots,f_{T,p}, \re f_{T,p+1}$, $\im f_{T,p+1}$, $\dots, \re f_{T,p+q}, \im f_{T,p+q})$ is a basis for $E$, we find that the matrix
\begin{equation*}
\begin{bmatrix}
f_{T,1}(z_1) & \dots & f_{T,1}(z_{p+2q}) \\
\dots & \dots & \dots \\
f_{T,p}(z_1) & \dots & f_{T,p}(z_{p+2q}) \\
\re f_{T,p+1}(z_1) & \dots & \re f_{T,p+1}(z_{p+2q}) \\
\im f_{T,p+1}(z_1) & \dots & \im f_{T,p+1}(z_{p+2q}) \\
\dots & \dots & \dots \\
\re f_{T,p+q}(z_1) & \dots & \re f_{T,p+q}(z_{p+2q}) \\
\im f_{T,p+q}(z_1) & \dots & \im f_{T,p+q}(z_{p+2q}) \\
\end{bmatrix}
\end{equation*}
is invertible. Equivalently, $(\F_T(z_1),\dots,\F_T(z_{p+2q}))$ is a basis for $\mathbb{R}^p \times \mathbb{C}^q$.

At this point, we consider the set $\mathcal{V}$ of elements $(y_1,\dots,y_{p+2q})$ of $M^{p+2q}$ such that $(\F_T(y_1), \dots, \F_T(y_{p+2q}))$ is a basis for $\mathbb{R}^p \times \mathbb{C}^q$. The set $\mathcal{V}$ is open and we just proved that $\mathcal{V}$ is non-empty. Let us introduce the map $\widetilde{T} : M^{p+2q} \to M^{p+2q}$ defined by $\widetilde{T}(y_1,\dots,y_{p+2q}) = (T(y_1),\dots,T(y_{p+2q}))$. Notice that $\widetilde{T}$ is expanding. Hence, Lemma \ref{lemma:expanding_open_sets} implies that there is $N_0 \geq 0$ such that $\widetilde{T}^{N_0}(\mathcal{V}) = M^{p+2q}$. In particular, for every $N \geq N_0$ we have
\begin{equation*}
\set{(x,\dots,x) :x \in M} \subseteq \widetilde{T}^{N}(\mathcal{V})
\end{equation*}
which proves the lemma.
\end{proof}

From Lemma \ref{lemma:linear_algebra}, we deduce a crucial tool for the proof of Lemma~\ref{lemma:key_deformation}.

\begin{lemma}\label{lemma:long_range_construction}
Let $T \in U_0$. There is $N_0 \geq 0$ such that for every $N \geq N_0$ and $y_0 \in M$, if $V_0$ is a neighbourhood of $y_0$, then there is an open neighbourhood $V \subseteq V_0$ of $y_0$ in $M$ such that if $\mathfrak{g}_1,\dots,\mathfrak{g}_p$ are elements of $C_0^\infty(M,\mathbb{R})$ supported in $V$ and $\mathfrak{g}_{p+1},\dots,\mathfrak{g}_{p+q}$ are elements of $C_0^\infty(M,\mathbb{C})$ supported in $V$ then there is a vector field $X$ on $M$ such that:
\begin{enumerate}[label=(\roman*)]
\item for $j = 1,\dots,p+q$, we have $\mathcal{L}_T^{N} P_T(X) f_{T,j} = \mathfrak{g}_j$;
\item the vector field $X$ is supported in $T^{-N-1}(V)$.
\end{enumerate}
\end{lemma}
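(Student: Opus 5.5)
The plan is to reduce the statement to a pointwise real linear system. The basis property of Lemma~\ref{lemma:linear_algebra} makes that system uniquely solvable, and the transfer operator can be transported to $V$ by local inverse branches of $T^{N+1}$.

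\textbf{Choice of $N_0$ and $V$.} Take $N_0$ to be the integer given by Lemma~\ref{lemma:linear_algebra}, and fix $N \geq N_0$ and $y_0 \in M$. Apply Lemma~\ref{lemma:linear_algebra} with exponent $N+1 \geq N_0$. This gives points $z_1,\dots,z_{p+2q}$ with $T^{N+1}(z_i) = y_0$ and $(\F_T(z_1),\dots,\F_T(z_{p+2q}))$ a basis of $\mathbb{R}^p \times \mathbb{C}^q$ over $\mathbb{R}$. These points are pairwise distinct, since otherwise two vectors of the basis would coincide.

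Since $T^{N+1}$ is a covering, we can choose pairwise disjoint neighbourhoods $W_i$ of $z_i$ and a neighbourhood $V \subseteq V_0$ of $y_0$ such that:
\begin{itemize}
\item $V$ is diffeomorphic to a ball;
\item $\Phi_i := T^{N+1}|_{W_i}$ is a diffeomorphism from $W_i$ onto $V$ for each $i$;
\item $(\F_T(\Phi_1^{-1}(y)),\dots,\F_T(\Phi_{p+2q}^{-1}(y)))$ is still a basis for every $y$ in a neighbourhood of $\overline{V}$.
\end{itemize}
The last condition uses that the basis property is an open condition. Note that $W_i \subseteq T^{-N-1}(V)$.

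\textbf{Reformulation.} By definition, $\mathcal{L}_T^N P_T(X) f = -\mathcal{L}_T^{N+1}(\Div(fX))$. Look for $X = \sum_i X_i$ with $X_i$ supported in $W_i$. The key identity is that $\mathcal{L}_T^{N+1}$, restricted to functions supported in $W_i$, is the pushforward of densities by $\Phi_i$, and pushforward commutes with divergence. Precisely, for $u$ smooth and supported in $W_i$,
\begin{equation*}
\mathcal{L}_T^{N+1}(\Div(u X_i)) = \Div\big((u \circ \Phi_i^{-1})\, Z_i\big),
\end{equation*}
where $Z_i$ is the pushforward of $X_i$ by $\Phi_i$ multiplied by the Jacobian factor $|\det D\Phi_i^{-1}|$. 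The correspondence $X_i \leftrightarrow Z_i$ is a bijection between vector fields supported in $W_i$ and vector fields supported in $V$. This is a change-of-variables computation, which I would check by pairing both sides against a test function and using \eqref{eq:change_of_variable}.

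So it suffices to find vector fields $Z_i$ supported in $V$ such that, for every $j$,
\begin{equation*}
\sum_i f_{T,j}(\Phi_i^{-1}(y))\, Z_i(y) = -Y_j(y),
\end{equation*}
where $Y_j$ is a vector field supported in $V$ with $\Div Y_j = \mathfrak{g}_j$.

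\textbf{Construction of $Y_j$ and $Z_i$.} Each $Y_j$ exists because $\mathfrak{g}_j$ has zero mean and is supported in the ball $V$; this is the same fact used in the proof of Theorem~\ref{theorem:generic_simple}. It can be taken real for $j \leq p$ and complex-valued for $j > p$.

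Working in local coordinates on $V$, write each vector $Z_i(y) \in \T_y M$ through its $d = \dim M$ real components. The system then splits, component by component, into real linear systems of the form
\begin{equation*}
\sum_i a_i\, \F_T(\Phi_i^{-1}(y)) = b \in \mathbb{R}^p \times \mathbb{C}^q, \qquad a_i \in \mathbb{R}.
\end{equation*}
By the choice of $V$, each such system has a unique solution depending smoothly (and linearly) on $b$ and smoothly on $y$. Hence $Z_i$ is smooth and real, and it is supported in $\bigcup_j \operatorname{supp} Y_j \subseteq V$. Pulling the $Z_i$ back to $X_i$ gives a real vector field $X$ supported in $\bigcup_i W_i \subseteq T^{-N-1}(V)$. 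By construction, $\mathcal{L}_T^N P_T(X) f_{T,j} = \Div Y_j = \mathfrak{g}_j$ for every $j$.

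\textbf{Main obstacle.} The only delicate step is the commutation identity between $\mathcal{L}_T^{N+1}$ and $\Div$ on a single inverse branch, together with correct bookkeeping of the Jacobian factor and of the sign. The remaining steps are the openness argument for the basis property and the linear algebra above.
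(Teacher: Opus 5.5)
Your proposal is correct and is essentially the paper's own proof: you take $N_0$ from Lemma~\ref{lemma:linear_algebra} applied with exponent $N+1$, use disjoint inverse branches $W_i$ of $T^{N+1}$ over a ball $V \subseteq V_0$ on which the basis property persists, choose vector fields $Y_j$ with $\Div Y_j = \mathfrak{g}_j$, solve a pointwise real linear system for the $Z_i$, and transport back via the branch-wise identity $\mathcal{L}_T^{N+1}\Div = \Div\circ(\text{Jacobian-weighted pushforward})$. The paper's $X_k = -|\det DT^{N+1}|\,(DT^{N+1})^{-1}\cdot Z_k\circ T^{N+1}$ is exactly your pullback with the minus sign moved from the linear system into $X_k$; the only cosmetic fix is that your commutation identity needs $uX_i$, not $u$, to be supported in $W_i$, which holds automatically because $X_i$ is.
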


\begin{proof}
Let $N_0$ be as in Lemma \ref{lemma:linear_algebra}. Let $N \geq N_0$ and $y_0 \in M$. Let $z_1,\dots,z_{p+2q}$ be points in $M$ such that $(\F_T(z_1),\dots, \F_T(z_{p+2q}))$ is a basis for $\mathbb{R}^p \times \mathbb{C}^q$ and $T^{N+1}(z_1) = \dots = T^{N+1}(z_{p+2q}) = y_0$. Since $T^{N+1}$ is a local diffeomorphism, we may find open neighbourhoods $W_1,\dots,W_{p+2q}$ pf $z_1,\dots,z_{p+2q}$ such that $T^{N+1}(W_1) = \dots = T^{N+1}(W_{p+2q})$ and for $j = 1,\dots,p+2q$ the map $T^{N+1}$ induces a diffeomorphism from $W_j$ to $T^{N+1}(W_j)$. Up to taking $W_1,\dots,W_{p+2q}$ smaller, we may assume that $W_1,\dots,W_{p+2q}$ are disjoint, diffeomorphic to the open ball of dimension $\dim M$, that $T^{N+1}(W_1) \subseteq V_0$ and that $(w_1,\dots,w_{p+2q}) \in W_1 \times \dots \times W_{p+2q}$ implies that the family $(\F_T(w_1),\dots,\F_T(w_{p+2q}))$ is a basis for $\mathbb{R}^p \times \mathbb{C}^q$.

We set $V = T^{N+1}(W_1)$. Let then $\mathfrak{g}_1,\dots, \mathfrak{g}_{p+q}$ be as in the statement of the lemma. For $j = 1,\dots,p$, since $V$ is diffeomorphis to an open ball and $\mathfrak{g}_j$ has zero average, we may find a vector field $Y_j$ supported in $V$ such that $\mathfrak{g}_j = \Div(Y_j)$. For $j = p+1,\dots,q$, since the function $\mathfrak{g}_j$ is a priori complex valued, it is not always possible to get such a vector field. Instead, we get a section $Y_j$ of $\T M \otimes \mathbb{C}$ (a ``complex vector field'') supported in $V$ and such that $\mathfrak{g}_j = \Div(Y_j)$.

For $x \in V$, we have
\begin{equation*}
(Y_1(x),\dots,Y_{p+q}(x)) \in (\T_x M)^p \times (\T_x M \otimes \mathbb{C})^q \simeq \T_x M \otimes (\mathbb{R}^p \times \mathbb{C}^q). 
\end{equation*}
Since $(\F_T((T^{N+1}_{|W_1})^{-1}(x)),\dots,\F_T((T^{N+1}_{|W_{p+2q}})^{-1}(x)))$ is a basis for $\mathbb{R}^p \times \mathbb{C}^q$, we find that there are $Z_1(x),\dots,Z_{p+2q}(x) \in \T_x M$ such that 
\begin{equation}\label{eq:tensor_product_decomposition}
Y_j(x) = \sum_{k = 1}^{p+2q} f_{T,j}((T^{N+1}_{|W_k})^{-1}(x)) Z_k(x) \textup{ for } j = 1,\dots,p+q.
\end{equation}
From the uniqueness in \eqref{eq:tensor_product_decomposition}, we find that the vector fields $Z_1,\dots,Z_{p+2q}$ on $V$ defined in this way are smooth and compactly supported. For $k = 1,\dots,p+2q$, let $X_k$ be the vector field on $M$ defined by
\begin{equation*}
X_k(x) = \begin{cases} - |\det DT^{N+1}(x)| D T^{N+1}(x)^{-1} \cdot Z_k(T^{N+1}x) & \textup{ if } x \in W_k, \\ 0 & \textup{ if } x \in M \setminus W_k. \end{cases}
\end{equation*}
Notice that $X_k$ is a smooth vector field, supported in $W_k$, and that, for $j =1,\dots,p+q$, we have
\begin{equation*}
\mathcal{L}_T^N P_T(X_k)(f_{T,j}) = - \mathcal{L}_T^{N+1}(\Div(f_{T,j} X_k)).
\end{equation*}
Hence, $\mathcal{L}_T^N P_T(X_k)(f_{T,j})$ is supported in $V$ and for $x \in V$ we have
\begin{equation*}
\begin{split}
& \mathcal{L}_T^N P_T(X_k)(f_{T,j})(x) \\ & \qquad \qquad \qquad = \frac{\Div(|\det DT^{N+1}| f_{T,j} (DT^{N+1})^{-1} \cdot (Z_k \circ T^{N+1}))((T^{N+1}_{|W_k})^{-1}x)}{|\det T^{N+1} ((T^{N+1}_{|W_k})^{-1}x)|} \\
     & \qquad \qquad \qquad = \Div(f_{T,j} \circ (T^{N+1}_{|W_k})^{-1} Z_k)(x).
\end{split}
\end{equation*}
Letting $X = \sum_{k = 1}^{p+2q} X_k$ and summing over $k$, we find that for $j = 1,\dots,p+2q$ the function $\mathcal{L}_T^N P_T(X_k)(f_{T,j})$ is supported in $V$ and for $x \in V$ we have
\begin{equation*}
\mathcal{L}_T^N P_T(X)(f_{T,j})(x) = \Div(\sum_{k = 1}^{p+2q}f_{T,j} \circ (T^{N+1}_{|W_k})^{-1} Z_k)(x) = \Div(Y_j)(x) = \mathfrak{g}_j(x).
\end{equation*}
It remains to notice that $X$ is supported in $\bigcup_{k = 1}^{p+2q} W_k \subseteq T^{-N-1}(V)$.
\end{proof}

We need a last preparatory lemma before proving Lemma \ref{lemma:key_deformation}.

\begin{lemma}\label{lemma:contracting_fixed_point}
Let $V$ be a smooth manifold. Let $x_0 \in V$. Let $G : V \to V$ be a smooth map such that $G(x_0) = x_0$. Assume that the eigenvalues of $DG(x_0)$ belong to $\set{ z \in \mathbb{C} : 0 < |z| < 1}$.  Let $h : V \to \mathbb{C}$ be a smooth function such that $|h| < 1$. Let $w : V \to \mathbb{C}$ be a smooth function. There is an open neighbourhood $\widetilde{V}$ of $x_0$ in $V$ such that $G(\widetilde{V}) \subseteq \widetilde{V}$ and a smooth function $g : \widetilde{V} \to \mathbb{C}$ such that 
\begin{equation}\label{eq:contracting_fixed_point}
g - h. g \circ G = w
\end{equation}
on $\widetilde{V}$. Moreover, if $h$ and $w$ are real-valued, the function $g$ is real-valued.
\end{lemma}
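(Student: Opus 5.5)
The plan is to solve \eqref{eq:contracting_fixed_point} by the Neumann series
\begin{equation*}
g = \sum_{n \geq 0} \Big(\prod_{k=0}^{n-1} h\circ G^k\Big)\, w \circ G^n ,
\end{equation*}
which formally satisfies $g - h.\, g\circ G = w$. The series is a fixed point of $g \mapsto w + h.\, g\circ G$. If $h$ and $w$ are real-valued, every term is real, so $g$ is real-valued. Everything then reduces to choosing $\widetilde V$ and proving that the series and all its derivatives converge uniformly on $\widetilde V$.

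\textbf{Choice of $\widetilde V$.} Since the spectral radius of $DG(x_0)$ is less than $1$, there is an adapted inner product in a chart around $x_0$ for which $\n{DG(x_0)} < 1$. By continuity we get a small closed ball $B$ centered at $x_0$ (for this norm) and some $\kappa<1$ with $\n{DG} \leq \kappa$ on $B$. Then $G$ is $\kappa$-Lipschitz on $B$ and fixes $x_0$, so $G(B)\subseteq B$. We take $\widetilde V$ to be the open ball, which is also $G$-invariant. Shrinking $B$, continuity of $h$ and compactness give $|h| \leq \eta < 1$ on $B$. Hence the $n$-th term is bounded by $\eta^n \n{w}_{C^0}$, and the series converges in $C^0$.

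\textbf{Convergence of derivatives (the main obstacle).} Derivatives of $w\circ G^n$ and of the product $\prod_{k<n} h\circ G^k$ must be controlled uniformly in $n$.
\begin{itemize}
\item By the chain rule, $D(G^n)$ is bounded by $\kappa^n$ on $B$.
\item By Faà di Bruno, the higher derivatives of $G^n$ satisfy $\n{D^r G^n} \leq C_r \kappa^n$ for each $r$. This follows by induction on $r$, using the recursion $G^{n+1} = G\circ G^n$ and summing a geometric series.
\item For the product $\Phi_n = \prod_{k<n} h\circ G^k$, we do not take logarithms, since $h$ may vanish. Instead we differentiate directly. Because each $h\circ G^k$ has $r$-th derivative $O(\kappa^{k})$ for $r\ge 1$, a Leibniz expansion gives $\n{D^r \Phi_n} \leq C_r\, n^r \eta^{n-r}$. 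The idea is that at most $r$ factors get differentiated, and the remaining factors are each bounded by $\eta$.
\end{itemize}
So the $r$-th derivative of the $n$-th term is $O(n^r \eta^{n-r})$, which is summable. The series therefore converges in every $C^r(\overline{\widetilde V})$, $g$ is smooth on $\widetilde V$, and termwise manipulation justifies \eqref{eq:contracting_fixed_point}.

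If the derivative bookkeeping gets messy, there is an alternative route. The operator $S: g\mapsto h.\, g\circ G$ is a contraction on $C^0(B)$. On $C^r(B)$ its leading term in $D^r$ is $h\,(D^r g\circ G)(DG)^{\otimes r}$, with norm at most $\eta\kappa^r<1$. With an equivalent weighted $C^r$ norm, $S$ becomes a contraction, and the Banach fixed point theorem applies for each $r$. Uniqueness of the $C^0$ fixed point shows these solutions all coincide, so they give a single smooth $g$.

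Note that the hypothesis $0<|z|$ on the eigenvalues, i.e. that $G$ is a local diffeomorphism near $x_0$, is not needed for this argument.
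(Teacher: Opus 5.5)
Your proof is correct, but it takes a different route from the paper's.

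The paper does not sum the Neumann series for $w$ directly. It first solves the equation formally at $x_0$, degree by degree in the Taylor expansion. This uses that $I - h(x_0)\,DG(x_0)^*$ is invertible on homogeneous polynomials of each degree, since the eigenvalues of $DG(x_0)^*$ there are products of eigenvalues of $DG(x_0)$. It then takes a smooth $g_0$ with this Taylor series and is left with $g_1 - h\, g_1\circ G = \tilde w$, where $\tilde w$ is flat at $x_0$. For this flat right-hand side it sums $\sum_n L^n \tilde w$. Flatness plus the contraction give superexponential decay in $C^0$. In $C^{m+1}$ it only uses the crude fact that $\|L^n\tilde w\|_{C^{m+1}}$ grows at most exponentially, and interpolation yields convergence in every $C^m$.

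You bypass all of this by using the contraction more sharply. In an adapted norm $G$ is a strict contraction of a small ball, so $\|D^rG^n\| \le C_r\kappa^n$. The Leibniz and Fa\`a di Bruno bookkeeping then shows that the $n$-th term of the Neumann series for $w$ itself is $O(n^r\eta^{n-r})$ in $C^r$. This needs no Borel realization, no flat reduction and no interpolation, and real-valuedness is immediate.

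What the paper's detour buys is generality. Because flatness beats any exponential growth, its argument uses $|h|<1$ only in the formal step, and there it really only needs $h(x_0)\mu^\alpha \neq 1$ for all products $\mu^\alpha$ of eigenvalues of $DG(x_0)$. This is precisely the relaxation invoked in Appendix~\ref{appendix:weighted_transfer_operator} for weighted transfer operators. There $|h(x_0)|<1$ may fail, and your Neumann series would then diverge in general. Your remark that the hypothesis $0<|z|$ is not needed is also right.
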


\begin{proof}
Without loss of generality we may assume that $V$ is an open subset of $\mathbb{R}^d$ for some $d \geq 1$ and that $x_0 = 0$. We wtart by searching a power series $g_{\textup{ps}}$ at $0$ that solves \eqref{eq:contracting_fixed_point}. Beware that we do not claim that we can find a convergent power series solving \eqref{eq:contracting_fixed_point}, we only claim equality of power series at $0$.

If we write $g_{\textup{ps}}(x) = \sum_{m \geq 0} A_m(x)$ where $A_m(x)$ denotes a homogeneous polynomial of order $m$, then for every $m \geq 0$ we have, as power series at $0$:
\begin{equation*}
h(x) A_m(G(x)) = h(0) A_m(DG(0) \cdot x) + \sum_{k \geq m+1} B_{m,k}(A_m)(x)
\end{equation*}
where for $k \geq m+1$ the polynomial $B_{m,k}(A_m)$ is a homogeneous of order $k$. Letting $\sum_{m \geq 0} w_m(x)$ denote the Taylor series at $0$ of $w$ (where $w_m$ is homogeneous of order $m$), we define inductively
\begin{equation*}
A_m = (I - h(0) DG(0)^*)^{-1}(w_m + \sum_{k= 0}^{m-1} B_{k,m}(A_k)).  
\end{equation*} 
Here, $DG(0)^*$ denote the pullback by $DG(0)$ on homogeneous polynomials of order $m$. The eigenvalues of $DG(0)^*$ are products of $m$ eigenvalues of $DG(0)$. In particular, they have modulus less than $1$ and thus $(I - h(0) DG(0)^*)^{-1}$ is well-defined. With this definition, $g_{\textup{ps}}$ solves \eqref{eq:contracting_fixed_point} as a power series at $0$. 

Let $g_0$ be a smooth function near $0$ whose Taylor series at $0$ is given by $g_{\textup{ps}}$. We search for $g$ in the form $g = g_0 + g_1$. The function $g_1$ must then satisfy
\begin{equation}\label{eq:reduced_contracting_equation}
g_1 - h. g_1 \circ G = \tilde{w}
\end{equation}
where $\tilde{w} = w - (g_0 - h. g_0 \circ G)$ is a smooth function that vanishes at infinite order at $0$. Since $G$ has a contracting fixed point at $0$, there is an open neighbourhood $\widetilde{V}$ of $0$ such that $\overline{G(\widetilde{V})}$ is a compact subset of $\widetilde{V}$ and for every $x \in \widetilde{V}$ the sequence $(G^n(x))_{n \geq 0}$ converges exponentially fast to $0$. Let $L$ be the operator $ u \mapsto h. u \circ T$ acting on functions on $\widetilde{V}$. Notice that for $n \geq 1$, we have $L^n (\tilde{w}) = h. h \circ G \dots h \circ G^{n-1}. \tilde{w} \circ G^{n}$. Since $\tilde{w}$ vanishes at infinite order at $0$, it follows from the contracting property of $G$ that the $L^\infty$ norm of $L^n(\tilde{w})$ on $\widetilde{V}$ decays superexponentially fast with $n$. We can consequently define $g_1$ as $\sum_{n \geq 0} L^n(\tilde{w})$. We know that $g_1$ is continuous and solve \eqref{eq:reduced_contracting_equation} on $\widetilde{V}$. We want to prove that $g_1$ is smooth.

For $m \geq 0$, let $C^m_b(\widetilde{V})$ denote the space of $C^m$ functions on $\widetilde{V}$ with bounded derivatives of order up to $m$. For every $m \geq 0$, the operator $L$ is bounded on $C^{m+1}_b(\widetilde{V})$ and thus $\n{L^n(\tilde{w})}_{C_b^{m+1}}$ grows at most exponentially fast with $n$. Since $\n{L^n(\tilde{w})}_{C_b^0}$ decays superexponentially fast with $n$, we find by interpolation (we may assume that $\widetilde{V}$ is a smooth domain) that for every $m \geq 0$ the norm $\n{L^n(\tilde{w})}_{C_b^m}$ decays superexponentially fast with $n$, proving that $g_1$ is $C^m$. We proved that $g_1$ is smooth.
\end{proof}

We are finally ready for the proof of Lemma \ref{lemma:key_deformation}.

\begin{proof}[Proof of Lemma \ref{lemma:key_deformation}]
\underline{Step 1.} We start with some set up. Let $N_0 \geq 1$ be as in Lemma \ref{lemma:long_range_construction}. For $k = 1,\dots,m+1$, let $\tilde{x}_k$ be an antecedent of $x_k$ for $T$ that is not periodic (such a point always exist because each point has at most one periodic antecedent). If there is $k \in \set{1,\dots,m}$ such that $x_k$ is a fixed point for $T$, then we impose $\tilde{x}_k = x_{m+1}$. If $x_1$ is not a fixed point for $T$, then we impose $\tilde{x}_1 \notin E$. Let then $V_1,\dots,V_{m+1}$ be open neighbourhoods of $\tilde{x}_1,\dots,\tilde{x}_{m+1}$ respectively such that $T^2(V_1) = \dots = T^2(V_{m+1})$ and for $j = 1,\dots,m+1$ the map $T^2$ induces a diffeomorphims from $V_j$ to its image. Up to taking $V_1,\dots,V_{m+1}$ smaller, we assume that these sets are disjoint and that for $j = 1,\dots,m+1$ the set $V_j$ is contained in the set $V$ given by Lemma \ref{lemma:long_range_construction} applied with ``$y_0 = \tilde{x}_j$'' and ``$N = N_0$''. Eventually reducing even more the size of $V_1,\dots,V_{m+1}$, we may assume that the following properties hold:
\begin{enumerate}[label=\alph*)]
\item The set $\bigcup_{k =1}^{m+1}\overline{T^{-N_0 - 1}(V_k)}$ does not contain $x_1,\dots,x_{m+1}, T(x_1)$ and the antecedents of $x_1,\dots,x_{m+1}$ by $T$.\label{itema}
\item For $k = 1,\dots,m$, the point $x_k$ does not belong to $\bigcup_{n = 1}^{m+1} \bigcup_{\ell = 1}^{N_0} \overline{T^{-\ell}(V_n)}$.\label{itemb}
\item If $x_1$ is not a fixed point for $T$ then $E$ does not intersect $\bigcup_{n = 1}^{m+1}( \overline{T(V_n)} \cup \bigcup_{\ell = 0}^{N_0} \overline{T^{-\ell}(V_n)})$.\label{itemc}
\item If there is no fixed points among $x_1,\dots,x_{m+1}$ then $T(x_1)$ does not belong to $\bigcup_{n = 1}^{m+1}( \overline{T(V_n)} \cup \bigcup_{\ell = 0}^{N_0} \overline{T^{-\ell}(V_n)}$.\label{itemd}
\end{enumerate}
Let us justify that we can achieve these properties:
\begin{enumerate}[label=\alph*)]
\item Notice that $\bigcup_{k =1}^{m+1}\overline{T^{-N_0 - 1}(V_k)}$ is an arbitrarily small neighbourhood of the set $T^{-N_0 - 1}(\set{\tilde{x}_1,\dots,\tilde{x}_{m+1}})$, and the latter does not contain $x_1,\dots,x_{m+1}$, $T(x_1)$ and the antecedents of $x_1,\dots,x_{m+1}$ by $T$, since $\tilde{x}_1,\dots,\tilde{x}_{m+1}$ are not periodic for $T$.
\item The union $\bigcup_{n = 1}^{m+1} \bigcup_{\ell = 1}^{N_0} \overline{T^{-\ell}(V_n)}$ is an arbitrarily small neighbourhood of the set $\bigcup_{\ell = 1}^{N_0} T^{- \ell}(\set{\tilde{x}_1,\dots,\tilde{x}_{m+1}})$, and the latter does not contain $x_1,\dots$, $x_{m}$ because $\tilde{x}_1,\dots,\tilde{x}_{m+1}$ are not periodic.
\item The set $\bigcup_{n = 1}^{m+1}( \overline{T(V_n)} \cup \bigcup_{\ell = 0}^{N_0} \overline{T^{-\ell}(V_n)})$ is an arbitrarily small neighbourhood of $\set{x_1,\dots,x_{m+1}} \cup \bigcup_{\ell = 0}^{N_0} T^{- \ell}( \set{\tilde{x}_1,\dots,\tilde{x}_{m+1}})$. If $x_1$ is not a fixed point for $T$, then $E$ does not intersect $\set{x_1,\dots,x_{m+1}}$. Moreover, when $x_1$ is not a fixed point we imposed that $\tilde{x}_1 \notin E$, which implies that $E$ does not intersect $\set{\tilde{x}_1,\dots,\tilde{x}_{m+1}}$. The sets $E$ and $T^{-1}(\set{\tilde{x}_1,\dots,\tilde{x}_{m+1}})$ do not intersect because we imposed $\tilde{x}_k = x_{m+1}$ when $Tx_k = x_k$. Since $\tilde{x}_1,\dots,\tilde{x}_{m+1}$ are not periodic, the sets $E$ and $\bigcup_{\ell = 2}^{N_0} T^{- \ell} \set{\tilde{x}_1,\dots,\tilde{x}_{m+1}}$ never intersect.
\item As in the previous point, we use that $\bigcup_{n = 1}^{m+1}( \overline{T(V_n)} \cup \bigcup_{\ell = 0}^{N_0} \overline{T^{-\ell}(V_n)})$ is a small neighbourhood of $\set{x_1,\dots,x_{m+1}} \cup \bigcup_{\ell = 0}^{N_0} T^{- \ell}( \set{\tilde{x}_1,\dots,\tilde{x}_{m+1}})$. If there is no fixed point among $x_1,\dots,x_{m+1}$, then $T(x_1)$ does not belong to $\set{x_1,\dots,x_{m+1}}$. Since $\tilde{x}_1,\dots,\tilde{x}_k$ are not periodic points, $T(x_1)$ does not belong to $\bigcup_{\ell = 0}^{N_0} T^{- \ell}( \set{\tilde{x}_1,\dots,\tilde{x}_{m+1}})$.
\end{enumerate}

\underline{Step 2.} We want now to construct functions $(\mathfrak{g}_{j,k})_{\substack{1 \leq j \leq p+q \\ 1 \leq k \leq m}}$ to which we will apply Lemma \ref{lemma:long_range_construction}. These functions must have the following properties:
\begin{enumerate}[label=\arabic*)]
\item for $j = 1,\dots,p+q$ and $k = 1,\dots,m+1$, the function $\mathfrak{g}_{j,k}$ is supported in $V_k$ and has zero average;\label{item:first}
\item for $j = 1,\dots,p$ and $k = 1,\dots,m+1$, the function $\mathfrak{g}_{j,k}$ is real-valued;\label{item:second}
\item for $j = 1,\dots,p+q$, we have $\sum_{k = 1}^{m+1} \mathcal{L}_T^2 \mathfrak{g}_{j,k} = 0$;\label{item:third}
\item for $j = 1,\dots,p$ and $k = 1,\dots,m$, the restrictions of the functions $\mathfrak{f}_{j,k}$ and $\lambda_j(T)^{-N_0 -2} \sum_{\ell = 1}^{m+1} \mathfrak{g}_{j,\ell} + \lambda_j(T)^{-N_0-1} \sum_{\ell = 1}^{m+1} \mathcal{L}_T \mathfrak{g}_{j,\ell}$ to a neighbourhood of $x_k$ coincide. \label{item:fourth}
\end{enumerate} 
For this construction, we will distinguish three cases.

\noindent \textbf{Case 1: there is no fixed point for $T$ among $x_1,\dots,x_{m+1}$.}

In that case, for $\ell = 1,\dots,m+1$ and $k = 1,\dots,m$, the points $\tilde{x}_\ell$ and $x_k$ are distinct (otherwise, we would have $T x_\ell = T x_k = T \tilde{x}_\ell = x_\ell$). Hence, by imposing that for $\ell = 1,\dots,m+1$ and $j = 1,\dots,p+q$ the function $\mathfrak{g}_{j,\ell}$ is supported close enough to $\tilde{x}_\ell$, condition \ref{item:fourth} above becomes: for $j = 1,\dots,p+q$ and $k = 1,\dots,m$ the functions $\lambda_j(T)^{-N_0-1} \mathcal{L}_T\mathfrak{g}_{j,k}$ and $ \mathfrak{f}_{j,k}$ coincide near $x_k$. This is guaranteed by imposing that $\mathfrak{g}_{j,k} = \lambda_j(T)^{N_0 + 1} |\det DT| \mathfrak{f}_{j,k} \circ T$ near $\tilde{x}_k$.

Condition \ref{item:third} is obtained by setting $\mathfrak{g}_{j,m+1} = - |\det DT^2| \sum_{\ell = 1}^m \mathcal{L}_T^2 \mathfrak{g}_{j,\ell} \circ T^2$ on $V_{m+1}$ (and zero outside) for $j = 1,\dots,p+q$. Condition \ref{item:second} is simple to get (take the real part if needed). 

Concerning condition \ref{item:first}, the point that could be a difficult is the zero average property. It is obtained for $j = 1,\dots,p+q$ and $k = 1,\dots,m$ by modifying $\mathfrak{g}_{j,k}$ away from $\tilde{x}_k$  (so that we do not lose the fourth condition). It is then automatically satisfied for $k = m+1$ in view of our definition of $\mathfrak{g}_{j,m+1}$.

\noindent \textbf{Case 2: there is a fixed point for $T$ among $x_1,\dots,x_m$.}

Up to relabelling, assume that $x_1$ is the fixed point. Recall that in that case, we imposed $\tilde{x}_1 = x_{m+1}$. Notice then that condition \ref{item:fourth} does not include the case $k = m+1$. Hence, the reasoning above still applies.

\noindent \textbf{Case 3: $x_{m+1}$ is a fixed point for $T$.}

If $x_{m+1}$ has an antecedent that is not among $x_1,\dots,x_{m+1}$, then we can take $\tilde{x}_{m+1} \notin \set{x_1,\dots,x_{m+1}}$ and work as above. However, it may happen that $x_1,\dots,x_{m+1}$ are the only antecedents of $x_{m+1}$. In that case, up to relabelling, we may assume that $\tilde{x}_{m+1} = x_m$. For $j =1,\dots,p+q$ and $k = 1,\dots,m-1$, we can still define $\mathfrak{g}_{j,k}$ by $\mathfrak{g}_{j,k} = \lambda_j(T)^{N_0 + 1} |\det DT| \mathfrak{f}_{j,k} \circ T$ near $\tilde{x}_k$. To get condition \ref{item:third}, we still impose $\mathfrak{g}_{j,m+1} = - |\det DT^2| \sum_{\ell = 1}^m \mathcal{L}_T^2 \mathfrak{g}_{j,\ell} \circ T^2$ on $V_{m+1}$. 

We need now to find (for $j = 1,\dots,p+q$) a function $\mathfrak{g}_{j,m}$ for which \ref{item:fourth} holds. The condition is that the functions $\mathfrak{f}_{j,m}$ and $\lambda_j(T)^{-N_0 - 2} \mathfrak{g}_{j,m+1} + \lambda_j(T)^{-N_0 - 1} \mathcal{L}_{T} \mathfrak{g}_{j,m}$ coincides near $x_m = \tilde{x}_{m+1}$. Using our definition of $\mathfrak{g}_{j,m+1}$, we want to achieve
\begin{equation*}
\begin{split}
& - |\det DT^2| \mathcal{L}_T^2 \mathfrak{g}_{j,m} \circ T^2 + \lambda_j(T) \mathcal{L}_T \mathfrak{g}_{j,m} \\ & \qquad \qquad \qquad \qquad = \lambda_j(T)^{N_0 + 2} \mathfrak{f}_{j,m} +  |\det DT^2| \sum_{\ell = 1}^{m-1} \mathcal{L}_T^2 \mathfrak{g}_{j,\ell} \circ T^2
\end{split}
\end{equation*}
near $x_m$. Since we search for $\mathfrak{g}_{j,m}$ supported near $\tilde{x}_m$, we can rewrite the relation above as
\begin{equation*}
\begin{split}
& \mathfrak{g}_{j,m} \circ (T^2_{|V_m})^{-1} \circ T^3 - \lambda_j(T) \frac{|\det DT^2| \circ (T^2_{|V_m})^{-1} \circ T^3}{|\det DT^2| \circ T |\det DT|} \mathfrak{g}_{j,m} \\ & \qquad \qquad = - \frac{|\det DT^2| \circ (T^2_{|V_m})^{-1} \circ T^3}{|\det DT^2| \circ T} \\ & \qquad \qquad \qquad \qquad \times  \left(\lambda_j(T)^{N_0 + 2} \mathfrak{f}_{j,m} \circ T +  |\det DT^2| \circ T \sum_{\ell = 1}^{m-1} \mathcal{L}_T^2 \mathfrak{g}_{j,\ell} \circ T^3\right)
\end{split}
\end{equation*}
near $\tilde{x}_m$. Notice then that the map $(T^2_{|V_m})^{-1} \circ T^3$ has an expanding fixed point at $\tilde{x}_m$ (it is conjugated to the fixed point of $T$ at $x_{m+1}$ by $T^2$). Let $G$ be the local inverse of $(T^2_{|V_m})^{-1} \circ T^3$ near $\tilde{x}_m$. The map $G$ has a contracting fixed point at $\tilde{x}_m$, and the relation we want for $\mathfrak{g}_{j,m}$ can be rewritten as
\begin{equation}\label{eq:contracting_equation}
\mathfrak{g}_{j,m} - h \mathfrak{g}_{j,m} \circ G = w \textup{ near } \tilde{x}_m,
\end{equation}
where
\begin{equation*}
h = \lambda_j(T) \frac{|\det DT^2|}{|\det DT^2| \circ T \circ G |\det DT| \circ G}
\end{equation*}
and
\begin{equation*}
\begin{split}
w = - \frac{|\det DT^2| }{|\det DT^2| \circ T \circ G} & \Bigg(\lambda_j(T)^{N_0 + 2} \mathfrak{f}_{j,m} \circ T \circ G \\ & \qquad \qquad+ |\det DT^2| \circ T \circ G \sum_{\ell = 1}^{m-1} \mathcal{L}_T^2 \mathfrak{g}_{j,\ell} \circ T^3 \circ G\Bigg).
\end{split}
\end{equation*}
Notice that $h(\tilde{x}_m) = \frac{\lambda_j(T)}{|\det DT(x_{m+1})|}$ has modulus strictly less than $1$, and thus $|h| < 1$ on a neighbourhood of $\tilde{x}_m$. Hence, the equation \eqref{eq:contracting_equation} is solved by Lemma \ref{lemma:contracting_fixed_point}. Thus we get a $\mathfrak{g}_{j,m}$ that guarantees the validity of \ref{item:third} and \ref{item:fourth}. The conditions \ref{item:first} and \ref{item:second} are dealt with as in the other cases.

\underline{Step 3.} Now that the $\mathfrak{g}_{j,k}$'s are constructed, for $k= 1,\dots,m+1$, we apply Lemma \ref{lemma:long_range_construction} to the family $(\mathfrak{g}_{j,k})_{1 \leq j \leq p +q}$ with ``$y_0 = \tilde{x}_k$ and $N = N_0$''. We end up with a vector field $X_k$ supported in $T^{-N_0 - 1}(V_k)$ and such that $\mathcal{L}_T^{N_0} P_T(X_k) f_{T,j} = \mathfrak{g}_{j,k}$ for $j = 1,\dots,p+q$. We define then $X = \sum_{k = 1}^{m+1} X_k$.

Let us check that this choice is consistent with the five properties we are searching for.

\ref{item:support} The vector field $X$ is supported in $\bigcup_{k = 1}^{m+1} T^{-N_0 - 1}(V_k)$. This set does not contain $x_1,\dots,x_{m+1}, T(x_1)$ and the antecedents of $x_1,\dots,x_{m+1}$ according to \ref{itema}. 

\ref{item:kernel} With $N = N_0 + 2$, we have for $j = 1,\dots,p+q$ that $\mathcal{L}_T^N P_T(X)f_{T,j} = \mathcal{L}_T^2(\sum_{k = 1}^{m+1} \mathfrak{g}_{j,k}) = 0$.

\ref{item:local_perturbation} Let $j \in \set{1,\dots,p+q}$ and $k \in \set{1,\dots,m}$, and write 
\begin{equation}\label{eq:forget_past}
\begin{split}
& \sum_{\ell = 0}^{N-1} \lambda_j(T)^{- \ell - 1} \mathcal{L}_{T}^\ell P_T(X) f_{T,j} \\ & \qquad \qquad =  \lambda_j(T)^{-N_0 - 2} \sum_{n = 1}^{m+1} \mathcal{L}_T(\mathfrak{g}_{j,n}) + \lambda_j(T)^{-N_0 - 1} \sum_{n = 1}^{m+1} \mathfrak{g}_{j,n} \\ & \qquad \qquad \qquad \qquad \qquad \qquad \qquad \qquad \qquad + \sum_{\ell = 0}^{N_0 - 1} \lambda_j(T)^{- \ell - 1} \mathcal{L}_T^\ell P_T(X) f_{T,j}.
\end{split}
\end{equation}
The sum of the first two terms in the right hand side coincide with $\mathfrak{f}_{j,k}$ on a neighbourhood of $x_k$ by \ref{item:fourth}. The last sum is supported in $\bigcup_{n= 1}^{m+1} \bigcup_{\ell = 1}^{N_0} \overline{T^{-\ell}(V_n)}$ which is away from $x_k$ according to \ref{itemb}.

\ref{item:we_need_more} Assume that $x_1$ is not a fixed point. For $j = 1,\dots,p+q$, the function $\sum_{\ell = 0}^{N-1} \lambda_j(T)^{- \ell - 1} \mathcal{L}_T^\ell P_T(x) f_{T,j}$ is supported in $\bigcup_{n= 1}^{m+1}(T(V_n) \cup \bigcup_{\ell = 0}^{N_0} T^{-\ell}(V_n))$, which is away from $E$ according to \ref{itemc}. 

\ref{item:not_too_much} Assume that there is no fixed point for $T$ among $x_1,\dots,x_{m+1}$. For $j = 1,\dots,p+q$, the function $\sum_{\ell = 0}^{N-1} \lambda_j(T)^{- \ell - 1} \mathcal{L}_T^\ell P_T(x) f_{T,j}$ is supported in $\bigcup_{n= 1}^{m+1}(T(V_n) \cup \bigcup_{\ell = 0}^{N_0} T^{-\ell}(V_n))$, which is away from $T(x_1)$ according to \ref{itemd}. 
\end{proof}

\section{Proof of Theorem \ref{theorem:generic_morse}}\label{section:first_generic_properties}

We have enough material to prove Theorem \ref{theorem:generic_morse}. Indeed, with Lemma \ref{lemma:key_deformation}, the proof of Theorem \ref{theorem:generic_morse} is reduced to standard transversality arguments (see for instance \cite[\S 5]{laudenbach_cours}).

\begin{proof}[Proof of Theorem \ref{theorem:generic_morse}]
That $\mathcal{U}_\delta$ is open is a consequence of Proposition \ref{proposition:stability_resonances}. Let us prove that it is dense. Let $U$ be an open subset of $\Exp(M)$. We want to prove that $U \cap \mathcal{U}_\delta \neq \emptyset$. According to Theorem \ref{theorem:generic_simple}, there is $F_0 \in U$ such that all resonances for $F_0$ of modulus larger than or equal to $\delta$ are simple. Let $\lambda_1,\dots,\lambda_p$ be the real resonances for $F_0$ that have absolute value larger than or equal to $\delta$ and $\lambda_{p+1},\dots,\lambda_{p+q}$ be the non-real resonances for $F_0$ with modulus larger than or equal to $\delta$ and positive imaginary part. Let then $U_0 \subseteq U$ be a neighbourhood of $F_0$ in $\Exp(M)$ as in \S \ref{section:key_lemma}. In the rest of the proof, we will use the notations introduced in the beginning of \S \ref{section:key_lemma}. Up to taking $U_0$ smaller, we assume that for every $T \in U_0$, the resonances of modulus larger than or equal to $\delta$ of $T$ are among $\lambda_1(T),\dots,\lambda_{p+q}(T)$.

Let $d$ be the dimension of $M$. Consider a point $x_1$ in $M$. Since $F_0$ has degree at least $2$, there is a point $x_2$ in $M$ with $F_0(x_2) = F_0(x_1)$. Let $g_{x_1,1},\dots,g_{x_1,d}$ be a smooth functions from $M$ to $\mathbb{R}$ such that $\mathrm{d}g_{x_1,1}(x_1),\dots,\mathrm{d}g_{x_1,d}(x_1)$ span $\T_{x_1}^* M$ and $g_{x_1,1}(x_1) \neq 0$. For $n = 1,\dots,d$, let us apply Lemma \ref{lemma:key_deformation} with $T = F_0,m = 1$ and $\mathfrak{f}_{j,1} = g_{x_1,n}$ for $1 \leq j \leq p+q$. We denote by $X_{x_1,n}$ the resulting vector field. We construct a last vector field $X_{x_1,d+1}$ by applying Lemma \ref{lemma:key_deformation} with $T = F_0,m = 1$ and $\mathfrak{f}_{j,1} = i g_{x_1,1}$ for $p+1 \leq j \leq p+q$ (the value of $\mathfrak{f}_{j,1}$ for $1 \leq j \leq p$ does not matter). Notice (see Lemma \ref{lemma:derivative_simple_resonance}) that there is a neighborhood $V_{x_1}$ of $x_1$ in $M$ such that for every $x \in V$ and $1 \leq j \leq p$ the maps $\mathrm{d} H_{F_0,\lambda_j} P_{F_0}(X_{x_1,n}) f_{F_0,j} (x) \neq 0$ for $n = 1,\dots,d$ span $\T_{x_1}^* M$ and $H_{F_0,\lambda_j} P_{F_0}(X_{x_1,1}) f_{F_0,j} (x) \neq 0$. For $p+1 \leq j \leq p+q$, and $x \in V_{x_1}$, the numbers $H_{F_0,\lambda_j} P_{F_0}(X_{x_1,1}) f_{F_0,j} (x)$ and $H_{F_0,\lambda_j} P_{F_0}(X_{x_1,d+1}) f_{F_0,j} (x)$ span $\mathbb{C}$ as a real vector space.

Since $M$ is compact, there are $y_1,\dots,y_N$ such that $M = \bigcup_{k = 1}^N V_{y_k}$. For $k = 1,\dots,N$ and $n = 1,\dots,d$, let $(\phi_{k,n}^t)_{t \in \mathbb{R}}$ be the flow of $X_{y_k,n}$. For $\bar{t} = (t_{k,n})_{\substack{1 \leq k \leq N \\ 1 \leq n \leq d}} \in \mathbb{R}^{N \times d}$, let $T_{\bar{t}} = F_0 \circ \phi_{1,1}^{t_{1,1}} \circ \dots \phi_{1,d}^{t_{1,d}} \circ \dots \circ \phi_{N,1}^{t_{N,1}}\circ \dots \circ \phi_{N,d}^{t_{N,d}}$. There is $\epsilon_0$ such that for every $\bar{t} \in (-\epsilon_0,\epsilon_0)^{N \times d}$ the map $T_{\bar{t}}$ belongs to $U_0$. For $j = 1,\dots,p$, we can consequently define a map
\begin{equation*}
\begin{array}{ccccc}
h_j &:& M \times (-\epsilon_0,\epsilon_0)^N & \to &\mathbb{R} \\
 & & (x,\bar{t}) & \mapsto & f_{T_{\bar{t}},j}(x).
\end{array}  
\end{equation*}  
According to Remark \ref{remark:useful_smoothness}, this is a smooth map. Moreover, it follows from Lemma \ref{lemma:derivative_simple_resonance} and Remark \ref{remark:derivative_simple_resonance} that for every $x \in M$ the derivative of $h_j$ at $(x,0)$ is
\begin{equation*}
(u,(\tau_{k,n})_{\substack{1 \leq k \leq N \\ 1 \leq n \leq d}}) \mapsto \mathrm{d} f_{F_0,j}(x) \cdot u + \sum_{k = 1}^N \sum_{n = 1}^d \tau_{k,n} H_{F_0,\lambda_j} P_{F_0}(X_{y_k,n}) f_{F_0,j}(x).
\end{equation*}
This derivative is non-zero (the term corresponding to $k$ such that $x \in V_{y_k}$ and $n = 1$ is non-zero) and thus there is $\epsilon_1 > 0$ such that $0$ is a regular value of the restriction of $h_j$ to $M \times (-\epsilon_1,\epsilon_1)^{N \times d}$. By a standard consequence of Sard's Theorem (see e.g. \cite[\S 5.3.1]{laudenbach_cours}), the set of $\bar{t} \in (-\epsilon_1,\epsilon_1)^{N \times d}$ such that zero is a regular value of $h_j(\cdot,\bar{t}) = f_{T_{\bar{t},j}}$ has full Lebesgue measure.

To get the Morse property, we apply the same reasoning to the map $\mathrm{d}_x h_j$ (partial derivative with respect to the variable in $M$) which is a map from $M \times (-\epsilon_0,\epsilon_0)^{N \times d}$ to $\T^* M$. Our construction ensures that $\mathrm{d}_x h_j$ is transverse to the zero section of $\T^* M$ along $M \times \set{0}$.  Indeed, for $x \in M$, the derivative with respect to $\bar{t}$ of $\mathrm{d}_x h_j$ at $(x,0)$ is
\begin{equation*}
(\tau_{k,n})_{\substack{1 \leq k \leq N \\ 1 \leq n \leq d}} \mapsto \sum_{k = 1}^N \sum_{n = 1}^d \tau_{k,n} \mathrm{d} H_{F_0,\lambda_j} P_{F_0}(X_{y_k,n}) f_{F_0,j}(x)
\end{equation*}
and the covectors that appear in this sum span $\T_x^* M$. Hence, we get (up to taking $\epsilon_1$ smaller) that for almost all $\bar{t}$ in $(-\epsilon_1,\epsilon_1)^{N \times d}$ the derivative $\mathrm{d}f_{T_{\bar{t},j}}$ is transverse to the zero section, i.e. $f_{T_{\bar{t}},j}$ is a Morse function.

Of course, we can get the same $\epsilon_1$ for each $j \in \set{1,\dots,p}$ (just take the minimal one). We deal with the complex resonant states similarly (our construction of vector fields ensures that, for $p+1 \leq j \leq p+q$, the derivative of $(x,\bar{t}) \mapsto f_{T_{\bar{t},j}}(x)$ at a point of $M \times \set{0}$ is surjective onto $\mathbb{C}$). Hence, we find that there is $\epsilon_1 > 0$ such that for every $\bar{t} \in (-\epsilon_1,\epsilon_1)^{N \times d}$ the map $T_{\bar{t}}$ belongs to $\mathcal{U}_\delta$. In particular, $U \cap \mathcal{U}_\delta \neq \emptyset$, ending the proof of the theorem.
\end{proof}

\begin{remark}
In the proof of Theorem \ref{theorem:generic_morse}, we worked with all the resonant states (corresponding to resonances larger than or equal to $\delta$) simultaneously because Lemma \ref{lemma:key_deformation} is general enough to do so, but we could have dealt with them one by one.
\end{remark}

\section{General results}\label{section:general_results}

This section is devoted to the proofs of Theorems \ref{theorem:generic_density}, \ref{theorem:generic_real_resonance} and \ref{theorem:generic_complex_resonance}. Concerning Theorems \ref{theorem:generic_real_resonance} and \ref{theorem:generic_complex_resonance}, we will actually prove a more detailed result, that allows to work with several resonances simultaneously:

\begin{theorem}\label{theorem:general_statement}
Let $n_0 \in \set{0,1}$ and $n_1,n_2 \in \mathbb{N}$. Let $\delta \in (0,1)$. Let $U$ be an open and dense subset of\footnote{The topology on the space $PC^\infty(M,\mathbb{R})$ of lines in $C^\infty(M,\mathbb{R})$ is defined by identifying it with the quotient of $C^\infty(M,\mathbb{R}) \setminus \set{0}$ under the action of $\mathbb{R}^*$, as we did for $PC_0^\infty(M,\mathbb{R})$ and $PC_0^\infty(M,\mathbb{C})$.} $PC^\infty(M,\mathbb{R})^{n_0} \times PC_0^\infty(M,\mathbb{R})^{n_1} \times PC_0^\infty(M,\mathbb{C})^{n_2} \times \mathbb{R}^{n_1} \times \mathbb{C}^{n_2}$. Let $m = \dim M + n_0 +n_1 + 2 n_2$. Let $\mathcal{U}$ be the set of $T \in \Exp_{\geq m}(M)$ such that if $\lambda_{1},\dots,\lambda_{n_0+n_1 + n_2}$ are resonances for $T$ such that:
\begin{itemize}
\item if $n_0 = 1$ then $\lambda_1 = 1$;
\item $\lambda_{n_0+1},\dots,\lambda_{n_0 + n_1}$ are distinct real numbers, not equal to $1$ and of absolute value larger than or equal to $\delta$;
\item $\lambda_{n_0 + n_1 + 1},\dots,\lambda_{n_0 + n_1 + n_2}$ are non-real numbers of modulus larger than or equal to $\delta$, distinct from each other and from the complex conjugate of each other
\end{itemize}
then $\lambda_1,\dots,\lambda_{n_0+n_1 + n_2}$ are simple resonances and\footnote{Here, and in the rest of this section, we make a slight abuse of notation and use $E_{T,\lambda}$ to denote the real eigenspace asssociated with $\lambda$, when $\lambda$ is a real resonance. It would be more rigorous to write $E_{T,\lambda} \cap C^\infty(M,\mathbb{R})$, but it seemed to us that the simplification of the notations was worth the small ambiguity.} $$(E_{T,\lambda_1},\dots, E_{T,\lambda_{n_0 + n_1 + n_2}},\lambda_{1 +n_0},\dots,\lambda_{n_0 + n_1 +n_2}) \in U.$$ The set $\mathcal{U}$ is open and dense in $\Exp_{\geq m}(M)$.
\end{theorem}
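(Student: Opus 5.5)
Openness and density are handled separately; the introduction already indicates the route for density, through a local solvability lemma proved with Nash--Moser followed by a genericity lemma proved by transversality.

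\emph{Openness.} Fix $T_0 \in \mathcal{U}$. Only finitely many resonances of $T_0$ have modulus $\geq \delta$, and they are all simple. Proposition \ref{proposition:stability_resonances} gives a neighbourhood of $T_0$ in which the resonances of modulus $\geq \delta$ stay in small disks around those of $T_0$ and remain simple. One point needs care: a non-real resonance near the circle $|\lambda| = \delta$ or near $\mathbb{R}$ could enter the admissible region. Real resonances stay real and non-real ones stay non-real (Remark \ref{remark:real_valued}), so the only risk is at the modulus threshold. We handle it by noting that every admissible tuple for $T$ is close to an admissible tuple for $T_0$, after shrinking $\delta$ slightly for $T_0$. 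Proposition \ref{proposition:smoothness_spectral_projector} then shows that the map $T \mapsto (E_{T,\lambda_j(T)}, \lambda_j(T))$ is continuous into the projective spaces. Since $U$ is open, the tuple stays in $U$ for $T$ near $T_0$.

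\emph{Density.} Start from any open set of $\Exp_{\geq m}(M)$. By Theorem \ref{theorem:generic_simple} we may pick $F_0$ in it whose resonances of modulus $\geq \delta$ are all simple. For each admissible choice of resonances, consider the map $\Phi : T \mapsto (E_{T,\lambda_1(T)},\dots,\lambda_{n_0+1}(T),\dots)$ defined on a neighbourhood $U_0$ of $F_0$ as in \S\ref{section:key_lemma}. There are finitely many such choices, so we treat them one after another, intersecting open dense sets.

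\textbf{Step 1 (local solvability).} If at some $T$ the linear map $X \mapsto (P_T(X) f_{T,j})_j$ is surjective onto the appropriate product of $C_0^\infty$ spaces, then $\Phi$ is locally surjective near $T$. The linear part is the following. Using Lemma \ref{lemma:derivative_simple_resonance} and Remark \ref{remark:derivative_simple_resonance}, the derivative of $\Phi$ is $X \mapsto ([H_{T,\lambda_j} P_T(X) f_{T,j}], \nu_{T,j}(P_T(X) f_{T,j}))$. Given a target $(g_j, \mu_j)$, we solve $P_T(X) f_{T,j} = (\lambda_j - \mathcal{L}_T) g_j + \mu_j f_{T,j}$. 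The right inverse is then a tame linear map built from a pointwise right inverse of $X \mapsto (-\Div(f_{T,j}X))_j$ composed with local inverses of $\mathcal{L}_T$. To get it, I would write the target as $\mathcal{L}_T$ of a divergence, spread over the $\deg T$ preimages. I would then solve pointwise the linear system $Y_j(x) = \sum_k f_{T,j}(y_k) Z_k(x)$ over the preimages $y_k$, exactly as in Lemma \ref{lemma:long_range_construction} but with $N = 0$. This works wherever the vectors $\F_T(y_1),\dots,\F_T(y_{\deg T})$ span $\mathbb{R}^{n_0+n_1}\times\mathbb{C}^{n_2}$. Taking $\mathcal{L}_T$ of the resulting divergence reproduces the target. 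Nash--Moser (Hamilton's theorem for tame maps with a smooth tame family of right inverses) then upgrades the tame right inverse of the derivative to a local right inverse of $\Phi$. Since $U$ is dense, we can move the tuple into $U$ by a small perturbation of $T$.

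\textbf{Step 2 (genericity of the spanning condition).} This is the main obstacle. We must show that for generic $T$, at every $x \in M$, the values $\F_T$ at the preimages $T^{-1}(x)$ span $\mathbb{R}^{n_0+n_1+2n_2}$ over $\mathbb{R}$. Failure means the $\deg T \geq m$ vectors lie in a hyperplane. I would stratify by fixing a subset of $n_0+n_1+2n_2$ preimages and consider the map $(x,T) \mapsto \det(\F_T(y_1(x)),\dots)$, or more robustly the joint map $x \mapsto (\F_T(y_k(x)))_k$. This map lands in the configurations of rank deficiency at least one, whose codimension is $\deg T - (n_0+n_1+2n_2) + 1 \geq \dim M + 1$. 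If the map is transverse, the preimage of that set in the $\dim M$-dimensional space is empty. Transversality is obtained through finite-dimensional families $T_{\bar t}$ as in the proof of Theorem \ref{theorem:generic_morse}. Lemma \ref{lemma:key_deformation} allows the first-order variations of all $f_{T,j}$ to be prescribed independently near $m$ of the $m+1$ preimages of a common point. Items (iv) and (v) keep the remaining preimages and $T(x_1)$ unperturbed, which matters when preimages coincide with orbit points. Fixed points among the preimages need the separate cases of that lemma, and I expect checking these degenerate configurations to be the delicate part.

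Finally, the resonance $1$ (the case $n_0=1$) fits the same scheme. Its resonant state lies in $PC^\infty(M,\mathbb{R})$ and the eigenvalue is fixed, so it imposes no eigenvalue constraint and accounts for the $n_0$ in the degree count.
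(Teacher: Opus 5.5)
Your openness argument and Step 1 agree with the paper (Lemmas \ref{lemma:local_solvability} and \ref{lemma:inverse_final_version}, then Hamilton's theorem). Step 2, however, has a genuine gap in the dimension count.

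Your codimension $\deg T-(n_0+n_1+2n_2)+1$ is that of rank-deficient configurations of the values of $\F_T$ at \emph{all} $\deg T$ preimages of $x$. Transversality of your joint map therefore needs the first-order variation of $\F_T$ to be prescribable independently at every preimage. Lemma \ref{lemma:key_deformation} does not give this. It prescribes the variation at all but one of the common-image points it is given; the spare point is used up to achieve $\mathcal{L}_T g=0$, which is what makes $H_{T,\lambda_j(T)}$ computable.

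This loss is structural. When $x$ is not fixed, item (v) makes the variation vanish near $x$. Differentiating $\lambda_j f_{T,j}(x)=\sum_{Ty=x}f_{T,j}(y)/|\det DT(y)|$ then forces a linear relation among the variations at the preimages. With only $\deg T-1$ preimages under control, the rank-deficient stratum has codimension $\deg T-(n_0+n_1+2n_2)$. This equals $\dim M$ exactly in the minimal case $\deg T=\dim M+n_0+n_1+2n_2$, and degree is locally constant, so that case cannot be perturbed away. Transversality then gives only a \emph{finite} bad set, not an empty one. As written, your argument proves the theorem with $m$ replaced by $m+1$. Remark \ref{remark:sketch_proof} points out exactly this obstruction.

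The missing idea is the second stage in the proof of Lemma \ref{lemma:generic_solvability}. Write $p=n_0+n_1$, $q=n_2$.
\begin{enumerate}
\item Use only $\dim M+p+2q-1$ preimages in $\mathcal{G}_{T,\ell}$ and make it transverse to every rank stratum $N_k$. This makes the bad set $B(T)$ finite, with $|B(T)\cap K|$ upper semicontinuous (Lemmas \ref{lemma:transversality} and \ref{lemma:usc}).
\item Take $T$ with $|B(T)|$ minimal. A bad point $x_0(T)$ then persists and moves continuously with $T$.
\item By the eigenvalue equation, $\F_T(x_0(T))$ lies in the span of the rescaled values $\widetilde{\F}_T$ at $p+2q-1$ spanning preimages, that is, $\mathcal{H}_T(x_0(T))\in\mathcal{N}$.
\item Make $\mathcal{H}_T$ transverse to $\mathcal{N}$. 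Then deform $T$ so that $\F_T$ is unchanged to first order near $x_0$ and near those preimages, using items (iv) and (v).
\item Transversality of $\mathcal{G}_{T,\ell}$ to $N_{p+2q-1}$ lets you prescribe $\frac{\mathrm{d}}{\mathrm{d}t}x_0(T_t)$ so that $\mathcal{H}_{T_t}(x_0(T_t))$ leaves $\mathcal{N}$. This contradicts minimality.
\end{enumerate}
Item (v) requires $x_0(T)$ not to be a fixed point of $T$, which needs its own perturbative argument (Step 2 there). That point, not the fixed-point cases inside Lemma \ref{lemma:key_deformation}, is the delicate part your proposal leaves unaddressed.
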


The idea behind the proof of Theorem \ref{theorem:general_statement} is the following. That $\mathcal{U}$ is open follows from Proposition \ref{proposition:stability_resonances}. In order to prove that $\mathcal{U}$ is open, we consider a map $T_0 \in \Exp(M)$ and associated resonances $\lambda_1,\dots,\lambda_{n_0 + n_1 + n_2}$ as in the statement of Theorem \ref{theorem:general_statement}. For $T$ a small perturbation of $T_0$, the resonances $\lambda_1,\dots,\lambda_{n_0 + n_1 + n_2}$ for $T_0$ are deformed into resonances $\lambda_1(T),\dots,\lambda_{n_0 + n_1 + n_2}(T)$. We would like to prove that the map
\begin{equation*}
\Phi : T \mapsto (E_{T,\lambda_1(T)},\dots,E_{T,\lambda_{n_0+n_1+n_2}(T)}, \lambda_{1 + n_0}(T),\dots,\lambda_{n_0 +n_1 + n_2}(T))
\end{equation*}
has a local right inverse near $\Phi(T_0)$. In \S \ref{subsection:local_solvability}, we study the surjectivity of the derivative of $\Phi$ at $T_0$ and use Nash--Moser theory to give a sufficient condition on $T_0$ for the existence of a right inverse for $\Phi$ near $T_0$ (Lemma \ref{lemma:local_solvability}). This condition is always satisfied when $n_0 = 1$ and $n_1 = n_2 = 0$, which proves Theorem \ref{theorem:generic_density}. In \S \ref{subsection:generic_local_solvability}, we prove that our sufficient condition for the existence of a right inverse for $\Phi$ is generically satisfied, provided the degree of $T_0$ is large enough (Lemma \ref{lemma:generic_solvability}).

Before studying the map $\Phi$, we make a few remarks on the infinite dimensional manifolds that are involved in Theorem \ref{theorem:general_statement} in \S \ref{subsection:tangent_spaces}.

\subsection{Infinite dimensional manifolds}\label{subsection:tangent_spaces}

The point of this section is not to give a full account of the notions of Nash--Moser theory that we are going to use. We will apply this theory as it is exposed in \cite{hamilton_ift}. In particular, we will use the same terminology as in this reference. We let the curious reader refer to \cite{hamilton_ift} for an in-depth exposition of Nash--Moser theory, including in particular the definitions of smooth tame map, tame Fréchet space and tame Fréchet manifold that we will use below.

The point of this subsection is to clarify certain facts about the structure of tame Fréchet manifolds of certain sets involved in the following subsections. 

\subsubsection{The space of expanding maps}

We already discussed the structure of Fréchet manifold of $\Exp(M)$ in \S \ref{subsection:linear_response_theory}. The same process endows $\Exp(M)$ with a structure of tame Fréchet manifold: $\Exp(M)$ is an open subset of $C^\infty(M,M)$, and the latter has a standard structure of tame Fréchet manifold, see \cite[Corollary II.2.3.2]{hamilton_ift}. As in \S \ref{subsection:linear_response_theory}, if $F_0 \in \Exp(M)$, then we can define a map $X \mapsto F_X$ by \eqref{eq:coordinate_expm}. This map is a smooth tame diffeomorphism from a neighbourhood of $0$ in $\Gamma(\T M)$ and a neighbourhood of $F_0$ in $\Exp(M)$. Actually, considering the proof of \cite[Corollary II.2.3.2]{hamilton_ift}, we see that we could use such maps to define an atlas for $\Exp(M)$.

As explained in \S \ref{subsection:linear_response_theory}, the derivative of $X \mapsto F_X$ at $0$ induces an identification between $\Gamma(\T M)$ and $\T_{F_0} M$. Hence, we have an identification of $\Exp(M) \times \Gamma(\T M)$ with $\T \Exp(M)$. In coordinates of the form $X \mapsto F_X$, the identification $\T \Exp(M) \simeq \Exp(M) \times \Gamma(\T M)$ is given by nonlinear partial differential operators, which are smooth tame maps according to \cite[Corollary II.2.2.7]{hamilton_ift} and the remark following it. Consequently, $\T \Exp(M)$ identifies with $\Exp(M) \times \Gamma(\T M)$ as a smooth tame vector bundle (which is crucial when checking the hypotheses of \cite[Theorem III.1.1.3]{hamilton_ift}). We will systematically use this identification in the rest of the section.

\subsubsection{Spaces $PC_0^\infty(M,\mathbb{K})$}

The other spaces that we need to consider are $PC_0^\infty(M,\mathbb{K})$ for $\mathbb{K} = \mathbb{R}$ or $\mathbb{C}$ and $PC^\infty(M,\mathbb{R})$. We recall that we endowed $PC_0^\infty(M,\mathbb{K})$ with a topology by identifying it with the quotient of $C_0^\infty(M,\mathbb{K}) \setminus \set{0}$ under the action of $\mathbb{K}^*$. Let us now endow $PC_0^\infty(M,\mathbb{K})$ with a structure of tame Fréchet manifold. We will do it through the use of ``affine'' charts.

Consider $f_0 \in C_0^\infty(M,\mathbb{K}) \setminus \set{0}$ and a continuous $\mathbb{K}$-linear form $l_0$ on the space $C_0^\infty(M,\mathbb{K})$ such that $l_0(f_0) \neq 0$. Define then the map
\begin{equation}\label{eq:affine_parametrization}
\begin{array}{ccccc}
\Psi_{f_0,l_0} & : & \ker l_0 & \to & PC_0^\infty(M,\mathbb{K}) \\
 & & g & \mapsto & [f_0 + g].
\end{array}
\end{equation}
Notice that $\Psi_{f_0,l_0}$ is a homeomorphism on its image, that we shall denote by\footnote{The set $\mathfrak{U}_{l_0}$ indeed only depends on $l_0$: it is the set of lines in $C_0^\infty(M,\mathbb{K})$ on which the restriction of $l_0$ is non-trivial.} $\mathfrak{U}_{l_0}$. We claim that $\mathcal{A} = \set{(\mathfrak{U}_l,\Psi_{f,l}^{-1}) : f \in C_0^\infty(M,\mathbb{K}), l \in C_0^\infty(M,\mathbb{K})^*, l(f) \neq 0}$ is an atlas of tame Fréchet manifold for $PC_0^\infty(M,\mathbb{K})$.

Indeed, $\ker l_0$ is a tame direct summand of $C^\infty(M,\mathbb{K})$ and thus a tame Fréchet space. Moreover, if $f_1 \in C_0^\infty(M,\mathbb{K})$ and $l_1 \in C_0^\infty(M,\mathbb{K})^*$ are such that $l_1(f_1) \neq 0$ the change of coordinates $\Psi_{f_1,l_1}^{-1} \circ \Psi_{f_0,l_0}$ is defined on the open set $\set{g \in \ker l_0 : l_1(g) \neq - l_1(f_0)} = \Psi_{f_0,l_0}^{-1}(\mathfrak{U}_{l_0} \cap \mathfrak{U}_{l_1})$ by $g \mapsto \frac{l_1(f_1)}{l_1(f_0 + g)}(f_0 +g) - f_1$. This is a smooth tame map as a composition of smooth tame maps.

Notice that in the chart $(\mathfrak{U}_{l_0},\Psi_{f_0,l_0}^{-1})$, the projection $C_0^\infty(M,\mathbb{K}) \setminus \set{0} \to PC_0^\infty(M,\mathbb{K})$ becomes $f \mapsto \frac{l_0(f_0)}{l_0(f)}f - f_0$ from $\set{ f \in C_0^\infty(M,\mathbb{K}) \setminus \set{0} : l_0(f) \neq 0}$ to $\ker l_0$. We may parametrize $\set{ f \in C_0^\infty(M,\mathbb{K}) \setminus \set{0} : l_0(f) \neq 0}$ by $\mathbb{K}^* \times \ker l_0$ using the map $(\lambda,g) \mapsto \lambda(f_0 + g)$. Using these coordinates, the projection $C_0^\infty(M,\mathbb{K}) \setminus \set{0} \to PC_0^\infty(M,\mathbb{K})$ turns into the map $(\lambda,g) \mapsto g$. Hence, the projection $C_0^\infty(M,\mathbb{K}) \setminus \set{0} \to PC_0^\infty(M,\mathbb{K})$ is a smooth tame map and a submersion, which justifies our choice of tame Fréchet manifold structure on $PC_0^\infty(M,\mathbb{K})$. It follows also that the tangent space to $PC_0^\infty(M,\mathbb{K})$ at $[f]$ may be identified with the kernel of the derivative of the projection $C_0^\infty(M,\mathbb{K}) \setminus \set{0} \to PC_0^\infty(M,\mathbb{K})$ at $f$, that is with $C_0^\infty(M,\mathbb{K}) / \langle f \rangle$.

The space $PC^\infty(M,\mathbb{R})$ also plays a small role in Theorem \ref{theorem:general_statement}. Its structure of tame Fréchet manifold may be described as above (taking $\mathbb{K} = \mathbb{R}$ and dropping the index $0$).

\subsection{Sufficient condition for the existence of a right inverse}\label{subsection:local_solvability}

In this section, we use the setting and notations of \S \ref{section:key_lemma}. If $1$ is among $\lambda_1,\dots,\lambda_p$ then we assume that $\lambda_1 = 1$. In that case, we set $p_0 = 1$ and $p_1 = p-1$. Otherwise, we set $p_0 = 0$ and $p_1 = p$. Consider the map
\begin{equation}\label{eq:map_phi}
\begin{array}{ccccc}
\Phi & : & U_0 & \to & P C^\infty(M,\mathbb{R})^{p_0} \times P C_0^\infty(M,\mathbb{R})^{p_1} \times PC_0^\infty(M,\mathbb{C})^{q} \times \mathbb{R}^{p_1} \times \mathbb{C}^q \\
    & & T & \mapsto & (E_{T,\lambda_1(T)},\dots,E_{T,\lambda_{p+q}(T)}, \lambda_{1 + p_0}(T),\dots,\lambda_{p+q}(T)).
\end{array}
\end{equation}
We recall that for $T \in U_0$, the map $\F_T : M \mapsto \mathbb{R}^p \times \mathbb{C}^q$ is defined by $\F_T(x) = (f_{T,j}(x))_{1 \leq j \leq p+q}$ for $x \in M$. The main result of this subsection is:

\begin{lemma}\label{lemma:local_solvability}
Let $T_0 \in U_0$. Assume that for every point $x$ in $M$ the family $(\F_T(y))_{y \in T_0^{-1}(\set{x})}$ spans $\mathbb{R}^p \times \mathbb{C}^q$. Then there is a neighbourhood $V_0$ of $\Phi(T_0)$ in $P C^\infty(M,\mathbb{R})^{p_0} \times P C^\infty(M,\mathbb{R})^{p_1} \times PC^\infty(M,\mathbb{C})^{q} \times \mathbb{R}^{p_1} \times \mathbb{C}^q$ and a smooth tame map $\Psi : V_0 \to U_0$ such that $\Phi(\Psi(\alpha)) = \alpha$ for every $\alpha \in V_0$.
\end{lemma}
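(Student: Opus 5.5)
We will apply Hamilton's Nash--Moser theorem in its surjective form: a smooth tame map whose derivative is surjective and admits a smooth tame family of right inverses has local smooth tame right inverses (\cite[Theorem III.1.1.3]{hamilton_ift}, or its corollary on right inverses). The hypothesis is open in $T$: if the family $(\F_{T_0}(y))_{y\in T_0^{-1}(\set{x})}$ spans for every $x$, then, by compactness of $M$ and continuity of $T\mapsto f_{T,j}$ in $C^0$, the same holds for $\F_T$ and $T^{-1}(\set{x})$ for all $T$ in a neighbourhood $U_1$ of $T_0$. (We read the hypothesis as concerning $\F_{T_0}$.) So it suffices to show three things on $U_1$: that $\Phi$ is smooth tame, that $D\Phi(T)$ is surjective, and that we can build a right inverse $(T,v)\mapsto \mathcal{R}(T)v$ which is smooth tame in $(T,v)$.

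Smooth tameness of $\Phi$ should come from the formula $f_{T,j}=\frac{1}{2i\pi}\int_{\partial\mathbb{D}(\lambda_j,\epsilon)}R_T(z)f_{F_0,j}\,\mathrm{d}z$ and from $\lambda_j(T)=\nu_{T,j}(\mathcal{L}_Tf_{T,j})$ (or a trace formula as in Remark \ref{remark:useful_smoothness}), composed with the projection to $PC_0^\infty$, which is tame by \S\ref{subsection:tangent_spaces}. Tame estimates for $R_T(z)$ on the contour, with a fixed loss of derivatives, follow from the Lasota--Yorke / Gou\"ezel--Keller--Liverani bounds used in Lemma \ref{lemma:useful_linear_response}. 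Since $f_{F_0,j}$ is a fixed smooth function, this is routine but technical.

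For the derivative, Lemma \ref{lemma:derivative_simple_resonance} and Remark \ref{remark:derivative_simple_resonance} give, in the charts, $D\Phi(T)X=\bigl([H_{T,\lambda_j(T)}P_T(X)f_{T,j}]_j,(\nu_{T,j}(P_T(X)f_{T,j}))_{j>p_0}\bigr)$. Given a target $(g_j,\mu_j)$, we look for $X$ with
\[
P_T(X)f_{T,j}=u_j:=(\lambda_j(T)-\mathcal{L}_T)g_j+\mu_jf_{T,j}.
\]
Using \eqref{eq:full_inverse}, we have $H(\lambda-\mathcal{L})g=(I-\Pi)g$, which is $g$ modulo $f_{T,j}$, and $\nu(u_j)=\mu_j$. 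For $j=1=\lambda_1$ with $p_0=1$ we take $\mu_1=0$, so that $u_1$ has zero mean. The problem thus reduces to solving $-\mathcal{L}_T(\Div(f_{T,j}X))=u_j$ simultaneously for all $j$, with data $u_j$ of zero average; indeed $u_j$ has zero average since $\mathcal{L}_T$ preserves integrals and the $g_j$ and $f_{T,j}$ with $j>p_0$ are in $C_0^\infty$. We solve this in two steps.

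First, choose tamely vector fields $Y_j$ (complex for $j>p$) with $\Div Y_j=-u_j$, using a fixed right inverse of the divergence on zero-mean functions, for instance $Y_j=-\nabla\Delta^{-1}u_j$, which is tame. Second, we seek $X$ with
\[
\mathcal{L}_T(f_{T,j}X)=Y_j,
\]
where $\mathcal{L}_T$ now denotes the pushforward of vector densities. Concretely, at $x$ this reads
\[
\sum_{y\in T^{-1}x} f_{T,j}(y)\frac{DT(y)X(y)}{|\det DT(y)|}=Y_j(x).
\]
Since this is a pushforward, $\Div$ commutes with it, which gives the required identity. Writing $Z(y)=DT(y)X(y)/|\det DT(y)|\in\T_xM$, this is exactly the tensor decomposition \eqref{eq:tensor_product_decomposition}: we need $\sum_y\F_T(y)\otimes Z(y)=(Y_j(x))_j$ in $(\mathbb{R}^p\times\mathbb{C}^q)\otimes\T_xM$, which is solvable pointwise by the spanning hypothesis. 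To make the solution smooth and tame, choose it with a minimal norm formula: with Gram matrix $G(x)=\sum_y \F_T(y)\F_T(y)^*$, which is invertible and smooth, set $Z(y)=\F_T(y)^*G(x)^{-1}Y(x)$. This is locally a smooth algebraic (zeroth-order) expression in $\F_T$, $T$ and $Y$, so it is tame in $(T,Y)$, and it is smooth in $T$ through $f_{T,j}$. It also produces real $X$, since the real structure is handled by working over $\mathbb{R}$ as in Lemma \ref{lemma:linear_algebra}.

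The composite $\mathcal{R}(T)$ loses no derivatives beyond those of $\Delta^{-1}$ and $\mathcal{L}_T$, which is the regularizing phenomenon mentioned in the introduction. We then invoke Hamilton's theorem. The main obstacle we expect is rigorously verifying tameness of $\Phi$, i.e.\ tame estimates for the spectral projector uniformly in $T$; we would first try to derive these from a uniform Lasota--Yorke inequality in $C^k$ with constants growing tamely in $\n{T}_{C^{k+r}}$.
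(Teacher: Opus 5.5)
Your proposal is correct and follows the paper's route: smooth tameness of $\Phi$ via a tame Lasota--Yorke inequality for the resolvent, reduction of the linearized problem to $P_T(X)f_{T,j}=(\lambda_j(T)-\mathcal{L}_T)g_j+\mu_j f_{T,j}$, a tame right inverse of the divergence, the pushforward identity $\mathcal{L}_T(\Div X)=\Div(L_T X)$, a pointwise solution of the tensor equation \eqref{eq:tensor_product_decomposition} using the spanning hypothesis, and finally \cite[Theorem III.1.1.3]{hamilton_ift}. The differences are cosmetic. Your global Gram-matrix formula $Z(y)=\F_T(y)^{*}G(Ty)^{-1}Y(Ty)$ replaces the paper's local choice of $p+2q$ preimages forming a basis, glued by a partition of unity, and is slightly cleaner; the paper also spells out the chart factors $l_j(f_{T,j})$ that you leave implicit.
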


We will use Lemma \ref{lemma:local_solvability} in conjonction with Lemma \ref{lemma:generic_solvability} in order to prove Theorem \ref{theorem:general_statement} (and thus Theorems \ref{theorem:generic_real_resonance} and \ref{theorem:generic_complex_resonance}). Theorem \ref{theorem:generic_density} however follows directly from Lemma \ref{lemma:local_solvability} as we explain now.

\begin{proof}[Proof of Theorem \ref{theorem:generic_density}]
The set $\mathcal{U}$ is clearly open, let us prove that it is dense. Let $F_0$ be any element of $\Exp(M)$. We consider the setting of \S \ref{section:key_lemma} in the specific case $p = 1,q= 0$ and $\lambda_1 = 1$, so that the map $\Phi$ is just $T \mapsto E_{T,1}$. Notice that $\F_{F_0}$ is a multiple of the density of the absolutely continuous invariant measure for $F_0$. In particular, the function $\F_{F_0}$ never vanishes, and thus for every $y \in M$ the number $\F_{F_0}(y)$ spans $\mathbb{R}$. Hence, the hypothesis of Lemma \ref{lemma:local_solvability} is satisfied. 

Let $\Psi$ be given Lemma \ref{lemma:local_solvability}. We identify $\set{ \rho \in C^\infty(M,\mathbb{R}_+^*) : \int_M \rho \mathrm{d}x = 1}$ with an open subset of $PC^\infty(M,\mathbb{R})$ and consider a sequence $(\alpha_n)_{n \geq 0}$ of elements of $U \cap V_0$ that converges to $E_{F_0,1}$. For every $n \geq 0$, we have $\Psi(\alpha_n) \in \mathcal{U}$, and thus $F_0$ belongs to the closure of $\mathcal{U}$. We proved that $\mathcal{U}$ is dense.
\end{proof}

The proof of Lemma \ref{lemma:local_solvability} is based on an application of the inverse function theorem of Nash and Moser, or rather to the corresponding result for maps with surjective derivatives \cite[Theorem III.1.1.3]{hamilton_ift}. A crucial feature of this result as it is exposed in \cite{hamilton_ift} is the notion of smooth tame map (see section II.2 of this reference). Hence, we need to update the results presented in \S \ref{subsection:linear_response_theory} by replacing smoothness by ``smooth tameness''. We start with the analogues of Lemmas \ref{lemma:explicit_derivative} and \ref{lemma:useful_linear_response} respectively.

\begin{lemma}\label{lemma:transfer_smooth_tame}
The map
\begin{equation*}
\begin{array}{ccc}
\Exp(M) \times C^\infty(M,\mathbb{C}) & \to & C^\infty(M,\mathbb{C}) \\
(T, f) & \mapsto & \mathcal{L}_T f
\end{array}
\end{equation*}
is smooth tame.
\end{lemma}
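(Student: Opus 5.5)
We will prove the statement by reusing the local description of $\mathcal{L}_T$ from the proof of Lemma \ref{lemma:explicit_derivative} and checking that every operation involved is one of the standard smooth tame operations of \cite[II.2]{hamilton_ift}. Smooth tameness is a local property, so we fix $T_0 \in \Exp(M)$. We work in a chart $X \mapsto F_X$ given by \eqref{eq:coordinate_expm} around $T_0$, and it suffices to show that $(X,f) \mapsto \mathcal{L}_{F_X} f$ is smooth tame on a neighbourhood of $0$ in $\Gamma(\T M) \times C^\infty(M,\mathbb{C})$.

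\textbf{Localization.} Cover $M$ by finitely many small open sets $W$, on each of which the inverse branches $G_1(T,\cdot),\dots,G_m(T,\cdot)$ of $T$ are defined for $T$ near $T_0$ (with $m = \deg T_0$). Fix a partition of unity $(\chi_W)$ subordinate to this cover. Then $\mathcal{L}_T f = \sum_W \chi_W \sum_j f(G_j(T,\cdot))\,|\det D G_j(T,\cdot)|$. Multiplication by the fixed $\chi_W$ is linear and tame, and finite sums of smooth tame maps are smooth tame. It is therefore enough to treat each term $(T,f) \mapsto f\circ G_j(T,\cdot)\,|\det DG_j(T,\cdot)|$ on $W$.

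\textbf{Step 1: the inverse branches.} The key step is to show that $T \mapsto G_j(T,\cdot)$ is smooth tame into $C^\infty(W',M)$ for a slightly smaller $W' \Subset W$. We will write $G_j(T,\cdot) = (T|_{V_j})^{-1}$, where $V_j$ is a neighbourhood of the $j$-th preimage sheet on which $T_0$ is a diffeomorphism. We then invoke the fact that inversion of diffeomorphisms is smooth tame (\cite[Example II.2.3.5]{hamilton_ift}, or the proof that $\mathrm{Diff}(M)$ is a tame Lie group). Since we only have local diffeomorphisms between open sets, we will first compose with cutoffs and extend. Alternatively, we can run Hamilton's argument directly. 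The implicit equation $T(G(x)) = x$ has derivative in $G$ given by multiplication by $DT(G(x))$, which is invertible with smooth tame inverse. The Nash--Moser implicit function theorem is not needed here, because the equation is of the form ``composition'' and the estimates follow from the chain rule together with tame interpolation (as in Hamilton's treatment of $\varphi \mapsto \varphi^{-1}$). I expect this step to be the main technical obstacle, mostly in adapting the statement about global diffeomorphisms to local inverse branches in a way that is uniform over $T$ near $T_0$.

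\textbf{Step 2: composition and Jacobian.} The map $(f,G) \mapsto f\circ G$ is smooth tame (\cite[Example II.2.2.4/Corollary II.2.2.7]{hamilton_ift}: composition is smooth tame, with tame estimates of degree one in each variable). The map $G \mapsto |\det DG|$ is a nonlinear differential operator of order one. Its values stay in a region where $|\det|$ is smooth and positive, since $G$ is close to a local diffeomorphism. It is therefore smooth tame by \cite[Corollary II.2.2.7]{hamilton_ift}. Products are smooth tame, and composites of smooth tame maps are smooth tame. Putting these together with Step 1 proves the lemma. As a consistency check, the derivative obtained this way must agree with $P_T(X)f$ from Lemma \ref{lemma:explicit_derivative}; it is visibly tame, being of order one in $f$ and $X$ composed with $\mathcal{L}_T$.
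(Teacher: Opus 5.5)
Your argument is correct, but it takes a different route from the paper. The paper does not treat inverse branches as smooth tame maps here. It reads off the tame estimate $\n{\mathcal{L}_{T_X} f}_{C^r} \leq C_r \n{f}_{C^r} + C_r(1 + \n{X}_{C^{r+1}})\n{f}_{C^0}$ from the tame Doeblin--Fortet--Lasota--Yorke inequality (Lemma \ref{lemma:dfly} with $n = 1$). It then upgrades tameness to smooth tameness by induction, using the derivative formula $P_T(X)f = -\mathcal{L}_T(\Div(fX))$: each derivative is $\mathcal{L}_T$ composed with tame multilinear differential operators. So what you list at the end as a consistency check, combined with Lemma \ref{lemma:dfly}, is the paper's whole proof.

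Your decomposition into Hamilton's basic smooth tame operations (inversion, composition, nonlinear differential operators, products) is more self-contained and does not need Lemma \ref{lemma:dfly}. The paper's route is shorter because Lemma \ref{lemma:dfly} is required anyway in the proof of Lemma \ref{lemma:tame_resolvent}. There, the sharp top-order coefficient $C_r e^{n(\theta_0 - r\theta)}$ for iterates matters, and a purely compositional argument would not provide it.

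Note also that the step you expect to be the main obstacle disappears in the chart you chose. There $F_X = T_0 \circ \Phi_X$, with $\Phi_X(x) = \exp_x(X(x))$ a diffeomorphism of $M$ close to the identity. Hence $G_j(F_X,\cdot) = \Phi_X^{-1} \circ G_j(T_0,\cdot)$, and in fact $\mathcal{L}_{F_X} f = \mathcal{L}_{T_0}\big((f/|\det D\Phi_X|)\circ \Phi_X^{-1}\big)$. Since $\mathcal{L}_{T_0}$ is a fixed tame linear operator, smooth tameness of inversion and composition on the diffeomorphism group of $M$ gives the lemma directly. You then need no cutoffs, no partition of unity, and no local inverse branches.
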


\begin{lemma}\label{lemma:tame_resolvent}
Let $T_0 \in \Exp(M)$. Let $V$ be an open relatively compact subset of $\mathbb{C}^*$ such that $\overline{V} \cap \res(T_0) = \emptyset$. There is an open neighbourhood $U$ of $T_0$ in $\Exp(M)$ such that for every $T \in U$ the map $T$ has no resonance in $\overline{V}$. Moreover, the map $(T,z,f) \mapsto R_T(z) f$ is smooth tame from $U \times V \times C^\infty(M,\mathbb{C})$ to $C^\infty(M,\mathbb{C})$.
\end{lemma}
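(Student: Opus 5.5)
The plan is to follow the structure of the proof of Lemma \ref{lemma:useful_linear_response}, with ``smooth'' replaced by ``smooth tame''. The existence of $U$ is again Proposition \ref{proposition:stability_resonances}. For the tameness, I will use Hamilton's criterion \cite[Theorem II.3.1.1]{hamilton_ift} for families of inverses: if a smooth tame family of linear maps $L(T,z)$ is invertible for each parameter, and the family of inverses is \emph{tame}, then the family of inverses is smooth tame. Here I would apply it to $L(T,z) = z - \mathcal{L}_T$ on $C^\infty(M,\mathbb{C})$, which is smooth tame by Lemma \ref{lemma:transfer_smooth_tame} (affine in $z$). Its inverse is $R_T(z)$ for $T \in U$, $z \in V$. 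The whole question therefore reduces to one estimate: locally uniform tame bounds on $R_T(z)$.

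Concretely, fix $(T_1,z_1) \in U \times V$. I need a neighbourhood $\widetilde U \times \widetilde V$ and constants $r$, $b$, $C_n$ such that
\begin{equation*}
\n{R_T(z) f}_{C^n} \leq C_n \left( \n{f}_{C^{n+r}} + \n{T}_{n+r} \n{f}_{C^{b}} \right)
\end{equation*}
for all $n \geq b$, where $\n{T}_{n}$ is measured in a chart $X \mapsto F_X$ around $T_1$. I would obtain this in two stages.

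\textbf{Fixed-$k$ stage.} For each $k$, Proposition \ref{proposition:essential_spectral_radius} together with the Lasota--Yorke inequality
\begin{equation*}
\n{\mathcal{L}_T^N f}_{C^k} \leq C_k e^{-N k\theta} \n{f}_{C^k} + C_{k,N}\n{f}_{C^{k-1}}
\end{equation*}
applies. The constant $C_k$ and the rate $\theta$ are uniform on a $C^1$ neighbourhood of $T_1$, and $C_{k,N}$ is uniform on a $C^k$ neighbourhood. Combined with the Gouëzel--Keller--Liverani argument of \cite[Theorem A.4]{baladi_book2} (as in the proof of Lemma \ref{lemma:useful_linear_response}), this gives a uniform bound $\n{R_T(z)}_{C^k \to C^k} \leq C$ for a fixed low $k = b$.

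\textbf{Bootstrap stage.} To reach all higher $n$ with a tame dependence on $T$, I would write $R_T(z) f = u$ as $u = z^{-1}(f + \mathcal{L}_T u)$. I would then differentiate $n$ times using the local formula \eqref{eq:local_transfer}, which expresses $\mathcal{L}_T$ through inverse branches $G_j(T,\cdot)$. In this expression, the top-order derivatives of $u$ appear only through $D^n u \circ G_j \cdot (DG_j)^{\otimes n}$, whose operator norm is at most $C e^{-n\theta}$ uniformly near $T_1$. All other terms contain lower derivatives of $u$ multiplied by derivatives of $G_j$, and the latter are controlled tamely by $\n{T}_{n+1}$ via tame estimates for composition and inversion \cite[II.2.2]{hamilton_ift}. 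For $n$ large, $|z|^{-1}e^{-n\theta}$ is not small in general, so I would instead use an iterate $\mathcal{L}_T^N$ and the identity $u = \sum_{k<N} z^{-k-1}\mathcal{L}_T^k f + z^{-N}\mathcal{L}_T^N u$. The top-order coefficient then becomes $|z|^{-N} C e^{-nN\theta}$, which is less than $1/2$ once $n \geq b$ is chosen large relative to $\log(1/\inf_{\widetilde V}|z|)$. This allows the top derivative to be absorbed. Interpolation inequalities (Hamilton's tame interpolation) then handle the mixed products and yield the displayed tame bound, by induction on $n$ from the base case $n = b$.

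\textbf{Main obstacle.} I expect the main difficulty to be the uniformity in the bootstrap stage. The absorbing argument requires knowing a priori that $u \in C^n$, which is true because $R_T(z)$ maps $C^\infty$ into itself. It also requires that the lower-order terms contribute a factor $\n{T}_{n+r}$ only linearly against the low norm $\n{f}_{C^b}$. That linearity should follow from the standard tame estimates for composition, but it must be checked carefully for the inverse branches $G_j$. Once the tame bound holds, Hamilton's inverse-family theorem gives smooth tameness of $(T,z,f) \mapsto R_T(z)f$, with the same derivative formula as in Lemma \ref{lemma:useful_linear_response}.
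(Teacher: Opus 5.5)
Your proposal is correct and follows essentially the paper's route. You reduce to tameness via Hamilton's Theorem II.3.1.1, take a locally uniform low-norm resolvent bound from the Gouëzel--Keller--Liverani argument, and absorb the top-order term with a tame Lasota--Yorke inequality for an iterate combined with $R_T(z)f = \sum_{k<N} z^{-k-1}\mathcal{L}_T^k f + z^{-N}\mathcal{L}_T^N R_T(z)f$. The differences are cosmetic: the paper isolates the tame Lasota--Yorke bound as Lemma \ref{lemma:dfly}, removes intermediate norms by interpolation rather than by your induction on $n$, and only needs a $C^\ell \to C^0$ a priori bound as the low-norm input.
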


In order to prove these two results, we revisit a famous estimate.

\begin{lemma}[Tame Doeblin--Fortet--Lasota--Yorke inequality]\label{lemma:dfly}
Let $T_0$ be an expanding map on $M$. For $X \in \Gamma(\T M)$, let $T_X$ be the map defined by \eqref{eq:coordinate_expm} with $F_0$ replaced by $T_0$. There are $\theta,\theta_0 > 0$ and a neighbourhood $U$ of $0$ in $\Gamma(\T M)$ such that:
\begin{itemize}
\item for every $X \in U$, the map $T_X$ belongs to $\Exp(M)$;
\item for every $r \in \mathbb{N}^*$, there is a constant $C_r$ such that for every $n \geq 0$ there is a constant $C_{r,n}$ such that for every $f \in C^\infty(M,\mathbb{C})$ and $X \in U$ we have
\begin{equation}\label{eq:tame_lasota_yorke}
\n{\mathcal{L}_{T_X}^n f}_{C^r} \leq C_r e^{n(\theta_0 - r \theta)} \n{f}_{C^r} + C_{r,n}(1 + \n{X}_{C^{r+1}})\n{f}_{C^0}.
\end{equation}
\end{itemize}
\end{lemma}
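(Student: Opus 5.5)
We will prove the inequality by induction on $r$, with the constants chosen uniformly for $X$ in a small $C^1$ neighbourhood. This is the classical Lasota--Yorke argument; the extra work is to track how the $C^{r}$ norm of $\mathcal{L}_{T_X}^n f$ depends on $X$. We expect only the top-order coefficient to be dangerous, and it depends on $X$ only through $C^1$ data.

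\textbf{Choice of $U$.} Take $U$ to be a small ball around $0$ for the $C^1$ norm. Since the expansion constants in \eqref{eq:definition_expanding} are stable under $C^1$ perturbations, we can find $C,\theta$ such that every $T_X$ with $X\in U$ is expanding with these constants, uniformly. This gives the first item.

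\textbf{Local formula for the iterate.} Cover $M$ by finitely many charts, chosen as in the proof of Lemma \ref{lemma:explicit_derivative}, uniformly in $X\in U$. For each $n$, write
\begin{equation*}
\mathcal{L}_{T_X}^n f(x)=\sum_{\gamma} f(\gamma(x))\,J_\gamma(x),
\end{equation*}
where the sum runs over the inverse branches $\gamma$ of $T_X^n$ and $J_\gamma=|\det D\gamma|$. Differentiate this $r$ times using the Faà di Bruno formula. The terms split into two groups.

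\textbf{Top-order term.} This is $\sum_\gamma (D^r f)(\gamma(x))[D\gamma,\dots,D\gamma]\,J_\gamma(x)$. Its size is at most
\begin{equation*}
\n{D^r f}_{C^0}\,\sup_x |D\gamma|^r \sum_\gamma J_\gamma(x) \leq C^{-r}e^{-nr\theta}\, e^{n\theta_0}\,\n{f}_{C^r}.
\end{equation*}
Here we use $\sum_\gamma J_\gamma=\mathcal{L}^n_{T_X}1\le e^{n\theta_0}$, where $\theta_0$ bounds $\log\sup\mathcal{L}_{T_X}1$ uniformly on $U$. This bound requires only $C^1$ control of $X$, so it gives the first term of \eqref{eq:tame_lasota_yorke} with a constant $C_r$ independent of $n$. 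After enlarging $C_r$ once, we may absorb the chart constants and the dependence on the metric.

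\textbf{Lower-order terms.} Every other term involves derivatives of $f$ of order at most $r-1$, multiplied by derivatives of $\gamma$ and $J_\gamma$ of order at most $r$. These in turn involve derivatives of $T_X$ of order up to $r+1$, hence derivatives of $X$ of order up to $r+1$. By the chain rule and the definition \eqref{eq:coordinate_expm}, they are polynomials in $\partial^{\le r+1}X$. For fixed $n$, the tame estimates for composition (Hamilton's interpolation, Corollary II.2.2.7) give a bound of the form
\begin{equation*}
C_{r,n}\bigl(1+\n{X}_{C^{r+1}}\bigr)\n{f}_{C^{r-1}}.
\end{equation*}
The key point is that the $X$ dependence is linear in the highest norm. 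We get this by the standard tame interpolation inequality $\n{X}_{C^{k}}\n{X}_{C^{l}}\lesssim \n{X}_{C^{k+l-1}}\n{X}_{C^1}$, together with the bound $\n{X}_{C^1}\le 1$ on $U$.

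\textbf{Reducing $\n{f}_{C^{r-1}}$ to $\n{f}_{C^0}$.} For this we interpolate: $\n{f}_{C^{r-1}}\le \eta\n{f}_{C^r}+C_\eta\n{f}_{C^0}$.

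\textbf{Main obstacle.} The cross term $C_{r,n}(1+\n{X}_{C^{r+1}})\,\eta\,\n{f}_{C^r}$ cannot be absorbed into the first term with an $X$-independent $\eta$, because it carries the factor $\n{X}_{C^{r+1}}$. To get around this, we first prove a weaker bound of the form
\begin{equation*}
C_r e^{n(\theta_0-r\theta)}\n{f}_{C^r}+C_{r,n}\bigl(1+\n{X}_{C^{r+1}}\bigr)\n{f}_{C^{r-1}}.
\end{equation*}
We then handle $\n{f}_{C^{r-1}}$ in one of two ways. The first option is to write out the lower-order Faà di Bruno terms exactly and interpolate each product using the tame inequality $\n{g}_{C^{a}}\n{X}_{C^{b}}\lesssim \n{g}_{C^{a+b}}\n{X}_{C^0}+\n{g}_{C^0}\n{X}_{C^{a+b}}$, where $g$ ranges over derivatives of $f$. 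This isolates $\n{X}_{C^{r+1}}\n{f}_{C^0}$, while the remaining terms carry only $C^1$ norms of $X$ and can be absorbed into $C_r e^{n(\theta_0-r\theta)}\n{f}_{C^r}$. The second option, if this absorption fails, is to accept a slightly larger $\theta_0$. We expect this bookkeeping of the tame interpolation to be the main technical point of the proof.
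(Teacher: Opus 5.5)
\textbf{Gap in closing the lower-order estimate.} Your overall structure matches the paper's. You take $U$ small so that all $T_X$ are uniformly expanding, and you split $\partial^\alpha(\mathcal{L}^n_{T_X}f)$ into a top-order term and lower-order terms. The top-order term is bounded by $C_r e^{n(\theta_0-r\theta)}\n{f}_{C^r}$ using only $C^1$ control of $X$, and your version of that bound via $\mathcal{L}^n_{T_X}1\le e^{n\theta_0}$ is fine. The gap is exactly at the step you flag as the main obstacle, and neither of your options closes it.

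\textbf{Option one.} The inequality $\n{g}_{C^a}\n{X}_{C^b}\lesssim \n{g}_{C^{a+b}}\n{X}_{C^0}+\n{g}_{C^0}\n{X}_{C^{a+b}}$ pushes the $f$-factor up to $\n{f}_{C^r}$. If you count derivatives of $X$ literally it even reaches $\n{f}_{C^{r+1}}$, since $\gamma$ and $J_\gamma$ already depend on $DX$. The coefficient of this term is one of the $n$-dependent constants $C_{r,n}$. The lower-order terms contain the full factor $\sum_\gamma J_\gamma$ but fewer contracting factors coming from derivatives of $\gamma$, so they are genuinely larger than $e^{n(\theta_0-r\theta)}$. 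Such a term cannot be absorbed into $C_r e^{n(\theta_0-r\theta)}\n{f}_{C^r}$, whose constant must not depend on $n$.

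\textbf{Option two.} Enlarging $\theta_0$ does not help either. $\theta_0$ must be independent of $r$: the only use of the lemma (in Lemma \ref{lemma:tame_resolvent}) is that $\theta_0-r\theta\to-\infty$.

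\textbf{The missing idea.} First bounding the $X$-factors by $\n{X}_{C^{r+1}}$ and only then interpolating in $f$ (your ``weaker bound'') is the wrong order of operations. You must interpolate each lower-order product as a whole, with endpoints chosen so that $f$ never reaches order $r$. The paper writes these terms as products $\partial^{\beta_1}h\cdots\partial^{\beta_k}h\,\partial^{\gamma}f$, where $h$ runs over the entries of $(DT^n_X)^{-1}$, $|\gamma|\le r-1$, and the total order is at most $r$. It then interpolates between the endpoints $\n{(DT^n_X)^{-1}}_{C^1}\n{f}_{C^{r-1}}$ and $\n{(DT^n_X)^{-1}}_{C^r}\n{f}_{C^0}$.

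\textbf{Why $U$ must be $C^2$-bounded.} The first endpoint needs $\n{(DT^n_X)^{-1}}_{C^1}$, hence $\n{X}_{C^2}$, to be bounded. This is why $U$ is taken bounded in $C^2$ rather than a $C^1$ ball; the coefficient of $\n{f}_{C^{r-1}}$ is then independent of $X$. Tame estimates for composition and matrix inversion give $\n{(DT^n_X)^{-1}}_{C^r}\le C_{r,n}\bigl(1+\n{X}_{C^{r+1}}\bigr)$. Only after this is your interpolation $\n{f}_{C^{r-1}}\le\eta\n{f}_{C^r}+C_\eta\n{f}_{C^0}$ usable. Choose $\eta=\eta(r,n)$ with $C_{r,n}\eta\le C_r e^{n(\theta_0-r\theta)}$, and put $C_\eta$ into $C_{r,n}$.

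\textbf{Alternative keeping a $C^1$ ball.} You could keep a $C^1$ ball and use a Young-type split $\epsilon\n{h}_{C^0}\n{f}_{C^r}+C_\epsilon\n{h}_{C^r}\n{f}_{C^0}$ with $\epsilon=\epsilon(r,n)$ small; this works because $|\gamma|\le r-1$. Either way, some $n$-dependent smallness in front of the high $f$-norm, with an $X$-independent coefficient, is indispensable, and your sketch does not provide it.
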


\begin{remark}
In Lemma \ref{lemma:dfly}, we use the coordinates on $\Exp(M)$ given by \eqref{eq:coordinate_expm} because it is more convenient in order to state a tame estimate. It is crucial in Lemma \ref{lemma:dfly} that the neighbourhood $U$ of $T_0$ does not depend on $r$ and that the constants $C_r$ and $C_{r,n}$ do not depend on $X$. We did not specify a definition for the $C^r$ norm because it is irrelevant.
\end{remark}

\begin{proof}[Proof of Lemma \ref{lemma:dfly}]
By taking $U$ small enough, we ensure that for every $X \in U$ the map $T_X$ belongs to $\Exp(M)$ and satisfies \eqref{eq:definition_expanding} with constants $C$ and $\theta$ that do not depend on $X$. Moreover, we assume that $U$ is connected, which ensures that for every $X \in U$ the degree of $T_X$ coincides with the degree of $T_0$. We also assume that $U$ is $C^2$ bounded.

Let $r \in \mathbb{N}^*$. In the following $C_r$ denotes a constant that could be fixed at this point of the proof (i.e. it may depend on $r$ and $U$ only), but whose actual value may change from line to line. We will also consider a parameter $n$ and denote by $C_{r,n}$ a constant that could be fixed after the choice of $r$ and $n$ (i.e. it only depends on $U, r$ and $n$), but whose actual value may change from line to line.

Pick $f \in C^\infty(M,\mathbb{C})$. For $x \in M, X \in \Gamma(\T M)$ and $n \in \mathbb{N}$, we have
\begin{equation*}
\mathcal{L}_{T_X}^n f(x) = \sum_{T_X^n(y) = x} \frac{f(y)}{|\det D T_X^n(y)|}.
\end{equation*}
Consider $\alpha = (\alpha_1,\dots,\alpha_d) \in \mathbb{N}^d$ such that $|\alpha| \leq r$, where $d$ denotes the dimension of $M$. We can compute $\partial^\alpha(\mathcal{L}_{T_X}^n f)(x)$ for $x$ in some coordinate patches by iterated application of the chain rule and Leibniz formula. We find
\begin{equation}\label{eq:derivative_transfer_coordinates}
\begin{split}
& \partial^\alpha(\mathcal{L}_{T_X}^n f)(x) \\ & \qquad \qquad = \underbrace{\sum_{T_X^n(y) = x} \frac{\mathrm{d}^{|\alpha|} f(y) \cdot ((D T_X^n(y))^{-1} \cdot e_{j_1},\dots,(D T_X^n(y))^{-1} \cdot e_{j_{|\alpha|}} )}{|\det D T_X^n(y)|}}_{= A_{X,n,\alpha}f (x)} \\ & \qquad \qquad \qquad \qquad \qquad \qquad \qquad \qquad \qquad \qquad \qquad \qquad \qquad \qquad + B_{X,n,\alpha}f(x),
\end{split}
\end{equation}
where $(e_1,\dots,e_d)$ is the canonical basis for $\mathbb{R}^d$ and $j_1,\dots,j_{|\alpha|} \in \set{1,\dots,d}$ are such that $\alpha = e_{j_1} + \dots + e_{j_r}$. The term $B_{X,n,\alpha}f(x)$ is a sum of expressions of the form\footnote{In order to keep relatively simple notations, we decided to ignore the change of variable maps when working in coordinates in this proof. This is not an issue because we can cover $M$ by finitely many coordinates patches. Notice however that when we evaluate a term of the form \eqref{eq:other_terms_lasota_yorke} or \eqref{eq:other_terms_doeblin_fortet} the point $y$ will change coordinate patch (as $x$ moves) more and more often as $n$ goes to $+ \infty$. One could be afraid that this could make us lose uniformity in the some estimates (e.g. when applying an interpolation argument), but it is harmless eventually because we keep track of this dependence on $n$ in our notation and that in the end it only impacts the second term in \eqref{eq:tame_lasota_yorke}.}:
\begin{equation}\label{eq:other_terms_lasota_yorke}
\partial^{\beta_1} h_{i_1,j_1}(y) \dots \partial^{\beta_{|\alpha|}}h_{i_{|\alpha|},j_{|\alpha|}}(y) \partial^\gamma \left( \frac{f}{|\det D T_X^n|} \right)(y),
\end{equation}
where $y$ is an antecedent of $x$ by $T_X^n$, the $h_{i,j}(y)$'s are the entries of the matrix $D T_X^n(y)^{-1}$ and $\beta_1 + \dots + \beta_{|\alpha|} + \gamma = \alpha$ with $\gamma \neq \alpha$. Applying Leibniz rule to expand the last factor in \eqref{eq:other_terms_lasota_yorke}, we find that $B_{X,n,\alpha}f(x)$ is a sum of terms of the form:
\begin{equation}\label{eq:other_terms_doeblin_fortet}
g(y) \partial^{\beta_1} h_{i_1,j_1}(y) \dots \partial^{\beta_{|\alpha|+d}}h_{i_{|\alpha|+d},j_{|\alpha|+d}}(y) \partial^\gamma f(y)
\end{equation}
with $\beta_1 + \dots + \beta_{|\alpha|+d} + \gamma \leq \alpha$ and $\gamma \neq \alpha$. The factor $g(y)$ is a smooth function that comes from the comparison of the Euclidean metric and the metric on $M$ when computing the Jacobian determinant of $T_X^n$. By interpolation inequalities and recalling that $|\gamma| \leq r-1$, we can bound the term \eqref{eq:other_terms_doeblin_fortet} by\footnote{When we take a $C^k$ norm of $(DT_X^n)^{-1}$, we mean the norm of the map $y \mapsto (DT_X^n(y))^{-1}$ defined in a domain of a coordinates patch.}
\begin{equation*}
C_{r,n} \n{(DT_X^n)^{-1}}_{C^0}^{|\alpha| + d-1} \left( \n{(D T_X^n)^{-1}}_{C^1} \n{f}_{C^{r-1}} + \n{(D T_X^n)^{-1}}_{C^r} \n{f}_{C^0} \right).
\end{equation*}
Since $U$ is $C^2$ bounded, this quantity is less than
\begin{equation*}
C_{r,n}\left( \n{f}_{C^{r-1}} + \n{(DT_X^{n})^{-1}}_{C^r} \n{f}_{C^0} \right)
\end{equation*}
Since $y \mapsto (DT_X^n (y))^{-1}$ is the composition of $y \mapsto DT_X^n(y)$ and matrix inversion, using the chain rule and an interpolation argument as above (or more directly as in the proof of \cite[Lemma II.2.3.4]{hamilton_ift}), we can bound $\n{(DT_X^{n})^{-1}}_{C^r}$ by $C_{r,n}(1+\n{(DT_X^{n})}_{C^r})$. The map $T_X$ is also a composition of $X$ and a fixed smooth function, so that $\n{(DT_X^{n})}_{C^r} \leq C_{r,n}(1 + \n{X}_{C^{r+1}})$. Consequently, we get that the term $B_{X,n,\alpha}f(x)$ in \eqref{eq:derivative_transfer_coordinates} is bounded by 
\begin{equation*}
C_{r,n}\left( \n{f}_{C^{r-1}} + (1 + \n{X}_{C^{r+1}}) \n{f}_{C^0} \right).
\end{equation*}
The term $A_{X,n,\alpha} f(x)$ on the other hand can be bounded using \eqref{eq:definition_expanding} by
\begin{equation*}
C_r e^{- n (|\alpha|+d) \theta} (\deg T_0)^{n} \n{f}_{C^{|\alpha|}}.
\end{equation*}
Gathering the estimates above we get
\begin{equation*}
\begin{split}
\n{\mathcal{L}_{T_X}^n f}_{C^r} & \leq C_r e^{- n (r+d) \theta} (\deg T_0)^{n} \n{f}_{C^{r}} \\ & \qquad \qquad \qquad \qquad + C_{r,n} \n{f}_{C^{r-1}} + C_{r,n}(1 + \n{X}_{C^{r+1}}) \n{f}_{C^0}.
\end{split}
\end{equation*}
Using that $\n{f}_{C^{r-1}} \leq C_r e^{- n (r+d) \theta} (\deg T_0)^{n} \n{f}_{C^{r}} + C_{r,n} \n{f}_{C^0}$, we get rid of the term $C_{r,n} \n{f}_{C^{r-1}}$, and the result follows with $\theta_0 = \log \deg T_0$.
\end{proof}

Let us now prove Lemmas \ref{lemma:transfer_smooth_tame} and \ref{lemma:tame_resolvent}.

\begin{proof}[Proof of Lemma \ref{lemma:transfer_smooth_tame}]
It follows from Lemma \ref{lemma:dfly} that $(T,f) \mapsto \mathcal{L}_T f$ is tame. We recall that, using the identification of the tangent bundle of $\Exp(M)$ with $\Exp(M) \times \Gamma(\T M)$, the partial derivative of $(T,f) \mapsto \mathcal{L}_T f$ with respect to $T$ is $(T,f,X) \mapsto - \mathcal{L}_T(\Div(fX))$ (we do not need to consider the partial derivative with respect to $f$ since the operator that we study is linear in $f$). Notice that $(f,X) \mapsto \Div(fX)$ is tame linear and thus smooth tame. It follows that $(T,f,X) \mapsto - \mathcal{L}_T(\Div(fX))$ is tame. By induction, we find that $(T,f) \mapsto \mathcal{L}_T f$ is smooth tame. 
\end{proof}

\begin{proof}[Proof of Lemma \ref{lemma:tame_resolvent}]
The existence of $V$ is guaranteed by Lemma \ref{lemma:useful_linear_response}. In order to prove that $(T,z,f) \mapsto R_T(z) f$ is smooth tame, it follows from \cite[Theorem II.3.1.1]{hamilton_ift} that we only need to prove that $(T,z,f) \mapsto R_T(z) f$ is tame. 

Let $z_0 \in V$ and $\widetilde{T}_0 \in V$. We saw in the proof of Lemma \ref{lemma:useful_linear_response} that there are neighbourhoods $V_0$ of $z_0$ in $V$ and $\widetilde{U}_0$ of $\widetilde{T}_0$ in $U_0$ such that there are $C > 0$ and $\ell \in \mathbb{N}^*$ such that for every $z \in V_0, T \in \widetilde{U}$ and $f \in C^\infty(M,\mathbb{C})$ we have $\n{R_{T}(z)f}_{C^0} \leq C \n{f}_{C^\ell}$. Up to making $\widetilde{U}_0$ smaller, we may assume that it has the form $\widetilde{U}_0 = \set{ \widetilde{T}_X : X \in \widetilde{U}}$ where $\widetilde{U}$ is the neighbourhood of $0$ in $\Gamma(\T M)$ obtained by applying Lemma \ref{lemma:dfly} to $\widetilde{T}$ (and $\widetilde{T}_X$ defined by \eqref{eq:coordinate_expm} with $F_0$ replaced by $\widetilde{T}_0$).

For $X \in \widetilde{U},z \in V_0$ and $n \in \mathbb{N}^*$, let $G_{X,n}(z) = \sum_{k =0}^{n-1} z^{-k-1} \mathcal{L}_{\widetilde{T}_X}^k$, and notice that $I = G_{X,n}(z)(z - \mathcal{L}_{\widetilde{T}_X}) + z^{-n} \mathcal{L}_{\widetilde{T}_X}^n$. Consequently, for $r \in \mathbb{N}^*, n \in \mathbb{N}, X \in \widetilde{U}, z \in V_0$ and $f \in C^\infty(M,\mathbb{C})$ we have applying Lemma \ref{lemma:dfly}:
\begin{equation}\label{eq:vers_tame}
\begin{split}
\n{f}_{C^r} \leq \n{G_{X,n}(z)(z - \mathcal{L}_{\widetilde{T}_X})f}_{C^r} & + C_r \left(\frac{e^{\theta_0}}{|z|}\right)^{n} e^{-n r \theta}\n{f}_{C_r} \\ & \qquad \qquad + C_{r,n}(1 + \n{X}_{C^{r+1}}) \n{f}_{C^0}.
\end{split}
\end{equation}
Using Lemma \ref{lemma:dfly}, we get that\footnote{We use the same conventions concerning the constant $C_r$ and $C_{r,n}$ in this proof as in the proof of Lemma \ref{lemma:dfly}.}
\begin{equation*}
\begin{split}
\n{G_{X,n}(z)(z - \mathcal{L}_{\widetilde{T}_X})f}_{C^r} & \leq C_{r,n} \n{(z - \mathcal{L}_{\widetilde{T}_X})f}_{C^r} \\ & \qquad \qquad + C_{r,n}(1+\n{X}_{C^{r+1}}) \n{(z - \mathcal{L}_{\widetilde{T}_X})f}_{C^0}.
\end{split}
\end{equation*}
We can then impose $r \geq r_0$ with $r_0$ large enough so that $e^{\theta_0 - r_0 \theta}|z|^{-1} < 1/2$ for every $z \in V_0$. Hence, for $n$ large enough we have $C_r \left(\frac{e^{\theta_0}}{|z|}\right)^{n} e^{-n r \theta} \leq 1/2$, which allows us to get rid of the corresponding term in \eqref{eq:vers_tame}. Hence, we proved that for $r \in \mathbb{N}^*$ large enough there is a constant $C_r > 0$ such that for every $X \in \widetilde{U},z \in V_0$ and $f \in C^\infty(M,\mathbb{C})$ we have
\begin{equation*}
\n{f}_{C^r} \leq C_r \n{(z - \mathcal{L}_{\widetilde{T}_X})f}_{C^r} + C_r(1 + \n{X}_{C^{r+1}})\left( \n{(z- \mathcal{L}_{\widetilde{T}_X})f}_{C_0} + \n{f}_{C_0} \right). 
\end{equation*}
Replacing $f$ by $R_{\widetilde{T}_X}(z) f$ in this estimate yields:
\begin{equation*}
\begin{split}
\n{R_{\widetilde{T}_X}(z)f}_{C^r} & \leq C_r \n{f}_{C^r} + C_r(1 + \n{X}_{C^{r+1}}) \left(\n{f}_{C^0} + \n{R_{\widetilde{T}_X}(z)f}_{C_0} \right) \\ & \leq C_r \n{f}_{C^r} + C_r(1 + \n{X}_{C^{r+1}}) \n{f}_{C_\ell}.
\end{split}
\end{equation*}
This is the tame estimate we were looking for.
\end{proof}

Let us now consider the implication of Lemmas \ref{lemma:transfer_smooth_tame} and \ref{lemma:tame_resolvent} in terms of the map $\Phi$ from \eqref{eq:map_phi}.

\begin{lemma}\label{lemma:the_derivative}
The map $\Phi$ is smooth tame. Moreover, if $T_0 \in U_0$ then the derivative of $\Phi$ at $T_0$ is the map from $\Gamma(\T M)$ to $$(C^\infty(M,\mathbb{R})/ \langle f_{T,1} \rangle)^{p_0} \times \prod_{j = 1+p_0}^{p+q} C^\infty_0(M,\mathbb{K}_j) / \langle f_{T,j} \rangle \times \mathbb{R}^{p_1} \times \mathbb{C}^q,$$ where $\mathbb{K}_j = \mathbb{R}$ for $1 \leq j \leq p$ and $\mathbb{K}_j = \mathbb{C}$ for $p+1 \leq j \leq p + q$, given by
\begin{equation*}
X  \mapsto  (([H_{T_0,\lambda_j(T_0)} P_{T_0}(X) f_{T,j}])_{1 \leq j \leq p + q},(\nu_{T_0,j}(P_{T_0}(X)f_{T_0,j}))_{1+ p_0 \leq j \leq p+q}).
\end{equation*}
Here, we used the identifications of the different tangent spaces discussed in \S \ref{subsection:tangent_spaces}.
\end{lemma}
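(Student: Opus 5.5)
The plan is to write $\Phi$ locally as a composition of maps that are already known to be smooth tame, and then read off the derivative from Lemma \ref{lemma:derivative_simple_resonance} and Remark \ref{remark:derivative_simple_resonance}.

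\textbf{Smooth tameness of $\Phi$.} For each $j$, the map $T \mapsto f_{T,j} = \frac{1}{2i\pi}\int_{\partial \mathbb{D}(\lambda_j,\epsilon)} R_T(z) f_{F_0,j}\,\mathrm{d}z$ is smooth tame from $U_0$ to $C^\infty(M,\mathbb{C})$. To see this, cover the circle $\partial\mathbb{D}(\lambda_j,\epsilon)$ by finitely many relatively compact open sets $V$ avoiding $\res(F_0)$. On each of them, Lemma \ref{lemma:tame_resolvent} gives a smooth tame resolvent near any given $T$. Integrating over the contour preserves smooth tameness, using differentiation under the integral as in the proof of Proposition \ref{proposition:smoothness_spectral_projector}, with tame estimates uniform in $z$. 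Composing with the projection $C_0^\infty(M,\mathbb{K}_j)\setminus\{0\} \to PC_0^\infty(M,\mathbb{K}_j)$, which is smooth tame by \S\ref{subsection:tangent_spaces}, shows that $T \mapsto E_{T,\lambda_j(T)}$ is smooth tame. For $j \le p$ we use the real-valuedness of $f_{T,j}$ (Remark \ref{remark:real_valued}), and for $j = 1$ with $p_0 = 1$ we use the space $PC^\infty(M,\mathbb{R})$.

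For the eigenvalues, I would use the formula
\[
\lambda_j(T) = \frac{l(\mathcal{L}_T f_{T,j})}{l(f_{T,j})},
\]
where $l$ is a fixed measure with $l(f_{F_0,j}) \neq 0$, after shrinking $U_0$ so that the denominator stays nonzero. By Lemma \ref{lemma:transfer_smooth_tame}, $(T,f)\mapsto \mathcal{L}_T f$ is smooth tame, and $l$ is a continuous linear map into a finite-dimensional space. Hence $\lambda_j$ is smooth tame, because any smooth map into $\mathbb{C}$ built this way is tame with target finite-dimensional.

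\textbf{Derivative.} Take a curve $T_t$ through $T_0$ with tangent $X$, under the identification $\mathcal{T}\Exp(M)\simeq \Exp(M)\times\Gamma(\mathcal{T}M)$. Lemma \ref{lemma:derivative_simple_resonance} gives the derivative of $f_{T_t,j}$. Its second term $\Pi_{T_0,\lambda_j(T_0)} P_{T_0}(X) H_{T_0,\lambda_j(T_0)} f_{F_0,j}$ is a multiple of $f_{T_0,j}$. Taking the differential of the projection in the affine chart $\Psi_{f,l}$, that differential is the quotient map to $C_0^\infty/\langle f_{T_0,j}\rangle$ (resp.\ $C^\infty/\langle f_{T_0,1}\rangle$), so this term disappears. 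What remains is the class of $H_{T_0,\lambda_j(T_0)}P_{T_0}(X)f_{T_0,j}$. The eigenvalue components are $\nu_{T_0,j}(P_{T_0}(X)f_{T_0,j})$, directly from the same lemma.

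\textbf{Main obstacle.} The delicate point is the smooth tameness of the contour integral: the tame constants from Lemma \ref{lemma:tame_resolvent} must be uniform along the compact contour and in a neighbourhood of $T_0$ in tame charts. I would handle this with the finite covering by neighbourhoods $V_0\times\widetilde U_0$ used in the proof of that lemma, together with compactness. Then tame estimates for $\int R_T(z) f\,\mathrm{d}z$ and for all its derivatives follow, the latter via the derivative formula $R_TP_T(X)R_T$ and Hamilton's composition results.
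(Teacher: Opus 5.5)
Your proposal is correct and follows essentially the same route as the paper: smooth tameness of $T\mapsto f_{T,j}$ from the contour integral of the tame resolvent (Lemmas \ref{lemma:transfer_smooth_tame} and \ref{lemma:tame_resolvent}), then composition with the smooth tame projection onto the projective space, and finally the derivative read off from Lemma \ref{lemma:derivative_simple_resonance}, with the $\Pi_{T_0,\lambda_j(T_0)}$-term disappearing in the quotient by $\langle f_{T_0,j}\rangle$. The only difference is that you obtain the eigenvalues from $\lambda_j(T)=l(\mathcal{L}_T f_{T,j})/l(f_{T,j})$, whereas the paper cites Remark \ref{remark:useful_smoothness} together with the fact that the target is a Banach space. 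Your version is a one-dimensional instance of the same trace argument and is slightly more self-contained, though you should choose $l$ locally near each base point rather than shrinking $U_0$.
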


\begin{proof}
For $j = 1,\dots,p+q$, the smoothness of $T \mapsto \lambda_j(T)$ is given by Remark \ref{remark:useful_smoothness}, and since it is valued in a Banach space it is smooth tame. The formula for the derivative is given in Lemma \ref{lemma:derivative_simple_resonance}.

Let $j \in \set{1,\dots,p+q}$. Recall that for $T \in U_0$ we have
\begin{equation*}
f_{T,j} = \frac{1}{2 i \pi} \int_{\partial \mathbb{D}(\lambda_j,\epsilon)} R_{T}(z) f_{F_0,j} \mathrm{d}z
\end{equation*}
The smoothness of $T \mapsto f_{T,j}$ is given by Proposition \ref{proposition:smoothness_spectral_projector}, and the derivatives of this map may be computed by differentiation under the integral as explained in the proof of Proposition \ref{proposition:smoothness_spectral_projector}. In view of the formula given in Lemma \ref{lemma:useful_linear_response} for the derivative of $T \mapsto R_{T}(z)$, it follows from Lemmas \ref{lemma:transfer_smooth_tame} and \ref{lemma:tame_resolvent} that the derivatives (of any order) of $T \mapsto f_{T,j}$ are tame. Since the projection $C_0^\infty(M,\mathbb{K}_j) \setminus \set{0} \to PC_0^\infty(M,\mathbb{K}_j)$ is smooth tame, the result follows. The formula for the first derivative of $T \mapsto E_{T,\lambda_j(T)}$ is a consequence of Lemma \ref{lemma:derivative_simple_resonance} (see also Remark \ref{remark:derivative_simple_resonance}).
\end{proof}

We deduce from Lemma \ref{lemma:the_derivative} that in order to prove Lemma \ref{lemma:local_solvability} we only need to establish:

\begin{lemma}\label{lemma:inverse_after_identification}
Under the assumptions of Lemma \ref{lemma:local_solvability}, there is a neighbourhood $U_1$ of $T_0$ in $U_0$ and a smooth tame map $Q : U_1 \times (C_0^\infty(M,\mathbb{R})^p \times C_0^\infty(M,\mathbb{C})^q) \to \Gamma(\T M)$, linear in its second argument such that for every $T \in U_1$, $(g_j)_{1 \leq j \leq p + q} \in C_0^\infty(M,\mathbb{R})^p \times C_0^\infty(M,\mathbb{C})^q$ and $k \in \set{1,\dots,p+q}$ we have
\begin{equation*}
P_T(Q(T,(g_j)_{1 \leq j \leq p + q}))f_{T,k} = g_{T,k}.
\end{equation*}
\end{lemma}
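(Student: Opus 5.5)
The plan is to imitate the construction of Lemma \ref{lemma:long_range_construction} with $N=0$, but now globally and with all choices depending smoothly on $T$. We want $X$ with
\[
-\mathcal{L}_T(\Div(f_{T,k}X)) = g_k \quad \text{for } k=1,\dots,p+q .
\]

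\emph{Step 1: a right inverse of $\mathcal{L}_T$ on functions.} First find a partition of unity $(\chi_i)$ on $M$, subordinate to a cover by small balls $B_i$, such that $T$ restricted to each component of $T^{-1}(B_i)$ is a diffeomorphism onto $B_i$. For $T$ close to $T_0$ these inverse branches $G_{i,y}(T,\cdot)$ exist and depend smoothly tamely on $T$ (implicit function theorem, as in the proof of Lemma \ref{lemma:explicit_derivative}). By compactness, and since the spanning hypothesis is open in $x$ and $T$, for each $i$ we may choose finitely many branches $y_{i,1},\dots,y_{i,r_i}$ with $\bigl(\F_T(G_{i,y_{i,\ell}}(T,x))\bigr)_\ell$ spanning $\mathbb{R}^p\times\mathbb{C}^q$ for all $x\in\overline{B_i}$ and all $T\in U_1$.

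\emph{Step 2: pointwise linear algebra.} Given the $g_k$, write $g_k=-\Div(Y_k)$ with $Y_k$ a (complex for $k>p$) vector field depending linearly and tamely on $g$. For instance take $Y_k=-\nabla \Delta^{-1} g_k$, which is legitimate because each $g_k$ has zero average (when $p_0=1$ the first component is not mean zero, so we need the correction discussed below). We then want vectors $Z_{i,\ell}(x)\in \T_xM$ with
\[
\chi_i(x) Y_k(x) = \sum_\ell f_{T,k}\bigl(G_{i,\ell}(T,x)\bigr)\,Z_{i,\ell}(x).
\]
Since the family spans, the real-linear map $(Z_\ell)\mapsto \bigl(\sum_\ell f_{T,k}(G_{i,\ell})Z_\ell\bigr)_k$ from $(\T_xM)^{r_i}$ onto $\T_xM\otimes(\mathbb{R}^p\times\mathbb{C}^q)$ is surjective. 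Use its Moore--Penrose right inverse, which depends smoothly on $(T,x)$. Then push forward as in Lemma \ref{lemma:long_range_construction}: set
\[
X_{i,\ell}=-|\det DT|\,DT^{-1}\, Z_{i,\ell}\circ T
\]
on the branch $G_{i,\ell}(T,B_i)$, and put $X=\sum X_{i,\ell}$. The computation of Lemma \ref{lemma:long_range_construction} with $N=0$ gives
\[
P_T(X)f_{T,k}=\Div\Bigl(\sum_i \chi_i Y_k\Bigr)=-g_k .
\]
After fixing the sign, $Q(T,g)\coloneqq X$ is linear in $g$.

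\emph{Step 3: tameness and the case $p_0=1$.} $Q$ is a composition of a tame linear operator ($g\mapsto Y$, one derivative lost and regained), multiplication by smooth tame coefficients built from $f_{T,k}$ (smooth tame in $T$ by Lemma \ref{lemma:the_derivative}), composition with $T$ and the branches $G_{i,\ell}$, and products. These are all smooth tame by Hamilton's results on nonlinear differential operators and composition. When $\lambda_1=1$, the target is $C^\infty(M,\mathbb{R})$ modulo $\langle f_{T,1}\rangle$. We only need to hit the mean-zero part, since $\int P_T(X)f=-\int\Div(fX)=0$. So for $g_1$ we first subtract $\bigl(\int g_1\bigr) f_{T,1}/\int f_{T,1}$, which is smooth tame in $T$, and the quotient absorbs this correction.

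\emph{Main obstacle.} The hard part is uniformity. The branch selection and the right inverse must be chosen once on a neighbourhood $U_1$, so that no discontinuous choices depend on $T$. This works because spanning is an open condition and the pseudo-inverse varies smoothly on the set of surjective matrices. It then remains to check the tame estimates for composition with the $T$-dependent inverse branches, which follows as in Lemma \ref{lemma:dfly}.
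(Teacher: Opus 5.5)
Your proposal is correct and follows essentially the paper's route: a tame linear right inverse of the divergence on mean-zero functions (the paper uses Hodge theory, and your $\nabla\Delta^{-1}$ serves equally well), then a pointwise solution of $L_T(f_{T,k}X)=\alpha_k$ along inverse branches of $T$ using the spanning hypothesis, glued by a partition of unity (the paper picks $p+2q$ antecedents forming a basis where you use a pseudo-inverse over a spanning family). Your $p_0=1$ correction is harmless but vacuous, since the lemma only takes mean-zero inputs $g_j$.
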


The idea behind the proof of Lemma \ref{lemma:inverse_after_identification} is very similar to our solution to the equation ``$P_T(X) f_{T,1} = g$'' in Remark \ref{remark:proof_key}, except that the hypothesis in Lemma \ref{lemma:local_solvability} implies that we are always in the case in which this solution applies. Before proving Lemma \ref{lemma:inverse_after_identification}, let us explain why it implies Lemma \ref{lemma:local_solvability}.

\begin{proof}[Proof of Lemma \ref{lemma:local_solvability} from Lemma \ref{lemma:inverse_after_identification}]
Assume that Lemma \ref{lemma:inverse_after_identification} holds and let $U_1$ and $Q$ be as in this lemma. For $j = 1+p_0,\dots,p+q$ let $l_j$ be a continuous linear form on $C_0^\infty(M,\mathbb{K}_j)$ such that $l_j(f_{T_0,j}) = 1$. If $p_0 = 1$, let $l_1 : g \mapsto \int_M g \mathrm{d}x$ be a continuous linear form on $C^\infty(M,\mathbb{R})$. For $j = 1,\dots,p+q$, let then $\Psi_j = \Psi_{f_{T_0,j},l_j}$ be the parametrization for $PC^\infty(M,\mathbb{R}), PC_0^\infty(M,\mathbb{R})$ or $PC_0^\infty(M,\mathbb{C})$ given by \eqref{eq:affine_parametrization}, and let $W_j$ be its image (this is a neighbourhood of $E_{T_0,\lambda_j(T_0)}$). For $j = 1,\dots,p+q$ and $T$ close to $T_0$ define $\tilde{f}_{T,j} = \Psi_j^{-1}(E_{T,\lambda_j(T)})$. Instead of $\Phi$, we will invert the map $\widetilde{\Phi} : T \mapsto ((\tilde{f}_{T,j})_{1 \leq j \leq p+q},\lambda_j(T))_{1+p_0 \leq j \leq p+q})$ in a neighbourhood of $T_0$ (which is just $\Phi$ ``in coordinates'').

For $j = 1+p_0,\dots,p+q$ and $T$ near $T_0$, there is an isomorphism between $\ker l_j \times \mathbb{K}_j$ and $C_0^\infty(M,\mathbb{K}_j)$ given by $(f,\tau) \mapsto l_j(f_{T,j}) (\lambda_j(T) - \mathcal{L}_T) f + \tau f_{T,j}$. The inverse of this isomorphism is $$h \mapsto (l_j(f_{T,j})^{-1} H_{T,\lambda_j(T)} h - l_j(f_{T,j})^{-2} l_j(H_{T,\lambda_j(T)}h) f_{T,j},\nu_{T,j}(h)),$$ see \eqref{eq:full_inverse}. If $p_0 = 1$, then notice that $\ker l_1 = C_0^\infty(M,\mathbb{R})$. Put all these isomorphisms together to get an identification
\begin{equation*}
\mathfrak{A}(T) : \left( \prod_{j = 1}^{p+q} \ker l_j \right) \times \mathbb{R}^{p_1} \times \mathbb{C}^{q} \to C_0^\infty(M,\mathbb{R})^p \times C_0^\infty(M,\mathbb{C})^q.
\end{equation*}
It follows from Lemma \ref{lemma:the_derivative} that, for $T$ near $T_0$, the map $\mathcal{A}(T) D \widetilde{\Phi}(T)$ is $X \mapsto (P_T(X) f_{T,j})_{1 \leq j \leq p +q}$, mapping $\Gamma(\T M)$ into $C_0^\infty(M,\mathbb{R})^p \times C_0^\infty(M,\mathbb{C})^q$. Lemma \ref{lemma:inverse_after_identification} asserts that this map has a left inverse (which is a smooth tame function of $T$ and $X$). Since $(T,\alpha) \mapsto \mathfrak{A}(T)^{-1}\alpha$ is a smooth tame map (it follows from the explicit formula for the inverse and Lemma \ref{lemma:tame_resolvent}), we find that $X \mapsto D \widetilde{\Phi}(T) \cdot X$ has a right inverse which is a smooth tame map of $T$ and $X$. The result then follows from \cite[Theorem III.1.1.3]{hamilton_ift}. 
\end{proof}

\begin{remark}
The isomorphism (depending on $T$) between the spaces $\ker l_j \times \mathbb{K}_j$ and $C_0^\infty(M,\mathbb{K}_j)$, for $j = 1+p_0,\dots,p+q$, that we use in the proof of Lemma \ref{lemma:local_solvability} may seem complicated. This is because we took coordinates on $PC_0^\infty(M,\mathbb{K}_j)$. In a more intrinsic fashion, the proof relies on the isomorphism between $C_0^\infty(M,\mathbb{K}_j)$ and $C_0^\infty(M,\mathbb{K}_j) / \langle f_{T,j} \rangle \times \mathbb{K}_j$ given by
\begin{equation*}
 g \mapsto ([H_{T,\lambda_j(T)} g],\nu_{T,j}(g))
\end{equation*}
whose inverse is $([g],\tau) \mapsto (\lambda_j(T) - \mathcal{L}_T) g + \tau f_{T,j}$, see \eqref{eq:full_inverse}. We need to take coordinates in the proof of Lemma \ref{lemma:local_solvability} in order to make sense of the fact that the map $(T,\alpha) \mapsto \mathfrak{A}(T)^{-1} \alpha$ is smooth tame for instance.
\end{remark}

The goal of the following lemma is to simplify the proof of Lemma \ref{lemma:inverse_after_identification}.

\begin{lemma}\label{lemma:tame_homology}
There is a tame linear map $A : C_0^\infty(M,\mathbb{R}) \to \Gamma(\T M)$ such that for every $f \in C_0^\infty(M,\mathbb{R})$ we have $\Div(Af) = f$.
\end{lemma}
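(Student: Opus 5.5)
The natural approach is to solve the divergence equation through the Laplacian: a Hodge-theoretic construction gives an explicit right inverse of $\Div$ on zero-mean functions, and tameness then follows from elliptic regularity. Let $\Delta = \Div \circ \nabla$ be the Laplace--Beltrami operator of the Riemannian metric on $M$. Since $M$ is compact, connected and boundaryless, $\Delta$ is a self-adjoint elliptic operator whose kernel consists of the constants. By the Fredholm alternative, for every $f \in C_0^\infty(M,\mathbb{R})$ there is a unique $u \in C_0^\infty(M,\mathbb{R})$ with $\Delta u = f$. Here the zero average of $f$ against $\mathrm{d}x$ is exactly orthogonality to $\ker \Delta^* = \ker \Delta$; note that the density $\mathrm{d}x$ defining $C_0^\infty$ is the Riemannian volume, so this matches the divergence. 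Denote by $G : C_0^\infty(M,\mathbb{R}) \to C_0^\infty(M,\mathbb{R})$ this inverse (the Green operator), and set
\begin{equation*}
A f = \nabla (G f).
\end{equation*}
Then $\Div(Af) = \Delta G f = f$, and $A$ is clearly linear.

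It remains to check tameness. By \cite[II.2.2]{hamilton_ift}, a linear differential operator of order $k$ with smooth coefficients is tame linear, since it satisfies $\n{Pu}_{C^r} \leq C_r \n{u}_{C^{r+k}}$ with a fixed shift of degree. So $\nabla$ is tame, and it suffices to show that $G$ is tame, because a composition of tame linear maps is tame.

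For $G$, I would use elliptic regularity in Hölder or Sobolev norms: for every $r$,
\begin{equation*}
\n{Gf}_{H^{r+2}} \leq C_r \n{f}_{H^r}.
\end{equation*}
This holds because $\Delta$ is invertible from $H^{r+2} \cap \set{\int = 0}$ to $H^r \cap \set{\int = 0}$. Sobolev embeddings then give $\n{Gf}_{C^r} \leq C_r \n{f}_{C^{r+s}}$ for a fixed $s$ depending only on $\dim M$. Alternatively, one can quote directly Hamilton's result that the inverse of an invertible elliptic operator is tame (\cite[II.3.3]{hamilton_ift}), applied to $\Delta$ restricted to the tame direct summand $C_0^\infty(M,\mathbb{R})$. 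Since the projection $f \mapsto f - \int_M f\,\mathrm{d}x / \mathrm{vol}(M)$ is tame, this restriction is harmless.

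The only step of substance is the tame estimate for $G$. It is standard elliptic theory and involves a fixed loss of derivatives, which is allowed in the tame category. I do not expect a real obstacle, only care in invoking the correct Fréchet-tame formulation from \cite{hamilton_ift}.
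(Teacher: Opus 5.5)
Your proof is correct and is essentially the paper's argument. The paper inverts the Hodge Laplacian on top-degree forms and takes $X_f$ with $\D^* H f = i_{X_f}\mathrm{d}x$, which under the Hodge star is exactly your $\nabla$ applied to the Green operator of the scalar Laplacian. Tameness comes from ellipticity in both cases. Working with functions and the gradient has the small advantage of avoiding the paper's reduction to the orientable case via the orientation cover.
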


\begin{proof}
Up to working on the bundle of orientation, we may assume that $M$ is orientable. Hence, the density $\mathrm{d}x$ identifies with a volume form. Consider the Hodge Laplacian $\Delta = \D \D^* + \D^* \D$, associated to the Riemannian metric on $M$, acting on form of degree $d = \dim M$ (so that the term $\D^* \D$ is actually not needed here). Let $H$ be the holomorphic part of the resolvent $(z - \Delta)^{-1}$ at $0$. If $f \in C_0^\infty(M,\mathbb{R})$, we have $f \mathrm{d}x = \omega - \Delta H f = \omega - \D \D^* H f$, where $\omega$ belongs to the kernel of $\Delta$. We have
\begin{equation*}
\int_M \omega = \int_M f \mathrm{d}x + \int_M \D \D^* H f = 0.
\end{equation*}
It follows that $\omega$ is cohomologically trivial, but then Hodge theory implies that $\omega = 0$. Hence, we have $f \mathrm{d}x = \D \D^* H f$. To the $(d-1)$-form $\D^* H f$ it corresponds a unique vector field $X_f$ such that $\D^* H f = i_{X_f} \mathrm{d}x$, and we have then $f = \Div(X_f)$. The map $f \mapsto \D^* H f$ is tame from $C_0 ^\infty(M,\mathbb{R})$ to the space of smooth $(d-1)$-form on $M$ because it is a pseudo-differential operator ($\Delta$ is elliptic). The identification between smooth $(d-1)$-forms and vector fields using $\mathrm{d}x$ also defines a tame operator (we just apply a linear map in each fiber, that depends smoothly on the point).
\end{proof}

For $T \in \Exp(M)$, define the operator $L_T$ from $\Gamma(\T M \otimes \mathbb{C})$ to itself by
\begin{equation*}
L_T(X) (x) = \sum_{Ty = x} \frac{D T(y) \cdot X(y)}{|\det DT(y)|}, \quad X \in \Gamma(\T M \otimes \mathbb{C}), \, x \in M.
\end{equation*}
Notice that if $X \in \Gamma(\T M \otimes \mathbb{C})$, then we have
\begin{equation*}
\mathcal{L}_T(\Div(X)) = \Div(L_T(X)). 
\end{equation*}
Hence, it follows from Lemma \ref{lemma:tame_homology} that, instead of Lemma \ref{lemma:inverse_after_identification}, we only need to prove:

\begin{lemma}\label{lemma:inverse_final_version}
Under the assumptions of Lemma \ref{lemma:local_solvability}, there is an open neighbourhood $U_1$ of $T_0$ in $U_0$ and a smooth tame map, linear in its second argument, $Q: U_1 \times (\Gamma(\T M)^p \times \Gamma(\T M \otimes \mathbb{C})^q) \to \Gamma(\T M)$ such that for every $T \in U_1$ and $\alpha \in \Gamma(\T M)^p \times \Gamma(\T M \otimes \mathbb{C})^q$ we have
\begin{equation}\label{eq:inverse_final_version}
(L_T(f_{T,j} Q(T,\alpha)))_{1 \leq j \leq p+q} = \alpha.
\end{equation}
\end{lemma}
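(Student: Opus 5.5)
The equation $(L_T(f_{T,j} X))_{j} = \alpha$ is pointwise linear algebra fibrewise over $M$. For $x \in M$ we have
\begin{equation*}
L_T(f_{T,j}X)(x) = \sum_{Ty = x} \frac{f_{T,j}(y)}{|\det DT(y)|} DT(y)\cdot X(y).
\end{equation*}
The unknowns at $x$ are the tangent vectors $Z_y := |\det DT(y)|^{-1} DT(y)\cdot X(y) \in \T_x M$, one for each antecedent $y$. The equation then reads $\sum_{Ty=x} \F_T(y) \otimes Z_y = \alpha(x)$ in $\T_x M \otimes (\mathbb{R}^p \times \mathbb{C}^q)$. By the hypothesis of Lemma \ref{lemma:local_solvability}, the vectors $(\F_{T_0}(y))_{T_0 y = x}$ span $\mathbb{R}^p\times\mathbb{C}^q$ over $\mathbb{R}$ for every $x$. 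This spanning condition is open, and $\F_T$ depends continuously on $T$ in $C^0$ by Proposition \ref{proposition:smoothness_spectral_projector}. By compactness of $M$, it therefore persists for all $T$ in a small neighbourhood $U_1$ of $T_0$. So the system is always solvable, and the task is to choose the solution smoothly and tamely.

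To make a canonical choice, work locally. Cover $M$ by finitely many open sets $W_1,\dots,W_K$, chosen small enough that for $T\in U_1$ the inverse branches $G_1(T,\cdot),\dots,G_m(T,\cdot)$ of $T$ over each $W_i$ are well defined, as in the proof of Lemma \ref{lemma:explicit_derivative}. On $W_i$, consider the $\mathbb{R}$-linear map
\begin{equation*}
\mathcal{M}_T(x) : \mathbb{R}^m \to \mathbb{R}^p\times\mathbb{C}^q, \quad c \mapsto \sum_k c_k \F_T(G_k(T,x)).
\end{equation*}
It is surjective, so $\mathcal{M}_T\mathcal{M}_T^\top$ is invertible and $\mathcal{M}_T^+ = \mathcal{M}_T^\top(\mathcal{M}_T\mathcal{M}_T^\top)^{-1}$ is a smooth right inverse. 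Tensoring with the identity on $\T_x M$ gives $Z_k(x) = (\mathcal{M}_T^+(x)\otimes I)\,\alpha(x)$. Because the ambiguity is only in the choice of right inverse, it is cleaner to glue via a partition of unity. Take $\chi_i$ subordinate to $W_i$ and set $\alpha_i = \chi_i\alpha$. Solve on $W_i$ for $\alpha_i$, which gives $Z^{(i)}_k$ supported in $W_i$. Define $X^{(i)}$ on $G_k(T,W_i)$ by $X^{(i)}(G_k(T,x)) = |\det DT|\, DT^{-1} Z^{(i)}_k(x)$, extended by zero, and put $Q(T,\alpha)=\sum_i X^{(i)}$. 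Linearity in $\alpha$ then gives \eqref{eq:inverse_final_version}, since $L_T(f_{T,j}\,\cdot)$ is linear in $X$.

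For smooth tameness, every ingredient is a composition of smooth tame operations. The maps $T\mapsto f_{T,j}$ are smooth tame; this is shown in the proof of Lemma \ref{lemma:the_derivative}, via Lemmas \ref{lemma:transfer_smooth_tame} and \ref{lemma:tame_resolvent}. The inverse branches $G_k(T,\cdot)$ and $DT$ are nonlinear differential and composition operators in $T$ and are tame by \cite[II.2]{hamilton_ift}. Composition $(T,f)\mapsto f\circ G_k(T,\cdot)$, pointwise matrix inversion of the invertible smooth matrix $\mathcal{M}_T\mathcal{M}_T^\top$, products, and multiplication by the fixed cutoffs $\chi_i$ are all smooth tame. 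The result is linear in $\alpha$ and involves no loss of derivatives beyond those coming from $DT$.

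The main obstacle is making the tameness bookkeeping rigorous. Specifically, the composition $f_{T,j}\circ G_k(T,\cdot)$ and the inverse $(\mathcal{M}_T\mathcal{M}_T^\top)^{-1}$ must be jointly smooth tame in $(T,\alpha)$, uniformly on $U_1$. I would handle this by invoking Hamilton's results on composition and on smooth pointwise nonlinear operators, after shrinking $U_1$ so that $\det(\mathcal{M}_T\mathcal{M}_T^\top)$ stays bounded below uniformly.
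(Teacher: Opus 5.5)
Your proposal is correct and follows essentially the paper's argument. Both rest on the same steps: fibrewise linear algebra over the antecedents using the spanning hypothesis (made uniform near $T_0$ by compactness), inverse branches defined on fixed small open sets, pulling back by $|\det DT|\,DT^{-1}$, gluing by a partition of unity, and smooth tameness inherited from $T\mapsto f_{T,j}$, local inverses, composition and pointwise linear algebra. The only difference is the choice of right inverse, and both choices give a solution that is smooth tame and linear in $\alpha$. Near each $x_0$ the paper selects $p+2q$ branches whose $\F_{T}$-values form a basis, solves a square invertible system, and sets the vector field to zero on the other branches. You instead use all $\deg T$ branches with the Moore--Penrose right inverse $\mathcal{M}_T^\top(\mathcal{M}_T\mathcal{M}_T^\top)^{-1}$.
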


\begin{proof}
Let $x_0 \in M$. We will prove that there are an open neighbourhood $W$ of $x_0$ in $M$, an open neighbourhood $U_1$ of $T_0$ in $U_0$ and a smooth tame map, linear in its second argument, $(T,\alpha) \mapsto Q(T, \alpha)$ from $U_1 \times (\Gamma(\T M)^p \times \Gamma(\T M \otimes \mathbb{C})^q)$ to $\Gamma(\T M)$ such that for every $T \in U_1$ and $\alpha \in \Gamma(\T M)^p \times \Gamma(\T M \otimes \mathbb{C})^q$ the relation \eqref{eq:inverse_final_version} holds if $\alpha$ is supported in $W$. The full result follows then by a partition of unity argument.

By assumption, there are points $y_1,\dots,y_{p+2q} \in M$ such that $T_0(y_1) = \dots = T_0(y_{p+2q}) = x_0$ and $(\F_{T_0}(y_j))_{1 \leq j \leq p+2q}$ is a basis of $\mathbb{R}^p \times \mathbb{C}^q$. For $j = 1,\dots,p+2q$, let $V_j$ be a neighbourhood of $y_j$ in $M$ such that $T_0$ induces a diffeomorphism from a neighbourhood of $\overline{V}_j$ to its image. Up to taking the $V_j$'s smaller, we may assume that for every $(z_1,\dots,z_{p+2q}) \in \overline{V}_1 \times \dots \times \overline{V}_{p+2q}$, the family $(\F_{T_0}(z_j))_{1 \leq j \leq p+2q}$ is a basis of $\mathbb{R}^p \times \mathbb{C}^q$ (in particular, the $\overline{V}_j$'s are disjoint). Choose $W$ an open neighbourhood of $x_0$ such that $\overline{W} \subseteq \bigcap_{j = 1}^{p+2q} T_0(V_j)$.

There is a neighbourhood $U_1$ of $T_0$ in $U_0$ such that for every $T \in U_1$ we have:
\begin{itemize}
\item $\overline{W} \subseteq \bigcap_{j = 1}^{p+2q} T(V_j)$;
\item for $j = 1,\dots,p+2q$ the map $T$ induces a diffeomorphism from a neighbourhood of $\overline{V}_j$ to its image;
\item for every $(z_1,\dots,z_{p+2q}) \in \overline{V}_1 \times \dots \times \overline{V}_{p+2q}$, the family $(\F_T(z_j))_{1 \leq j \leq p+2q}$ is a basis of $\mathbb{R}^p \times \mathbb{C}^q$.
\end{itemize}
Now, consider $T \in U_1$ and $\alpha = (X_1,\dots,X_{p+q}) \in \Gamma(\T M)^p \times \Gamma(\T M \otimes \mathbb{C})^q$ supported in $W$. For every $x \in W$, since $(\F_{T}(T_{|V_j}^{-1}(x)))_{1 \leq j \leq p+2q}$ is a basis of $\mathbb{R}^p \times \mathbb{C}^q$, there are $Y_1(x),\dots,Y_{p+2q}(x) \in \T_x M$, uniquely defined, such that
\begin{equation*}
X_j(x) = \sum_{k = 1}^{p+2q} f_{T,j}(T_{|V_k}^{-1}(x)) Y_k(x)
\end{equation*}
for $j = 1,\dots,p+2q$. We set then $Q(T, \alpha) = Z$, where $Z$ is the vector field defined by
\begin{equation*}
Z(x) = \begin{cases} |\det DT(x)| D T(x)^{-1} \cdot Y_k(x)  & \textup{ if } x \in T_{|V_k}^{-1}(W), k \in \set{1,\dots,p+2q}, \\
         0 & \textup{ if } x \in M \setminus \bigcup_{k= 1}^n V_k,
       \end{cases}
\end{equation*}
for $x \in M$.

Finally $(T,\alpha) \mapsto Q(T,\alpha)$ is a smooth tame map because for $j = 1,\dots,p+q$ the map $T \mapsto f_{T,j}$ is smooth tame (as a consequence of Lemma \ref{lemma:tame_resolvent}) and the other components of the construction are local inverses of $T$, composition of smooth maps and linear algebra formulae (the first two can be dealt with as in the proof of Lemma \ref{lemma:dfly}, see also \cite[Lemmas II.2.3.4 and II.2.3.6]{hamilton_ift}).
\end{proof}

As explained above, Lemma \ref{lemma:inverse_final_version} implies Lemma \ref{lemma:inverse_after_identification} which itself implies Lemma \ref{lemma:local_solvability}.

\subsection{Generic existence of a right inverse}\label{subsection:generic_local_solvability}

In this subsection, we use the setting and notations of \S \ref{section:key_lemma}. Our goal is the following lemma:

\begin{lemma}\label{lemma:generic_solvability}
Let $\mathcal{V}$ be the set of $T \in U_0$ such that for every $x \in M$ the family $(\F_T(y))_{y \in T^{-1}(\set{x})}$ spans $\mathbb{R}^p \times \mathbb{C}^q$. Assume that $\deg F_0 \geq \dim M + p + 2q$. The set $\mathcal{V}$ is open and dense in $U_0$.
\end{lemma}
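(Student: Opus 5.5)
The plan is to prove openness by a continuity argument and density by a parametric transversality argument. The degree hypothesis enters as a codimension count.

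\emph{Openness.} Fix $T \in \mathcal{V}$ and $x \in M$, and let $y_1,\dots,y_n$ be the $n = \deg T$ antecedents of $x$. The condition is that the real $(p+2q) \times n$ matrix with columns $\F_T(y_k)$ has rank $p+2q$. This is an open condition. The antecedents move continuously with $(T,x)$ (see the proof of Lemma \ref{lemma:explicit_derivative}), and $T \mapsto f_{T,j}$ is continuous into $C^0$ by Proposition \ref{proposition:smoothness_spectral_projector}. Compactness of $M$ then gives a neighbourhood of $T$ inside $\mathcal{V}$.

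\emph{Density: the codimension count.} Take $T_1 \in U_0$ and set $D = p+2q$ and $d = \dim M$. The matrix is rank deficient exactly when there is a nonzero $\xi \in (\mathbb{R}^D)^*$ with $\xi(\F_T(y_k)) = 0$ for all $k$. I will work in the space of pairs $(x,[\xi])$, which has dimension $d + D - 1$. The incidence condition imposes $n \geq d + D$ real equations. If these equations are transverse, the incidence set is empty, since $n > d + D - 1$. There is one subtlety. The transfer-operator identity
\begin{equation*}
\sum_{k} \frac{\F_T(y_k)}{|\det DT(y_k)|} = \Lambda \F_T(x),
\end{equation*}
with $\Lambda$ the diagonal matrix of the $\lambda_j(T)$ (real $2\times 2$ rotation-scaling blocks for the complex ones), shows that the columns are not all independent of $\F_T(x)$. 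So I intend to perturb only $n-1$ of the columns freely and keep $\F_T(x)$ fixed. With $\F_T(x)$ fixed, the last column is determined by the others, and $n-1$ equations on $d+D-1$ unknowns still suffice.

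\emph{Density: producing the deformations.} Work locally near a point $(x_*,[\xi_*])$, cover by finitely many such patches, and intersect the resulting open dense sets. Apply Lemma \ref{lemma:key_deformation} to $T_1$ with $m = n-1$ and $x_1,\dots,x_{m+1}$ the antecedents of $x_*$. By item (iii) this prescribes, to first order, arbitrary values of the perturbations of $f_{T,j}$ near $x_1,\dots,x_m$. By item (v) it leaves $f_{T,j}$ unchanged near $T(x_1) = x_*$ when none of the $x_k$ is fixed, and then the identity above controls the last column. By item (ii) and Lemma \ref{lemma:derivative_simple_resonance}, the true variation is this prescribed function plus a multiple of $f_{T,j}$, coming from the $\Pi H$ term. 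Adding a multiple of row $j$ to row $j$ is an operation tangent to the rank-deficient stratum, so that extra term is harmless. The resonances also move, which changes $\Lambda$ slightly, but that change only affects lower-order terms. Taking finitely many such vector fields and composing flows as in the proof of Theorem \ref{theorem:generic_morse}, I get a finite-dimensional family $T_t$. For this family, the map $(x,[\xi],t) \mapsto (\xi(\F_{T_t}(y_k(x))))_{k \le n-1}$ is a submersion at $t=0$. The parametric transversality theorem (Sard) then shows that for almost every small $t$ the incidence set is empty near $(x_*,[\xi_*])$.

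\emph{Main obstacle.} I expect the hardest part to be the degenerate configurations, where some antecedent of $x_*$ is a fixed point, or where $x_*$ is itself an antecedent. There item (v) is unavailable, and $\F_T(x_*)$ is itself one of the columns being perturbed. I would handle these with item (iv) and the set $E$, choosing $E$ to contain the antecedents whose columns must stay fixed. Failing that, I would remove those points by a dimension count, since the set of $x$ with a fixed antecedent is finite. I would try this second route first.
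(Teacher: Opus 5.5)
\textbf{There is a genuine gap: your dimension count is off by one, in exactly the borderline degree the lemma allows.} Openness and your first transversality stage are fine. The problem is the claim that $n-1$ equations on the $(d+D-1)$-dimensional space of pairs $(x,[\xi])$ ``still suffice''.

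\emph{Why the count fails.} Parametric transversality makes the incidence set empty only when $n-1>d+D-1$, i.e.\ when $\deg F_0\geq \dim M+p+2q+1$. The lemma allows $n=d+D$, and that equality case is precisely the degree used in Theorems~\ref{theorem:generic_real_resonance}, \ref{theorem:generic_complex_resonance} and~\ref{theorem:general_statement}. When $n=d+D$, your Sard argument gives a finite set of transverse zeros $(x_0,[\xi_0])$ of the $n-1$ perturbed equations, not an empty set. At such a point, your own transfer-operator identity shows the remaining equation is equivalent to $\xi_0(\Lambda\F_T(x_0))=0$. Your deformations are built to leave $\F_T$ near $x_0$, and $\Lambda$, unchanged at first order, so they never act on this equation.

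For example, take a degree-$2$ circle map and one real resonance. You only get that $f_T\circ y_1$ has isolated nondegenerate zeros. Nothing prevents $f_T(x_0)=0$ at such a zero, and then $f_T(y_2(x_0))=0$ as well. This is exactly the obstruction described in Remark~\ref{remark:sketch_proof}.

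Fixed points are also not where the difficulty lies. For the $n-1$ columns you perturb, item (iii) holds at fixed points too. Item (v) only serves to control the column you leave alone.

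\textbf{What is missing is a second stage that removes these finitely many transverse bad points.} The paper does this as follows.
\begin{enumerate}
\item It takes $T$ minimising $|B(T)|$, which is finite and upper semicontinuous by Lemmas~\ref{lemma:transversality} and~\ref{lemma:usc}.
\item A bad point $x_0(T)$ is a transverse intersection. So deformations that leave $T$ unchanged near it can still prescribe its velocity. This is used to ensure $x_0(T)$ is not a fixed point (Step~2).
\item It applies Lemma~\ref{lemma:key_deformation} at $x_1=x_0(T)$ itself, using item (iv) with $E$ the $p+2q-1$ spanning antecedents, to make $\mathcal{H}_T$ transverse to $\mathcal{N}$ (Step~3).
\item An item (v) deformation then moves $x_0(T)$ so that $\mathcal{H}_T(x_0(T))$ leaves $\mathcal{N}$ (Step~4).
\end{enumerate}

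\emph{A shorter fix within your framework.} At a non-fixed bad point, apply item (iii) at $x_1=x_0$ together with item (iv), taking $E=\set{y_1(x_0),\dots,y_{n-1}(x_0)}$. This requires $y_n(x_0)$ to be non-periodic. Since at most one antecedent is periodic, you can arrange this by running your first stage for every choice of omitted column.

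This deformation leaves your $n-1$ equations unchanged at first order. The only exception is the multiple of $f_{T,j}$ from Lemma~\ref{lemma:derivative_simple_resonance}, which just moves $[\xi]$. So $x_0$ does not move at first order. The last equation changes by $|\det DT(y_n(x_0))|\,\xi_0(\Lambda\dot{\F}(x_0))$, where $\dot{\F}(x_0)$ is the first-order change prescribed by item (iii). Since that change is arbitrary, this can be made nonzero.

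Either way, you still need something like the paper's Step~2 to push bad points off fixed points before item (iv) can be used.
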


Before starting the proof of Lemma \ref{lemma:generic_solvability}, let us explain why it implies Theorem \ref{theorem:general_statement}.

\begin{proof}[Proof of Theorem \ref{theorem:general_statement}]
Proposition \ref{proposition:stability_resonances} implies that $\mathcal{U}$ is open. Let us prove that it is dense in $\Exp_{\geq m}(M)$. Let $V$ be an open subset of $\Exp_{\geq m}(M)$. According to Theorem \ref{theorem:generic_simple}, there is $F_0 \in V$ such that all resonances of $F_0$ of modulus larger than or equal to $\delta$ are simple.

Let $\lambda_1,\dots,\lambda_{n_0 + n_1 + n_2}$ be resonances for $F_0$ as in the statement of Theorem~\ref{theorem:general_statement}. With these resonances, we are in the setting of \S \ref{section:key_lemma} with $p = n_0 + n_1$ and $q = n_2$. It follows from Lemma \ref{lemma:generic_solvability} that there is a $T_0$, arbitrarily close to $F_0$ such that the map $\Phi$ from \eqref{eq:map_phi} has a local right inverse near $\Phi(T_0)$. Using the local right inverse for $\Phi$, we find as in the proof of Theorem \ref{theorem:generic_density} that there is $T_1$, arbitrarily close to $T_0$ and thus to $F_0$ (in particular it can be chosen in $V$) such that
\begin{equation}\label{eq:first_condition}
(E_{T_1,\lambda_1(T_1)},\dots,E_{T_1,\lambda_{p+q}(T_1)},\lambda_{1 + n_0}(T_1),\dots,\lambda_{p+q}(T_1)) \in U.
\end{equation}
This is not enough to get $T_1 \in \mathcal{U}$, because we considered only one family of resonances. However, expanding maps near $F_0$ only have finitely many family of resonances as in the statement of Theorem \ref{theorem:general_statement}. Hence, we can repeat the process above, starting this time from $T_1$, with another family of resonances. If we stay close enough from $T_1$ the condition \eqref{eq:first_condition} will be preserved. Iterating this process, we end up with an element of $\mathcal{U}\cap V$, and prove that $\mathcal{U}$ is dense.
\end{proof}

\begin{remark}\label{remark:sketch_proof}
A significant part of the technicality in the proof of Lemma \ref{lemma:generic_solvability} comes from the fact that we are considering several resonances at once, and that we allow complex resonances. Consequently, let us give a sketch of the proof of Lemma \ref{lemma:generic_solvability} in the case of a single real resonance (i.e. $p_0 = q = 0$ and $p = p_1 = 1$). To lighten notations, we will write $f_T$ instead of $f_{T,1}$. The condition on $m$ in that case is $m \geq d+1$.

We want to find $T \in U_0$ such that for every $x \in M$ there is $y \in M$ such that $T(y) = x$ and $f_{T}(y) \neq 0$. A natural idea to do so is to consider for $T \in U_0$ a map $\widetilde{\mathcal{G}}_T : M \to \mathbb{R}^m$ that maps a point $x$ to the collection of the values of $f_T$ at the antecedents of $x$ by $T$. By working locally on $M$, we can lift the ambiguity in the ordering of the antecedents to get a map that actually takes value in $\mathbb{R}^m$. Hence, we need to work with several maps defined on different subsets of $M$, we will ignore this issue in this sketch of proof. Notice that by assumption $m > \dim M$, hence if $0$ was a regular value for $\widetilde{\mathcal{G}}_T$ we would know that $\widetilde{\mathcal{G}}_T$ does not vanish and we would be done. Unfortunately, we cannot produce enough perturbations of $\widetilde{\mathcal{G}}_T$ using Lemma \ref{lemma:key_deformation} to write a transversality argument that would make $0$ a regular value of $\widetilde{\mathcal{G}}_T$ when $m = \dim M+1$. 

To tackle this difficulty, we drop components of $\widetilde{\mathcal{G}}_T$ (at least one) to get a map $\mathcal{G}_T$ from $M$ to $\mathbb{R}^{\dim M}$. We can now use Lemma \ref{lemma:key_deformation} and a transversality argument to find $T_0 \in U_0$ such that $0$ is a regular value of $\mathcal{G}_{T_0}$. According to Theorem \ref{theorem:generic_morse}, we may also assume that $0$ is a regular value for $f_{T_0}$. Notice that there are finitely many points $x \in M$ such that $f_{T_0}(y) = 0$ for every $y$ such that $T_0(y) = x$, because such an $x$ is a zero of $\mathcal{G}_{T_0}$. Consider such a point $x_0$ (if there are none, we are already done). Since $x_0$ is a regular zero for $\mathcal{G}_{T_0}$, for $T$ near $T_0$ there is a unique point $x_0(T)$ near $x_0$ such that $\mathcal{G}_T(x_0(T)) = 0$. Hence, $x_0(T)$ is the only point near $x_0$ that can have the property that for every $y \in M$ such that $T y = x_0(T)$ we have $f_T(y) = 0$. Using Lemma \ref{lemma:key_deformation}, we can construct a deformation $t \mapsto T_t$ of $T_0$ which induces any first order deformation of $\mathcal{G}_{T_0}$ at $x_0$. Hence, we can impose $\frac{\mathrm{d}}{\mathrm{d}t}(x_0(T_t))_{t= 0}$. If we can arrange so that $\frac{\mathrm{d}}{\mathrm{d}t}(f_{T_t})_{t= 0}$ is zero near $x_0$, then since $0$ is a regular value of $f_{T_0}$, we can make it so that $f_{T_t}(x_0(T_t)) \neq 0$ for $t$ small. By the eigenvalue equation for $f_{T_t}$, it implies that there is $y \in M$ such that $T_t(y) =x_0(T_t)$ and $f_{T_t}(y) \neq 0$. Hence, we reduced the number of problematic points, iterating this argument ends the proof.

In order to achieve the condition $\frac{\mathrm{d}}{\mathrm{d}t}(f_{T_t})_{t= 0} = 0$ near $x_0$, we need to ensure that $x_0$ is not a fixed point for $T_0$ (see Lemma \ref{lemma:key_deformation}). This possibility is proven by contradiction, using Lemma \ref{lemma:key_deformation} again.
\end{remark}

The rest of this subsection is dedicated to the proof of Lemma \ref{lemma:generic_solvability}. Hence, we assume that $\deg F_0 \geq \dim M + p + 2q$. It follows from Proposition \ref{proposition:stability_resonances} that $\mathcal{V}$ is open, so we only need to prove that it is dense. Let $U$ be an open subset of $U_0$. We want to prove that $\mathcal{V} \cap U \neq \emptyset$. Let $d = \dim M$. Applying the implicit function theorem as in the proof of Lemma \ref{lemma:explicit_derivative} and up to making $U$ smaller, we may cover $M$ by open subsets $M_1,\dots,M_R$ such that for $\ell = 1,\dots,R$ there are smooth maps $G_{\ell,1},\dots,G_{\ell,p+2q-1} : U \times M_j \to M$ such that for every $T \in U$ and $x \in M_j$ the points $G_{\ell,1}(T,x),\dots,G_{\ell,p+2q+d-1}(T,x)$ are distinct antecedents of $x$ by $T$. We use here our assumptions on the degree of $F_0$. Consider compact subsets $K_1,\dots,K_R$ of $M_1,\dots,M_R$ whose interiors cover $M$.

For $T \in U$ and $\ell \in \set{1,\dots,R}$, we introduce the map $\mathcal{G}_{T,\ell} : M \to (\mathbb{R}^p \times \mathbb{C}^q)^{p + 2q + d -1}$ defined by
\begin{equation*}
\mathcal{G}_{T,\ell}(x) = (\F_T(G_{\ell,1}(T,x)),\dots,\F_T(G_{\ell,p+2q+d-1}(T,x))) \textup{ for } x \in M.
\end{equation*}
For $k = 0,\dots,p+2q$, let 
\begin{equation*}
\begin{split}
N_k = \{(C_1,\dots,C_{p+2q + d-1}) \in & (\mathbb{R}^p \times \mathbb{C}^q)^{p + 2q + d -1} : \\ & \qquad \qquad \dim \langle C_1,\dots,C_{p+2q +d-1} \rangle = k\}.
\end{split}
\end{equation*}
It is useful to notice that:

\begin{lemma}\label{lemma:submanifold}
If $k \in \set{0,\dots,p+2q}$, then $N_k$ is a smooth submanifold of $(\mathbb{R}^p \times \mathbb{C}^q)^{p+2q + d-1}$ of codimension $(p+2q + d-1-k)(p+2q-k)$.
\end{lemma}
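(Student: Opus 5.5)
The plan is to recognise $N_k$ as the classical determinantal variety of real matrices of rank exactly $k$. Set $n = p+2q$ and $s = p+2q+d-1$, and view $\mathbb{R}^p \times \mathbb{C}^q$ as the real vector space $\mathbb{R}^n$. Here the dimension of the span is taken over $\mathbb{R}$, consistent with Lemma \ref{lemma:linear_algebra}. A tuple $(C_1,\dots,C_s)$ is then just an $n \times s$ real matrix $C$ whose columns are the $C_i$. The set $N_k$ is exactly the set of such matrices of rank $k$. Once this is in place, the claim is the standard fact that rank-$k$ matrices form a submanifold of codimension $(n-k)(s-k)$. Since $k \le n \le s$, this matches the announced codimension $(s-k)(n-k)$.

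To prove the standard fact, I would work locally near a point $C_0 \in N_k$. After permuting rows and columns, which are linear diffeomorphisms preserving $N_k$, I may assume that $C_0$ has block form
$$C = \begin{bmatrix} A & B \\ D & E \end{bmatrix}$$
with $A$ a $k\times k$ block invertible at $C_0$. On the open set $\set{C : A \text{ invertible}}$, the rank of $C$ equals $k + \operatorname{rank}(E - D A^{-1} B)$. This follows from Gaussian elimination, i.e. multiplying by invertible block-triangular matrices. Hence, in this open set, $N_k$ coincides with the zero set of the smooth map
$$C \mapsto E - DA^{-1}B \in \mathbb{R}^{(n-k)\times(s-k)}.$$

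It remains to check that this map is a submersion. Its derivative in the direction of the $E$-block is the identity, so it is surjective. The regular value theorem then gives that $N_k$ is locally a smooth submanifold of codimension $(n-k)(s-k)$. Covering $N_k$ by such charts, one for each choice of $k$ rows and $k$ columns, finishes the proof.

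The edge case $k=0$ is trivial: $N_0=\set{0}$, of codimension $ns$. The only point requiring care is the real identification of $\mathbb{C}^q$ with $\mathbb{R}^{2q}$, so that spans and dimensions are real. I do not expect any real obstacle here.
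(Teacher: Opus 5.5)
Your proof is correct and is essentially the paper's argument: both describe $N_k$ near one of its points as the zero set of a submersion expressing that the remaining $s-k$ columns lie in the span of $k$ linearly independent columns, and both get surjectivity of the derivative by varying those remaining columns alone. The only difference is cosmetic: the paper uses the orthogonal projector $\Pi(c)$ onto the orthogonal complement of the span of the first $k$ columns, composed with $\Pi(C)$ to obtain a fixed target space, whereas your Schur complement $E - DA^{-1}B$ uses a coordinate complement and lands in a fixed space automatically.
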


\begin{proof}
Let $k \in \set{0,\dots,p+2q}$. Let $C = (C_1,\dots,C_{p+2q+d-1})$ be an element of $N_k$. There is a subset $I$ of $\set{1,\dots,p+2q + d-1}$ of cardinal $k$ such that $(C_i)_{i \in I}$ is a linearly independent family. For notational simplicity, let us assume that $I = \set{1,\dots,k}$. There is a neighbourhood $W$ of $C$ in $(\mathbb{R}^p \times \mathbb{C}^q)^{p+2q + d-1}$ such that if $c = (c_1,\dots,c_{p+2q + d-1}) \in W$ then $(c_1,\dots,c_k)$ is a linearly independent family. For such a $c$, let $\Pi(c)$ be the orthogonal projector on the orthogonal of the subspace of $\mathbb{R}^p \times \mathbb{C}^q$ spanned by $c_1,\dots,c_k$. Here, we endow $\mathbb{R}^p \times \mathbb{C}^q$ with an Euclidean structure for instance by identifying it with $\mathbb{R}^{p+2q}$. Consider then the map
\begin{equation*}
\begin{array}{ccccc}
\psi & :&  W & \to & (\im \Pi(C))^{p+2q + d-1-k} \\
& & c = (c_1,\dots,c_{p+2q + d-1}) & \mapsto & (\Pi(C) \Pi(c)c_{k+j})_{1 \leq j \leq p + 2q + d-1-k}.
\end{array}
\end{equation*}
Up to making $W$ smaller, for every $c \in W$, the map $\Pi(C)$ induces an isomorphism between the range of $\Pi(c)$ and the range of $\Pi(C)$. It follows that $N_k \cap W = \psi^{-1}(\set{0})$. The map $\psi$ is a submersion at $C$ (one only needs to act on the $p+2q+d-1-k$ last columns to get surjectivity of the derivative), and thus $N_k$ is a submanifold of $(\mathbb{R}^p \times \mathbb{C}^q)^{p+2q + d-1}$ of codimension $(p+2q + d-1-k)(p+2q-k)$.

Let us mention that the tangent space to $N_k$ at $C$ is
\begin{equation*}
\begin{split}
\T_C N_k = \{(v_1,\dots,v_{p+2q}) \in (\mathbb{R}^p \times & \mathbb{C}^q)^{p+2q+d-1}: \\ & \qquad \qquad v_{k+i} - \sum_{j = 1}^k \mu_{i,j}v_j \in \langle C_1,\dots,C_k \rangle\}
\end{split}
\end{equation*}
where the coefficients $\mu_{i,j}$ are defined by $C_{k+i} = \sum_{j = 1}^k \mu_{i,j} C_j$.
\end{proof}

Notice that if $T \in U, \ell \in \set{1,\dots,R}$ and $x \in M_j$ are such that $(\F_T(y))_{Ty = x}$ does not span $\mathbb{R}^p \times \mathbb{C}^q$, then $\mathcal{G}_{T,\ell}(x)$ belongs to $N_k$ for a $k$ between $0$ and $p+2q-1$. We progress toward the proof of Lemma \ref{lemma:generic_solvability} by showing that:

\begin{lemma}\label{lemma:transversality}
Let $U_1$ be the set of $T \in U$ such that for $\ell = 1,\dots,R$ the map $\mathcal{G}_{T,\ell}$ is tranvserse to $N_k$ along $K_j$ for $k = 0,\dots,p+2q-1$. The set $U_1$ is open and dense in $U$.
\end{lemma}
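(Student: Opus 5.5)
Openness follows from stability: transversality to the submanifolds $N_k$ along the compact sets $K_\ell$ is an open condition in the $C^1$ topology on $\mathcal{G}_{T,\ell}$. The map $T\mapsto\mathcal{G}_{T,\ell}$ is continuous into $C^1(M_\ell,(\mathbb{R}^p\times\mathbb{C}^q)^{p+2q+d-1})$, by Remark \ref{remark:useful_smoothness} together with the smoothness of the local inverses $G_{\ell,i}$. The one subtlety is that the $N_k$ are not closed. However, $\overline{N_k}=\bigcup_{k'\le k}N_{k'}$, and this union is stratified with the Whitney property, as for determinantal varieties. So transversality to all strata $N_0,\dots,N_{p+2q-1}$ simultaneously is open. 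I would state this with a short argument by induction on $k$: near a point of $N_{k'}$ with $k'<k$, transversality to $N_{k'}$ gives transversality to the nearby part of $N_k$.

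For density I would use the parametric transversality theorem, i.e.\ Sard as in the proof of Theorem \ref{theorem:generic_morse}. Fix $T_1\in U$. I want a finite-dimensional family $\bar t\mapsto T_{\bar t}$, with $T_0$ equal to $T_1$, built as compositions with flows of vector fields as before, such that for each $\ell$ the map $(x,\bar t)\mapsto\mathcal{G}_{T_{\bar t},\ell}(x)$ is a submersion at every point of $K_\ell\times\{0\}$. Then transversality of $\mathcal{G}_{T_{\bar t},\ell}$ to each $N_k$ holds for almost every small $\bar t$, and doing this for the finitely many $\ell$ and $k$ at once gives density. By compactness it is enough to produce, for each $x_0\in K_\ell$, finitely many vector fields whose induced first-order variations of $\mathcal{G}_{T,\ell}$ at $x_0$ span the whole target space $(\mathbb{R}^p\times\mathbb{C}^q)^{p+2q+d-1}$. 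These remain spanning on a neighbourhood of $x_0$, and I then cover $K_\ell$ by such neighbourhoods.

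The derivative of $\mathcal{G}_{T_t,\ell}(x_0)$ in the $i$-th slot is
$$\partial_t f_{T_t,j}(y_i)+\mathrm{d}f_{T,j}(y_i)\cdot\partial_t G_{\ell,i}(T_t,x_0),\qquad y_i=G_{\ell,i}(T,x_0).$$
By Lemma \ref{lemma:key_deformation}\ref{item:support}, $X$ vanishes near the antecedents of the $y_i$'s. Hence $\partial_t G_{\ell,i}=0$, and only the first term survives. I now apply Lemma \ref{lemma:key_deformation} with $m=p+2q+d-1$, taking the points $y_1,\dots,y_m$ and one further antecedent $y_{m+1}$ of $x_0$. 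This extra antecedent exists because $\deg T\ge m+1=d+p+2q$. The lemma prescribes the variation $\sum_\ell\lambda_j^{-\ell-1}\mathcal{L}_T^\ell P_T(X)f_{T,j}$ arbitrarily near each $y_k$. By Lemma \ref{lemma:derivative_simple_resonance}, the true derivative equals this prescribed variation plus $c_j f_{T,j}$, where the scalar $c_j$ comes from the $\Pi P H$ term.

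The main obstacle is this multiple of $f_{T,j}$: it is an uncontrolled common shift in every slot. I would handle it by linearity. Choosing the $\mathfrak{f}_{j,k}$ to be suitable real or complex multiples of chosen basis vectors in slot $k$ and zero elsewhere, the derivatives span a space $S+\operatorname{span}$ of the correction vectors. Equivalently, the differential of the whole family hits every target vector up to a correction of the form $(c_jf_{T,j}(y_k))_{j,k}$ with $c$ depending linearly on the prescribed data. A finite-dimensional perturbation argument would then show that the image is still everything. If that argument fails, I would fall back on the projective version: work with the normalized states $f_{T,j}/l_j(f_{T,j})$, which define the same lines, so the ranks defining $N_k$ are unchanged. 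With that normalization the correction term is killed exactly, as in Remark \ref{remark:derivative_simple_resonance}.
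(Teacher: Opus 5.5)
Your overall plan matches the paper's: a finite-dimensional family $T_{\bar t}$ built from Lemma \ref{lemma:key_deformation} with $m=p+2q+d-1$ and one spare antecedent, then parametric transversality, plus a separate openness argument. However, the step you flag as the main obstacle is not closed. Along $X_r$ the derivative of $\mathcal{G}_{T_t,\ell}(x_0)$ is $A_r+\mathrm{diag}(c^{(r)})\,\mathcal{G}_{T,\ell}(x_0)$. Here $A_r$ is the prescribed variation and $c^{(r)}_j=\nu_{T,j}(P_T(X_r)H_{T,\lambda_j(T)}f_{F_0,j})$. This scalar depends on $X_r$ itself, not linearly on the prescribed data. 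That the $A_r$ span the target says nothing about the span of the $A_r+\mathrm{diag}(c^{(r)})\mathcal{G}_{T,\ell}(x_0)$; nothing forbids $A_1=-\mathrm{diag}(c^{(1)})\mathcal{G}_{T,\ell}(x_0)$. So no general ``finite-dimensional perturbation argument'' yields your submersion.

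Your fallback works only with a specific normalization. For a fixed form $l_j$, the derivative of $\tilde f_{T,j}=f_{T,j}/l_j(f_{T,j})$ is $a_j/l_j(f_{T,j})-l_j(a_j)\,\tilde f_{T,j}/l_j(f_{T,j})$, with $a_j=H_{T,\lambda_j(T)}P_T(X)f_{T,j}$. An uncontrolled multiple of $\tilde f_{T,j}$ therefore survives unless $l_j$ annihilates the range of $H_{T,\lambda_j(T)}$. Two choices do repair your argument:
\begin{itemize}
\item take $l_j=\nu_{T_1,j}$ at your base point $T_1$, which works because $\Pi_{T,\lambda_j(T)}H_{T,\lambda_j(T)}=0$;
\item rebase the construction of \S\ref{section:key_lemma} at $T_1$ instead of $F_0$, so that Remark \ref{remark:derivative_simple_resonance} applies.
\end{itemize}
In either case you also need that a constant invertible row scaling preserves each $N_k$, and hence preserves transversality along $K_\ell$.

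The paper avoids this issue because it never asks for a submersion. The correction $\mathrm{diag}(c)\,\mathcal{G}_{T,\ell}(y)$ is the velocity of $s\mapsto\exp(s\,\mathrm{diag}(c))\,\mathcal{G}_{T,\ell}(y)$. This is a curve of row dilations, real on the first $p$ rows and complex on the last $q$, and it preserves the real rank. So whenever $\mathcal{G}_{T,\ell}(y)\in N_k$, the correction lies in $\mathcal{T}N_k$. The spanning of the $A_r$ then gives transversality of $(x,\bar t)\mapsto\mathcal{G}_{T_{\bar t},\ell}(x)$ to $N_k$ directly, which is all the parametric transversality theorem needs.

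For openness, your Whitney-stratification argument is valid but heavier than necessary. The codimension formula of Lemma \ref{lemma:submanifold} exceeds $d$ for $k\le p+2q-2$. Transversality to these strata therefore just means avoiding the closed set of rank at most $p+2q-2$. Moreover, $N_{p+2q-1}$ is a closed submanifold of the open set $N_{p+2q-1}\cup N_{p+2q}$.

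A minor slip: what gives $\partial_t G_{\ell,i}=0$ is that $X$ vanishes near the $y_i$ themselves, not near their antecedents. Item (i) of Lemma \ref{lemma:key_deformation} provides both.
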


Notice that the formula for the codimension of $N_k$ in Lemma \ref{lemma:submanifold} implies that if $T \in U_1$ and $\ell \in \set{1,\dots,R}$ then $\mathcal{G}_{T,\ell}(K_\ell)$ does not intersect $N_0,\dots,N_{p+2q-2}$ and intersects $N_{p+2q-1}$ at a finite number of points.

\begin{proof}[Proof of Lemma \ref{lemma:transversality}]
For $\ell = 1,\dots,R$, let $U_1^{(\ell)}$ be the set of $T \in U$ such that $\mathcal{G}_{T,\ell}$ is transverse to $N_k$ along $K_\ell$ for $k = 0,\dots,p+2q-1$. We will prove that the $U_1^{(\ell)}$'s are open and dense in $U$, and it will then follows that $U_1 = \bigcap_{\ell = 1}^R U_1^{(\ell)}$ is open and dense.

Let us fix $\ell \in \set{1,\dots,R}$ for the end of the proof. Notice that $T \in U_1^{(\ell)}$ if and only if the restriction to $K_\ell$ of $\mathcal{G}_{T,\ell}$ takes value in the open set $N_{p+2q-1} \cup N_{p+2q}$ and has only transverse intersections with $N_{p+2q-1}$. Since $N_{p+2q-1}$ is a closed submanifold of $N_{p+2q-1} \cup N_{p+2q}$, we find that $U_1^{(\ell)}$ is open. To prove that it is dense, let us pick $T \in U$ and construct a nearby map in $U_1^{(\ell)}$.

Consider a point $x \in K_\ell$. According to Lemma \ref{lemma:key_deformation}, we can find vector fields $X_1^x,\dots,X_{(p+2q + d-1)(p+2q)}^x$ such that the\footnote{We consider the elements of $(\mathbb{R}^p \times \mathbb{C}^q)^{p+2q +d-1}$ as $(p+q) \times (p+2q+d-1)$ matrix whose first $p$ rows have real entries and last $q$ rows have complex entries}
\begin{equation*}
(H_{T,\lambda_j(T)} P_T(X_{r}^x) f_{T,j} (G_{\ell,s}(T,x)))_{\substack{1 \leq j \leq p+q \\ 1 \leq s \leq p + 2q + d-1}}
\end{equation*}
for $r = 1,\dots, (p+2q + d-1)(p+2q)$ form a basis of $(\mathbb{R}^p \times \mathbb{C}^q)^{p + 2q + d -1}$. Here, we use the fact that there is an antecedent for $x$ which is not among $(G_{\ell,s}(T,x))_{1 \leq s \leq p + 2q + d-1}$, which follows from our assumption on the degree of $F_0$. Notice in addition that the vector fields $X_r^x, r = 1,\dots,(p+2q + d-1)(p+2q)$ are supported away from the antecedents of $x$ by $T$. Hence, if we use these vector fields to deform $T$, the maps $G_{\ell,s}(T,\cdot), s = 1,\dots,p+2q+d-1$ will be unchanged near $x$. There is then an open neighbourhood $V_x$ for $x$ in $M$ such that for every $y \in \overline{V}_x$ we have that the 
\begin{equation*}
(H_{T,\lambda_j(T)} P_T(X_{r}^x) f_{T,j} (G_{\ell,s}(T,y)))_{\substack{1 \leq j \leq p+q \\ 1 \leq s \leq p + 2q + d-1}}
\end{equation*}
for $r = 1,\dots, (p+2q + d-1)(p+2q)$ form a basis of $(\mathbb{R}^p \times \mathbb{C}^q)^{p + 2q + d -1}$, and that for $r = 1,\dots,(p+2q+d-1)(p+2q)$, the vector field $X_r^x$ vanishes on a neighbourhood of the antecedents of $y$ by $T$.

Since $K_\ell$ is compact, there are $x_1,\dots,x_N \in M$ such that $K_\ell \subseteq \bigcup_{n = 1}^N V_{x_n}$. Let $Y_1,\dots,Y_K$ be an enumeration of all the vector fields of the form $X_r^x$ for $r \in \set{1,\dots,(p+2q)(p+2q+d-1)}$ and $x \in \set{x_1,\dots,x_N}$. For $n = 1,\dots, K$, let $(\phi_n^t)_{t \in \mathbb{R}}$ be the flow of $Y_n$. For $\bar{t} = (t_1,\dots,t_K) \in \mathbb{R}^K$, let $T_{\bar{t}} = T \circ \phi_{1}^{t_1} \circ \dots \circ \phi_{K}^{t_K}$. There is then an $\epsilon_0 > 0$ such that for every $\bar{t} \in (-\epsilon_0,\epsilon_0)^K$ the map $T_{\bar{t}}$ belongs to $U$. We define then the map
\begin{equation*}
\begin{array}{ccccc}
\psi & : & M \times (-\epsilon_0,\epsilon_0)^{K} & \to & (\mathbb{R}^p \times \mathbb{C}^q)^{p + 2q + d-1} \\
 & & (x,\bar{t}) & \mapsto & \mathcal{G}_{T_{\bar{t}}}(x).
\end{array}
\end{equation*}
It follows from Lemma \ref{lemma:useful_linear_response} and Remark \ref{remark:useful_smoothness} that this map is smooth. Let us prove that there is $\epsilon_1 > 0$ such that for $k = 0,\dots,p+2q-1$, the map $\psi$ restricted to $\bigcup_{n = 1}^N V_{x_n} \times (-\epsilon_1,\epsilon_1)^K$ is transverse to $N_k$.

Consider $k \in \set{0,\dots,p+2q-1}, y \in \bigcup_{n = 1}^N V_{x_n}$ and $\bar{t} \in (-\epsilon_0,\epsilon_0)^K$ close to $0$. If $\mathcal{G}_{T_{\bar{t}},\ell}(y)$ does not belong to $N_k$, there is nothing to prove. Otherwise, let $n \in \set{1,\dots,N}$ be such that $y \in V_{x_n}$. Among the partial derivatives of $\psi$ with respect to $\bar{t}$ at $(y,\bar{t})$, there are, according to Lemma \ref{lemma:derivative_simple_resonance}, the
\begin{equation}\label{eq:complicated_derivative}
(H_{T_{\bar{t}},\lambda_j(T_{\bar{t}})} P_{T_{\bar{t}}} (\widetilde{X}_{r,\bar{t}}^{x_n}) f_{T_{\bar{t}},j} (G_{\ell,s}(T_{\bar{t}},y)))_{\substack{1 \leq j \leq p+q \\ 1 \leq s \leq p + 2q + d-1}} + D_r \mathcal{G}_{T_{\bar{t}},\ell}(y)
\end{equation}
for $r = 1,\dots,p+2q+d-1$.  Here, $\widetilde{X}_{r,\bar{t}}^{x_n}$ is a vector field that depends continuously on $\bar{t}$ and coincides with $X_{r}^{x_n}$ when $\bar{t} = 0$. The matrix $D_r$ is a diagonal, i.e. $D_r \mathcal{G}_{T_{\bar{t}},\ell}(y)$ is deduced from $\mathcal{G}_{T_{\bar{t}},\ell}(y)$ by mutliplying its $p$ first rows by a real number (that may depend on the row) and the last $q$ rows by a complex number. The term $D_r \mathcal{G}_{T_{\bar{t}},\ell}(y)$ comes from the second term in \eqref{eq:derivative_simple_resonance}. Since $N_k$ is invariant by dilation of the rows of an element of $(\mathbb{R}^p \times \mathbb{C}^q)^{p + 2q + d-1}$ seen as a matrix, we find that $D_r \mathcal{G}_{T_{\bar{t}},\ell}(y)$ is tangent to $N_k$ at $\mathcal{G}_{T_{\bar{t}},\ell}(y)$. Since the first term in \eqref{eq:complicated_derivative} span $(\mathbb{R}^p \times \mathbb{C}^q)^{p + 2q + d-1}$ when $r$ goes from $1$ to $(p+2q+d-1)(p+q)$, provided $\bar{t}$ is small enough, we get that the range of $\mathrm{d}\psi(y,\bar{t})$ is transverse to $N_k$ at $\mathcal{G}_{T_{\bar{t}},\ell}(y)$.

Now that we have $\epsilon_1 > 0$ such that for $k = 0,\dots,p+2q-1$, the map $\psi$ restricted to $\bigcup_{n = 1}^N V_{x_n} \times (-\epsilon_1,\epsilon_1)^K$ is transverse to $N_k$, we find that the set of $\bar{t} \in (-\epsilon_1,\epsilon_1)^K$ such that $T_{\bar{t}} \in U_1^{(\ell)}$ is residual \cite[Theorem 2.7 p.79]{hirsch_differential_topology}. In particular, $T$ belongs to the closure of $U_1^{(\ell)}$.
\end{proof}

For every $T \in U_1$, we let $B(T)$ be the set of points $x \in M$ such that $(\F_T(y))_{y \in T^{-1}(\set{x})}$ does not span $\mathbb{R}^p \times \mathbb{C}^q$. Our goal is to prove that there is $T \in U_1$ such that $B(T) = \emptyset$. A direct consequence of Lemma \ref{lemma:transversality} is:

\begin{lemma}\label{lemma:usc}
For every $T \in U_1$, the set $B(T)$ is finite. Moreover, if $K$ is a closed subset of $M$, then the map $T \mapsto |B(T) \cap K|$ is upper-semi-continuous on $U_1$.
\end{lemma}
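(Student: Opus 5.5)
The plan is to reduce everything to the local maps $\mathcal{G}_{T,\ell}$ on the compact sets $K_\ell$ and then use the codimension count from Lemma \ref{lemma:submanifold} together with the transversality given by Lemma \ref{lemma:transversality}.

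First, finiteness. Let $T \in U_1$ and $x \in B(T)$. Choose $\ell$ with $x \in K_\ell$. The points $G_{\ell,s}(T,x)$ are among the antecedents of $x$, so the family $\mathcal{G}_{T,\ell}(x)$ spans a subspace of $\langle \F_T(y) : Ty = x\rangle$. That subspace is proper, hence $\mathcal{G}_{T,\ell}(x) \in N_k$ for some $k \le p+2q-1$. Therefore
\begin{equation*}
B(T) \cap K_\ell \subseteq \mathcal{G}_{T,\ell}^{-1}\Big(\bigcup_{k=0}^{p+2q-1} N_k\Big) \cap K_\ell .
\end{equation*}
Since $T \in U_1$, the map $\mathcal{G}_{T,\ell}$ is transverse to each $N_k$ along $K_\ell$. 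The codimension of $N_k$ is $(p+2q+d-1-k)(p+2q-k)$, which exceeds $d$ when $k \le p+2q-2$ and equals $d$ when $k = p+2q-1$. So $\mathcal{G}_{T,\ell}(K_\ell)$ misses $N_0,\dots,N_{p+2q-2}$. Its preimage of $N_{p+2q-1}$ is a $0$-dimensional submanifold near $K_\ell$, hence discrete. A discrete closed subset of the compact set $K_\ell$ is finite. Finiteness of $B(T)$ follows since finitely many $K_\ell$ cover $M$.

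Second, upper semicontinuity. Fix $T_0 \in U_1$ and a closed $K$, and list $B(T_0) \cap K = \{b_1,\dots,b_r\}$. I will show that for $T$ near $T_0$, each point of $B(T) \cap K$ lies near some $b_i$, and each $b_i$ has at most one such point near it.

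\emph{Points away from $B(T_0)$.} Take open balls $W_i$ around the $b_i$. On the compact set $K \setminus \bigcup W_i$, every $x$ satisfies that $(\F_{T_0}(y))_{T_0 y = x}$ spans $\mathbb{R}^p \times \mathbb{C}^q$. This spanning condition is open jointly in $(T,x)$, because the antecedents and $f_{T,j}$ depend continuously on $T$ (Proposition \ref{proposition:stability_resonances}, Remark \ref{remark:useful_smoothness}). By compactness it persists for $T$ near $T_0$, so $B(T) \cap K \subseteq \bigcup W_i$.

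\emph{Points near a single $b_i$.} The main obstacle is to see that at most one point of $B(T)$ can appear near each $b_i$, which requires stability of transverse intersections. For each $\ell$ with $b_i \in K_\ell$, $b_i$ is a transverse zero-dimensional intersection point of $\mathcal{G}_{T_0,\ell}$ with $N_{p+2q-1}$. Near the image point, write $N_{p+2q-1}$ locally as $\psi^{-1}(0)$ with $\psi$ a submersion, as in the proof of Lemma \ref{lemma:submanifold}. Then $\psi\circ\mathcal{G}_{T,\ell}$ is a map from a $d$-dimensional neighbourhood to $\mathbb{R}^d$ with invertible differential at $(T_0,b_i)$, depending continuously on $T$ in $C^1$. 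By the inverse function theorem with parameters, it has exactly one zero in a fixed small neighbourhood $W_i$ for $T$ close to $T_0$. Every point of $B(T) \cap W_i$ must be such a zero, shrinking $W_i$ so that it lies inside some $M_\ell$ where the $G_{\ell,s}$ are defined. Hence $|B(T)\cap W_i| \le 1$.

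Combining the two steps, $|B(T) \cap K| \le r$ for $T$ near $T_0$. One caveat: points of $B(T)$ inside $W_i$ but outside $K$ are simply not counted, so they cause no trouble. The only care needed is taking $W_i$ within a single chart domain $M_\ell$ with $b_i$ in the interior of the relevant $K_\ell$ (possible since the interiors of the $K_\ell$ cover $M$), and requiring uniform $C^1$ continuity of $\mathcal{G}_{T,\ell}$ in $T$.
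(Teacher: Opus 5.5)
Your proposal is correct and follows essentially the same route as the paper. Finiteness comes from the codimension count of Lemma \ref{lemma:submanifold} together with transversality along $K_\ell$. Upper semicontinuity comes from a compactness argument combining two facts: the spanning condition is open near points outside $B(T_0)$, and near each point of $B(T_0)\cap K$ the transverse intersection with $N_{p+2q-1}$ is locally unique (only ``at most one'' zero is actually needed there), which bounds $|B(T)\cap K|$ by $|B(T_0)\cap K|$.
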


\begin{proof}
For $T \in U_1$ we have $B(T) \subseteq \bigcup_{\ell = 1}^{R} \mathcal{G}_{T,\ell}^{-1}(N_{p+2q-1}) \cap K_\ell$. The latter is a union of finite set (see the codimension formula in Lemma \ref{lemma:submanifold}) and thus a finite set.

Let us move to the upper semi-continuity statement. Let $K$ be a closed subset of $M$. Let $T \in U_1$. Consider a point $x$ of $K$. If $x \notin B(T)$, then there is an open neighbourhood $V_x$ for $x$ in $M$ and a neighbourhood $\widetilde{U}_{x}$ for $T$ in $U_1$ such that for every $y \in V_x$ and $\widetilde{T} \in \widetilde{U}_x$ we have $y \notin B(\widetilde{T})$. If $x \in B(T)$, there is $\ell \in \set{1,\dots,R}$ such that $x \in K_\ell$. Since $x \in B(T)$, we must have $\mathcal{G}_{T,\ell}(x) \in N_{p+2q-1}$. Since  $\mathcal{G}_{T,\ell}$ is transverse to $N_{p+2q-1}$ at $x$, there is an open neighbourhood $\widetilde{U}_x$ of $T$ in $U_1$, a neighbourhood $\widetilde{V}$ of $x \in M_\ell$ and a continuous map $\psi : \widetilde{U}_x \to \widetilde{V}$ such that for every $T' \in \widetilde{U_x}$, the point $\psi(T')$ is the only point in $\widetilde{V}$ mapped within $N_{p+2q-1}$ by $\mathcal{G}_{T',\ell}$. Hence, $\psi(T')$ is the only point in $\widetilde{V}$ that can belong to $B(T')$. Consequently, up to making $\widetilde{U}_x$ smaller, there is a neighbourhood $V_x$ of $x$ such that for every $T' \in \widetilde{U}_x$, there is at most one point of $B(T')$ in $V_x$.

Since $K$ is compact, we can find $x_1,\dots,x_N \in K$ such that $K \subseteq \bigcup_{\ell = 1}^N V_{x_\ell}$. For $T' \in \bigcap_{\ell = 1}^{N} \widetilde{U}_{x_\ell}$, we have then that 
\begin{equation*}
|B(T') \cap K| \leq \sum_{\ell = 1}^{N} |B(T') \cap V_{x_\ell}| \leq |\set{\ell \in \set{1,\dots,N} : x_\ell \in B(T)}| \leq |B(T) \cap K|. 
\end{equation*} 
\end{proof}

We are now ready to end the proof of Lemma \ref{lemma:generic_solvability}.

\begin{proof}[Proof of Lemma \ref{lemma:generic_solvability}]

The proof is by contradiction. Let $F_2$ be an element $U_1$ such that $|B(F_2)| = \min\limits_{T \in U_1} |B(T)|$ and assume that $B(F_2) \neq \emptyset$. 

\underline{Step 1.} Pick $x_0 \in B(F_2)$ and let $V_0$ be an open neighbourhood of $x_0$ such that $B(F_2) \cap \overline{V}_0 = \set{x_0}$. It follows from Lemma \ref{lemma:usc} that there is an open neighbourhood $U_2$ of $F_2$ in $U_1$ such that for every $T \in U_2$ we have $|B(T) \cap (M \setminus V_0)| \leq |B(F_2)\cap (M \setminus V_0)| = |B(F_2)| - 1$ and $|B(T) \cap \overline{V}_0| \leq 1$. Hence, for $T \in U_2$, there is a unique point $x_0(T)$ in $B(T) \cap V_0$. Notice also that for every $T \in U_2$ we have $|B(T)| = |B(F_2)|$. Thus, the argument we just used implies that the map $T \mapsto x_0(T)$ is continuous on $U_2$. Hence, up to taking $U_2$ smaller, there is $\ell \in \set{1,\dots,R}$ such that $x_0(T) \in K_\ell$ for every $T \in U_2$. Notice also that $x_0(F_2) = x_0$.

For every $T \in U_2$, since $x_0(T) \in B(T) \cap K_\ell$, the range of $\mathcal{G}_{T,\ell}(x_0(T))$ is spanned by $p+2q-1$ of its columns. Up to taking $U_2$ smaller, we may assume that the indexes of these columns do not depend on $T$. Up to relabelling, we can even assume that the range of $\mathcal{G}_{T,\ell}(x_0(T))$ is spanned by its $p+2q-1$ first columns

\underline{Step 2.} We claim that, up to making $U_2$ smaller, we may assume that for every $T \in U_2$ the point $x_0(T)$ is not a fixed point for $T$. To do so, we only need to find an element $T$ of $U_2$ such that $T(x_0(T)) \neq x_0(T)$ (we can then replace $U_2$ by a small neighbourhood of this $T$). We prove by contradiction that such a $T$ exists and assume that for every $T \in U_2$ we have $T(x_0(T)) = x_0(T)$. Pick any $T_0 \in U_2$. As in the proof of Lemma \ref{lemma:transversality}, we may use Lemma \ref{lemma:key_deformation} to produce a smooth deformation $(T_t)_{t \in (-\epsilon_0,\epsilon_0)}$ of $T_0$ such that $\frac{\mathrm{d}}{\mathrm{d}t} \mathcal{G}_{T_t,\ell}(x_0(T_0))_{t = 0}$ takes any value in $(\mathbb{R}^p \times \mathbb{C}^q)^{p+2q}$, up to the tangent space of $N_{p+2q-1}$. Notice that $T_t$ coincides with $T_0$ near $x_0(T_0)$ and its antecedents. Since $\mathcal{G}_{T_0}$ is transverse to $N_{p+2q-1}$ at $x_0(T_0)$ and $\mathcal{G}_{T_t,\ell}(x_0(T_t)) \in N_k$ for every $t \in (- \epsilon_0,\epsilon_0)$, the map $t \mapsto x_0(T_t)$ is smooth and we can impose the value of $\frac{\mathrm{d}}{\mathrm{d}t}x_0(T_t)_{t = 0}$. In particular, we can make it non-zero. But since $T_t$ coincides with $T_0$ near $x_0(T_0)$, we must have $x_0(T_t) = x_0(T_0)$ (because the fixed point of $T_0$ are isolated), a contradiction.

\underline{Step 3.} For $T \in U_2$, let us define the function $\mathcal{H}_T$ from on a neighbourhood of $x_0(T)$ to $(\mathbb{R}^p \times \mathbb{C}^q)^{p+2q}$ by $$\mathcal{H}_T(x) = (\widetilde{\F}_T(G_{\ell,1}(T,x)),\dots,\widetilde{\F}_T(G_{\ell,p+2q-1}(T,x)),\F_T(x)),$$ where the function $\widetilde{\F}_T$ is defined on $M$ by $\widetilde{\F}_T(x) = (\lambda_j(T)^{-1} f_{T,j}(x))_{1 \leq j \leq p + 2q}$. From the definition of resonant states, we find that $\F_T(x)$ belongs to the span of $(\widetilde{\F}_T(y))_{y \in T^{-1}(\set{x})}$. Hence, $\mathcal{H}_T(x_0(T))$ belongs to 
\begin{equation*}
\begin{split}
\mathcal{N} = \{(C_1,\dots & ,C_{p+2q}) \in (\mathbb{R}^{p} \times \mathbb{C}^q)^{p+2q} : \\ & \dim \langle C_1,\dots,C_{p+2q}\rangle = \dim \langle C_{1},\dots,C_{p+2q-1}\rangle = p+2q-1\}.
\end{split}
\end{equation*}
Otherwise $(\widetilde{\F}_T(y))_{y \in T^{-1}(\set{x})}$ would span $\mathbb{R}^p \times \mathbb{C}^q$, which implies that the family $(\F_T(y))_{y \in T^{-1}(\set{x})}$ spans $\mathbb{R}^p \times \mathbb{C}^q$. 
 
Notice that the tangent space to $\mathcal{N}$ at a point $C = (C_1,\dots,C_{p+2q})$ is
\begin{equation*}
\begin{split}
\T_C \mathcal{N} = \Big\{(v_1,\dots,v_{p+2q}) \in (\mathbb{R}^p & \times \mathbb{C}^q)^{p+2q} \\ &: v_{p+2q} - \sum_{j = 1}^{p+2q-1} \mu_j v_{j} \in \langle C_1,\dots,C_{p+2q-1} \rangle\Big\},
\end{split}
\end{equation*}
where $\mu_1,\dots,\mu_{p+2q-1}$ are the real numbers such that $C_{p+2q} = \sum_{j = 1}^{p+2q-1} \mu_j C_j$. According to Lemma \ref{lemma:key_deformation}, we may make any first order deformation of $\F_T$ near $x_0(T)$ without changing $T$ (at all) or $\F_T$ (at first order) near the points $G_{\ell,1}(T,x_0(T)),\dots,G_{\ell,p+2q-1}(T,x_0(T))$. Here, we apply Lemma \ref{lemma:key_deformation} with $E$ being $\set{G_{\ell,1}(T,x_0(T)),\dots,G_{\ell,p+2q-1}(T,x_0(T))}$. There are at least two antecedents for $x_0(T)$ outside of $E$ (by the degree condition) and thus at least one which is not periodic. The resonances are also unchanged at first order, see Lemma \ref{lemma:derivative_simple_resonance}. Working as in the proof of Lemma \ref{lemma:transversality} and in view of the tangent space to $\mathcal{N}$ (we only need deformations along the last column to get transversality), we can find $\widetilde{T} \in U_2$ such that $\mathcal{H}_{\widetilde{T}}$ is transverse to $\mathcal{N}$ at $x_0(\widetilde{T})$. There is then an open neighbourhood $\widetilde{U}_2$ of $\widetilde{T}$ in $U_2$ such that for every $T \in \widetilde{U}_2$ the map $\mathcal{H}_T$ is transverse to $\mathcal{N}$ at $x_0(T)$.

\underline{Step 4.} Recall the formula for the tangent space to $N_{p+2q-1}$ given in the proof of Lemma \ref{lemma:submanifold}. From this formula, we find that, starting from any map $T_0 \in \widetilde{U}_2$, we can use Lemma \ref{lemma:key_deformation} to produce a smooth deformation $(T_t)_{t \in (-\epsilon_0,\epsilon_0)}$ with $\frac{\mathrm{d}}{\mathrm{d}t} F_{T_t}(x)_{t = 0} = 0$ for $x$ near $G_{\ell,k}(T,x_0(T_0)), k = 1,\dots,p+2q-1$ and such that $\frac{\mathrm{d}}{\mathrm{d}t} \mathcal{G}_{T_t,\ell}(x_0(T_0))_{t= 0}$ projects to an arbitrarily element of the quotient $(\mathbb{R}^p \times C^q)^{p+2q}/ \T_{\mathcal{G}_{T_0,\ell}(x_0(T_0))} N_{p+2q-1}$. Indeed, we only need deformations along the last $d$ columns to produce any element of this space. Since $\mathcal{G}_{T_0,\ell}$ is transverse to $N_{p+2q-1}$ at $x_0(T_0)$, it means that we can impose the value of $\frac{\mathrm{d}}{\mathrm{d}t} x_0(T_t)_{|t=0}$ (using that $\mathcal{G}_{T_t,\ell}(x_0(T_t))$ stays within $N_{p+2q-1}$). Here, we use the fact that the deformation $(T_t)_{t \in (-\epsilon_0,\epsilon_0)}$ is trivial near $x_0(T_0)$ and its antecedents by $T_0$. The condition that we impose on $\frac{\mathrm{d}}{\mathrm{d}t} x_0(T_t)_{|t=0}$ is that $\mathrm{d}\mathcal{H}_{T_0}(x_0(T_0)) \cdot \frac{\mathrm{d}}{\mathrm{d}t} x_0(T_t)_{|t=0}$ is not tangent to $\mathcal{N}$ (which is possible because $\mathcal{H}_{T_0}$ is transverse to $\mathcal{N}$ at $x_0(T_0)$).

Since $x_0(T_0)$ is not a fixed point for $T_0$ (thanks to step 2 of the current proof), we may arrange the perturbation so that $\F_{T_t}$ is unchanged at first order near $x_0(T_0)$ (point \ref{item:not_too_much} of Lemma \ref{lemma:key_deformation}). It implies that the map $\mathcal{H}_{T_t}$ is unchanged at first order near $x_0(T_0)$ by the deformation $t \mapsto T_t$, and thus that $\frac{\mathrm{d}}{\mathrm{d}t}(\mathcal{H}_{T_t}(x_0(T_t)))_{|t= 0} = \mathrm{d}\mathcal{H}_{T_0}(x_0(T_0)) \cdot \frac{\mathrm{d}}{\mathrm{d}t} x_0(T_t)_{|t=0}$. Hence, for $t$ small but non-zero we have $\mathcal{H}_{T_t}(x_0(T_t)) \notin \mathcal{N}$, a contradiction.
\end{proof}

\appendix

\section{Existence of simple real resonances}\label{appendix:existence_simple_real_resonances}

We prove here a result that implies that Theorem \ref{theorem:generic_real_resonance} is not empty.

\begin{proposition}\label{proposition:existence_simple_resonance}
Let $M$ be smooth compact connected manifold. Assume that $\Exp(M)$ is not empty. There is $T \in M$ with a simple real resonance distinct from $1$.
\end{proposition}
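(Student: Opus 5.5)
The plan is to first produce an expanding map with \emph{some} resonance other than $1$, and then turn it into a simple real one by perturbation, using the results of \S\ref{section:simplicity}--\S\ref{section:key_lemma}.

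\textbf{Step 1: some map has a resonance $\lambda\neq 1$.} Argue by contradiction: suppose every $T\in\Exp(M)$ has $\res(T)=\set{1}$. I would use Ruelle's dynamical determinant
\begin{equation*}
d_T(z)=\exp\Big(-\sum_{n\geq1}\frac{z^n}{n}\sum_{T^nx=x}\frac{1}{|\det(DT^n(x)-I)|}\Big),
\end{equation*}
whose zeros are exactly the inverses of the resonances (with multiplicity). If only $1$ is a resonance, one expects $d_T(z)=1-z$. This would force
\begin{equation*}
\sum_{Tx=x}|\det(DT(x)-I)|^{-1}=1 .
\end{equation*}
Composing $T$ with a diffeomorphism that is the identity near the (finitely many, isolated) fixed points except near one fixed point $x_*$, one can change $DT(x_*)$ without creating or destroying fixed points, keeping the map expanding. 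This breaks the identity, which is a contradiction.

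This is the step I expect to be the main obstacle. It needs $d_T$ to be entire with no non-trivial exponential factor (a finite-order/growth statement in the $C^\infty$ category). If that is unavailable, I would instead look for a non-perturbative example, as in \cite{bjs_2013}, on a factor of $M$, but I do not see how to do this on a general $M$.

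\textbf{Step 2: reduce to a real resonance.} By Theorem~\ref{theorem:generic_simple}, a small perturbation keeps a resonance near $\lambda\neq1$ and makes all large resonances simple. If this resonance is real we are done, so assume it is non-real and simple. The derivative formula of Lemma~\ref{lemma:derivative_simple_resonance} gives $\partial_X\lambda=\nu(P_T(X)f)$. Arguing as in the proof of Theorem~\ref{theorem:generic_simple}, I would prescribe $P_T(X)f=\varphi$ locally, with $\varphi$ any zero-mean function supported in a small set. Since $\nu$ is fully supported (Proposition~\ref{proposition:full_support}), the value $\nu(\varphi)$ can be made equal to any complex number. So $T\mapsto\lambda(T)$ is a submersion onto $\mathbb{C}$ near $T$, and we may perturb until $\arg\lambda\in\pi\mathbb{Q}$. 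Pick $k$ with $\lambda^k\in\mathbb{R}$, and replace $T$ by $T^k\in\Exp(M)$. Since $\mathcal{L}_{T^k}=\mathcal{L}_T^k$, the real number $\lambda^k\neq1$ is a resonance of $T^k$. Its characteristic space contains $f$ and $\bar f$, and it is spanned by the finitely many states of $T$ for $\mu$ with $\mu^k=\lambda^k$. After a preliminary generic perturbation of $T$ (again via the submersion property), we can arrange that only $\lambda,\bar\lambda$ satisfy this, so $\lambda^k$ is a semisimple real resonance of multiplicity $2$, with real basis $\re f,\im f$.

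\textbf{Step 3: split into two real simple resonances.} For a perturbation $X$ of $S=T^k$, the two resonances near $\lambda^k$ move to first order by the eigenvalues of the real $2\times2$ matrix $A(X)=(\nu_i(P_S(X)g_j))_{i,j}$. Here $(g_1,g_2)=(\re f,\im f)$ and $(\nu_1,\nu_2)$ is the real dual basis of coresonant states. Using the construction of Lemma~\ref{lemma:long_range_construction} (with the linear algebra of Lemma~\ref{lemma:linear_algebra} for the two functions $g_1,g_2$), choose $X$ with $P_S(X)g_1=\varphi_1$ and $P_S(X)g_2=0$. Then $A(X)$ has first column $(\nu_1(\varphi_1),\nu_2(\varphi_1))$ and second column zero. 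Choosing $\varphi_1$ localized with $\nu_1(\varphi_1)\neq0$, possible by Proposition~\ref{proposition:full_support}, gives distinct real eigenvalues $\nu_1(\varphi_1)$ and $0$. Hence for small $t$ the resonance splits into two distinct real simple resonances of $S_t$, both close to $\lambda^k\neq1$. This proves the proposition.
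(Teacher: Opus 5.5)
Your Step~1 has a genuine gap, and it is the one you flag yourself. For a general $C^\infty$ expanding map, all that is available is that $d_T$ is entire with zeros exactly at the inverse resonances. Without a growth bound, nothing rules out $d_T(z)=e^{g(z)}(1-z)$ with $g$ a non-constant entire function, so $\res(T)=\set{1}$ does not yield $\sum_{Tx=x}|\det(DT(x)-I)|^{-1}=1$. That implication is a \emph{global} trace formula (the sum of the resonances, with multiplicity, equals the flat trace), and it is not available in the $C^\infty$ category.

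The missing idea, which is how the paper proceeds, is that you need the formula for only one map, and you may take that map real-analytic. Fix a real-analytic structure on $M$. Real-analytic maps are dense in $C^\infty(M,M)$ and $\Exp(M)$ is open. Fixed points of expanding maps are non-degenerate (every eigenvalue of $DT(x)$ has modulus $>1$), so $T\mapsto\sum_{Tx=x}|\det(I-DT(x))|^{-1}$ is continuous in the $C^1$ topology. Hence, after your modification near $x_*$, you can approximate by a real-analytic expanding map whose fixed-point sum is still $\neq 1$. For real-analytic expanding maps, Ruelle's theory ($\mathcal{L}_T$ is nuclear on a space of holomorphic functions, with non-zero spectrum $\res(T)$) gives $\sum_{\lambda\in\res(T)}\lambda=\sum_{Tx=x}|\det(I-DT(x))|^{-1}$. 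This fits directly into your contradiction argument.

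Steps~2 and~3 follow the paper's strategy: make $\arg\lambda\in\pi\mathbb{Q}$, pass to an iterate to get a semisimple real double resonance, and split it at first order. Your rank-one splitting matrix with eigenvalues $c$ and $0$ works as well as the paper's $\left(\begin{smallmatrix}0&1\\1&0\end{smallmatrix}\right)$. Two points need tightening.

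First, in Step~2 the recipe $X=f^{-1}Y$ from the proof of Theorem~\ref{theorem:generic_simple} gives a complex vector field when $f$ is complex-valued. Moreover, a submersion for $T\mapsto\lambda(T)$ alone does not let you push $\lambda$ away from other resonances of modulus $|\lambda|$. Both are repaired by applying Lemma~\ref{lemma:long_range_construction} to the whole finite family of resonances of modulus close to $|\lambda|$ (one per conjugate pair, excluding $1$). It provides a \emph{real} $X$ with $\mathcal{L}_T^N P_T(X)f_j=\mathfrak{g}_j$ for independently chosen $\mathfrak{g}_j$, so $\partial_X\lambda_j=\lambda_j^{-N}\nu_j(\mathfrak{g}_j)$. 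Proposition~\ref{proposition:full_support} then makes $T\mapsto(\lambda_j(T))_j$ a submersion onto $\mathbb{R}^p\times\mathbb{C}^q$. This lets you reach $\arg\lambda\in\pi\mathbb{Q}$ while avoiding $\mu^k=\lambda^k$ for every resonance $\mu\notin\set{\lambda,\bar\lambda}$.

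Second, in Step~3 the same lemma gives $\mathcal{L}_S^NP_S(X)g_j$ rather than $P_S(X)g_j$. This is harmless because $\nu_i\circ\mathcal{L}_S^N=\lambda^{kN}\nu_i$.

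Done this way, your Step~2 is more elementary than the paper's. The paper calls on the Nash--Moser Lemmas~\ref{lemma:local_solvability} and~\ref{lemma:generic_solvability}, and so must first pass to an iterate of degree at least $\dim M+5$.
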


\begin{proof}[Proof of Proposition \ref{proposition:existence_simple_resonance}]
\underline{Step 1.} We start by proving that there is a smooth expanding map $T_0$ on $M$ with a resonance distinct from $1$.

By assumption, there is $T_0 \in \Exp(M)$. We may assume that the sum $\sum_{ T_0 x = x} \frac{1}{|\det(I - D T_0(x))|}$ is not equal to $1$ (otherwise, we can just modify $T_0$ near a fixed point to achieve it). Then, we choose a real-analytic structure on $M$ and we may assume that $T_0$ is real-analytic (since real-analytic maps are dense within smooth maps). Since $T_0$ is analytic, the sum of the resonances of $T_0$ is\footnote{This fact follows for instance from \cite[\S 6.6 and 6.7]{ruelle_expanding_maps} and \cite[Theorem 3.4]{jezequel_local_global}.} $\sum_{T_0x =x } \frac{1}{|\det (I -DT_0(x))|} \neq 1$. Hence, $T_0$ must have a resonance distinct from $1$.

\underline{Step 2.} We have $T_0 \in \Exp(M)$ with a resonance $\lambda_0$ distinct from $1$. Thanks to Theorem \ref{theorem:generic_simple}, we may assume that $\lambda_0$ is simple. If $\lambda_0$ is real, we are done. Otherwise, we prove that there is $F_0 \in \Exp(M)$ with a real resonance $\mu_0$ of multiplicity $2$ with no Jordan block, and such that there is no other resonance of modulus $|\mu_0|$ for $F_0$.

To do so, we replace $T_0$ by an iterate of itself to ensure $\deg T_0 \geq \dim M +5$. Then we use Lemmas \ref{lemma:local_solvability} and \ref{lemma:generic_solvability}, maybe several times, to replace $T_0$ by a nearby map and assume that the argument of $\lambda_0$ is a rational multiple of $\pi$ and that there are no other resonances for $T_0$ with modulus $|\lambda_0|$ except $\bar{\lambda}_0$. It follows that there is $n \geq 2$ such that $\lambda_0^n \in \mathbb{R}^*$ is a resonance of $T_0^n$ of multiplicity $2$ without Jordan blocks. We set $F_0 = T_0^n$ and $\mu_0 = \lambda_0^n$.

\underline{Step 3.} Let $f_1,f_2$ be a basis of $E_{F_0,\mu_0}$ and $\nu_1,\nu_2$ be coresonant states associated to $\mu_0$ for $F_0$ such that $\nu_{j}(f_k) = \delta_{j,k}$ for $j,k \in \set{1,2}$. Working as in the proof of Lemma \ref{lemma:linear_algebra}, we find that there is $N \geq 1$ such that for every $x \in M$ the family $((f_1(y),f_2(y)))_{y \in F_0^{-N}(\set{x})}$ spans $\mathbb{R}^2$. If $N > 1$, we replace $F_0$ by $F_0^N$ to get $N = 1$. The proof of Lemma \ref{lemma:inverse_after_identification} implies then that there is $X \in \Gamma(\T M)$ such that $P_T(X) f_1 = f_2$ and $P_T(X) f_2 = f_1$.

Let $(\phi_t)_{t \in \mathbb{R}}$ be the flow of $X$ and for $t$ small let $F_t = F_0 \circ \phi_t$. We let $\Pi(t)$ be the spectral projector on resonances for $F_t$ near $\mu_0$. For $t$ small, let $A(t)$ be the matrix for the operator induced by $\mathcal{L}_{F_t}$ on the range of $\Pi(t)$ in the basis $(\Pi(t) f_1, \Pi(t)f_2)$. Notice that 
\begin{equation*}
A(0) = \begin{bmatrix}
\mu_0 & 0 \\ 0 & \mu_ 0
\end{bmatrix} \textup{ and }
A'(0) = \begin{bmatrix}
0 & 1\\
1 & 0
\end{bmatrix}.
\end{equation*}
Hence, we see that for $t > 0$ small the resonance $\mu_0$ splits into two simple real resonances (for $t< 0$ it splits into two simple non-real resonances). Thus, for $t > 0$ small, $F_t$ has a simple real-resonance.
\end{proof}

\section{Weighted transfer operators}\label{appendix:weighted_transfer_operator}

We only studied in this paper transfer operators and resonances associated to absolutely continuous invariant measures for smooth expanding maps. However, more general weighted transfer operators are interesting when studying Gibbs states. Let us fix a smooth expanding map $T \in \Exp(M)$ and let $\mathbb{K} = \mathbb{R}$ or $\mathbb{C}$. For $g \in C^\infty(M,\mathbb{K})$, we define an operator $\mathcal{M}_g$ by
\begin{equation*}
\mathcal{M}_g f(x) = \sum_{\substack{y \in M \\ T y = x}} g(y) f(y) \textup{ for } x \in M
\end{equation*}
when $f$ is a function from $M$ to $\mathbb{C}$. There is a theory of Ruelle resonances for such operators, see \cite{ruelle_expanding_maps} and \cite[Part I]{baladi_book2} (more generally, one could consider transfer operator associated to vector bundle extensions of $T$). The case which is relevant for the study of Gibbs states is $g$ valued in $\mathbb{R}_+^*$.

We expect that most of the analysis of the current paper may be adapted to study the resonant states for $\mathcal{M}_g$ for a generic $g$ (eventually restricting to the open set of non-vanishing $g$). Let us mention some differences that need to be taken into account to deal with this other case:
\begin{itemize}
\item Remark \ref{remark:1_resonance} does not hold anymore. Hence, $1$ is not necessarily a resonance, and the resonant states associated to resonances distinct from $1$ do not need to have zero average anymore. Consequently, the spaces $C_0^\infty(M,\mathbb{R})$ and $C_0^\infty(M,\mathbb{C})$ should be replaced in $C^\infty(M,\mathbb{R})$ and $C^\infty(M,\mathbb{C})$ in the analysis.
\item If $\mathbb{K} = \mathbb{R}$, then Remark \ref{remark:real_valued} still holds, but it is not the case anymore if $\mathbb{K} = \mathbb{C}$. Hence, in the latter case, one should not distinguish real and complex resonances (a real resonance may deformed into a complex one in that case). Each time we had to exclude pair of complex conjugates in the analysis (as in \S \ref{section:key_lemma} or Theorem \ref{theorem:general_statement}), this condition should drop when $\mathbb{K} = \mathbb{C}$.
\item The perturbation theory for the family of operators $g \mapsto \mathcal{M}_g$ is simpler than for $T \mapsto \mathcal{L}_T$, one can indeed rely on standard perturbation theory \cite{kato_book}. Due to this fact, it is likely that some results can even be stated in finite regularity without excessive technicalities.
\item The adaptation of Proposition \ref{proposition:full_support} is not obvious. If $g$ does not vanish, then the proof of Proposition \ref{proposition:full_support} can easily be adapted to prove that the coresonant states for $\mathcal{M}_g$ have full support. However, if $g$ is allowed to vanish then a new argument and some condition on $g$ are needed. Notice that it may happen that a coresonant state for $\mathcal{M}_g$ does not have full support (consider $g$ supported on a small neighbourhood of a fixed point for $T$).
\item The map $(g,f) \mapsto \mathcal{M}_g f$ is bilinear. Hence, we are interested in the surjectivity of the map $\varphi \mapsto \mathcal{M}_\varphi f = \mathcal{M}_f \varphi$ when $f$ is a resonant state for $\mathcal{M}_g$. We expect that some version of Lemma \ref{lemma:key_deformation} should still hold. There is however some noticeable point: if we try to adapt directly the proof of Lemma \ref{lemma:key_deformation}, then the condition $|h| < 1$ will not be guaranteed anymore when applying Lemma \ref{lemma:contracting_fixed_point}. However, looking at the proof of Lemma \ref{lemma:contracting_fixed_point}, we only need to know that $h(x_0)$ does not belong to some discrete set that depends on $G$. Hence, we could probably just ask some extra (generic) conditions on the resonances and weights to deal with this issue. Notice however, that the possibilty that $g$ might vanish is also a difficulty here.
\end{itemize}

\bibliographystyle{alpha}
\bibliography{biblio_expanding_generic.bib}

\end{document}